\documentclass[aos]{imsart}
\usepackage{amsmath}
\usepackage{amsfonts}
\usepackage{amsthm}
\usepackage{hyperref}
\usepackage{bbm}
\usepackage{scrextend}
\usepackage{amssymb}

\usepackage{graphicx}
\setlength{\parindent}{0pt}

\newcommand*{\p}{\mathbb{P}}
\usepackage{xcolor}
\usepackage{dsfont}
\usepackage{natbib}
\usepackage{graphicx}
\usepackage{float}
\usepackage{multibib}

\newcites{SM}{References}

\setlength{\parindent}{0pt}

\newcommand{\norm}[1]{\left\lVert#1\right\rVert}


\newtheorem{Lemma}{Lemma}[section]
\newtheorem{definition}[Lemma]{Definition}
\newtheorem{theorem}[Lemma]{Theorem}
\newtheorem{corollary}[Lemma]{Corollary}
\newtheorem{remark}[Lemma]{Remark}
\newtheorem{example}[Lemma]{Example}
\usepackage[utf8]{inputenc}

\date{}

\definecolor{darkblue}{rgb}{.1, 0.1,.8}
\definecolor{darkgreen}{rgb}{0,0.8,0.2}
\definecolor{darkred}{rgb}{.8, .1,.1}

\usepackage{mathtools}
\mathtoolsset{showonlyrefs}
\newcommand*{\E}{\mathbb{E}}
\newcommand*{\R}{\mathbb{R}}
\renewcommand{\P }{{\mathbb P}}
\newcommand{\1}{\mathbbm{1}}
\newcommand{\nto}{n \to \infty}

\begin{document}
\begin{frontmatter}
	\title{Testing for practically significant  dependencies in high dimensions via bootstrapping maxima of U-statistics}
	\runtitle{Independence testing in high dimensions}
	
	\begin{aug}
		\author[A]{\fnms{Patrick}~\snm{Bastian}\ead[label=e1]{patrick.bastian@rub.de}},
		\author[A]{\fnms{Holger}~\snm{Dette}\ead[label=e2]{holger.dette@rub.de}}
		\and
		\author[B]{\fnms{Johannes}~\snm{Heiny}\ead[label=e3]{johannes.heiny@math.su.se}}
		\address[A]{Ruhr-Universität Bochum\printead[presep={,\ }]{e1,e2}}
		\address[B]{Stockholm University\printead[presep={,\ }]{e3}}
	\end{aug}
	
	\begin{abstract}
		This paper  takes a different  look  on the problem of testing 
the mutual independence of the components of a high-dimensional vector. Instead of testing if all pairwise associations (e.g. all pairwise Kendall's $\tau$) between the components vanish, we are interested in the (null)-hypothesis that all pairwise associations
do not exceed a certain threshold in absolute value.
The consideration of these hypotheses is motivated by the observation  that in the high-dimensional regime, it  is rare, and perhaps impossible, to have a null hypothesis that can be exactly modeled by assuming that all pairwise associations  are  precisely equal to zero.

The formulation of  the null hypothesis as a composite hypothesis  makes the problem of constructing tests non-standard and in this paper we provide a solution 
for a broad class of dependence measures, which can be estimated by $U$-statistics. In particular
we develop an asymptotic and a bootstrap level $\alpha$-test for the new hypotheses in the high-dimensional regime. We also prove that the new tests are minimax-optimal  and  investigate their finite  sample properties by means of a small simulation study and a data example.
	\end{abstract} 
	
	\begin{keyword}[class=MSC]
		\kwd{62G10, 62F40, 
			62C20}		
	\end{keyword}
	
	\begin{keyword}
		\kwd{independence testing}
		\kwd{relevant association}
		\kwd{$U$-statistics}
		\kwd{gaussian approximation}
		\kwd{bootstrap}
		\kwd{minimax optimality}
	\end{keyword}
	
\end{frontmatter}

\section{Introduction} 
\label{sec1} 
  \def\theequation{1.\arabic{equation}}	
	\setcounter{equation}{0}
	
Measuring dependence and testing for independence 
are fundamental problems in
statistics and since the  early work 
of  \cite{Pearson1920}, \cite{Kendall1938}, \cite{Hoeffding1948} and \cite{blum1961} 
numerous authors have worked in this area
\citep[for some more recent references, see][among many others]{gretton2008,Szekely2007,Heller2012,Dette.2012,bergsma2014,Albert2014,Geenens.2020,Chatterjee.2020}. 
 Similarly,  testing for mutual independence of the components 
of a vector has found considerable attention in the literature and exemplary we refer to \cite{Narain1950}, \cite{roy1957},  \cite{Lee1971}, \cite{nagao1973},
and Chapter 9 in the book of \cite{anderson2003}.  However, it is well known that the last-named tests do not perform well
if  the dimension, say $p$,   is comparable to or even larger than the sample size, say $n$, and 
in recent  years many authors have worked on  testing for  mutual independence of the components 
in the high-dimensional regime, where the dimension $p$ converges with the  sample size $n$ to infinity. 

Independence testing of  high-dimensional (mostly) Gaussian data has been considered by
\cite{baietal2009},  \cite{NormalLikelihood}, \cite{Jiang2015}, \cite{chen2017}, \cite{boddetpar2019} and  \cite{dettedoernemann2020}, 
among others, who investigated the asymptotic
properties of  likelihood ratio tests. 
Other authors consider   more  general distributions, where the dependence between two components of the vectors is estimated by different 
covariance/correlation statistics such as Pearson's $r$, Spearman's $\rho $, and Kendall's $\tau$, and different functions are used to aggregate these estimates of the pairwise dependencies. For example, \cite{Bao2015} and \cite{Li2021} use linear spectral statistics of the matrix of estimates,
while \cite{schott2005,Bandedness,yaoetal2017} and \cite{Leung2018}  propose tests based on the Frobenius norm.  
Further very popular methods of aggregating  estimates of the pairwise dependencies
are  maximum-type tests, which have good power properties
against sparse alternatives and have 
been investigated for various
covariance/correlation statistics in \cite{jiang2004,zhou2007,liuetal2008,Li2010,Cai2012,Shao2014,hanetal2017,drttonetal2020, heiny:mikosch:yslas:2021} 
and \cite{heetal2021} among others.

These tests  differ in the distributional assumptions, the way  of aggregation and in the considered  measures to quantify the dependence between two components. However, a common feature of all cited references  consists 
in the fact that  statistical tests are proposed for the  hypotheses
\begin{equation}
	\label{hd1a}
    \begin{split}
   &  H_0^{\rm exact}: ~~  d_{ij} = 0  ~~ \text{ for all } 1 \leq i < j \leq p\,, \\
   & H_1^{\rm exact}:  ~~ d_{ij} \not = 0    ~~   \text{ for at least one pair }   (i,j) \text{ with }   1 \leq i < j \leq p \,,
    \end{split}
\end{equation}
where $d_{ij} = d(X_{1i},X_{1j})$ is a (population) measure of dependence between the two components
$X_{1i}$ and $X_{1j}$ of the $p$-dimensional random vector $X_1 = (X_{11}, \ldots ,X_{1p})^\top$, such as the covariance ${\rm Cov}(X_{1i},X_{1j})$.

In the present paper we take a different point of view on the problem of testing 
the mutual independence of the components of a high-dimensional vector. Our work is motivated by the paper  of 
\cite{berger1987} who  argue that it  {\it is rare, and perhaps impossible, to have a null
hypothesis that can be exactly modeled by a parameter being exactly $0$.}
Similarly, \cite{tukey1991} argues in the context of multiple comparisons of means that  
{\it  $\ldots $
``All we know about the world teaches us that the
 effects of A and B are always different  - in some 
 decimal place - for any A and B.  Thus asking ``Are
 the effects different?'' is foolish''.  $\ldots $
} 

In the context of independence testing this means that in  many applications, in particular in the high-dimensional regime,  it is often unlikely that all $p(p-1)/2$ 
associations  (measured by $d_{ij}$) satisfy  $d_{ij} = 0$  ($1 \leq i <  j \leq p$).
As a consequence  one uses a  formulation of the null hypothesis in \eqref{hd1a},
which is believed to be not true, and for sufficiently large  sample size any consistent test will detect an arbitrary small deviation from the null hypothesis, which might not be of interest scientifically. 
Problems of this type are  particularly relevant in the big-data era.  Here the sample size (and dimension)  is usually large and one will reject the null hypothesis $H_0^{\rm exact}$ in \eqref{hd1a}, even if some of the dependence measures $d_{ij}$ do not vanish exactly but are small and scientifically not relevant. 
For example \cite{yaoetal2017} consider the   hypotheses in \eqref{hd1a} for a band 
 of the form  $125 \leq |i-j| \leq p= 150$. Their test rejects the null hypothesis, although the authors observe 
that   the dependencies are relatively weak, see 
 Section 7 in in this reference.

As an alternative we propose   to investigate if all associations  (measured by the quantities $d_{ij}$) are in some sense ``small''.  For this purpose 
we consider the hypotheses 
\begin{equation}
	\label{hd1b}
    \begin{split}
   &  H_0: ~~  | d_{ij} | \leq \Delta   ~~ \text{ for all } 1 \leq i < j \leq p\,, \\
   & H_1:  ~~ | d_{ij} | > \Delta   ~~   \text{ for at least one pair }   (i,j) \text{ with }   1 \leq i < j \leq p\,, 
    \end{split}
\end{equation} 
where $\Delta > 0 $ is a given threshold, which defines when a dependence between the components $i$ and $j$ is considered as (scientifically) not relevant.
Note that \eqref{hd1a} is obtained from \eqref{hd1b} for $\Delta=0$, but in the present paper we are not interested in arbitrary small deviations from \mbox{$d_{ij}=0$}, because we are aiming to detect only dependencies 
exceeding a given positive threshold. This  threshold can in fact be determined studying the robustness properties of the subsequent inference  tools which use  the independence assumption. The  rejection of $H_{0}$ in \eqref{hd1b} allows to decide at a controlled  type I error that at least one association  is larger than the given  threshold $\Delta$. On the other hand,
 interchanging the null hypothesis in \eqref{hd1b} and developing an appropriate test allows to decide  at a controlled 
 type I error that all  dependence measures $ | d_{ij} | $  are  smaller than  $\Delta$ (see Remark \ref{rem11}  for more details).
Note that interchanging the null-hypothesis and alternative does not make sense for the hypotheses in
 \eqref{hd1a} and here we are only able to control the error probability of ``deciding for dependence''.

An essential ingredient in this approach is  the specification of the threshold $\Delta$, and its choice depends sensitively on the particular problem under consideration. 
Essentially, this boils down to the important question when a correlation (or another dependence measure) is  {\it practically significant}, which has a long history in the application of statistics. It is  related to the specification of the effect size \citep[see, for example,][]{cohen1988}, which is used in various disciplines to obtain a better interpretation of $p$-values for comparing sample means. For a dependence measure $d_\star$  varying in absolute value between $0$ and $1$ several authors transfer this concept to classify the 
strength of the association in three categories ``small'' ($\Delta  \leq  d_\star \leq \Delta_1$), ``medium'' ($\Delta_1 <  d_\star \leq \Delta_2$) or ``large''  ($\Delta_2 <  d_\star \leq 1$), where the exact definition of the classes varies between  the disciplines and the considered measure. For example, 
\cite{cohen1988} proposes
$\Delta=0.1, \Delta_1=0.3$ and $\Delta_2=0.5$ for  Pearson's correlation and these  thresholds  are refined by \cite{Brydges2019}  to $\Delta=0.1$, $\Delta_1=0.2$ and $\Delta_2=0.3$ in the context of gerontology research  and by  \cite{Lovakov2021} to $\Delta=0.12, \Delta_1=0.24$ and $\Delta_2=0.41$  for social psychology. \cite{Huang2022} consider Spearmans $\rho$ and  propose to use  $\Delta=0.3, \Delta_1=0.5, \Delta_2=0.7$ in a study ascertaining risk factors for colorectal cancer.  
There are numerous other papers discussing these issues from different perspectives
\citep[see][for example]{Hemphill2003,Bosco2015,Quintana2016}, but a common aspect consists in the fact that this literature usually defines a threshold $\Delta$, which should be exceeded to consider an association as {\it practically significant}. With this point of view rejecting the null hypothesis in \eqref{hd1b} means that {\it there exists at least one 
practically significant association between the components of the vector $X_1$.}

The purpose of the present paper is the development of statistical tests for hypotheses of the form \eqref{hd1b} in the 
high-dimensional regime, where the dependence measures
$d_{ij}$ can be estimated by  $U$-statistics. Typical examples include the classical covariance, Kendall's  $\tau$, Hoeffding's $D$, Blum-Kiefer-Rosenblatt's $R$, 
Bergsma–Dassios–Yanagimoto’s $\tau^*$
and a dominating term of   Spearman's rank correlation $\rho$.
 
As 
 \cite{jiang2004,zhou2007,liuetal2008,hanetal2017} and \cite{drttonetal2020}  we consider maximum-type tests,
and allow the dimension  $p$ to grow exponentially with $n$.
We develop a new asymptotic  and a new bootstrap
test   for the hypotheses in  \eqref{hd1b} and investigate their statistical properties. Compared to the ``classical'' hypotheses in \eqref{hd1a} 
the composite structure of the hypotheses in \eqref{hd1b} makes both  tasks   non-standard from a mathematical  point of view. 
On the one hand, 
the  asymptotic analysis  of  estimators  of $\max_{1\leq i < j \leq p} | d_{ij} | $   by  
Poisson approximation techniques \citep[see, for example,][]{Arratiaetal1989} is  very demanding  due to the additional dependencies 
under the null hypothesis in \eqref{hd1b}. 
On the other hand, 
further challenges arise in the development of bootstrap procedures, since ``generating data  under the null hypothesis 
$H_{0}: \max_{1\leq i < j \leq p} | d_{ij} | \leq \Delta$'' 
is not straightforward for the composite hypotheses in \eqref{hd1b}. 
For this purpose a novel bootstrap method is developed and its consistency for testing hypotheses of the form \eqref{hd1a} is established.  Note  that even for a fixed dimension $p$  it is by no means obvious  that the bootstrap  is applicable as
aggregation is performed by the maximum operator, which is not Hadamard differentiable  (see Theorem 3.1 in \cite{fang2019inference} which actually  gives an ``if and only if'' condition for the standard bootstrap distribution coinciding with the asymptotic distribution). If the dimension  is increasing  with the sample size such an approach is even more challenging, as there does not necessarily exist a limit distribution.
Moreover, we also emphasize that  our results are valid for discrete and continuous distributions, independently of the degree of  degeneracy of the $U$-statistic. Note that  the tests for classical hypotheses with degenerate $U$-statistics require continuous distributions, see
\cite{drttonetal2020}.

In Section \ref{sec2} we consider testing problems
of the form \eqref{hd1b} in a more general context and propose an 
asymptotic level $\alpha$ test, which  is (uniformly)  consistent 
against local alternatives, where the maximum deviation is at least
$\Delta + c {\sqrt{\log d }}/{ \sqrt n}$
for some constant $c>0$
(here $d=p(p-1)/2$ is  the number of terms over which  the maximum is taken). 
The proof of these properties is based 
on the weak convergence of an appropriately normalized 
maximum  statistic  to a Gumbel distribution under suitable assumptions on the dependence structure, sample size and dimension.
 As such assumptions are often hard to justify 
 in statistical practice and the convergence rates 
 in extreme value theory are usually very slow,
 we develop  in Section \ref{sec22} 
 a non-standard  bootstrap test for the hypotheses of the form \eqref{hd1b} and  prove its validity. 
 Moreover, our approach also provides an asymptotic confidence interval for the quantity  $\max_{1 \leq i < j \leq p}  | d_{ij} |$ (note that confidence intervals are rarely presented in this field).
 In Section~\ref{sec3} we specialize these results to the problem of testing hypotheses of the form \eqref{hd1b}, where the associations $d_{ij}$ are given by the covariances,  Kendall's $\tau$,  a dominating term of   Spearman's $\rho$, Hoeffding's $D$, Blum-Kiefer-Rosenblatt's $R$ and 
Bergsma–Dassios–Yanagimoto’s $\tau^*$. In particular, we prove that for  many dependence measures the tests  proposed in this paper are  minimax-optimal 
 against local alternatives of the  form $\max_{1\leq i < j \leq p} | d_{ij} |= \Delta + c {\sqrt{\log p }}/{ \sqrt n}$.
 Note that these rates coincide with the  minimax-optimal  rates 
 for testing the classical hypotheses  \eqref{hd1a}, that is $\Delta =0$, 
if dependencies are measured by Spearman's $\rho$ and Kendall's $\tau$ correlations, see, for example, 
\cite{hanetal2017}. 
 In Section \ref{sec4} we 
  investigate the finite   sample properties of  the developed methodology by means of a simulation study and a data example.
Finally, all technical proofs and details are deferred to an online supplement.

\section{Testing for relevant deviations} \label{sec2} 
  \def\theequation{2.\arabic{equation}}	
\setcounter{equation}{0}

In this section we consider the testing problems in a slightly more general 
but notationally simpler form as described in the introduction. The case of testing for relevant deviations  of the entries in  a matrix of pairwise dependence measures is a special case of the following discussion (see Example \ref{ex1}) and will be addressed in Section~\ref{sec3} in more detail. To be precise, let     $X_1,\ldots ,X_n$ denote independent identically distributed $p$-dimensional random vectors with distribution function $F$. Note that formally $F$ depends on the dimension $p$, which  varies with $n$, but we will not reflect this dependence in our notation throughout this paper.
For some positive integer $m$ let 
\begin{align} \label{hd2}
h =(h_1, \ldots , h_d)^\top :\left(\R^p\right)^m \rightarrow \R^d
\end{align}
denote a measurable symmetric function  with finite expectation 
\begin{align} \label{hd0}
\theta_F=(\theta_1,\ldots,\theta_d)^\top =\E_F[h(X_1, \ldots ,X_m)] \in \mathbb{R}^{d }~,
\end{align}
 which defines our  parameter of interest.  In order to estimate the parameter $\theta_F$ we consider the $U$-statistic of order $m$
\begin{align} \label{hd3}
    U= (U_1 , \ldots , U_d)^\top = {n \choose m}^{-1} \sum_{1 \leq l_1< \ldots <l_m \leq n}h(X_{l_1}, \ldots ,X_{l_m})\,.
\end{align}
In the high-dimensional regime $U$-statistics have recently found considerable interest in the literature and we refer to \cite{chen2017,UApprox,songetal2019,kim2020,wang2021inference,chengetal2022} among others.

\begin{example} \label{ex1}
{\rm
We briefly illustrate the notation for dependence measures between the components of high-dimensional vectors as introduced in Section \ref{sec1}. In particular, such $U$-statistics have been investigated by \cite{hanetal2017,chen2017a,Zhou2019,drttonetal2020} and \cite{heetal2021} 
in the context of independence testing by means of the classical hypotheses \eqref{hd1a}.

To be precise,  for $ 1 \leq i < j \leq p$ let 
\begin{align} \label{hd21}
d_{ij} = 
d(X_{1i},X_{1j})
= \mathbb{E}_F [\tilde h ( X_{1i},X_{1j}, \ldots ,X_{mi},X_{mj})] 
\end{align}
denote a dependence measure between the $i$th and $j$th 
components of the random vector $X_{1}= (X_{11} , \ldots  , X_{1p} )^\top $, which can be expressed 
as the expectation of a kernel  $\tilde h: \R^{2m} \to \mathbb{R}$ of order $m $
evaluated at $( X_{1i},X_{1j}, \ldots  , X_{mi},X_{mj} )$.   In this case 
the function $h$  in \eqref{hd2}  is  defined by
\begin{align*}
h (X_1,...,X_m)  & = {\rm vech} \big ( ( h_{ij} (X_1,...,X_m) )_{i,j=1, \ldots , p}  \big ) \\
& = {\rm vech} \big ( (\tilde h ( X_{1i},X_{1j},...,X_{mi},X_{mj}) )_{i,j=1, \ldots , p}  \big )\,,
\end{align*}
where the second equality defines the functions $h_{ij} : \mathbb{R}^{pm} \to \mathbb{R}$ in an obvious manner and 
vech($\cdot$) is the operator that stacks the columns  above  the diagonal of a symmetric 
$ p \times p$ matrix as a vector with $d = p(p -1)/2$ components. Note that the index $(i,j)$ 
in   the definition of the function  $ h_{ij}$
is only used to emphasize that each $h_{ij}$ acts on different components of the vectors  $X_{1}, \ldots ,X_{m}$. 
Similarly, the vector $\theta_F $ is defined by
$\theta_F  =  {\rm vech} \big ( ( d_{ij} )_{i,j=1, \ldots , p}  \big )$, and the components of the vector
$U= {\rm vech} \big ( ( U_{ij} )_{i,j=1, \ldots , p}  \big ) $ in \eqref{hd3} are given by
\begin{align*}
 U_{ij}  &= {n \choose m}^{-1}\sum_{1 \leq l_1<...<l_m \leq n}  h_{ij} ( X_{l_1},...,X_{l_m})\\
 &= {n \choose m}^{-1}\sum_{1 \leq l_1<...<l_m \leq n} \tilde h (X_{l_1i},X_{l_1j},...,X_{l_mi},X_{l_mj})\,.
\end{align*}
A more detailed discussion of specific dependence measures is postponed to Section \ref{sec3}.
}
\end{example}

Recall that, in this paper, we are not interested in testing the ``classical'' 
hypotheses $H_0:\theta_F=0 \text{ versus } H_1:\theta_F \neq 0$, but want to investigate 
if at least one of the components  $\theta_i$ of  the vector $\theta_F =(\theta_1, \ldots , \theta_d)^\top$ exceeds a given threshold
$\Delta>0$, that is 
\begin{align} \label{hd4}
   H_0: \max_{i=1}^d  | \theta_i | \leq \Delta     ~\text{ versus } ~  H_1: \max_{i=1}^d | \theta_i | > \Delta ~,
\end{align}
where $\Delta$  denotes the largest deviation 
that is still considered  as negligible. Hypotheses of this form are often called {\it relevant}  hypotheses. In the case $d=1$ these hypotheses (more precisely the interchanged hypotheses 
$ H_0: \max_{i=1}^d  | \theta_i | > \Delta     \text{ versus }   H_1: \max_{i=1}^d | \theta_i | \leq \Delta$) have found considerable attention 
in the biostatistics literature \citep[see, for example  the monographs by][]{chowliu1992,wellek2010testing}, but - despite their importance -  they  have not been studied intensively in
the high-dimensional regime. 
In what follows, we will  construct tests for hypotheses of the form  \eqref{hd4} based on  asymptotic 
theory  of a (standardized)  estimator of 
$\max_{i=1}^d | \theta_i |$ and also develop (under substantially weaker assumptions) a  non-standard bootstrap test
in the high-dimensional regime, 
where we allow the dimension $d$ to grow exponentially with $n$.\\

\subsection{An asymptotic level $\alpha$ test}
\label{sec21}

Recall the definition of the parameter  $\theta_F = \E_F[U] = \E_F[h(X_1, \ldots  ,X_m)]  \in \mathbb{R}^d$ in \eqref{hd0}, where $X_1, \ldots  ,X_m \sim F$ are independent $p$-dimensional random vectors with  distribution $F$  (the dependence on $p$ is omitted here for simplicity). In Example \ref{ex1} and in most cases of practical interest, $d$ is given as a function of $p$, but our theoretical results are more generally stated in a $U$-statistics framework that only depends on the dimension of the vector $\theta_F$. We denote by  
${\cal F}$ the class of all  distribution functions on $\mathbb{R}^p$ for which  the expectation $\E_F[U]$ exists, and we set
$\theta_{i}=\theta_{F,i}=\E_F[h_{i}(X_{1}, \ldots ,X_{m})] $
to be the $i$th component of $\theta_F $, where $h_i$, the $i$th component of the vector $h$ in \eqref{hd2}, is  a
 symmetric kernel  of order $m$. 
 Define  by 
\begin{align}
\label{UDef}
U_{i}={n \choose m}^{-1}\sum_{1\leq l_1< \ldots <l_m\leq n}h_{i}(X_{l_1}, \ldots ,X_{l_m})\,,\qquad i=1,\ldots,d\,,
\end{align}
the corresponding estimate  of $\theta_i $.  
Under standard assumptions the statistics $U_i$  are   unbiased and consistent estimators
of the parameters $\theta_i$ ($i=1, \ldots ,d$), and therefore it is reasonable
to reject the null hypothesis in \eqref{hd4} for large values of $\max_{i=1}^d | U_i | $.
For technical reasons we consider the quantities  $U_i ^2$  instead of $| U_i |$ and compare 
their maximum with $\Delta^2$.
Corresponding results for $|U_i|$, which estimates $|\theta_i|$, can be derived in a similar way and
are discussed  in Remark \ref{nv} (b).

We note that 
\begin{align}
 \label{pb10}
  U_i^2  - \Delta^2  =  (U_i- \theta_i)^2  + 2 \theta_i (U_i - \theta_i)  - (\Delta^2 - \theta_i^2)   
\end{align}
and introduce the notations
	\begin{align} \label{x1b}
	\zeta_{1,i}={\rm Var}_F (h_{1,i}(X_1))
	\quad \text{ and } \quad
	      h_{1,i}(x) =\E_F[h_i(X_1, \ldots ,X_m)|X_1 =x]~.
	\end{align}
	If  $\zeta_{1,i} >0 $,  the kernel
 $h_i$  of   the statistic  $U_i$ is called non-degenerate.
Note that  this property 
depends on the kernel 
 $h_i$ and on the  distribution $F$. In particular,  
 for composite null hypotheses of the form \eqref{hd4}, there may exist  different distributions, say  $F_1, F_2 \in \mathcal{F} $, 
both  corresponding to  parameters $\theta_{F_1}$ and  
$\theta_{F_2}$ in the null hypothesis 
such that the kernel  is degenerate under $F_1$ and non-degenerate  under $F_2$, that is  
$0={\rm Var}_{F_1} (h_{1,i}(X_1))  < {\rm Var}_{F_2} (h_{1,i}(X_1)) $. 
If $\zeta_{1,i}>0 $   the statistic $U_i$
 is asymptotically normal distributed
with mean $\theta_i$ and variance  $m^2\zeta_{1,i}/n$.
Therefore, it is reasonable to standardize 
the differences $U_i^2  - \Delta^2  $  appropriately before taking the maximum. We propose 
to use the   test statistic 
\begin{align}
\label{TestStatDefin}
\mathcal{T}_{n,\Delta}:=\underset{1\leq i\leq d}{\max}\ \frac{U^2_{i}-\Delta^2}{2 \, \hat{\sigma}_{i}\,\Delta}
\end{align}
for testing the hypotheses in \eqref{hd4}, where 
\begin{align}
	\label{varDef}
\hat{\sigma}^2_{i}:=\frac{m^2(n-1)}{n(n-m)^2}\sum_{k=1}^{n}(q_{k,i}-U_i)^2
\end{align}
is a Jackknife based estimator  of the variance of $U_i$ 
 and  $q_{k,i}$  is  defined by 
\begin{align*}
q_{k,i}:={n-1 \choose m-1}^{-1} \sum_{1 \leq l_1<..<l_{m-1} \leq n, l_j \neq k}h_i(X_k,X_{l_1}, \ldots ,X_{l_{m-1}})
\end{align*}
 \citep[see][for details]{Zhou2019}.  The null hypothesis
 in \eqref{hd4} is rejected, whenever 
 \begin{align}	 \label{hd5}	
\mathcal{T}_{n,\Delta}>\frac{q_{1-\alpha}}{a_d}+b_d\,,
	\end{align} 
	where $q_{1-\alpha} = -\log(\log(\frac{1}{1-\alpha})) $ is the 
$({1-\alpha})$-quantile of the standard Gumbel distribution with distribution function $\exp ( - \exp ( - x) ), x\in \R$, and 
\begin{equation*}
a_d=\sqrt{2\log d} \quad ~\text{  and } ~ \quad b_d=a_d-\frac{\log(\log d)+\log(4\pi)}{2a_d}. 
\end{equation*}	
In the following discussion we will show that this test has asymptotic level $\alpha$.
An important step in these arguments is a proof of the weak convergence  
 \begin{align}	 \label{hd5a}	
 \lim_{n \to \infty } \mathbb{P} \big (
a_d  (\mathcal{T}_{n,\Delta}  - b_d  ) \leq x \big ) 
= \exp ( - \exp ( - x) )\,,\qquad x\in \R\,,
	\end{align}
in the case $ |\theta_1 |= | \theta_2 |= \ldots = |\theta_d | = \Delta > 0 $. 
Note that this choice corresponds to the most extreme case in the null hypothesis \eqref{hd1b} which means that  the rejection
probabilities  of  the test \eqref{hd5}
for all other parameter
constellations  under the null hypothesis $H_0: \max_{i=1}^d |\theta_i| \leq \Delta$
are bounded by  this scenario 
and in many cases substantially smaller.

 Under additional assumptions on the kernels $h_i$ we can also prove that the test \eqref{hd5} is
minimax optimal, see Section \ref{sec35} for a discussion of this property in the context of dependence measures. Interestingly, it turns out that for deriving these properties it is not necessary to assume that the kernels $h_i$ are non-degenerate for all distributions $F$ corresponding to the null hypothesis (see the discussion below, in particular Assumption (A2)).

In what follows, we will need the function $\psi_\beta(x)=\exp(x^\beta)-1$ and the corresponding Orlicz norm 
	\begin{align} \label{orl}
	 \norm{Z}_{\psi_\beta}:=\inf\{\nu>0 : \E [\psi_\beta(|Z|/\nu)]\leq 1\}
	\end{align}
	of  a real-valued random variable $Z$.
	We continue by spelling out several regularity assumptions that are required for proving the weak convergence in \eqref{hd5a}.

\begin{description}
	\item \textbf{(A1)} For some constant $ \beta \in (0,2] $ there exist a non-negative sequence $(B_n)_{n \in \mathbb{N}}$ and a constant $D > 0 $ such that for all  $d=d(n), n \in  \mathbb{N}$, 
	\begin{align*}
\max_{1 \leq i \leq d}
	    \norm{h_{i}(X_{1}, \ldots ,X_m)-\theta_i}_{\psi_\beta} &\leq B_n\,,\\
	\max_{1 \leq i \leq d}    \zeta_{1,i}& \leq D \,, \\
	 	\max_{1 \leq i \leq d}   \E_F[(h_{1,i}(X_1)-\theta_i)^4] & \leq DB_n^2\,.
	\end{align*} 
	\item \textbf{(A2)} There exist constants  $\underline{b}>0$ and $c \in (0,\Delta)$ such that 
	$
	\min_{1 \leq i \leq d, |\theta_i|>c}\zeta_{1,i}>\underline{b}
	$
	 for all $d=d(n), n \in \mathbb{N}$. Here and in the following,  a minimum over the empty set is defined as  $+\infty$.
	\item \textbf{(A3)} Let $\kappa_{i,j}
	= {\rm Corr}_F (h_{1,i}(X_1) , h_{1,j}(X_1) ) 
	\in (-1,1) $ denote the correlation between  $h_{1,i}(X_1)$ and $h_{1,j}(X_1)$. There exist a constant $\epsilon>0$ and a sequence $\gamma_n=o(1)$ such that for all  $d=d(n)$, $n\in  \mathbb{N}$
	\begin{align*}	
	\sum_{1\le i\neq j\le d}\frac{|\kappa_{i,j}|}{\sqrt{1-\kappa_{i,j}^2}}\exp\left(-\frac{(2-\epsilon)\log d}{1+|\kappa_{i,j}|}\right)\leq \gamma_n\,.
	\end{align*}
\end{description}

Assumption $(A1)$ is a technical condition that captures a uniform tail probability decay from which we will deduce concentration inequalities for 
the components of the $U$-statistic defined in \eqref{hd3}.  
It is possible to  weaken this assumption to a polynomial one at the cost of an only polynomial growth of $p$ in $n$.
Note that   condition  (A1) is
always satisfied  if the kernel $h$ is bounded. 
Assumption $(A2)$ is a uniform non-degeneracy requirement which is a standard condition for deriving Gaussian approximation results, see for instance \cite{UApprox, ImprovedApprox} among others. 
  We emphasize that 
this assumption is only required here for  the parameters  $\theta_i$ which  are (uniformly) bounded away from $0$. This covers most cases of practical 
interest, where a degenerate kernel appears 
in the case $ \theta_i =0$, but the kernel is non-degenerate, whenever $\theta_i \not = 0$. Roughly speaking, for the problem of testing composite  hypotheses of the form \eqref{hd4} the distinction between the degenerate and non-degenerate case 
is basically not necessary if Assumption (A2) is satisfied (see Section \ref{sec34} for a more detailed discussion  in the context of dependence measures).
Finally,  Assumption $(A3)$ ensures that we can approximate the maximum of dependent normal distributed random variables by the maximum of independent ones,  thereby obtaining a Gumbel limit for the statistic. This assumption is implicit in other works such as \cite{hanetal2017} as it is always true under the classical null of independence.
We already emphasize at this point that this assumption will not  be required 
for the  bootstrap test, which will be developed in Section \ref{sec22} later on.

Our first result shows that the test defined in \eqref{hd5} has asymptotic level $\alpha$
(uniformly over a given class of distributions).
For a  precise statement consider the set
of all distribution functions on $\mathbb{R}^d$ satisfying Assumptions (A1) - (A3),  and define 
\begin{align}
\label{det0aa}
V_0:= \Big \{z=(z_1,\ldots,z_d)^{\top} \in \R^d ~\Big |~ \max_{1 \leq i \leq  d} |z_i|\leq  \Delta  \Big \}
\end{align}
as the parameter space corresponding to the null hypothesis in \eqref{hd4}.  Note that these sets depend on $n$ (through the dimension $d=d(n)$).
We define 
\begin{align}
\label{det0a}
    {\cal H} _{0}(\Delta)  := \big \{F \in {\cal F} ~|~  \theta_F \in V_0 ~,~F \text{ satisfies  Assumptions (A1), (A2), (A3) } 
     \big \}~~~~
\end{align}
as the set of distribution functions satisfying the null hypothesis (and the basic assumptions)
with  existing expectation $\mathbb{E}_F[U]$. Note that $ {\cal H} _{0}(\Delta) $
depends on the constants $\underline{b},$ $D$ and on $n$ (through the dimension $d=d(n)$ and
sequence $(B_n)_{n \in \mathbb{N}}$)
which is not reflected in our notation. 

\begin{theorem}
	\label{alpha}
If 	Assumptions  (A1), (A2), (A3)  are satisfied, $\log d=o(n^\gamma)$ with   $ 0\leq \gamma \leq \frac{1}{2/\beta+1}$ and 
 \begin{align}
 \label{hd7}
	\frac{B_n^2 \big ( \log(nd) \big) ^{4+2/\beta}}{n}=o(1)~, \qquad n\to \infty\,,
	\end{align}
	then, for any $\alpha \in (0,1-e^{-1})$, 
	\begin{align}	
	\label{hd23}
	\limsup_{n \to \infty } \underset{F \in {\cal H} _{0}(\Delta)  }{\sup}
	\mathbb{P} \Big (  \mathcal{T}_{n,\Delta}> \frac{q_{1-\alpha}}{a_d}+b_d  \Big ) \leq \alpha
	\, , 
 \end{align}

 with strict inequality, whenever 
 $\limsup_{n\to \infty}
 \big |   \big \{
 i \in \{ 1,\ldots , d\} : 
| \theta_i| =\Delta  \big \}  \big |/d < 1 $. Moreover,
\begin{align}	
	\label{hd23a}
\lim_{n \to \infty } 
\sup_{F \in {\cal H} _{0}(\Delta)  }
	\mathbb{P} \Big (  \mathcal{T}_{n,\Delta}> \frac{q_{1-\alpha}}{a_d}+b_d  \Big ) = \begin{cases}
  \alpha   ~, ~\text { if  }  | \theta_i| =\Delta \text { for all } ~1 \leq  i \leq d \,,  \\
	 0  ~, ~\text { if  }    ~\sup_{d  \in \mathbb{N}} \max_{i=1}^d |\theta_i|  < \Delta  \,.
	\end{cases}
\end{align}

\end{theorem}

\begin{remark} 
\label{rem1}~~
{\rm 
\begin{itemize}
    \item[(1)] 
 For the proof of Theorem \ref{alpha}  we proceed in two steps:   first we use Gaussian approximation techniques  \citep[see][for example]{UApprox,ImprovedApprox}
 and then compare the resulting Gaussian vector with  a Gaussian vector with i.i.d components under the additional assumption (A3) on the dependence structure of the vector $X_1$.
 The maximum of the latter Gaussian vector then converges to a Gumbel distribution under suitable assumptions on the dependence structure, sample size and dimension. 
    \item[(2)] 
Note that the statement \eqref{hd23} addresses the worst case under the null hypotheses $H_0: \max_{i=1}^d |\theta_i| \leq \Delta $ (uniformly over the class of distributions defined by \eqref{det0a}).

The second part in Theorem \ref{alpha} shows  that there
also exist  vectors  $\theta_F$ 
such that equality holds in \eqref{hd23} and that  
for many parameter constellations in the null hypothesis  the type I error of the test \eqref{hd5} 	will be much smaller than $\alpha$. 
 
  \item[(3)] Assumption (A3) can be dropped if the the statistics  $U_i$ are pairwise positively correlated. Simulations suggest that the test 
  \eqref{hd5}	is very conservative and therefore we will not include this test in the finite sample study presented in Section \ref{sec4}.
  
	\end{itemize}
}
\end{remark}

Next we turn to the consistency of the test \eqref{hd5} and  define 
\begin{align*}
 V(c) = 
\Big \{z \in \R^d ~\Big |~ \max_{1 \leq i \leq  d} |z_i|\geq  \Delta + c B_n((\log d)/n)^{1/2} \Big \}
\end{align*}

as a set of  alternatives (note that for a bounded kernel $h$ the sequence  $B_n$  can be chosen as a constant sequence). 
We will study the power of the  test  \eqref{hd5} against  alternatives in the set 
\begin{align}
\label{det0} 
    {\cal H}_1 (c) = \Big \{F  \in {\cal F} ~| ~\theta_F \in V(c) ~;~F \text{ satisfies  Assumption (A1)}
     \Big \}\, .
\end{align}

\begin{theorem}
	\label{consistency}
	If $\log d=o(n^\gamma)$ with  $0 \leq \gamma \leq \frac{1}{2/\beta+1}$, then  there exists a constant $c>0$, only depending on $\gamma$ and $\beta$, such that
	\begin{align*}
	\lim_{  \nto}   \underset{F\in   {\cal H}_1 (c)}{\inf}\p \Big ( \mathcal{T}_{n,\Delta}>\frac{q_{1-\alpha}}{a_d}+b_d  \Big ) = 1 \,.
	\end{align*}
\end{theorem}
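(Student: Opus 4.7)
The plan is to show that under the alternative, the test statistic $\mathcal{T}_{n,\Delta}$ exceeds the critical value $\kappa_{n,\alpha}:=q_{1-\alpha}/a_d+b_d$ with probability tending to one, uniformly over $F\in\mathcal{H}_1(c)$. A short expansion yields $\kappa_{n,\alpha}=\sqrt{2\log d}\,(1+o(1))$ as $n\to\infty$, so it suffices to prove $\mathcal{T}_{n,\Delta}\geq(1+\eta)\sqrt{2\log d}$ eventually, for some fixed $\eta>0$.

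Fix $F\in\mathcal{H}_1(c)$ and pick an index $i^*=i^*(F)$ with $|\theta_{i^*}|\geq\Delta+cB_n\sqrt{(\log d)/n}=:\Delta+\tau_n$. Since both $U_{i^*}^2$ and $\hat{\sigma}_{i^*}$ are invariant under replacing $h_{i^*}$ by $-h_{i^*}$, we may assume $\theta_{i^*}>0$, so that
\[
\mathcal{T}_{n,\Delta}\geq\frac{U_{i^*}^2-\Delta^2}{2\hat{\sigma}_{i^*}\Delta}.
\]
The plan is now to lower bound the numerator and upper bound the denominator on a common high-probability event.

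For the numerator, I would invoke a Bernstein-type exponential inequality for $U$-statistics under the Orlicz condition (A1), of the kind developed in \cite{UApprox,ImprovedApprox}. The moment bound $\zeta_{1,i^*}\leq c_\beta B_n^2$, implied by $\norm{h_{i^*}(X_1,\ldots,X_m)-\theta_{i^*}}_{\psi_\beta}\leq B_n$, yields a constant $C_1=C_1(m,\beta,D)$ with
\[
\sup_{F\in\mathcal{H}_1(c)}\mathbb{P}_F\bigl(|U_{i^*}-\theta_{i^*}|>C_1 B_n\sqrt{(\log d)/n}\bigr)=o(1).
\]
The growth condition $\log d=o(n^{1/(2/\beta+1)})$ enters exactly to ensure that the polynomial remainder of the Bernstein bound is absorbed by its sub-Gaussian main term. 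Taking $c\geq 2C_1$, we have $U_{i^*}\geq\Delta+\tau_n/2>0$ on this event, hence
\[
U_{i^*}^2-\Delta^2=(U_{i^*}-\Delta)(U_{i^*}+\Delta)\geq(\tau_n/2)(2\Delta)=\Delta\tau_n.
\]

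For the denominator, a direct moment computation gives $\mathbb{E}_F[\hat{\sigma}_{i^*}^2]\leq Cm^2\zeta_{1,i^*}/n+O(B_n^2/n^2)$. Combined with $\zeta_{1,i^*}\leq c_\beta B_n^2$ and a Chebyshev-type bound on the variance of the jackknife statistic (computed under only (A1)), this delivers a constant $C_2=C_2(m,\beta,D)$ with $\sup_{F\in\mathcal{H}_1(c)}\mathbb{P}_F(\hat{\sigma}_{i^*}>C_2 B_n/\sqrt{n})=o(1)$. Intersecting both good events yields
\[
\mathcal{T}_{n,\Delta}\geq\frac{\Delta\tau_n}{2C_2 B_n\Delta/\sqrt{n}}=\frac{c\sqrt{\log d}}{2C_2},
\]
so picking $c\geq\max\{2C_1,\,2\sqrt{2}\,C_2(1+\eta)\}$ makes the right-hand side exceed $\kappa_{n,\alpha}$ for all $n$ large, uniformly in $F\in\mathcal{H}_1(c)$. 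The main obstacle is the uniform upper bound on $\hat{\sigma}_{i^*}$: since Assumption (A2) is not imposed on the alternative set, the kernel $h_{i^*}$ may be degenerate, and the standard jackknife variance analyses (which hinge on non-degeneracy) do not apply off the shelf. Carefully bookkeeping the Hoeffding-decomposition terms of the jackknife statistic under only (A1), uniformly in $F$, is the technically delicate step.
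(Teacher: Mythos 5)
Your proof is essentially the route taken in the paper: both pick the index realizing the relevant alternative, both lower-bound $U_{i_0}^2-\Delta^2$ by roughly $\Delta\tau_n$ (the paper via the decomposition $U_{i_0}^2-\Delta^2=(U_{i_0}^2-\theta_{i_0}^2)+(\theta_{i_0}^2-\Delta^2)$ plus the concentration Lemma~\ref{UStatConc}, you via factoring $(U_{i^*}-\Delta)(U_{i^*}+\Delta)$ after concentrating $U_{i^*}$ near $\theta_{i^*}$), and both divide by an \emph{upper} bound $\sqrt{n}\hat\sigma_{i_0}\lesssim B_n$ to conclude $\mathcal{T}_{n,\Delta}\gtrsim c\sqrt{\log d}$.

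The ``main obstacle'' you flag at the end, however, is not actually there, and you should be able to remove that caveat. You correctly observe that (A2) is not imposed on $\mathcal{H}_1(c)$, so the kernel at $i^*$ could be degenerate. But the argument only requires an \emph{upper} bound on $\hat\sigma_{i^*}$, and that is exactly what the paper's Theorem~\ref{VarConv} supplies under (A1) alone, with no non-degeneracy hypothesis: it gives $\max_i|n\hat\sigma_i^2-m^2\zeta_{1,i}|\lesssim B_n^2\sqrt{\log(nd)/n}$ with high probability, and since (A1) also guarantees $\zeta_{1,i}\le D$, this yields $\sqrt{n}\hat\sigma_{i^*}\lesssim B_n$ uniformly over $\mathcal{H}_1$. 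Degeneracy ($\zeta_{1,i^*}=0$) would only push $\hat\sigma_{i^*}$ down, which, if anything, makes the test statistic larger and helps consistency. Non-degeneracy is needed for the complementary direction (a \emph{lower} bound on $\hat\sigma_i$, used in the level and Gaussian-approximation analysis), not here. So the jackknife variance analysis you were worried about is, in this direction, unconditional on (A2), and the step you called ``technically delicate'' is already handled by the paper's Theorem~\ref{VarConv}.
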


The choice of  the  sequence $(B_n)_{n \in \mathbb{N}}$ 
depends on the tail behavior of the random variables 
$h_i (X_1,\ldots ,X_m)$ and $h_{1,i} (X_k)$ and the condition 
\eqref{hd7} puts a further restriction on the growth rate of the 
dimension.
For example, if the sequence  $(B_n)_{n \in \mathbb{N}}$ is bounded, 
Theorem \ref{alpha} is applicable with an exponentially 
growing dimension $d$, i.e. $\log d=o(n^{1/(4+2/\beta)})$
which results in the rate $\log d=o(n^{1/5})$
 if  $h_1(X_1, \ldots  ,X_m), \ldots , h_d(X_1, \ldots ,X_m)$
 are sub-Gaussian random variables. Note that this property  implies that the random variables $h_{1,1}(X_1), \ldots , h_{1,d}(X_1)$ are Sub-Gaussian as well.
 Under additional assumptions on the kernel $h$
 it can also be proved that 
 the rate $\sqrt{\log (d) /n}$ in Theorem \ref{consistency}  is in fact minimax optimal and 
 cannot be improved by other tests. We discuss this optimality property in the context
 of bivariate dependence measures in Section \ref{sec35}.

\subsection{Bootstrap}
\label{sec22}
	The use of the asymptotic quantiles in the decision rule \eqref{hd5} is attractive from a computational point of view.
	On the other hand the basic statement of  weak convergence \eqref{hd5a}   used to establish its validity 
requires additional  assumptions regarding the dependence structure of the components of the random vectors $X_i$
as formulated in Assumption (A3). Moreover, for testing classical hypotheses,
it is well--known that the rate of convergence in results of this type is typically rather slow \citep[see, for example, Section B.4 in][]{hanetal2017}
and the nominal  level of the test \eqref{hd5} will not be well approximated.

In this section we discuss a bootstrap approach to  solve these problems. As usual in  applications of the bootstrap in testing  hypotheses this requires simulating the distribution of  the statistic $\mathcal{T}_{n,\Delta}$ 
in \eqref{TestStatDefin}
under an appropriate configuration of the null hypothesis $H_0:\max_{1 \leq i \leq d} |\theta_i | \leq \Delta$. While this task is relatively easy in the case of the ``classical'' null hypothesis corresponding to the case $\Delta=0$ it
is significantly more difficult
for the composite hypotheses 
corresponding to $\Delta >0$ as considered in this paper. 
The approach proposed here is based on bootstrap data generated at the ``boundary'' of the hypotheses in \eqref{hd4}, that is 
$\max_{1 \leq i \leq d}  | \theta_i| = \Delta $.

To be precise, let $X_1^*, \ldots , X_n^*$  be drawn with replacement from $X_1, \ldots , X_n$
and define for $i=1, \ldots , d$ by 

\begin{equation}
    \label{hd8}
    U_{i}^*={n \choose m}^{-1}\sum_{1\leq l_1< \ldots <l_m\leq n}h_{i}(X_{l_1}^*, \ldots ,X_{l_m}^*)
\end{equation}
a bootstrap analogue of the statistic introduced in \eqref{UDef}. Note that the conditional expectation of
$U_i^*$ given $X_1, \ldots , X_n$ is
given by the $V$-statistic 
\begin{align}
\label{hd11}
    V_i= \mathbb{E}_F  \big [U_i^* | X_1, \ldots , X_n \big ]
    = \frac{1}{n^m}\sum_{l_1, \ldots ,l_m=1}^{n}h_i(X_{l_1}, \ldots ,X_{l_m})
\end{align}
 \citep[see, for example,][]{UApprox}.
Next we define a truncated version of $V_i$, that is 
\begin{align} 
\label{hd10}
V_{i,\Delta}=
    \begin{cases}
    V_i\,,  \quad \text{ if} \ |V_i|\leq \Delta\\
    \Delta\,, \quad \text{ otherwise}
\end{cases}  \qquad i=1, \ldots , d
\end{align}
and note that $\big|\E[U_i^* - V_i  +V_{i,\Delta}| X_1, \ldots , X_n ]\big| \le \Delta$  a.s. We finally define 
	\begin{align}
	\label{hd10a}
	\mathcal{T}^*_{n}=\underset{1\leq i\leq d}{\max}\ \frac{\left(U^*_i-V_i+V_{i,\Delta}\right)^2  -V_{i,\Delta}^2}{2\,\hat{\sigma}_{i}\, \Delta}
	\end{align} 
	as the bootstrap analogue of the statistic 	$\mathcal{T}_{n, \Delta}$ defined in \eqref{TestStatDefin} and denote by
	$q^*_{1-\alpha}$ the $(1-\alpha)$-quantile of the distribution of $\mathcal{T}^*_{n}$. We propose to reject the null
	hypothesis in \eqref{hd4}, whenever 
\begin{align} \label{boottest}
	\mathcal{T}_{n,\Delta}>q^*_{1-\alpha}\,.
	\end{align}
The next result shows that this procedure defines a (uniformly) consistent and asymptotic level $\alpha$ test for the hypotheses
\eqref{hd4}. We emphasize that we do not require Assumption (A3) for this statement
 and that in this sense the bootstrap  
 test is valid under more  general
 assumptions than the asymptotic test 
 \eqref{hd5}. 
 This comes at the cost of a slight loss of sensitivity as the bootstrap data might have a larger conditional $\psi_\beta$-Orlicz norm than the original data. 
 Additionally, we  need some  conditions on the  entries
 $h_{i}(X_{1}, \ldots ,X_{1},X_{m-k}, \ldots ,X_m)$  for  all $1 \leq k \leq m$,
 which are known as von-Mises conditions in
 the literature  \citep[see][for example]{Bickel1981}.  More precisely, we make the following assumption.

\begin{description}
	\item  \textbf{(A1')}
	Let Assumption (A1) hold and assume that the constant 	$ \beta \in (0, 2]$ and the  sequence  $(B_n)_{n \in \mathbb{N}}$  satisfy
	additionally 
	$  \max_{1 \leq i \leq d}  \norm{h_{i}(X_{j_1}, \ldots ,X_{j_m})}_{\psi_\beta} \leq B_n~,
 $
	for all $j_1, \ldots ,j_m \in \{1, \ldots ,n\}$. 
\end{description}

	\begin{theorem}
	\label{Boot0} 
Let Assumptions  (A1') and  (A2) 
	be  satisfied, assume that $\log d=o(n^\gamma)$ with $0 \leq \gamma \leq \frac{1}{2/\beta+1}$	and that 
	\begin{align}
	\label{pb1}
	    \frac{B_n^2(\log(nd))^{5+2/\beta}}{n}+\frac{B_n^3(\log(nd))^{1+2/\beta}}{\sqrt{n}}=o(1)\,, \qquad \nto\,.
	\end{align}
	\begin{itemize}
	    \item[(1)] 
For any   $\alpha\in (0,1)$ it follows that 
	\begin{align}	
	\limsup_{n \to \infty } \underset{F \in {\cal H} _{0,boot}(\Delta)  }{\sup}
	\mathbb{P} \Big (   \mathcal{T}_{n,\Delta}> q_{1-\alpha}^*  \Big ) \leq \alpha
	~,
	\label{hd9a}
	\end{align} 
	where
\begin{align}
    \label{det0ab}
	    {\cal H} _{0,boot}(\Delta)  := \big \{F \in {\cal F} ~|~  \theta_F \in V_0; 
	    ~F \text{ satisfies  Assumptions (A1'), (A2)}
     \big \} ~~~~~~~
\end{align}
	and $V_0$ is defined in \eqref{det0aa}.
    \item[(2)] For  a sufficiently large constant
$c$, which only depends on $\gamma$ and $\beta$,  it follows that 
	\begin{align}
	\label{hd9}
	\lim_{n \to \infty} 
	    \underset{
	    F \in {\cal H} _1 (c(\log (nd))^{1/\beta})}{\inf}\p\Big (   \mathcal{T}_{n,\Delta}> q_{1-\alpha}^*  \Big )=1~,
	\end{align}
	where the set ${\cal H}_1(c)$ is defined in \eqref{det0}.
Moreover,  if the kernel
	$h$ in \eqref{hd3} is bounded, then the set
	$ {\cal H} _1 (c(\log(nd))^{1/\beta})$ in \eqref{hd9} can be replaced by  ${\cal H} _1 (c)$.
		\end{itemize}
\end{theorem}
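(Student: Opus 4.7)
The plan is to reduce both $\mathcal{T}_{n,\Delta}$ and its bootstrap counterpart $\mathcal{T}_n^*$ to the suprema of suitably scaled sums of i.i.d.\ random variables, to which high-dimensional Gaussian approximation (in the spirit of Chernozhukov--Chetverikov--Kato, as extended to $U$-statistics, the tool already invoked for Theorem~\ref{alpha}) can be applied on both sides simultaneously. Concretely, I would begin with the Hoeffding decomposition
\begin{align*}
U_i-\theta_i \;=\; \frac{m}{n}\sum_{k=1}^{n}\bigl(h_{1,i}(X_k)-\theta_i\bigr) + R_{n,i},
\end{align*}
and verify, using (A1$'$) and the truncation/concentration arguments for $U$-statistics (e.g.\ the maximal inequalities used for Theorem~\ref{alpha}), that $\max_i|R_{n,i}|/\hat\sigma_i=o_{\p}((\log d)^{-1/2})$ uniformly in $F\in\mathcal{H}_{0,\text{boot}}(\Delta)$. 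An analogous conditional Hoeffding expansion for $U_i^*-V_i$ with kernel $\bar h_{1,i}(x):=n^{-(m-1)}\sum_{l_2,\ldots,l_m}h_i(x,X_{l_2},\ldots,X_{l_m})-V_i$ produces a linear principal part plus a negligible remainder, provided the von Mises condition in (A1$'$) is available to control the conditional $\psi_\beta$-Orlicz norms of $h_i(X_{l_1}^*,\ldots,X_{l_m}^*)$; this is exactly where (A1$'$) is used beyond (A1).

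Next, I would expand the squared statistics as
\begin{align*}
\frac{U_i^2-\Delta^2}{2\hat\sigma_i\Delta}
&= \frac{\theta_i(U_i-\theta_i)}{\hat\sigma_i\Delta} \;-\; \frac{\Delta^2-\theta_i^2}{2\hat\sigma_i\Delta} \;+\; \frac{(U_i-\theta_i)^2}{2\hat\sigma_i\Delta},\\
\frac{(U_i^*-V_i+V_{i,\Delta})^2-V_{i,\Delta}^2}{2\hat\sigma_i\Delta}
&= \frac{V_{i,\Delta}(U_i^*-V_i)}{\hat\sigma_i\Delta} \;+\; \frac{(U_i^*-V_i)^2}{2\hat\sigma_i\Delta}.
\end{align*}
The quadratic tails are uniformly of order $B_n^2\log(nd)/n$, hence $o_{\p}((\log d)^{-1/2})$ under \eqref{pb1}, while the bias term $-(\Delta^2-\theta_i^2)/(2\hat\sigma_i\Delta)$ is non-positive on $\mathcal{H}_{0,\text{boot}}(\Delta)$. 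Combining this with the linearization and recognizing $V_{i,\Delta}\to\theta_i$ in probability whenever $|\theta_i|\le\Delta$ (with equality in the worst-case boundary $|\theta_i|=\Delta$), I am left with comparing the maxima of the two centered empirical sums $n^{-1/2}\sum_k\xi_{i,k}$ and $n^{-1/2}\sum_k\xi_{i,k}^*$ where $\xi_{i,k}=m\theta_i(h_{1,i}(X_k)-\theta_i)/(\hat\sigma_i\Delta\sqrt{n})$ and $\xi_{i,k}^*$ is its bootstrap/truncated analogue.

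For both of these families I would invoke a high-dimensional Gaussian approximation for maxima of sums of independent vectors (conditionally for the bootstrap, using the empirical measure), yielding mean-zero Gaussian vectors $G,G^*$ whose covariance matrices differ by $o_{\p}(1/\log d)$ entrywise. An application of the Chernozhukov--Chetverikov--Kato Gaussian comparison/anti-concentration inequality for maxima then shows that
\begin{align*}
\sup_{t\in\R}\Bigl|\p(\mathcal{T}_{n,\Delta}^{(\text{worst})}\le t)-\p(\mathcal{T}_n^*\le t\mid X_1,\ldots,X_n)\Bigr|=o_{\p}(1),
\end{align*}
where $\mathcal{T}_{n,\Delta}^{(\text{worst})}$ denotes the statistic under the boundary configuration. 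Since under any $F\in\mathcal{H}_{0,\text{boot}}(\Delta)$ the bias $-(\Delta^2-\theta_i^2)/(2\hat\sigma_i\Delta)\le 0$ makes $\mathcal{T}_{n,\Delta}$ stochastically dominated by $\mathcal{T}_{n,\Delta}^{(\text{worst})}$, this delivers \eqref{hd9a} uniformly. For the consistency statement (2), I would fix an index $i^*$ with $|\theta_{i^*}|\ge\Delta+cB_n\sqrt{\log(nd)/n}\cdot(\log(nd))^{1/\beta}$, use $\theta_{i^*}^2-\Delta^2\ge 2\Delta(|\theta_{i^*}|-\Delta)$ together with concentration of $U_{i^*}$ around $\theta_{i^*}$ and of $\hat\sigma_{i^*}^2$ around $m^2\zeta_{1,i^*}/n$ to lower-bound $\mathcal{T}_{n,\Delta}$ by a diverging multiple of $(\log(nd))^{1/\beta}$; on the other hand, a crude sub-$\psi_\beta$ maximal bound applied to the conditional bootstrap distribution shows $q_{1-\alpha}^*=O_{\p}((\log(nd))^{1/\beta})$, giving the desired divergence of the gap for $c$ large. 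In the bounded-kernel case this extra $(\log(nd))^{1/\beta}$ factor disappears because $B_n$ may be taken constant.

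The main obstacle is the conditional Gaussian approximation for the bootstrapped $U$-statistic: the kernels $\bar h_{1,i}$ depend on the sample and their conditional $\psi_\beta$-Orlicz norms can be larger than the unconditional ones by a factor $(\log n)^{1/\beta}$, which is the reason for the strengthened moment rate \eqref{pb1} relative to \eqref{hd7}, and for the inflated separation in the consistency bound. Carefully tracking this inflation through the coupling inequalities, while simultaneously showing covariance proximity between the data-based and bootstrap Gaussian limits uniformly over $\mathcal{H}_{0,\text{boot}}(\Delta)$, is the technically delicate part; once this is accomplished, the removal of assumption (A3) follows automatically because Gaussian comparison, unlike convergence to Gumbel, does not require weak dependence of the coordinates.
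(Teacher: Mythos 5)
Your proposal mirrors the paper's own proof in its essential architecture: both pass through a Hoeffding linearization of $U_i-\theta_i$ and of the bootstrap analogue $U_i^*-V_i$, identify that the quadratic remainders and the variance estimators contribute errors small in $(\log d)^{-1/2}$, apply a (conditional and unconditional) high-dimensional Gaussian approximation to the two linear parts, compare the two Gaussian limits via entrywise covariance proximity plus anti-concentration (Nazarov), and exploit stochastic dominance by the version $T_{n,\Delta}=\max_i(U_i^2-\theta_i^2)/(2\hat\sigma_i\Delta)$ for the level control under the composite null. You also correctly identify the role of (A1$'$): the conditional Orlicz norms of the resampled kernels inflate by a $(\log(nd))^{1/\beta}$ factor, which is precisely what drives the stronger rate condition \eqref{pb1} and the extra $(\log(nd))^{1/\beta}$ separation in part (2), and which disappears for bounded kernels. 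The consistency argument you sketch — lower-bounding $\mathcal{T}_{n,\Delta}$ via the gap $\theta_{i^*}^2-\Delta^2$ and upper-bounding the bootstrap quantile by a maximal inequality under the conditional $\psi_\beta$-norms — is the same two-sided bound used in the paper (the paper's bound on $q^*_{1-\alpha}$ also carries the $\sqrt{\log d}$ factor, which you elide, but the cancellation against the Gumbel-scale normalization is the same).

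Two details worth tightening but not genuine gaps: (i) the replacement of $\hat\sigma_i$ by the population $\sqrt{\zeta_{1,i}}$ must be done \emph{before} the Gaussian approximation step, since $\hat\sigma_i$ is random — the paper handles this via a dedicated uniform rate for $|n\hat\sigma_i^2-m^2\zeta_{1,i}|$; and (ii) the case where $|\theta_i|$ is not bounded away from zero (so the kernel may be degenerate for some coordinates) needs the index-set truncation argument from the proof of Theorem~\ref{alpha}, applied both to $\mathcal{T}_{n,\Delta}$ and to a corresponding truncated bootstrap statistic, combined with the monotonicity $q^*_{1-\alpha}\ge q^{**}_{1-\alpha}$; this step is implicit in "uniformly over $\mathcal{H}_{0,\text{boot}}(\Delta)$" in your write-up but would need to be made explicit to close the argument.
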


\begin{remark} \label{rem2}
~~
{\rm 
\begin{itemize}
    \item[(1)] 
     Note that the sets 
     ${\cal H} _{0} (\Delta )$  and  ${\cal H} _{0,boot}  (\Delta )$
     defined in \eqref{det0a} and \eqref{det0ab}, respectively, satisfy
    ${\cal H} _{0} (\Delta ) \subset {\cal H} _{0,boot}  (\Delta )$ and that 
    ${\cal H} _{0,boot}  (\Delta )$ does not imply assumption (A3).
    This means part (1) of Theorem  \ref{Boot0}
    holds under weaker assumptions than Theorem \ref{alpha}. In particular it also holds in cases, where the statistic  
    $a_d \mathcal{T}_{n,\Delta} - b_d $ 
does not converge weakly (to a
Gumbel distribution).
   
    \item[(2)]
Comparing the statement \eqref{hd9}
for the power of the bootstrap test \eqref{boottest} with Theorem~\ref{consistency} about  the power of the 
asymptotic test \eqref{hd5}, we observe that for 
unbounded kernels there is an additional factor $(\log(nd))^{1/\beta}$
in the definition of the set of alternatives ${\cal H}_1$.
This factor is a consequence of 
 an inflation in the tails of the conditional distribution 
 of the bootstrap data for unbounded kernels.
 As a consequence the bootstrap test can detect local 
 alternatives converging to the null 
 at the rate $
 (\log(nd))^{1/\beta}\sqrt{(\log d)/n}$ 
 and this rate improves to $
 \sqrt{(\log d)/n}$
in the case of bounded kernels. 
 \item[(3)]  

 We emphasize that the test \eqref{boottest} has similar properties as described in Remark~\ref{rem1} for the test \eqref {hd5}, which uses the quantiles of the Gumbel distribution. In particular, under the null hypothesis \eqref {hd1b} the rejection probability is asymptotically $\alpha$ if $|\theta_{i}|=\Delta$ for all $1\le i  \le d$, and, by Theorem \ref{Boot0}, this is  an upper bound for the rejection probability under the null.  Consequently, the type I error can be much  smaller than $\alpha$ if $|\theta_{i}|$ is substantially smaller than $ \Delta$ for many indices $1 \le i  \le d$, where  the extreme case appears if   $\theta_{i} = 0$ for all $1\le i  \le d$. 
 \item[(4)]  
 Under additional assumptions on the kernel $h$ it can also be proved that the test \eqref{boottest} is optimal in the sense that no other test can detect alternatives converging 
 with a faster rate than $B_n\sqrt{\log (d) /{n}}$
 to the null hypotheses.  
 We give more details and illustrate this property in Section \ref{sec35} for the bivariate dependence measures considered  in Example \ref{ex1}.
 \item[(5)]
 Naive algorithms for calculating higher order $U$-statistics result in prohibitive run times of order $n^m$ already when considering the case  $d=2$. Fortunately there are software packages providing optimized algorithms that calculate  rank based $U$-statistics in time $n \log(n)$, see for instance the R package ``independence'' from  \cite{EvenZohar2020}. Similar techniques can be used to shorten the computation times of the quantities $V_i$ and $\hat{\sigma}_i$ for rank based statistics.
 \end{itemize}
 }
\end{remark}

\begin{remark}[an alternative test]
\label{nv} ~~ \\
{ \rm
A careful inspection of the proofs in the online supplement shows that it is possible 
to construct a bootstrap procedure without normalizing the variance of each component.
To be precise we consider the test statistic 
\begin{align}
\label{bootnv}
 \mathcal{T}^{\text{nv}}_{n,\Delta}:=\sqrt{n} \Big ( \underset{1\leq i\leq d}{\max}\ U^2_{i}-\Delta^2  \Big ) ~,
\end{align}
which is obtained from \eqref{TestStatDefin} by omitting the normalizing factors 
$\hat \sigma_i$. This statistic  does not converge weakly to a Gumbel distribution. 
However,   a Gaussian approximation
and corresponding construction of a bootstrap procedure is still possible.

For  this purpose let $X_1^*,\ldots ,X_n^*$ be drawn with replacement from $X_1,\ldots ,X_n$ and 
define $U_i^*$, $V_i$ and $V_{i, \Delta}$ by  \eqref{hd8}, \eqref{hd11} and \eqref{hd10}, respectively. We then obtain a  bootstrap analogue of the statistic \eqref{bootnv} by  
\begin{align}
  \mathcal{T}_n^{*,\text{nv}}:=\sqrt{n}\max_{1 \leq i \leq d}\{ (U_i^*-V_i+V_{i,\Delta})^2-V_{i,\Delta}^2 \}
\end{align}
and denote by $q^{*,\text{nv}}_{1-\alpha}$
the corresponding $(1-\alpha)$-quantile. The null hypothesis in \eqref{hd4} is rejected, whenever
\begin{align} \label{boottestnv}
\mathcal{T}_n^{*,\text{nv}}
>q^{*,\text{nv}}_{1-\alpha}\,.
	\end{align}
For this test an analogue of Theorem \ref{Boot0} can be proved which even allows us to relax condition \eqref{pb1} slightly as we do not need to take into account errors that are incurred by approximating the variances anymore. At the cost of a slightly worse (but still logarithmic) dependence on $p$ we can also substantially weaken Assumption (A2) using recent  Gaussian approximation results from \cite{Chetverikov2020}, which only require the inequality in (A2) to hold for a single coordinate $i$.  The details are omitted for the sake of brevity. 
}
\end{remark}

The numerical results in  Section \ref{sec4} indicate that the bootstrap  tests \eqref{boottest} and \eqref{boottestnv} tend to exceed the desired significance level for some constellations of $(n,p)$. A better finite sample performance can be obtained by
replacing the the statistics $U_i^2$ by their positive square roots  $|U_i|$.
To be precise, we consider 
the non-normalized case and  define the test statistic by 
\begin{align}
\label{pb13}
    \mathcal{T}^{\text{abs}}_{n,\Delta}:=\sqrt{n} \Big ( \underset{1\leq i\leq d}{\max}\ \vert U_{i}\vert -\Delta \Big ) ~.
\end{align}
In this case
one observes  a property similar to \eqref{pb10} that facilitates the application of a Gaussian approximation. 
In particular, whenever  $\theta_i \not = 0 $, we have 
\begin{align}
\label{pb11}
    \vert U_i \vert - \Delta = \text{sign}  (U_i)  U_i -\Delta = \text{sign}  (\theta_i)   (U_i - \theta_i) +   (\text{sign}  (\theta_i)\theta_i -\Delta)
\end{align}
with high probability. Therefore 
a valid bootstrap procedure is obtained as follows.
Let $X_1^*,\ldots ,X_n^*$ be drawn with replacement from $X_1,\ldots ,X_n$ and recall  definitions \eqref{hd8}, \eqref{hd11} and \eqref{hd10}. We then define the bootstrap statistic as
\begin{align}
\label{pb14}
    \mathcal{T}_{n,\Delta}^{*,\text{abs}}:=\sqrt{n}\max_{1 \leq i \leq d}\{ \vert  U_i^*-V_i+V_{i,\Delta}\vert-\vert V_{i,\Delta}\vert \}
\end{align}
and denote by $q^{*,\text{abs}}_{1-\alpha}$ its $(1-\alpha)$ quantile. The null hypothesis \eqref{hd4} is rejected, whenever
\begin{align}
\label{boottestabs}
    \mathcal{T}_n^{*,\text{abs}}
>q^{*,\text{abs}}_{1-\alpha}\,.
\end{align}

  For this test one can obtain the following analogue of Theorem \ref{Boot0}.
\begin{theorem}
	\label{BootAbs} 
Let the assumptions of Theorem \ref{Boot0} be satisfied.
	\label{pb:1}
\begin{itemize}
	    \item[(1)] 
For any   $\alpha\in (0,1)$ it follows that 
	\begin{align}	
	\limsup_{n \to \infty } \underset{F \in {\cal H} _{0,boot}(\Delta)  }{\sup}
	\mathbb{P} \Big (   \mathcal{T}^{\text{abs}}_{n,\Delta}> q_{1-\alpha}^{*,\text{abs}} \Big ) \leq \alpha
	~,
	\label{hd:9a}
	\end{align} 
	where ${\cal H} _{0,boot}(\Delta) $  
	is defined in defined in \eqref{det0ab}.
    \item[(2)] For  a sufficiently large constant
$c$, which only depends on $\gamma$ and $\beta$,  it follows that 
	\begin{align}
	\label{hd:9}
	\lim_{n \to \infty} 
	    \underset{
	    F \in {\cal H} _1 (c(\log (nd))^{1/\beta})}{\inf}\p\Big (    \mathcal{T}^{\text{abs}}_{n,\Delta}> q_{1-\alpha}^{*,\text{abs}}  \Big )=1~,
	\end{align}
	where the set ${\cal H}_1(c)$ is defined in \eqref{det0}.
Moreover,  if the kernel
	$h$ in \eqref{hd3} is bounded, then the set
	$ {\cal H} _1 (c(\log(nd))^{1/\beta})$ in \eqref{hd:9} can be replaced by  ${\cal H} _1 (c)$.
		\end{itemize}
\end{theorem}

\begin{remark}[Testing various thresholds and confidence intervals]
\label{conf}
    {\rm 
    Note that the hypotheses  in \eqref{hd4} are nested.
    Recalling the definition of the statistic $\mathcal{T}^{abs}_{n,\Delta}$ in \eqref{pb13}, it is clear that  the function $\Delta \to \mathcal{T}^{abs}_{n,\Delta}$  is  decreasing. Moreover, if $q^{*,abs}_{1-\alpha,1}$ and $q^{*,abs}_{1-\alpha,2}$ are  the $(1-\alpha)$ quantiles of the statistic $\mathcal{T}^{*,abs}_n$  in \eqref{pb14}
    for  $\Delta=\Delta_1$ and $\Delta=\Delta_2$, respectively, with $\Delta_1<\Delta_2$, it can be shown that 
    \begin{align*}
        q^{*,abs}_{1-\alpha,2}=q^{*,abs}_{1-\alpha,1}+o_\p(1)
    \end{align*}
    as $n\to \infty$.  This yields that the inequality $\mathcal{T}^{abs}_{n,\Delta_2}\geq q^{*,abs}_{1-\alpha,2}$ implies
    \begin{align}
        \mathcal{T}^{abs}_{n,\Delta_1}>\mathcal{T}^{abs}_{n,\Delta_2}\geq q^{*,abs}_{1-\alpha,2} =   q^{*,abs}_{1-\alpha,1}~+o_\p(1),
    \end{align}
  Consequently,  rejecting $H_0$ 
  by the test \eqref{boottestabs}
  for $\Delta= \Delta_0$ also yields (asymptotically) rejection of $H_{0}$ 
  for all  $\Delta>\Delta_0$.
  By the sequential rejection principle, we may simultaneously test the  hypotheses  in \eqref{hd1b} for different $\Delta \geq 0$ 
  starting at $\Delta  = 0$ and 
   increasing  $\Delta $ to 
   find the minimum value of $\Delta $, say 
   $$
   \hat \Delta_\alpha:=\min \big \{\Delta \ge 0 \,| \, \mathcal{T}^{abs}_{n,\Delta}\leq q^{*,abs}_{1-\alpha} \big  \} = \max_{i=1}^d |U_i|-\frac{q^{*,abs}_{1-\alpha}}{n^{1/2}}
   $$ 
   for which 
   $H_0$  in \eqref{hd4} is not rejected.
 In particular, (asymptotically) the null hypothesis
  is accepted  for all  thresholds 
  $ \Delta \geq  \hat \Delta_\alpha $ and rejected for 
  $ \Delta <   \hat \Delta_\alpha $.  Therefore $\hat \Delta_\alpha $ could be interpreted as a measure of evidence against the null hypothesis in \eqref{hd1b}.
Moreover, by a careful inspection of the proofs in the online supplement  we obtain 
that
$$
\lim_{n \to \infty  } \mathbb{P}
\Big ( \sqrt{n} \Big (\max_{i=1}^d   |U_i|
- \max_{i=1}^d  | d_{i} | \Big ) \leq  q^{*,abs}_{1- \alpha}
\Big)  \geq  1- \alpha~. 
$$
Consequently, an 
asymptotic one-sided $(1-\alpha)$-
confidence interval for $\max_{i=1}^d  | d_{i} |$ given by 
\begin{align*}
    [\hat \Delta_\alpha,\infty)=   \Big [\max_{i=1}^d   |U_i|-\frac{q^{*,abs}_{1-\alpha}}{n^{1/2}},\infty    \Big )~.
\end{align*}
In this sense  the question of a reasonable
choice of the threshold $\Delta$ may be postponed until after seeing the data.  We also emphasize that similar arguments can be applied for the tests discussed in Theorem \ref{Boot0} and  Remark \ref{nv}.
}
\end{remark}

\begin{remark}[Reversed  hypotheses] \label{rem11}

{ \rm
As mentioned in the introduction the theory can be extended for testing the  reversed hypotheses 
\begin{align}
\label{hdx1}
   H_0^{\text{int}}:\max_{i=1}^d\vert \theta_i\vert \geq  \Delta \quad \text{versus} \quad H_1^{\text{int}}:\max_{i=1}^d\vert \theta_i\vert <  \Delta \,.
\end{align} 
These hypotheses are of interest if 
one wants to work under the independence assumption. In this case testing the classical hypotheses in \eqref{hd1a} is not helpful, as we cannot control the type II error. However, by  testing the hypotheses 
\eqref{hdx1} with  a  rather small threshold $\Delta$ we can decide  at a controlled type I error that  we are close to mutual independence (measured by the size of $|d_{ij}|$). In this case the threshold can  be determined studying the robustness properties of the subsequent inference  tools which use  the independence assumption.

For the sake of brevity we restrict ourselves to a bootstrap 
test in the spirit of Remark \ref{nv}, which rejects the null hypothesis in \eqref{hdx1}, whenever 
\begin{align} \label{boottestb}
	\mathcal{T}_{n,\Delta} 
	<   q^{**}_{\alpha}\,,
	\end{align}
	where the statistic $\mathcal{T}_{n,\Delta} $  is defined as
 \begin{align}
     \mathcal{T}_{n,\Delta} :=\sqrt{n}\underset{1\leq i\leq d}{\max}\ \vert U_{i} \vert -\Delta
 \end{align}
 and  
	the bootstrap quantile 
 $q^{**}_{\alpha}$
 is obtained as follows.
Let $X_1^*,...,X_n^*$ be drawn with replacement from $X_1,..,X_n$,  recall the definitions \eqref{hd8} and  \eqref{hd11} and 
replace the definition of $V_{i,\Delta}$  in  \eqref{hd10}, 
 by 
\begin{align*} 
V_{i,\Delta}=
    \begin{cases}
    V_i\,,  \quad \text{ if} \ |V_i| >  \Delta\\
    \Delta\,, \quad \text{ otherwise}
\end{cases}  \qquad i=1, \ldots , d~.
\end{align*}

 We then define the statistic 
\begin{align*}
 \mathcal{T}_n^{*,\text{int}}:= \sqrt{n} \left( \vert U_{i_0}^*-V_{i_0}+V_{{i_0},\Delta}\vert-\vert V_{{i_0},\Delta}\vert \right)\,,
\end{align*}
where $i_0$ is an index for which $\max_i |U_i|=|U_{i_0}|$ and we denote by 
 $q^{**}_{\alpha}$ the  $\alpha $-quantile of $\mathcal{T}_n^{*,\text{int}}$.
Using similar arguments as given in the proof of Theorem  \ref{Boot0} 
we can show that the decision rule \eqref{boottestb}
defines a (uniformly) consistent and asymptotic level $\alpha$ test for the hypotheses \eqref{hdx1}.
}
\end{remark}

\begin{remark}[Classical hypotheses]
\label{class hypotheses}
{\rm  With the choice $\Delta =0$ 
the non-normalized bootstrap test
 \eqref{boottestabs} can also be used for testing the classical hypotheses in \eqref{hd1a},
provided that  
the representation \eqref{hd0} for 
the parameter of interest  holds 
 with a $U$-statistic which is non-degenerate 
under the null hypothesis. 
This follows by a careful inspection of  the arguments given in the proofs of Theorem \ref{Boot0} in the online supplement. In such cases this test provides an alternative to the tests constructed by asymptotic arguments, see for instance \cite{hanetal2017, Zhou2019} and \cite{drttonetal2020}. Numerical results, which are available from the authors, indicate some advantages of the test \eqref{boottestabs}  for larger sample sizes. However, for small sample sizes the tests of \cite{hanetal2017} show a better performance. }
\end{remark}

\begin{remark}[One-sided hypotheses]
 \rm As pointed out by a referee,  for signed dependence measures it is also of interest to  consider the hypotheses of at least one  relevant positive (or negative) dependence. In our general formulation of the testing problems this corresponds to the hypotheses 
\begin{align}
\label{rev1}
    H_0^+:\max_{i=1}^d \theta_i \leq \Delta \quad \text{ and } \quad H_0^-:\min_{i=1}^d \theta_i \geq -\Delta~. 
\end{align}
Natural test statistics for $H_0^+$ and $H_0^-$ are given by 
\begin{align}
\label{rev2}
    T_{n}^+=\sqrt{n}\max_{i=1}^d( U_i - \Delta)~
    \text{~~and ~~} T_{n}^- = \sqrt{n}\max_{i=1}^d(- U_i - \Delta), 
\end{align}
respectively, for which quantiles can be obtained using the Gaussian multiplier 
bootstrap proposed in \cite{UApprox}.  Testing each of these hypotheses separately at level $\alpha/2$ and rejecting ${H}_{0}$ in \eqref{hd4} 
if one of them rejects, naturally yields a test that has asymptotic level $\alpha$. By construction, this test is  conservative   and as a consequence less powerful compared to ours. We have confirmed  the superiority of our approach  also for finite samples by means of a small simulation study. These results are not displayed for the sake of brevity.
%

\end{remark}

	\section{Relevant dependencies in high-dimension}
	\label{sec3}
	  \def\theequation{3.\arabic{equation}}	
	\setcounter{equation}{0}

In this section we apply the methodology  in the context of bivariate  dependence
measures between the components of high-dimensional vectors as 
considered in the introduction.  The relation between 
this problem  and 
the general formulation in Section \ref{sec2} is described in Example \ref{ex1}. 
Recall the definition of the  dependence measure in \eqref{hd21} for the kernel  $\tilde h$, the notation $X_k=(X_{k1}, \ldots , X_{kp})^\top $
and write  
 \begin{align}
\label{hd22}
 U_{ij} & = {n \choose m}^{-1}\!\!\sum_{1 \leq l_1 <... < l_m \leq n} \!\!  h_{ij} ( X_{l_1},...,X_{l_m}) \\
 &
\nonumber  = {n \choose m}^{-1}\!\!\sum_{1 \leq l_1 <... < l_m \leq n}\!\! \tilde  h ( X_{l_1i},X_{l_1j},...,X_{l_mi},X_{l_mj}) \, ~~~~~~
\end{align}
for the corresponding $U$-statistic,
where the second equality defines the functions $h_{ij} : \mathbb{R}^{pm} \to \mathbb{R}$ in an obvious manner. We now discuss several dependence measures separately.  For the sake of brevity we restrict ourselves to the bootstrap test introduced in Section \ref{sec22}, which is defined by 
\begin{align} \label{boottesta}
	\mathcal{T}_{n,\Delta}>q^*_{1-\alpha}~,
	\end{align}
where $\mathcal{T}_{n,\Delta} =
	\max_{1\leq i< j \leq p} (U^2_{ij}-\Delta^2)/(2
	\hat{\sigma}_{ij}\Delta)$ and $q^*_{1-\alpha}$ denotes the $(1-\alpha)$-quantile of the corresponding bootstrap distribution.

\subsection{Covariance} \label{sec31}

   The sample covariance matrix
    $$
    \big ( \hat \Sigma_{ij} \big )_{i,j=1,\ldots , p} =\frac{1}{n-1}\sum_{k=1}^{n}(X_k-\bar{X}_n)(X_k-\bar{X}_n)^\top\,, 
    $$
where $\bar{X}_n = {1\over n} \sum_{k=1}^n X_k$ denotes the
   sample mean of $X_1, \ldots , X_n$,
      is the commonly used  unbiased estimate for the  covariance matrix $\Sigma= {\rm Cov}_F (X_1) = \mathbb{E}_F \big  [ (X_1 - 
    \mathbb{E}_F [ X_1]) (X_1 - 
    \mathbb{E}_F [ X_1])^\top \big  ]$. 
  
The covariance is a special case of \eqref{hd22}  choosing  $h(x_1,x_2)=(x_1-x_2)(x_1-x_2)^\top /2$,
and we refer to 
 \cite{baietal2009,chen2017}, among others, who considered
  independence testing of
 the  classical hypotheses in  \eqref{hd1a} for covariances.
 We now consider the problem of testing the relevant 
 hypotheses \eqref{hd1b}, where $d_{ij} = {\rm Cov} 
 (X_{1i}, X_{1j})$, $1  \leq i < j \leq p$.
For the problem of testing relevant hypotheses of the form \eqref{hd1b}
an application of  the results of Section  \ref{sec22} yields the following result.

 \begin{corollary}
If $\log p=o(n^\gamma)$ 
 with $ 0 \leq \gamma \leq  ({5+4/\beta})^{-1} \land ({2+8/\beta}))^{-1}$, then the bootstrap test 
\eqref{boottesta} with $U_{ij} =\hat \Sigma_{ij} $
is (uniformly) consistent and has (uniform) asymptotic level $\alpha$ over the classes of distributions $\mathcal{H}_1(c(\log(nd))^{2/\beta})$ and 
$\mathcal{H}_{0,boot}(\Delta)$  defined in \eqref{det0} and \eqref{det0ab} respectively, where the conditions  (A1') and (A2) have to be replaced (and are implied) by 
\begin{itemize}
     \item[(C1)] There exist
     constants $ \beta \in (0,2] $  and  $C> 0 $ such that for all  $p=p(n), n \in  \mathbb{N}$ 
	\begin{align*}
	\max_{1 \leq i \leq p}
	    \norm{X_i-\E[X_i]}_{\psi_\beta}&\leq C\,.
	    \end{align*} 
	 \item[(C2)] For some constant $\underline{b}>0$ and $c \in (0,\Delta)$ we have
	 \begin{align*}
	     \min_{1 \leq i < j \leq p,|\Sigma_{ij}|>c}\text{Var}_F[(X_{1i}-\E_F [X_{1i}])(X_{1j}-\E_F [X_{1j}])]\geq \underline{b}\,
	 \end{align*}
		 for all $p=p(n), n \in \mathbb{N}$.
 \end{itemize}
 Note that for a normal distribution   Assumption (C2) holds whenever there exists a uniform positive lower bound for the diagonal elements of $\Sigma$.
 \end{corollary}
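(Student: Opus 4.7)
The plan is to obtain this corollary as a direct specialization of Theorem \ref{Boot0} to the covariance kernel. The only nontrivial work is to show that (C1), (C2) imply the hypotheses (A1'), (A2) of Theorem \ref{Boot0} with an effective Orlicz exponent $\beta/2$ in place of $\beta$, and that the stated growth condition on $\log p$ implies \eqref{pb1} under this substitution.

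First I would identify the $U$-statistic structure. The unbiased sample covariance $\hat{\Sigma}_{ij}$ is exactly the $U$-statistic of order $m=2$ with symmetric kernel $h(x_1,x_2)=(x_1-x_2)(x_1-x_2)^\top/2$; componentwise, $h_{ij}(x_1,x_2)=(x_{1i}-x_{2i})(x_{1j}-x_{2j})/2$, so that $\theta_{ij}=\Sigma_{ij}=d_{ij}$. Writing $\mu_k=\E_F X_{1k}$, a direct computation gives the H\'ajek projection
\begin{align*}
h_{1,ij}(x)=\E_F[h_{ij}(x,X_2)]=\tfrac{1}{2}\bigl((x_i-\mu_i)(x_j-\mu_j)+\Sigma_{ij}\bigr),
\end{align*}
so $\zeta_{1,ij}=\tfrac{1}{4}\operatorname{Var}_F((X_{1i}-\mu_i)(X_{1j}-\mu_j))$. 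Hence (C2) is exactly Assumption (A2) up to the universal factor $1/4$.

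Next I would verify (A1'). Using the standard product inequality for Orlicz norms $\|YZ\|_{\psi_{\beta/2}}\le K_\beta\|Y\|_{\psi_\beta}\|Z\|_{\psi_\beta}$ together with the triangle inequality and (C1), one obtains a uniform deterministic bound
\begin{align*}
\max_{1\le i<j\le p}\ \max_{j_1,j_2\in\{1,\dots,n\}}\ \|h_{ij}(X_{j_1},X_{j_2})\|_{\psi_{\beta/2}}\le 2K_\beta C^2 =: K,
\end{align*}
independent of $n$. Thus (A1') holds with effective Orlicz exponent $\beta':=\beta/2$ and a constant sequence $B_n\equiv K$; the remaining uniform bounds on $\zeta_{1,ij}$ and on $\E[(h_{1,ij}(X_1)-\theta_{ij})^4]$ follow since a finite $\psi_{\beta'}$-Orlicz norm bounds all polynomial moments. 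The single nontrivial calculation in the argument is this Orlicz product inequality together with the attendant bookkeeping for the substitution $\beta\mapsto\beta/2$, which is where I expect the main conceptual friction to lie.

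Finally I would check the rate conditions of Theorem \ref{Boot0}. Under the substitution $\beta'=\beta/2$ and $B_n=K$, condition \eqref{pb1} becomes
\begin{align*}
\frac{(\log(nd))^{5+4/\beta}}{n}+\frac{(\log(nd))^{1+4/\beta}}{\sqrt{n}}=o(1),
\end{align*}
which, using $\log d=2\log p+O(1)$, is implied by the assumed bound $\log p=o(n^{\gamma})$ with $\gamma\le(5+4/\beta)^{-1}\wedge(2+8/\beta)^{-1}$. The baseline requirement $\log d=o(n^{1/(2/\beta'+1)})=o(n^{\beta/(\beta+4)})$ is automatic because $\beta/(\beta+4)\ge(5+4/\beta)^{-1}$ on $\beta\in(0,2]$. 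Theorem \ref{Boot0}(1) then delivers uniform asymptotic level $\alpha$ over $\mathcal{H}_{0,boot}(\Delta)$, while Theorem \ref{Boot0}(2) delivers uniform consistency over $\mathcal{H}_1(c(\log(nd))^{1/\beta'})=\mathcal{H}_1(c(\log(nd))^{2/\beta})$, matching the alternative class announced in the corollary. The Gaussian addendum is immediate from Isserlis' formula: $\operatorname{Var}((X_{1i}-\mu_i)(X_{1j}-\mu_j))=\Sigma_{ii}\Sigma_{jj}+\Sigma_{ij}^2\ge\Sigma_{ii}\Sigma_{jj}$, so a uniform positive lower bound on the diagonal of $\Sigma$ yields (C2) at once.
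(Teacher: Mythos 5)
Your proof is correct and matches what the paper intends (the corollary is stated without explicit proof as a direct application of Theorem \ref{Boot0}). You correctly identify the covariance as a $U$-statistic of order $m=2$, compute the H\'ajek projection $h_{1,ij}(x)=\tfrac12\bigl((x_i-\mu_i)(x_j-\mu_j)+\Sigma_{ij}\bigr)$ so that $\zeta_{1,ij}=\tfrac14\operatorname{Var}_F\bigl((X_{1i}-\mu_i)(X_{1j}-\mu_j)\bigr)$ and (C2) reduces to (A2), use Lemma \ref{ProductNorm} to pass to the effective Orlicz exponent $\beta'=\beta/2$ with a constant $B_n$ (so (A1') holds, with the diagonal case $j_1=j_2$ trivial since the kernel vanishes there), and verify that substituting $\beta'=\beta/2$ into \eqref{pb1} and into the alternative class $\mathcal{H}_1(c(\log(nd))^{1/\beta'})$ yields exactly the exponents $5+4/\beta$, $2+8/\beta$ and $(\log(nd))^{2/\beta}$ announced in the corollary, with the baseline $\log d=o(n^{\beta/(\beta+4)})$ subsumed. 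The Isserlis argument for the Gaussian addendum is also correct.
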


\subsection{Kendall's $\tau$} \label{sec32}

    A very popular measure of (monotonic) dependence between the $i$th and $j$th component of  the vector $X_1=(X_{11}, \ldots , X_{1p})^\top $
      is  Kendall's  $\tau$ coefficient given by $\tau_{ij}=\E_F[\text{sign}(X_{1i}-X_{2i})\text{sign}(X_{1j}-X_{2j})]$ with empirical version
\begin{align*}
\hat \tau_{ij}= 
\frac{2}{n(n-1)}\sum_{1\leq k<l\leq n}\text{sign}(X_{ki}-X_{li})\, \text{sign}(X_{kj}-X_{lj})~.
\end{align*}
Here the kernel is given by 
$$
h_{ij}(x_1,x_2)=   \tilde h ( x_{1i},x_{1j},x_{2i},x_{2j}) = 
\text{sign}(x_{1i}-x_{2i})\text{sign}(x_{1j}-x_{2j})
$$
and the vector $U$ is defined by
$U={\rm vech} \big ( ( \hat{\tau}_{ij} )_{i,j=1, \ldots , p}  \big ).$
The classical testing problem \eqref{hd1a} 
 with $d_{ij} = \mathbb{E}_F[ \text{sign}(X_{1i}-X_{2i})\text{sign}(X_{1j}-X_{2j})]$
  was considered by  \cite{hanetal2017,Leung2018,Zhou2019} and \cite{Li2021}
 in the high dimensional regime. 
For the problem of testing relevant hypotheses of the form \eqref{hd1b}
an application of  the results of Section  \ref{sec22} yields the following result.

 \begin{corollary}
 If $\log p=o(n^\gamma)$ 
holds with $ 0\leq \gamma \leq \frac{1}{6}$, then the bootstrap test 
\eqref{boottesta} with $U_{ij} =\hat \tau_{ij} $
is (uniformly) consistent and has (uniform) asymptotic level $\alpha$ over the classes of distributions $\mathcal{H}_1(c)$ and 
$\mathcal{H}_{0,boot}(\Delta)$  defined in \eqref{det0} and \eqref{det0ab} respectively, where 
condition (A1')  can be omitted (because the kernel is bounded) and 
condition (A2) is replaced by 
 \begin{itemize}
	 \item[(T1)] 
	 There exist constants  $\underline{b}>0$ and $c \in (0,\Delta)$ such that 
	 \begin{align*}
	    \min\limits_{1 \leq i<j \leq p, |\tau_{ij}|>c}\text{Var}_F\left[ \E_F[\text{sign}(X_{1i}-X_{2i})\text{sign}(X_{1j}-X_{2j})|X_1]\right] \geq \underline{b}\,.
	 \end{align*}
	 for all $p=p(n), n \in \mathbb{N}$.
 \end{itemize}
 \end{corollary}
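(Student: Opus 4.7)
The plan is to derive this corollary as a direct application of Theorem \ref{Boot0} to the Kendall's $\tau$ kernel $\tilde h(x_{1i},x_{1j},x_{2i},x_{2j}) = \text{sign}(x_{1i}-x_{2i})\text{sign}(x_{1j}-x_{2j})$ of order $m=2$, after verifying the abstract assumptions (A1'), (A2), and the rate condition \eqref{pb1}. The key simplification is that this kernel is uniformly bounded: $|h_{ij}(X_{l_1},X_{l_2})| \leq 1$ almost surely for every pair $(i,j)$ and every pair of indices $(l_1,l_2)$.

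First, I would verify Assumption (A1') by noting that if $|Z| \leq 1$ a.s., then $\|Z\|_{\psi_\beta} \leq 1/(\log 2)^{1/\beta}$ for every $\beta \in (0,2]$. Hence I may take $B_n \equiv C$ to be a constant, and choose $\beta = 2$ for the mildest rate requirements. The uniform bounds on $\zeta_{1,ij}$ and on $\E_F[(h_{1,ij}(X_1) - \tau_{ij})^4]$ are then automatic from Jensen's inequality and boundedness. Next, Assumption (A2) is just the unpacking of the definitions: since $h_{1,ij}(X_1) = \E_F[\text{sign}(X_{1i}-X_{2i})\text{sign}(X_{1j}-X_{2j}) \mid X_1]$, the quantity $\zeta_{1,ij}$ coincides with $\text{Var}_F[h_{1,ij}(X_1)]$, and (T1) is precisely the statement that $\zeta_{1,ij}$ is uniformly bounded below over indices with $|\tau_{ij}| > c$, which is (A2).

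It remains to check the quantitative conditions. With $d = p(p-1)/2$ one has $\log d \leq 2 \log p$, and the condition $\log d = o(n^\gamma)$ for $\gamma \leq 1/(2/\beta + 1) = 1/2$ (with $\beta = 2$) is strictly weaker than $\log p = o(n^{1/6})$. With $B_n$ constant and $\beta = 2$, condition \eqref{pb1} becomes
\begin{equation*}
\frac{(\log(nd))^{6}}{n} + \frac{(\log(nd))^{2}}{\sqrt{n}} = o(1),
\end{equation*}
and the dominant first term reduces to requiring $\log p = o(n^{1/6})$ after using $\log(nd) \leq \log n + 2\log p$, which is precisely the stated hypothesis.

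Having verified all conditions, an application of Theorem \ref{Boot0}(1) yields the level statement uniformly over $\mathcal{H}_{0,boot}(\Delta)$, and Theorem \ref{Boot0}(2) yields consistency. Crucially, because the Kendall kernel is bounded, the final clause of Theorem \ref{Boot0}(2) allows us to replace the alternative class $\mathcal{H}_1(c(\log(nd))^{1/\beta})$ by $\mathcal{H}_1(c)$, giving the sharper rate claimed in the corollary. I expect no genuine obstacle here since the entire argument is one of specialization; the only point requiring care is bookkeeping to ensure that the constant $B_n$ in the abstract framework indeed does not inflate in $n$ and that $\log d$ and $\log p$ are interchangeable up to constants in both the rate condition and the definition of $\mathcal{H}_1(c)$.
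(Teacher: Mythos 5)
Your proposal is correct and takes essentially the same route the paper intends: the corollary is a direct specialization of Theorem \ref{Boot0} to the bounded Kendall kernel of order $m=2$, with $B_n$ taken constant, $\beta=2$ chosen to maximize the permissible growth of $d$, (A2) rewritten as (T1) by unpacking $\zeta_{1,ij}=\mathrm{Var}_F(h_{1,ij}(X_1))$, and the rate bookkeeping ($d\asymp p^2$, $\log(nd)\lesssim\log n+\log p$) showing that \eqref{pb1} collapses to $\log p=o(n^{1/6})$, while the boundedness clause in Theorem \ref{Boot0}(2) allows the sharp alternative class $\mathcal{H}_1(c)$.
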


\subsection{The dominating term of Spearman's $\rho$} \label{sec323}

 Let $Q^i_{nk}$ be the rank of $X_{ki}$ among  $X_{1i},...,X_{ni}$ and  consider
Spearman's rank correlation coefficient
\begin{align*}
   \rho_{ij}=\frac{\sum_{k=1}^n(Q^i_{nk}-(n+1)/2)(Q^j_{nk}-(n+1)/2)}{\sqrt{\sum_{k=1}^n(Q^i_{nk}-(n+1)/2)^2\sum_{k=1}^n(Q^j_{nk}-(n+1)/2)^2}}\, ,
\end{align*}
which defines 
  another popular measure of dependence between the $i$th and $j$th component of the vector $X_1=(X_{11},\ldots ,X_{1p})^\top$. While $\rho_{ij}$ ist not  a $U$-statistic, it
was shown by \cite{Hoeffding1948a} that it can be decomposed as follows 
\begin{align*}
    \rho_{ij}=\frac{n-2}{n+1}\hat{\rho}_{ij}+\frac{3}{n+1} \hat\tau_{ij}\,,
\end{align*}
where the dominating term 
\begin{align*}
    \hat{\rho}_{ij}=\frac{6}{n(n-1)(n-2)}\sum_{1 \leq k_1<k_2<k_3\leq n}\text{sign}(X_{k_1i}-X_{k_2i})\text{sign}(X_{k_1j}-X_{k_3j})
\end{align*}
is a $U$-statistic of degree $3$ with  bounded kernel 
$$
h_{ij}(x_1,x_2,x_3)= 
\tilde h ( x_{1i},x_{1j},x_{2i},x_{2j},x_{3i},x_{3j}) = 
\text{sign}(x_{1i}-x_{2i})\text{sign}(x_{1j}-x_{3j}).
$$
The classical testing problem for this statistic and continuous data was considered by \cite{hanetal2017} and \cite{ Leung2018}. For the problem of testing relevant hypotheses of the form \eqref{hd1b}
an application of  the results of Section  \ref{sec22} yields the following result.

 \begin{corollary}
If $\log p=o(n^\gamma)$ 
holds with $  0\leq \gamma \leq \frac{1}{6}$, then the bootstrap test 
\eqref{boottesta} with $U_{ij} =\hat{\rho}_{ij} $
is (uniformly) consistent and has (uniform) asymptotic level $\alpha$ over the classes of distributions $\mathcal{H}_1(c)$ and 
$\mathcal{H}_{0,boot}(\Delta)$  defined in \eqref{det0} and \eqref{det0ab} respectively, where 
condition (A1')  can be omitted (because the kernel is bounded) and 
condition (A2) is replaced by 
\begin{itemize}
	 \item[(S1)] There exist constants  $\underline{b}>0$ and $c \in (0,\Delta)$ such that 
	 \begin{align*}
	    \min\limits_{1 \leq i<j \leq p, |\rho_{ij}|>c}\text{Var}_F\left[ \E_F[\text{sign}(X_{1i}-X_{2i})\text{sign}(X_{1j}-X_{3j})|X_1]\right] \geq \underline{b}\,.
	 \end{align*}
	 for all $p=p(n), n \in \mathbb{N}$.
 \end{itemize}
 \end{corollary}

\subsection{Dependence measures with degenerate kernel }
\label{sec34}

While Kendall's $\tau$ and Spearman's $\rho$ only capture monotonic dependencies between two random variables there are a number of higher order $U$-statistics that are able to capture any form of dependency between two random vectors.  Exemplary, we mention here Hoeffding's $D$ 
\citep{Hoeffding1948}, Blum-Kiefer-Rosenblatt's $R$
\citep{blum1961} and 
Bergsma–Dassios–Yanagimoto’s $\tau^*$ \citep{bergsma2014}. 
Note that in the case of independence (reflecting  the classical null hypothesis in \eqref{hd1a})
the kernels corresponding to these 
$U$-statistics are degenerate. 
On the other hand, if the components are dependent
(which corresponds to the classical alternative), 
 all three statistics are  non-degenerate  for a large class of distributions.  
In such  cases 
the general theory  developed 
in Section \ref{sec3}  is applicable as well.
Before going into details we emphasize that similar results as presented below can be derived for other types of dependence measures which can be estimated by $U$-statistics with a degenerate kernel under 
 independence  such as the distance correlation introduced by \cite{Szekely2007}, see Theorem 4.1 in \cite{Edelmann2021}.
 
To be precise  we recall the definition of the $U$-statistics considered 
in \cite{Hoeffding1948,blum1961,bergsma2014}.
Let $z_1,...,z_6$ be $p$-dimensional vectors 
of the form $z_i = (z_{i1} , \ldots  , z_{ip})^\top $, define
 \begin{align*}
     \1_{j_1,j_2,j_3}^k &:= \1\{z_{j_1k}\leq z_{j_3k}\}-\1\{z_{j_2k}\leq z_{j_3k}\} ~, \\
     \1_{j_1,j_2}^{j_3,j_4,k} &:= \1\{z_{j_1k}<z_{j_3k}\}\1\{z_{j_1k}<z_{j_4k}\}\1\{z_{j_2k}<z_{j_3k}\}\1\{z_{j_2k}<z_{j_4k}\}~, 
      \end{align*}
     and consider the  kernels 
      \begin{align*}
     h_{ij}^D(z_1,...,z_5) &:= \frac{1}{16}\sum_{1 \leq j_1 \neq ... \neq j_5\leq 5}\1_{j_1,j_2,j_5}^i\1_{j_3,j_4,j_5}^i\1_{j_1,j_2,j_5}^j\1_{j_3,j_4,j_5}^j~, \\
     h_{ij}^R(z_1,...,z_6) &:= \frac{1}{32}\sum_{1 \leq j_1 \neq ...\neq j_6\leq 6}\1_{j_1,j_2,j_5}^i\1_{j_3,j_4,j_5}^i\1_{j_1,j_2,j_6}^j\1_{j_3,j_4,j_6}^j~, \\
        h_{ij}^{\tau^*}(z_1,...,z_4) &:= \frac{1}{16}\sum_{1 \leq j_1\neq ... \neq j_4\leq 4}(\1_{j_1,j_3}^{j_2,j_4,i}+\1_{j_2,j_4}^{j_1,j_3,i}-\1_{j_1,j_4}^{j_2,j_3,i}-\1_{j_2,j_3}^{j_1,j_4,i})
     ~, \\ 
     & \qquad \qquad \qquad \qquad \times (\1_{j_1,j_3}^{j_2,j_4,j}+\1_{j_2,j_4}^{j_1,j_3,j}-\1_{j_1,j_4}^{j_2,j_3,j2}-\1_{j_2,j_3}^{j_1,j_4,j})~.
 \end{align*}
Note that  $h_{ij}^D$, $h_{ij}^R$
and $ h_{ij}^{\tau^*}$ define symmetric kernels of orders $5,6$ and $4$ respectively. The corresponding
 matrices of empirical  dependence measures  calculated from the sample $X_1,...,X_n \in \R^p$  are then given by 
 \begin{align}
 \notag
     \hat  D=(\hat  D_{ij})_{1 \leq i<j\leq p} =\Big({n \choose 5}^{-1}\sum_{1 \leq j_1 < \ldots  < j_5\leq n}h_{ij}^D(X_{j_1},\ldots ,X_{j_5})\Big)_{1 \leq i < j \leq p}\,,\\
    \notag
   \hat  R=(\hat  R_{ij})_{1 \leq i<j\leq p}=\Big({n \choose 6}^{-1}\sum_{1 \leq j_1 < \ldots  < j_6 \leq n }h_{ij}^R(X_{j_1},\ldots ,X_{j_6})\Big)_{1 \leq i < j \leq p}\,.  
 \label{HigherOrder} \\
     \notag
   \hat  \tau^*=(\hat  \tau^*_{ij})_{1 \leq i<j\leq p}=\Big ( {n \choose 4}^{-1} \sum_{1 \leq j_1 < \ldots  < j_4\leq n }h_{ij}^{\tau^*}(X_{j_1},\ldots ,X_{j_4})\Big )_{1 \leq i < j \leq p}\,. \notag
 \end{align}
The classical testing problem \eqref{hd1a}, where 
the dependence measure 
$d_{ij}$ is  either given by  $
D_{ij}=\mathbb{E}_F[ h_{ij}^D(X_{1},\ldots ,X_{5})]$, 
$R_{ij}=\mathbb{E}_F[ h_{ij}^R (X_{1},\ldots ,X_{6})]$ or  $ \tau^*_{ij}=\mathbb{E}_F[ h_{ij}^{\tau^*}(X_{1},\ldots ,X_{4})]$ 
 was considered by \cite{drttonetal2020} in the high dimensional regime.
For the problem of testing relevant hypotheses of the form \eqref{hd1b}
an application of  the results of Section  \ref{sec22} yields the following result.

 \begin{corollary}
If $\log p=o(n^\gamma)$ 
holds with $  0 \leq \gamma \leq \frac{1}{6}$, then the bootstrap test \eqref{boottesta} with $U_{ij}$ given by either
  $ \hat D_{ij}$, $\hat R_{ij}$ or   $\hat \tau^*_{ij}$ is (uniformly) consistent and has (uniform) asymptotic level $\alpha$ over the classes of distributions $\mathcal{H}_1(c)$ and $\mathcal{H}_{0,boot}(\Delta)$  defined in \eqref{det0} and \eqref{det0ab} respectively, where
condition (A1')  can be omitted (because the kernels are bounded) and condition (A2) is replaced 
by
\begin{itemize}
	 \item[(D1)] 
	 There exist constants  $\underline{b}>0$ and $c \in (0,\Delta)$ such that 
	 \begin{align*}
	     \min\limits_{1 \leq i<j \leq p, |D_{ij}|>c}\text{Var}_F\left[ \E_F[h_{ij}^D(X_1,...,X_5)|X_1]\right] \geq \underline{b}\,.
	 \end{align*}
	 for all $p=p(n), n \in \mathbb{N}$. 
	 \end{itemize}
	 in the case of Hoeffding's $D$, by 
\begin{itemize}
 \item[(R1)]
	 There exist constants  $\underline{b}>0$ and $c \in (0,\Delta)$ such that 
	 \begin{align*}
	    \min\limits_{1 \leq i<j \leq p, |R_{ij}|>c} \text{Var}_F\left[ \E_F[h_{ij}^R(X_1,...,X_6)|X_1]\right] \geq \underline{b}\,.
	 \end{align*}
	 for all $p=p(n), n \in \mathbb{N}$.
 \end{itemize}
	 in the case of Blum-Kiefer-Rosenblatt's $R$,  and by 
\begin{itemize}
 \item[(TA1)]
	  There exist constants  $\underline{b}>0$ and $c \in (0,\Delta)$ such that 
	 \begin{align*}
	     \min\limits_{1 \leq i<j \leq p, |\tau^*_{ij}|>c}\text{Var}_F\left[ \E_F[h_{ij}^{\tau^*}(X_1,...,X_4)|X_1]\right] \geq \underline{b}\,.
	 \end{align*}
	 for all $p=p(n), n \in \mathbb{N}$.
	 \end{itemize}
for Bergsma–Dassios–Yanagimoto’s $\tau^*$.
 \end{corollary}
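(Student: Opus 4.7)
The plan is to obtain all three statements as a direct application of Theorem \ref{Boot0}, with the vector of pairwise dependence measures playing the role of $\theta_F$ and $d = p(p-1)/2$. The task reduces to verifying the assumptions of that theorem for the specific kernels $h_{ij}^D$, $h_{ij}^R$, $h_{ij}^{\tau^*}$ and translating them into the conditions (D1), (R1), (TA1).

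First, I would observe that each of the three kernels is a (normalized) finite sum of products of indicator functions taking values in $\{-1,0,1\}$, so every $h_{ij}^{\bullet}(z_1,\ldots,z_m)$ is bounded by a universal constant independent of the sample points, the pair $(i,j)$ and the distribution $F$. Consequently $\|h_{ij}^{\bullet}(X_{j_1},\ldots,X_{j_m})\|_{\psi_\beta}$ is uniformly bounded for every $\beta \in (0,2]$ and every choice of indices $j_1,\ldots,j_m \in \{1,\ldots,n\}$. In particular (A1') holds with $B_n \equiv C$ a constant and $\beta=2$, so condition (A1') may indeed be omitted from the hypotheses.

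Second, I would identify (A2) with the replacement conditions given in the corollary. Writing $\theta_{ij}=D_{ij}$, $R_{ij}$, or $\tau^*_{ij}$ respectively, and recalling that $\zeta_{1,(ij)}$ in \eqref{x1b} equals $\mathrm{Var}_F(h_{1,(ij)}(X_1))$ with $h_{1,(ij)}(x) = \E_F[h_{ij}^{\bullet}(X_1,\ldots,X_m)\mid X_1=x]$, the non-degeneracy requirement $\min_{|\theta_{ij}|>c}\zeta_{1,(ij)} \geq \underline b$ is exactly (D1) (resp.\ (R1), (TA1)). Note that this is precisely where the composite-null formulation pays off: under the classical null $\theta_{ij}=0$ all three kernels are degenerate, so a pointwise non-degeneracy assumption would be inadmissible, but only non-degeneracy on $\{|\theta_{ij}|>c\}$ is required, and this is plausible for realistic distributions.

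Third, I would check the moment/growth condition \eqref{pb1}. With $B_n = O(1)$ and $\beta = 2$ it reduces to
\begin{equation*}
\frac{(\log(nd))^{6}}{n} + \frac{(\log(nd))^{2}}{\sqrt n} = o(1),
\end{equation*}
and since $d \lesssim p^2$ the hypothesis $\log p = o(n^{1/6})$ implies $\log(nd)=o(n^{1/6})$, making the first summand (the binding term) $o(1)$ and the second $o(1)$ a fortiori; the constraint $\gamma \leq \tfrac{1}{2/\beta+1} = \tfrac12$ is trivially met. Applying part (1) of Theorem \ref{Boot0} then yields the uniform asymptotic level statement over $\mathcal{H}_{0,boot}(\Delta)$, and part (2), together with the final sentence allowing the logarithmic factor to be dropped for bounded kernels, yields uniform consistency over $\mathcal{H}_1(c)$ for a sufficiently large $c$. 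The only conceptual subtlety is the mild bookkeeping around the degenerate/non-degenerate dichotomy just described; there is no additional technical obstacle because the general machinery of Section \ref{sec22} has already been built to handle composite nulls in which the kernel may be degenerate at a strict subset of parameter values.
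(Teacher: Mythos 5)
Your proposal is correct and is essentially the paper's intended argument: the corollary is a direct instantiation of Theorem~\ref{Boot0}, and your three verification steps (boundedness of the sign/indicator-based kernels gives (A1') with $B_n\equiv C$ and $\beta=2$, conditions (D1)/(R1)/(TA1) are exactly (A2) for the corresponding $h_{1,(ij)}$, and $\log p=o(n^{1/6})$ with $d\lesssim p^2$ yields the rate condition \eqref{pb1} with the dominant term $(\log(nd))^6/n$) are precisely what needs to be checked. The paper gives no further detail, stating only that the result follows from Section~\ref{sec22}, so your write-up supplies the expected bookkeeping.
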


\subsection{Minimax optimality}
\label{sec35}

Recall that, by Theorems \ref{consistency} and \ref{Boot0}, both the asymptotic test and the bootstrap test (under the additional assumption of a bounded kernel)
correctly reject the null hypothesis  in \eqref{hd4}  if at least one entry of the vector $\theta$ is  larger  than $\Delta + C B_n\sqrt{\log (d) /{n}}$. In this section we will show that in many situations, where the sequence  $(B_n)_{n \in \mathbb{N}}$ is bounded
this rate cannot be improved.
These cases include all dependence measures 
discussed in Sections \ref{sec31}  --  \ref{sec34}.
To be precise, we define  
$$
\mathcal{T}_\alpha:= \big \{T_\alpha ~|~
{\sup}_{F \in {\cal H} _0(\Delta)  }
\P(T_\alpha \text{ does not reject } H_0 )\leq \alpha \big \}
$$
as the  set of all  tests
with  (uniform) level $\alpha$. 
   
    We begin with a result for the covariances, that is $d_{ij}={\rm Cov}_F (X_{1i},X_{1j}) $ ($1 \leq i < j \leq p $. 
    For the sake of simplicity, we  assume without loss of generality that $d_{ii}= {\rm Var} (X_{1i})=1$ ($i=1, \ldots , p)$, the general case is obtained by a scaling argument. Note that in this 
    case only values $\Delta  \in (0,1)$ are useful thresholds for the hypotheses \eqref{hd4}.
We then obtain the 
following result.

\begin{theorem}
\label{optimality}
    Assume that
    the dependence measure $d_{ij}$ in \eqref{hd21} is 
    given by $d_{ij}={\rm Cov}_F (X_{1i},X_{1j})$ and $d_{ii}=1$ ($i,j=1, \ldots , p$); so we have $d=p(p-1)/2$.  Further let $c_0, \alpha,\beta $ denote 
     positive constants
such that   $c_0<{1-\Delta}  $
   and  $\alpha+\beta<1$. If  $\log(p)/n\rightarrow 0$ and $\log(p)n/p^2\rightarrow 0$, as $n\to\infty$, then we have for  sufficiently 
   large $n$ and $p$
   \begin{equation}
    \label{hd42}
       \inf_{T_\alpha \in \mathcal{T}_\alpha}\sup_{F \in \mathcal{H}_1(c_0)}\p(
    T_\alpha \text{ does not reject } H_0  
    )\geq 1-\alpha-\beta\,.
  \end{equation}
  
\end{theorem}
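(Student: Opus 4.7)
I would prove this via Le~Cam's two-fuzzy-hypothesis (mixture) method. Exhibit a null $F_0\in\mathcal{H}_0(\Delta)$ and a family $\{F_k\}_{k=1}^K\subset\mathcal{H}_1(c_0)$ whose uniform mixture $P_\pi = K^{-1}\sum_k F_k^{\otimes n}$ is asymptotically indistinguishable from $F_0^{\otimes n}$. Indeed, for any $T_\alpha\in\mathcal{T}_\alpha$,
$\sup_{F\in\mathcal{H}_1(c_0)}\p_F(T_\alpha\text{ does not reject})\ge P_\pi(T_\alpha\text{ does not reject})\ge 1-\alpha-\mathrm{TV}(P_\pi,F_0^{\otimes n})\ge 1-\alpha-\tfrac12\sqrt{\chi^2(P_\pi,F_0^{\otimes n})}$,
where the first step averages over $\pi$, the second uses $F_0\in\mathcal{H}_0(\Delta)$ to invoke the level-$\alpha$ constraint together with a triangle inequality, and the last is Cauchy--Schwarz. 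The claim thus reduces to showing $\chi^2(P_\pi,F_0^{\otimes n})\to 0$ under the stated growth conditions.

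For the concrete construction I would take Gaussian laws. Partition $\{1,\ldots,p\}$ into $K=\lfloor p/2\rfloor$ disjoint pairs and set $F_0 = N(0,\Sigma_0)$ with $\Sigma_0$ block-diagonal with identical blocks $\begin{pmatrix}1 & \Delta\\ \Delta & 1\end{pmatrix}$; let $F_k = N(0,\Sigma_k)$ be obtained from $F_0$ by replacing the $k$-th block by $\begin{pmatrix}1 & \Delta+\epsilon_n\\ \Delta+\epsilon_n & 1\end{pmatrix}$, with $\epsilon_n = c_0 B_n\sqrt{\log d/n}$. Then $F_k\in \mathcal{H}_1(c_0)$ by construction, and to verify $F_0\in\mathcal{H}_0(\Delta)$ one checks (A1)--(A3): (A1), (A2) are immediate for Gaussians with unit marginals, and (A3) reduces via Isserlis' formula to bounding $\sum d^{-(2-\epsilon)/(1+|\kappa|)}$ over kernel-index pairs. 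Only index configurations lying in the union of at most two blocks produce nonvanishing correlations $\kappa_{(i,j),(i',j')}$, yielding $O(d)$ nonzero terms with $|\kappa|\le \Delta^2<1$, whence the (A3) sum is $d^{1-(2-\epsilon)/(1+\Delta^2)}=o(1)$ since $\Delta<1$.

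Because distinct blocks of $F_0$ are independent and $L_k = dF_k^{\otimes n}/dF_0^{\otimes n}$ depends only on the $k$-th block, for $k\neq k'$ one has $\E_{F_0^{\otimes n}}[L_k L_{k'}] = \E_{F_0^{\otimes n}}[L_k]\E_{F_0^{\otimes n}}[L_{k'}] = 1$, and therefore $\chi^2(P_\pi,F_0^{\otimes n}) = K^{-1}(\E_{F_0^{\otimes n}}[L_1^2]-1)$. A direct computation of the Gaussian second moment on the $2\times 2$ block yields the exact identity $\E_{F_0}[L_1(X)^2] = \{[1-\epsilon_n^2/(1-\Delta)^2][1-\epsilon_n^2/(1+\Delta)^2]\}^{-1/2}$, whose logarithm expands as $\epsilon_n^2(1+\Delta^2)/(1-\Delta^2)^2 + O(\epsilon_n^4)$. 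Substituting $n\epsilon_n^2 = c_0^2 B_n^2\log d$ and using $K\asymp p$, $d\asymp p^2$, along with the universal bound on $B_n$ for the covariance kernel, I obtain $\chi^2(P_\pi,F_0^{\otimes n}) \lesssim p^{-1 + 2c_0^2 B_n^2(1+\Delta^2)/(1-\Delta^2)^2 + o(1)}$, which vanishes as $p\to\infty$ once the exponent is strictly negative.

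\textbf{Main obstacle.} Matching the \emph{sharp} constant $c_0<1-\Delta$ is the delicate point: the disjoint-pair mixture above only yields the stricter inequality $c_0^2 B_n^2(1+\Delta^2)<(1-\Delta^2)^2/2$. Covering the full range $c_0<1-\Delta$ requires either enriching the prior to a mixture over $\binom{p}{2}$ overlapping alternatives built from a null whose correlation graph is sparse enough to retain (A3) but whose inverse-covariance off-diagonals are $O(1/p)$ (so that the quadratic-form scores $T_{(i,j)}$ become approximately uncorrelated and only diagonal pairs contribute to the $\chi^2$-sum), or replacing $\mathrm{TV}\le \tfrac12\sqrt{\chi^2}$ by a sharper Neyman--Pearson/Hellinger bound exploiting Gaussianity. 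In either refinement the condition $\log p/n\to 0$ validates the Taylor expansion of $\log \E_{F_0}[L_1^2]$, while $n\log p/p^2\to 0$ is precisely what controls the $O(p^3)$ overlapping-pair cross terms that appear in the chi-squared sum once the mixture is enriched beyond disjoint blocks.
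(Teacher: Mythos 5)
Your general strategy (Le Cam's mixture method, bounding total variation by the square root of a $\chi^2$-divergence between $F_0^{\otimes n}$ and a uniform prior mixture of alternatives) is indeed the paper's strategy, via the analogue of Theorem~5 of Han~et~al.\ (2017). However, the concrete construction you use is different, and — as you yourself flag at the end — it does not prove the theorem as stated. The paper's null is the \emph{full equicorrelation} matrix $M_0=\Sigma_{p,\Delta}$ (every off-diagonal entry equals $\Delta$, the "full boundary" of $H_0$), and the alternatives are \emph{star-shaped}: $M_k=\Sigma_{p,\Delta}+\rho(e_1e_k^\top+e_ke_1^\top)$ for $k=2,\ldots,p$, so all $p-1$ alternatives overlap through coordinate~1. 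With this construction the $\chi^2$ computation splits into a diagonal piece $A_{11}$ (the $k=l$ terms) and an overlapping cross-term piece $A_{22}$ (the $k\neq l$ terms, which are nontrivial precisely because the star alternatives all share index~1). The diagonal piece $A_{11}=o(1)$ under $c_0<1-\Delta$ follows from Woodbury-type inversion of the rank-two perturbation and a Taylor expansion of the resulting determinants; the cross-term piece $A_{22}=1+o(1)$ is where $\log(p)\,n/p^2\to 0$ is used. Your disjoint-pair construction makes the cross-terms trivially factorize to~$1$ (so $\log(p)n/p^2\to0$ is not even invoked), but — as you correctly compute — it then only delivers $\chi^2\to0$ for a strictly smaller range of $c_0$ than $c_0<1-\Delta$, because the exponent in your $A_{11}$-analogue is governed by $\log \E_{F_0}[L_1^2]\approx \epsilon_n^2(1+\Delta^2)/(1-\Delta^2)^2$ rather than the $2\rho^2/(1-\Delta)^2$ that the equicorrelation null produces.

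So the gap is concrete: the disjoint-block scheme is not a valid proof of the theorem with the constant $c_0<1-\Delta$; you need the equicorrelation null plus star alternatives (or some other genuinely overlapping construction) to obtain the correct quadratic coefficient in the exponent. Your last paragraph guesses the right direction — that overlapping alternatives and the condition $n\log p/p^2\to 0$ are what is needed — but stops short of carrying it out. Two smaller points: (i) your $\epsilon_n$ is scaled by $\sqrt{\log d}\asymp\sqrt{2\log p}$ while the paper uses $\rho=c_0\sqrt{\log p/n}$, a $\sqrt{2}$ factor that matters when the constant is sharp; (ii) the paper does not in fact verify (A1)--(A3) for its equicorrelation null $M_0$, so the care you take with (A3) for your block-diagonal null is a point on which your write-up is actually more explicit than the source.
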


 The proof of \eqref{hd42} 
 uses the fact that the supremum of the probabilities 
 with respect to the distributions $F \in \mathcal{H}_1(c_0)$ 
 can be bounded  from below
 by the supremum taken over all centered multivariate normal distributions in $\mathcal{H}_1(c_0)$, where 
 the covariance matrices have the following form. All diagonal elements are $1$, except for two off-diagonal elements all off-diagonal elements are 
 equal to $\Delta$ and the two remaining off-diagonal elements
 are given by $\Delta + \rho $.
 Because this argument 
 does not depend on the specific dependence measure under consideration, a careful  inspection of the proof of  Theorem \ref{optimality}
shows that statements of the form \eqref{hd42} 
are also available for 
dependence measures, 
which, under the assumption of a normal distribution, can be represented as a function of the correlation.  More precisely,
let $d_{ij}(F)= d(X_{1i},X_{1j})$
denote a bivariate dependence measure, such that 
\begin{equation} \label{xx1}
d_{ij}(N_1,N_2)  =    g(\rho)
\end{equation} 
for a normal distributed vector 
$(N_1,N_2)^\top \sim   {\cal N}_2 \big  ( 0 ,
\begin{tiny}
\begin{pmatrix}
1 & \rho \\ \rho &1 
\end{pmatrix} 
\end{tiny}
\big  )$, where   $g: (-1,1) \to \mathbb{R} $ 
is a  differentiable  function with 
non-vanishing derivative at some $\rho\in g^{-1}(\{\Delta\})$.
\begin{corollary}
\label{cor35}
The conclusion of Theorem \ref{optimality} 
remains valid for any bivariate dependence measure
$d_{ij}$, which  satisfies \eqref{xx1}
and for which there exists 
a constant  $ a \in (-1,1)$ such that 
  $|g(a)| =\Delta$  
and $\text{\rm sign}(g'(a))=\text{\rm sign}(g(a))$.
\end{corollary}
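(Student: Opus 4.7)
The plan is to imitate the proof of Theorem \ref{optimality} after reparametrizing the Gaussian construction used there through the function $g$. Fix $a\in(-1,1)$ with $|g(a)|=\Delta$ and $\text{sign}(g'(a))=\text{sign}(g(a))$, and consider the family of centered multivariate normal distributions $F^{(\rho)}_{i_0,j_0}=\mathcal{N}(0,\Sigma^{(\rho)}_{i_0,j_0})$, where $\Sigma^{(\rho)}_{i_0,j_0}$ has $1$ on the diagonal, $a$ on every off-diagonal except the symmetric pair $(i_0,j_0),(j_0,i_0)$, whose entries are set to $a+\rho$. By \eqref{xx1}, under $F^{(\rho)}_{i_0,j_0}$ the dependence measure equals $g(a)$ on every unperturbed pair and $g(a+\rho)$ on the perturbed pair; this mirrors the covariance-case construction of Theorem \ref{optimality}, where $\Delta$ itself plays the role of $a$. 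Since $|g(a)|=\Delta$, the null distribution $F^{(0)}$ lies in $\mathcal{H}_0(\Delta)$. A first-order Taylor expansion gives $g(a+\rho)=g(a)+\rho g'(a)+o(\rho)$, and the sign condition forces $|g(a+\rho)|=\Delta+|g'(a)|\rho+o(\rho)$; therefore, choosing $\rho=c_1\sqrt{\log p/n}$ with any $c_1>c_0(\sup_n B_n)/|g'(a)|$ (finite because $g'(a)\neq 0$ by the sign condition combined with $g(a)\neq 0$, and because the kernels in Sections \ref{sec31}--\ref{sec34} are bounded, so $B_n=O(1)$) places $F^{(\rho)}_{i_0,j_0}$ in $\mathcal{H}_1(c_0)$ for all sufficiently large $n$.

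The decisive step is then the Le Cam-type multi-hypothesis lower bound already carried out in Theorem \ref{optimality}. The key observation is that the chi-squared (equivalently Hellinger) divergence between two centered Gaussian laws depends only on the two covariance matrices, not on the dependence measure $d$. Since $\Sigma^{(\rho)}_{i_0,j_0}$ differs from $\Sigma^{(0)}$ by exactly the same rank-two perturbation of magnitude $\rho=c_1\sqrt{\log p/n}$ as in the covariance case, the divergence estimate from the proof of Theorem \ref{optimality} transfers verbatim; averaging the likelihood ratios over all $p(p-1)/2$ choices of $(i_0,j_0)$ and invoking the hypotheses $\log(p)/n\to 0$ and $n\log(p)/p^2\to 0$, the same Neyman--Pearson/Ingster argument yields $\inf_{T_\alpha}\sup_{F\in\mathcal{H}_1(c_0)}\P(T_\alpha\text{ does not reject }H_0)\geq 1-\alpha-\beta$.

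The main obstacle will be positive-definiteness when $a<0$: the equicorrelated matrix $(1-a)I+a\mathbf{1}\mathbf{1}^\top$ is only positive definite for $p<1-1/a$, so for large $p$ with negative $a$ I would replace the construction by a block-diagonal one consisting of $\lfloor p/2\rfloor$ blocks of size $2\times 2$, each with off-diagonal $a$, and a single perturbed block. Across-block pairs then give $d_{ij}=g(0)$, which lies in $[-\Delta,\Delta]$ for every dependence measure considered in Sections \ref{sec31}--\ref{sec34}, since they all vanish under independence; this preserves $F^{(0)}\in\mathcal{H}_0(\Delta)$ and reduces the number of candidate perturbation positions from $p(p-1)/2$ to $\lfloor p/2\rfloor$, which is still of order $p$ and yields the same $\sqrt{\log p/n}$ separation rate in the divergence bound. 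A minor secondary issue is absorbing the $o(\rho)$ Taylor remainder into the constant $c_1$ uniformly in $n$, which requires only the assumed continuity of $g'$ near $a$.
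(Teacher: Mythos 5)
Your construction—replacing $\Delta$ by $a$ in the off-diagonal correlation, perturbing one pair to $a+\rho$, and using the Taylor expansion $|g(a+\rho)|=\Delta+|g'(a)|\rho+o(\rho)$ together with the sign condition to convert a correlation perturbation into a relevant perturbation of the dependence measure—is exactly the reparametrization the paper has in mind, and the observation that the $\chi^2$-divergence calculation in the proof of Theorem~\ref{optimality} involves only the Gaussian covariance matrices and not $d$ is precisely the reason the authors say the argument "does not depend on the specific dependence measure." So the route is essentially the same as the paper's (which offers only a verbal sketch rather than a written proof), and your proposal fills in the details correctly.

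Two small remarks. First, "transfers verbatim" slightly overstates the matter: once $\Delta$ is replaced by $a$ in the equicorrelation matrices $M_0$ and $M_k$, the determinant computations for $A_{11}$ and $A_{22}$ go through with $a$ in place of $\Delta$, and the condition $c_0<1-\Delta$ from Theorem~\ref{optimality} becomes a condition of the form $c_1<1-a$ on the correlation-scale perturbation constant; translating back through $g$ relates $c_0$ and $c_1$ via $|g'(a)|$ (and the $\sqrt{2}$ coming from $\log d\approx 2\log p$), so the admissible range of $c_0$ changes by an explicit constant factor. This is a cosmetic constant-level change, not a change to the argument, but it is worth stating rather than claiming verbatim transfer. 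Second, your positive-definiteness concern for $a<0$ is careful but moot for all the dependence measures in Sections~\ref{sec31}--\ref{sec34}: for odd increasing $g$ with $g(0)=0$ (covariance, Kendall's $\tau$, Spearman's $\rho$) and for even $g\ge 0$ increasing in $|\rho|$ (Hoeffding's $D$, BKR's $R$, $\tau^*$), the sign condition $\text{sign}(g'(a))=\text{sign}(g(a))$ forces $a>0$, so $\Sigma_{p,a}$ is always positive definite. The block-diagonal workaround would therefore only be needed for artificial choices of $g$ not covered by the paper's examples, and as you note it would change the number of candidate perturbation positions and hence the $A_{22}$ computation; it is not needed to establish the corollary as applied in the paper.
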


\begin{remark}
{\rm We conclude this section with some examples of dependence measures, where Corollary \ref{cor35} is applicable.
Note that   Theorem \ref{optimality}
gives a lower bound for all tests. Thus it
also applicable for dependence measures, which can be estimated by $U$-statistics.

\begin{itemize}
    \item[(1)] A prominent dependence measure that fulfills this assumption is  Kendall's $\tau$ for which it holds that $\tau_{ij}=(2/\pi)\, \text{arcsin}(\rho )$.
A similar result holds  for Spearman's $\rho$, here we have
    $\rho_{ij}=(6/\pi)\,\text{arcsin}(\rho/2 )$.
    Another obvious choice is the Pearson correlation for which $g(\rho)=\rho$ is the identity function.
\item[(2)] 
For  a centered normal 
distribution
Hoeffding's D, Blum-Kiefer-Rosenblatt's R
 and 
Bergsma–Dassios– Yanagimoto’s $\tau^*$s,
which  are considered in Section \ref{sec34}, can be expressed in terms of $\rho$, such that \eqref{xx1} holds. We expect  that 
the assumptions of Corollary~\ref{cor35} are satisfied
as well, but we do not work out the details here for the sake of brevity.
\end{itemize}
}
\end{remark}

	\section{Finite sample properties}
\label{sec4} 
  \def\theequation{4.\arabic{equation}}	
		\setcounter{equation}{0}
		
	In this section we report the results of a small simulation study conducted in order to  investigate the finite sample properties of the proposed tests for the relevant hypotheses \eqref{hd1b}.
	We focus on Kendall's $\tau$
	and  the bootstrap test \eqref{boottest},
its  non-normalized version defined by
	\eqref{boottestnv} and the test 
	\eqref{boottestabs}, which uses the statistics $|U_{ij}|$ instead of their squares $U_{ij}^2$. 
	
	As distributions we consider the  centered $p$-dimensional  normal 
	distribution with covariance matrix $\Sigma$, that is 
	\begin{align}
	    \label{hd25}
	    X_1,\ldots ,X_n \sim \mathcal{N}_p\left(0,\Sigma \right)
	\end{align}
	and the centered  $p$-dimensional  $t$-distribution with $f=3$ degrees of freedom and scale matrix $\Sigma$, that is
	\begin{align}
	    \label{hd26}
	    X_1,\ldots ,X_n \sim t_f\left(0,\Sigma \right)\,, 
	\end{align}	
	with density 
	\begin{align*}
	  g_{f,\Sigma}(x)=\frac{\Gamma((f+p)/2)}{\Gamma(f/2)f^{p/2}\pi^{p/2}|\Sigma|^{1/2}}\Big ( 1+\frac{1}{f}x^\top\Sigma x\Big )^{-(f+p)/2}\,.
	\end{align*}
	We generate data from the models \eqref{hd25} and \eqref{hd26} 
	for sample sizes $n\in\{50, 100\}$   and dimension $p\in \{100,200,400\}$, where we investigate $3$ choices for the covariance matrices $\Sigma$ and $(f/(f-2))\Sigma$ in \eqref{hd25} and \eqref{hd26} respectively, that is 
		\begin{align}
		\label{hd27}\tag{$M1$}
	  &  \text{Diag}_p(1-\rho, \ldots , 1-\rho)+\rho J_p ~, \\
	    		\label{hd28a}\tag{$M2$}
	 &\text{Diag}_p(1, \ldots ,1)+\rho \textstyle{\sum_{1\leq i < j \leq \lfloor p/\sqrt{2} \rfloor }
	 (e_ie_j^\top+ e_je_i^\top)} ~, \\
	 	    	\label{hd28}\tag{$M3$}
	 &\text{Diag}_p(1, \ldots ,1)+\rho (e_ie_j^\top+ e_je_i^\top)~.
	\end{align}
	Here $\text{Diag}_p (a_1, \ldots , a_p) $ denotes a diagonal  $ p \times p$ matrix with diagonal entries 
	$a_1, \ldots , a_p$, $J_p$ denotes the $ p \times p$
	matrix with all entries equal to $1$, $e_j$ is the $j$th standard basis vector and $\rho$ is a constant that varies depending on whether or not on one wants generate data whose Kendall's $\tau$ exceeds the threshold  or not.
	In model \eqref{hd27} we have equal correlation between all components of $X_1$, whereas in model \eqref{hd28} only the $i$th and $j$th components of $X_1$ are correlated. Model  \eqref{hd28a}
	defines an intermediate case  with a block-diagonal correlation matrix, where 
	 the first $\lfloor p/\sqrt{2} \rfloor$ components have the same correlation and the remaining components are uncorrelated.
All numerical results presented in the following discussion 	are based on $1000$ simulation runs and $100$ bootstrap replications.

We investigate  different test for the hypothesis of a relevant deviation from independence 
	between the components of a high-dimensional vector,
	if the dependencies are measured by Kendall's $\tau$, as discussed in Section \ref{sec32}. Thus, the hypotheses are given by 
	\begin{equation}
 \label{kendal0}
  H_0: \max_{1 \le i < j \le p} | \tau _{ij} | \le \Delta  
    \text{\quad versus \quad } H_1 : \max_{1\le i < j \le p} |\tau_{ij}| >  \Delta \, ~,
	\end{equation}
	where we choose the threshold $\Delta=0.1$.
	Note that the distributions in \eqref{hd25}
	and \eqref{hd26}
	are elliptical, which implies the relation 
	 $$\tau_{ij} = \frac{2}{\pi} \arcsin
	 \big ( {\rm Corr} (X_{1i,}, X_{1j})
	 \big  ) $$
	between Kendall's $\tau$ and  the off-diagonal
	elements of the matrices $\Sigma$
	and $(f/(f-2))\Sigma$ in \eqref{hd25} and \eqref{hd26} respectively
	\citep[see][]{Lindskogetal2003}.

	\subsection{Test statistics involving $U_{ij}^2$}
	\label{sec41} 

We begin    studying the type I error of the bootstrap test \eqref{boottest}, which is based on a maximum of normalized 
		statistics involving squares of the $U$-statistics $U_{ij}$.
	As pointed out in Sections \ref{sec2} and \ref{sec3}, the (asymptotic) level of the bootstrap test 
 is substantially smaller than
	the nominal level $\alpha$ if $\max_{1 \leq i < j   \leq p} |\tau_{ij} | < 0.1$. Therefore, we
	concentrate on the case where at least one of the bivariate dependence measures satisfies  $|\tau_{ij} | = 0.1$, 	which corresponds to the choice  $\rho=\sin(\pi/20)$ in  model \eqref{hd27} - \eqref{hd28}. 
	Note that  the matrix in \eqref{hd27} represents the situation, where $|\tau_{ij} | = \Delta = 0.1 $ for all $1 \le i < j \le p$, which corresponds to
	 the ``full boundary'' of the hypotheses \eqref{kendal0}.
	The matrix in \eqref{hd28} represents a case which is closer to the ``interior'' of the null hypothesis (only  two off-diagonal elements have a Kendall's $\tau$ equal to 0.1, but for all other entries  Kendall's $\tau$ is equal to
	$ 0$).
	For the matrix \eqref{hd28a} about $50\%$ of the off-diagonal elements have a Kendall's $\tau$  equal to $0.1$.
 	Therefore, from the discussion in Sections \ref{sec2} and \ref{sec3}, we expect that 
	for model \eqref{hd27} the simulated level should be close to $0.1$, while it should 
	be substantially smaller than $0.1$ in the two other cases.
	Moreover, this effect should be more visible for 	model \eqref{hd28}  than for \eqref{hd28a}.

	\begin{table}[h]
	\centering 
	\begin{tabular}{|c|c|c|c|c|c|c|c|c|c|}
	    \hline
		 $n,p $ & $50,100$ & $50,200$&  $50,400$ & $100,100$ & $100,200$& $100,400$ & $200,100$ & $200,200$& $200,400$ \\
		\hline
	 \ref{hd27}  & 0.081& 0.054 & 0.043 & 0.164 & 0.103 & 0.112 & 0.174 & 0.199 & 0.196 \\
		 \ref{hd28a}  & 0.026& 0.011 & 0.009 & 0.100 & 0.078  & 0.067  &  0.148 & 0.139 & 0.162     \\
		 		 \ref{hd28}  & 0.000& 0.000& 0.000& 0.000 & 0.000 & 0.000 & 0.000 & 0.000 & 0.000  \\
		 \hline
		 \hline
		 \ref{hd27}  & 0.115 & 0.192  & 0.152  & 0.160& 0.275  & 0.337 & 0.265 & 0.274 & 0.302 \\
		 	 \ref{hd28a}  &0.030 & 0.037 & 0.035 & 0.127& 0.124  &   0.123  & 0.148 & 0.139 & 0.162    \\
		 \ref{hd28}  & 0.000&0.000 &0.000 & 0.000 & 0.000 & 0.000 & 0.000 & 0.000 & 0.000  \\
		 \hline
	\end{tabular}
\smallskip 

	\caption{ \it Simulated rejection probabilities of the test \eqref{boottest} under the null hypothesis
	in \eqref{kendal0} (nominal level $\alpha=0.1$). Upper part: multivariate normal distribution;
	lower part: multivariate $t$-distribution with $3$ degrees of freedom.}
	\label{tab1}
	\end{table}

	The corresponding rejection probabilities under the null hypothesis of the test \eqref{boottest} are shown in Table \ref{tab1}. 
For model   \eqref{hd28} 
(only  two off-diagonal elements have a Kendall's $\tau$ equal to $0.1$, but for all other entries  Kendall's $\tau$ is 	$ 0$)
we observe that the 
type I error is approximately $0$. In model 
\eqref{hd28a}  (about $50\%$ of the off-diagonal elements have a Kendall's $\tau$ equal to $0.1$, but for all other entries  Kendall's $\tau$ is 	$ 0$) the type one error
is larger than  for  \eqref {hd28} and provides a reasonable approximation of  the nominal level $\alpha=0.1$ for sample sizes $n=50,100$
(for the $t$-distribution it is slightly too large).
For model \eqref{hd27}
(all of the off-diagonal elements have a Kendall's $\tau$ equal to $0.1$) we only observe a reasonable approximation for the normal distribution and sample sizes $n=50, 100$ (with the exception $n=p=100$),
and the approximation of the nominal level is worse for the $t$-distribution.
A  similar problem was also observed for some tests of classical hypotheses in high dimension \citep[see the discussion in Section 5.2 of 
][]{hanetal2017}.
\\
Note that model \eqref{hd27} represents the ``worst case'' under the null hypothesis (corresponding to the situation $\tau_{ij} = \Delta = 0.1$ for all $1\le i <  j \le p)$ and for many other cases the test will keep its
 nominal level. Moreover, the deviations become smaller
 (but they are still visible) if one is testing the hypotheses \eqref{hd1b} with a smaller threshold  such as $\Delta =0.05$ (these results are not displayed for the sake of brevity).
A  potential explanation of the  observed exceedance in these cases is that the normalization by the variance estimators \eqref{varDef} may yield some instabilities, especially for more heavy tailed data.

 Therefore, we  next investigate the approximation of the nominal level by the non-normalized version of the  test \eqref{Boot0}, which is defined in equation  \eqref{boottestnv} in Remark \ref{nv}. The corresponding empirical type I error rates are displayed in Table \ref{tab3}. Compared to the test \eqref{Boot0} we observe an improvement of  the approximation of the nominal level in  scenario \eqref{hd27}. While this is mostly satisfactory in the case of a normal distribution, the 
 rejection probabilities are still a little too large  for $t$-distributed data if the sample sizes are $n\geq100$
 (again the  deviations become smaller if the threshold $\Delta =0.05$ is used in the hypotheses \eqref{hd1b}). However,  
 the test \eqref{boottestnv} keeps 
the nominal level well
for the two other 
models \eqref{hd28a} and \eqref{hd28} and  all combination of $n$ and $p$.

\begin{table}[t]
	\centering 
	\begin{tabular}{|c|c|c|c|c|c|c|c|c|c|}
	    \hline
	$n,p $ & $50,100$ & $50,200$ & $50,400$ & $100,100$ & $100,200$& $100,400$ & $200,100$ & $200,200$ & $200,400$ \\
		\hline
	 \ref{hd27}  & 0.062 & 0.042 & 0.050 & 0.130  & 0.113  & 0.098 & 0.136 & 0.132 & 0.150  \\ 
	 		 \ref{hd28a}  & 0.016 & 0.007 & 0.009 & 0.074  & 0.045 & 0.034 & 0.108 & 0.128 & 0.116 \\ 
		 \ref{hd28}  & 0.000& 0.000 & 0.000& 0.000 & 0.000 & 0.000  & 0.000 & 0.000& 0.000 \\
		 \hline
		 \ref{hd27}  & 0.102 & 0.084 &0.075  & 0.154 & 0.180 & 0.149  & 0.171 & 0.175 & 0.162\\
		 		 \ref{hd28a}  & 0.026 & 0.018 & 0.008 & 0.061 &  0.059 &  0.060 & 0.122 & 0.136 & 0.119      \\
		 \ref{hd28}  & 0.000&0.000 &0.000 & 0.000 & 0.000 & 0.000  & 0.000 & 0.000& 0.000 \\
		 \hline
	\end{tabular}
	\smallskip
	\caption{ \it Simulated rejection probabilities of the test \eqref{boottestnv} under the null hypothesis
	in \eqref{kendal0} (nominal level $\alpha=0.1$). Upper part: multivariate normal distribution;
	lower part: multivariate $t$-distribution with $3$ degrees of freedom.}
	\label{tab3}
	\end{table}

The power curves of the tests \eqref{boottest} and \eqref{boottestnv} are displayed in
 Figures \ref{fig:1} and \ref{fig:5}, respectively, where  we show the rejection probabilities of the test \eqref{boottest} as a function of Kendall's $\tau=\frac{2}{\pi}\arcsin{\rho}$
 for sample
size and dimension  given by $(n,p) =(50,100)$ and 
$(n,p) =(100,100)$.
\begin{figure}[H]
    \centering
    \includegraphics[width=7cm, height=4.5cm]{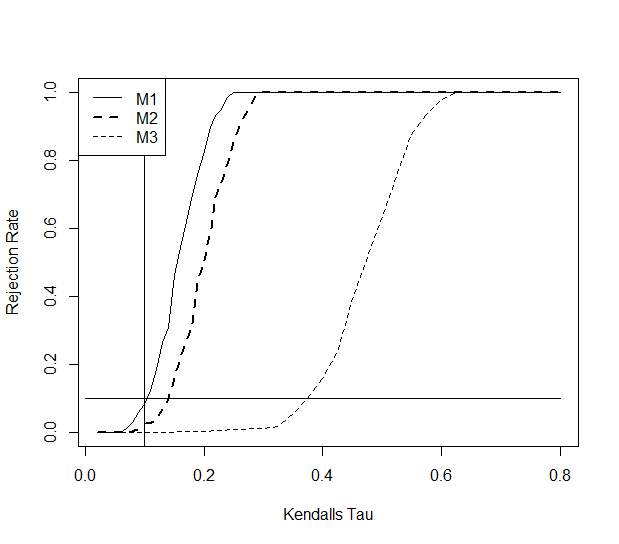}    \includegraphics[width=7cm, height=4.5cm]{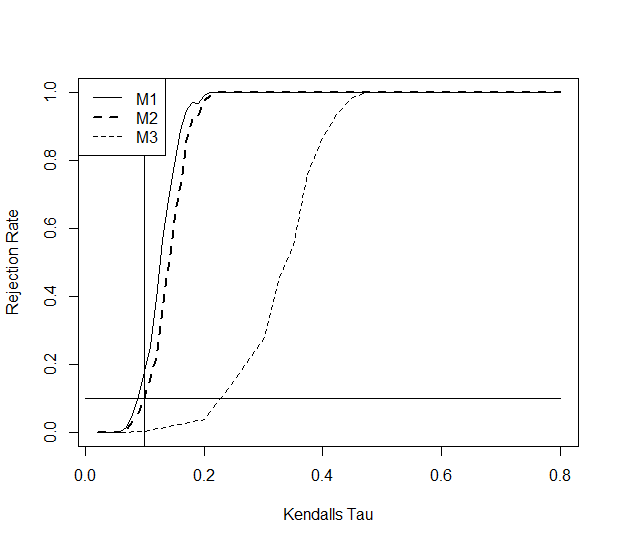}
     \includegraphics[width=7cm, height=4.5cm]{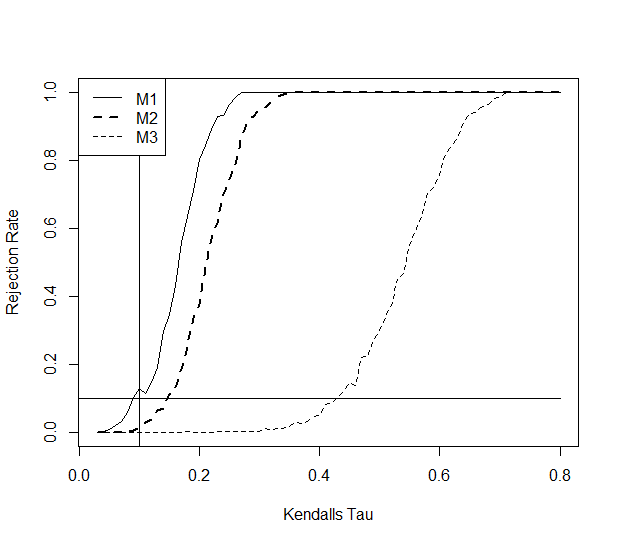}    \includegraphics[width=7cm, height=4.5cm]{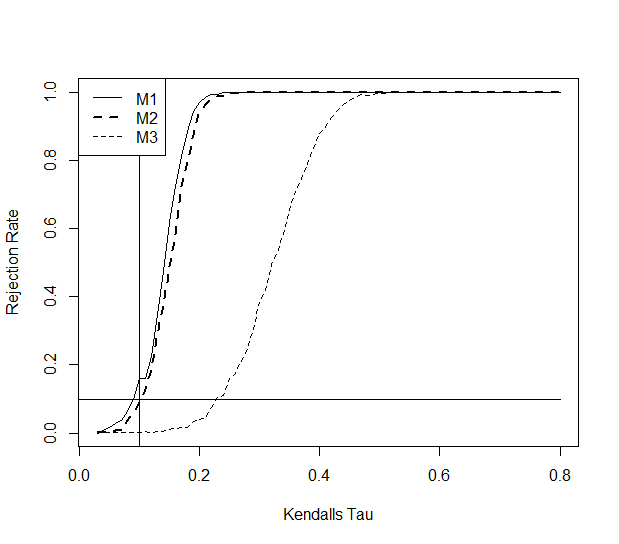}

\vspace{-.5cm}
    \caption{ \it Simulated rejection probabilities of the test  \eqref{boottest}
for the hypotheses \eqref{hd1b} with $\Delta=0.1$. The dimension is $p=100$, and the sample sizes are  $n=50$ (left panels) and $n=100$ (right panels). Upper part:  normal distributed data; Lower part:  $t_3$-distributed data.  }

    \label{fig:1}
    
\end{figure}

\begin{figure}[H]
    \centering
    \includegraphics[width=7cm, height=4.5cm]{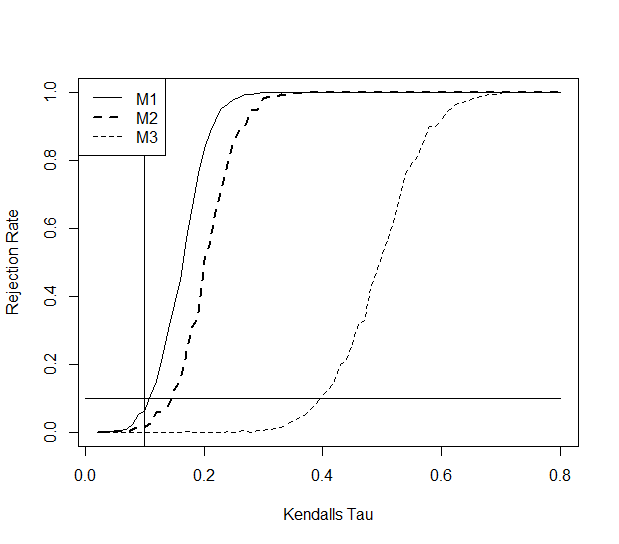}
    \includegraphics[width=7cm, height=4.5cm]{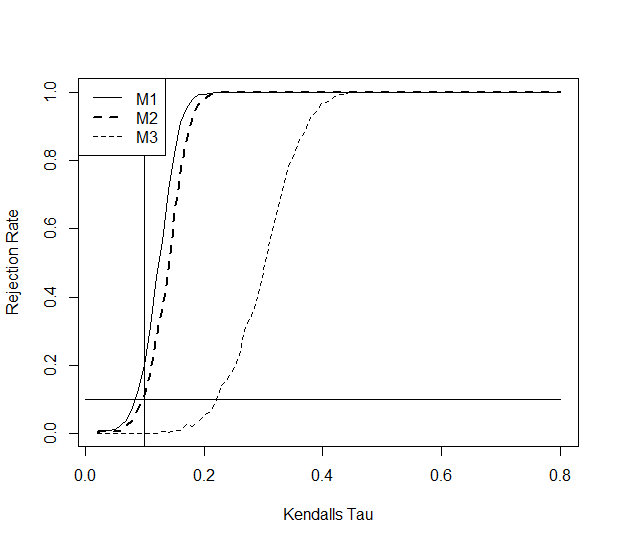}
    \includegraphics[width=7cm, height=4.5cm]{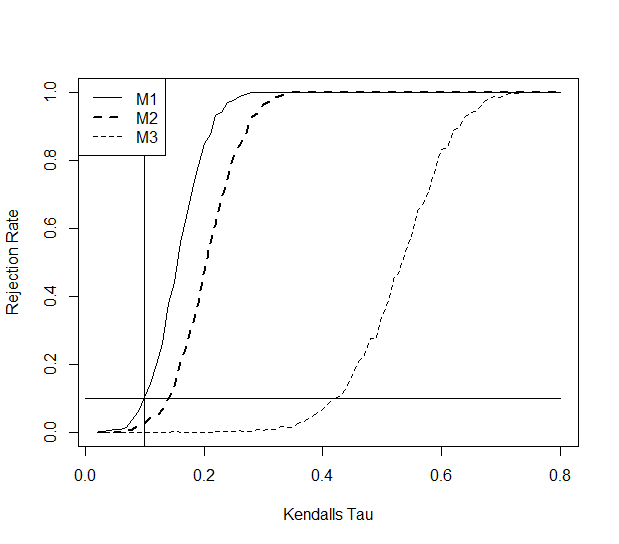}
    \includegraphics[width=7cm, height=4.5cm]{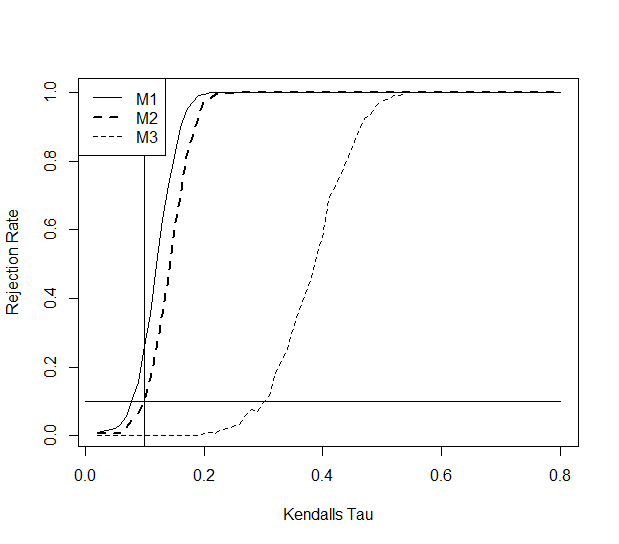}
\vspace{-.5cm} 

    \caption{ \it Simulated rejection probabilities of the test  \eqref{boottestnv}
for the hypotheses \eqref{hd1b} with $\Delta=0.1$. The dimension is $p=100$, and the sample sizes are  $n=50$ (left panels) and $n=100$ (right panels). Upper part:  normal distributed data; Lower part:  $t_3$-distributed data.  }
    \label{fig:5}
\end{figure}

The results reflect our theoretical findings.
The rejection rates increase with the distance to the null-hypothesis and the sample size for all three covariance structures. Moreover, the largest power is obtained for the covariance matrix \eqref{hd27} followed by \eqref{hd28a} and \eqref{hd28}. 
A comparison of the upper and lower parts in the figures shows 
that the tests have  lower power for $t$-distributed data.
Comparing Figures \ref{fig:1} and \ref{fig:5}  we  observe that the results of the 
tests \eqref{boottest} and \eqref{boottestnv}
under the alternative are comparable in most cases (with slight advantages of the test \eqref{boottest}).
Only  for model \eqref{hd28} with $(p,n) =(100,100)$ we observe
that the test \eqref{boottest} has a substantially larger power.

In a further simulation study, which is not presented for the sake of brevity, we have also 
investigated the performance of the asymptotic test \eqref{hd5}. This test is extremely conservative  
(under the null hypothesis the empirical rejection rate is always $0$  in all three scenarios), and it is less powerful than the  bootstrap
 tests.

\subsection{Test statistics involving $|U_{ij}|$}
\label{sec42}

In order to improve the approximation of the nominal level we investigate in this section the bootstrap test \eqref{boottestabs} that  uses the absolute value $|U_{ij}|$
instead of $U_{ij}^2$ in the definition of the test statistic (see  Remark \ref{nv}(b)).
For the sake of comparison we consider the  same scenarios as in Section \ref{sec41}
and study the properties of the test 
 for the hypotheses \eqref{hd1b} with $\Delta=0.1$.
The empirical type I error rates  are shown in Table \ref{tab2} and we observe that 
the test \eqref{boottestabs} keeps the nominal level
in all cases under consideration (in particular  also in  the ``worst case'' scenario \eqref{hd27}, where all (pairwise) Kendall's taus satisfy $\tau_{ij}=0.1$, and the data is  heavy tailed). Again we observe  in the two other scenarios \eqref{hd28a} and \eqref{hd28}  a smaller  type I error rate than  for the scenario \eqref{hd27}, which agrees with our theoretical findings in  Section \ref{sec2} and \ref{sec3}. In scenario M1 the
empirical rejection probabilities are still smaller than the  nominal level $\alpha=0.1$ predicted by the theory  (at least asymptotically). We conjecture that this is a common phenomenon of tests for hypotheses of this type using the  bootstrap  in high dimension. For example, we have conducted a small simulation study for testing the one-sided hypotheses $H_0^+$ in \eqref{rev1} using the statistic $T_n^+$  in \eqref{rev2} and the Gaussian multiplier bootstrap proposed in \cite{UApprox}. In setting M1 with a normal distribution and   $(n,p)=(50,200) $,
  $(n,p)=(50,400) $, $(n,p)=(100,200) $ and  
  $(n,p)=(100,400) $
  we obtained the empirical level  $1.5\%$, $0.4\%$, $6.1\%$ and 
  $3.2\%$, respectively. These results are very similar to the results in Table \ref{tab2} for the test test \eqref{boottestabs}.


\begin{table}[h]
	\centering 
	\begin{tabular}{|c|c|c|c|c|c|c|c|c|c|}
	    \hline
	$n,p $ & $50,100$ & $50,200$ & $50,400$ & $100,100$ & $100,200$& $100,400$  & $200,100$ & $200,200$& $200,400$\\
		\hline
	 \ref{hd27}  & 0.026 &  0.013& 0.015 & 0.048 & 0.047 & 0.029 & 0.076 & 0.067 & 0.077 \\
		 \ref{hd28a}  & 0.005& 0.008& 0.003 & 0.017 & 0.017 & 0.009  & 0.026 & 0.02 & 0.019  \\     
		 	 \ref{hd28}  & 0.000& 0.000 & 0.000& 0.000 & 0.000 & 0.000 & 0.000 & 0.000 & 0.000  \\
		 \hline
		 \hline
		 \ref{hd27}  & 0.052&  0.025& 0.014 & 0.081& 0.066 & 0.044 & 0.106 & 0.102 & 0.093 \\
		 \ref{hd28a}  & 0.014& 0.010 & 0.009 & 0.027 &  0.019 & 0.023 & 0.030 & 0.020 & 0.041        \\
		 	 \ref{hd28}  & 0.000&0.000 &0.000 & 0.000 & 0.000 & 0.000  & 0.000 & 0.000 & 0.000 \\
		 \hline
	\end{tabular}
	\smallskip
	
	\caption{ \it Simulated rejection probabilities of the test \eqref{boottestabs} under the null hypothesis
	in \eqref{kendal0} (nominal level $\alpha=0.1$). Upper part: multivariate normal distribution;
	lower part: multivariate $t$-distribution with $3$ degrees of freedom. }
	\label{tab2}
\end{table}
	
 In Figure \ref{fig:3} we display the empirical rejection probabilities as a function of Kendall's $\tau=\frac{2}{\pi}\arcsin{\rho}$ where the sample size and dimension are given by $(n,p)=(50,100)$ and $(n,p)=(100,100)$.
We consider again the covariance structures \eqref{hd27} - \eqref{hd28} and a multivariate normal  and $t_3$-distribution.

\begin{figure}[H]
    \centering
    \includegraphics[width=7cm, height=5cm]{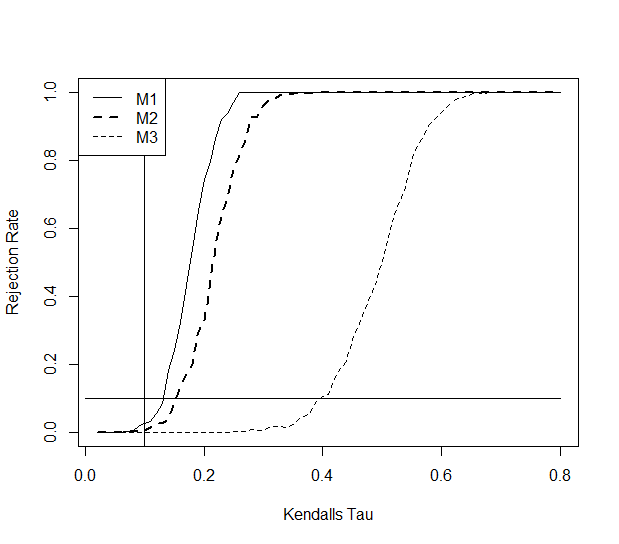}
    \includegraphics[width=7cm, height=5cm]{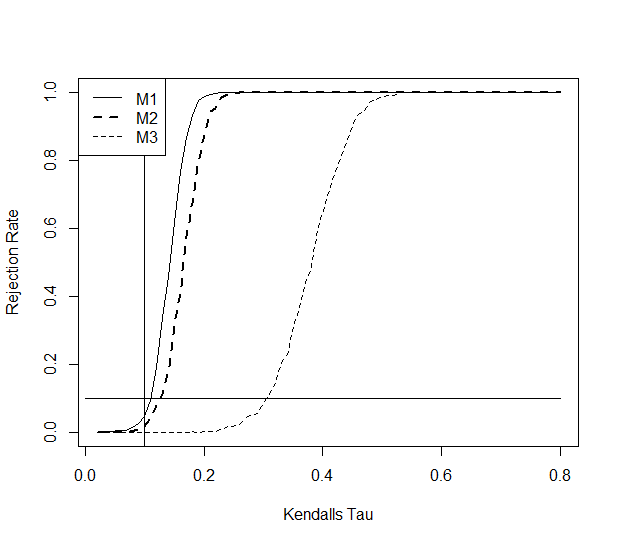}
    \includegraphics[width=7cm, height=5cm]{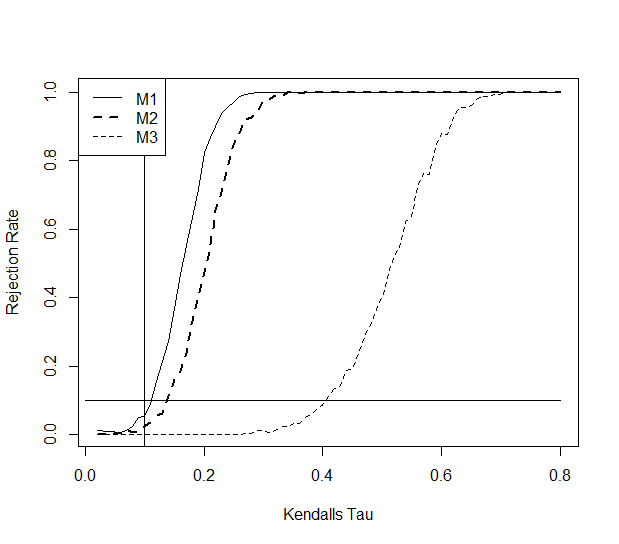}
    \includegraphics[width=7cm, height=5cm]{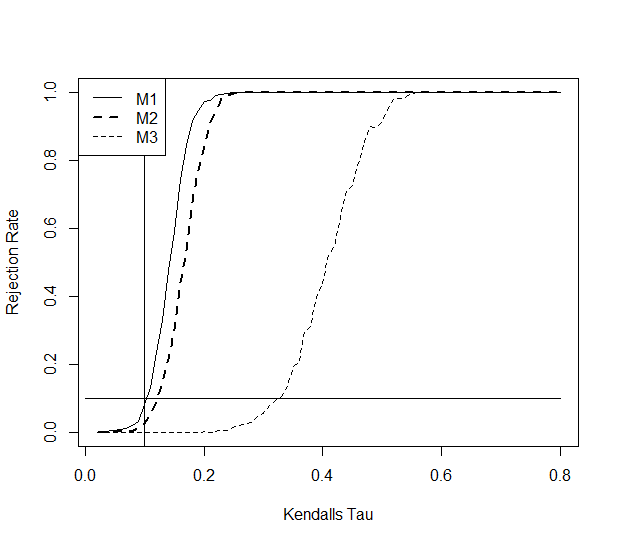}
\vspace{-.5cm} 

    \caption{ \it 
Simulated rejection probabilities of the test  \eqref{boottestabs}
for the hypotheses \eqref{hd1b} with $\Delta=0.1$. The dimension is $p=100$, and the sample sizes are  $n=50$ (left panels) and $n=100$ (right panels). Upper part:  normal distributed data; Lower part:  $t_3$-distributed data.  
}
    \label{fig:3}
\end{figure}	

Once again, the results are in line with our theoretical findings.
The rejection rates increase with the distance to the null-hypothesis and the sample size for all three covariance structures. Moreover, the largest power is obtained for the covariance matrix \eqref{hd27} followed by \eqref{hd28a} and \eqref{hd28}. Comparing the upper and the lower parts  we observe a loss in power for $t$-distributed data. It is also of interest to compare these results 
with the non-normalized test \eqref{boottestnv} in Figure \ref{fig:5}. While the differences 
are small in the case $(p,n)=(100,100)$, they are more visible 
for $(p,n)=(50,100)$. In other words:  the test 
\eqref{boottestabs} 
keeps the  nominal level in all cases under consideration, but
compared to the test \eqref{boottestnv}
this advantage comes with the price of a slight loss in power in the 
case $(p,n)=(50,100)$.

Finally,  we display in Table \ref{tab4} computation times for one run  of the test \eqref{boottestnv} with $100$ bootstrap replications.
 The other tests have similar computation times with exception of the test mentioned in Remark \ref{rem11}, which is substantially faster by construction. All computations were performed on single core of a 2.9 GHz Intel core i5 processor. 
\begin{table}[h]
	\centering 
	\begin{tabular}{|c|c|c|c|c|c|c|c|c|c|}
	    \hline
	$p,n $ & $50,50$ & $50,100$ & $50,200$ & $100,50$ & $100,100$& $100,200$  & $200,50$ & $200,100$& $200,200$\\
		\hline    
	  $t$   & 8.73 & 9.72 & 11.712 &  34.90  & 40.72 & 48.37 & 146.73   & 159.95  & 194.09   \\  
	  \hline        
		
	\end{tabular}
	\smallskip
	\caption{ \it Empirical computation time (in seconds) for  one run of the  bootsrap test \eqref{boottestb} with  $100$ bootstrap replications.}
	\label{tab4}
\end{table}

\subsection{Reversed Hypotheses}

In this section we follow up on Remark \ref{rem11} and provide 
a short study of the empirical power of the test \eqref{boottestb} for the hypotheses
\eqref{hdx1}.  As pointed out by a referee, testing these hypotheses with reasonable power requires a large sample size  as one needs to control the probability of deciding that all $p(p-1)/2 $ dependence measues  are smaller than the threshold, when in fact there is at least one dependence measure  larger or equal than $\Delta$.  This intuitive argument is confirmed by the results
 in Table \ref{pbt2}, which shows the 
 rejection probabilities of the test
 \eqref{boottestb}
for a $p$-dimensional  standard normal distribution (independence scenario), where the threshold is chosen  as $\Delta \in \{0.2,0.3\}$. 
We observe that the test only has (large) power, if the sample size is sufficiently large.
\begin{table}[h]
	\centering 
	\begin{tabular}{|c|c|c|c|c|c|c|c|c|c|}
	    \hline
	$n,p $ & $100,100$ & $100,200$ & $100,400$ & $200,100$ & $200,200$& $200,400$  & $400,100$ & $400,200$& $400,400$\\
		\hline    
	  $\Delta=0.2$  & 0.000 & 0.000 & 0.000 & 0.000 & 0.000 & 0.000& 1.000 & 0.994 & 0.882    \\      
	  \hline
	$\Delta=0.3$	& 0.016 & 0.000 & 0.000 & 0.998 & 0.996 & 0.962 & 1.000 & 1.000 & 1.000 \\
		 \hline
	\end{tabular}
	\smallskip
	\caption{ \it Simulated rejection probabilities of the test \eqref{boottestb} for the hypotheses \eqref{hdx1}  with thresholds $\Delta=0.2$, $0.3$.
	\label{pbt2}}
\end{table}


\vspace*{-0.5cm}
\subsection{Real Data Application Example}
\label{sec46}
In this section we will employ the proposed methods to re-analyze the  prostate cancer data set from protein mass spectroscopy discussed in  \cite{Adam}. The data set consists of samples from a healthy group and a group with prostate cancer, each consisting of a blood serum sample $i$ for which the intensities $X_{i,j}$ of a large number of time-of-flight values $t_j$ is recorded. The time-of-flight is related to the mass over charge ratio m/z of the constituent proteins in the blood. 
\\
The data set consists of $157$ healthy and $167$ prostate cancer patients with a total of 48538 m/z sites. Following previous researchers \citep[see, fore example,][]{Tibshirani} we ignore the m/z sites below $2000$ due to possible chemical artifacts. We also average the intensity levels in consecutive blocks of $20$, and following \cite{Levina} we further average again, this time in consecutive blocks of $10$, obtaining observations with a dimension of $218$. Similar to the cited studies we apply both a test for relevant dependence and a test for relevant bandedness to the data, conducting the tests for each group separately.
Here relevant bandedness refers to the hypotheses
\begin{align}
    H_0:\max_{|i-j|\geq m}|\rho_{i,j}|\leq \Delta
\end{align} for apropriately chosen $m$.
For our study we choose the test \eqref{boottestabs} with Spearman's $\rho$ to measure the dependence between the different intensities. We test at level $\alpha=0.05$ and choose $\Delta=0.1$, i.e. according to the thresholds from \cite{cohen1988} we are testing whether the correlations are (uniformly) small/negligible. We sample $10.000$ times to obtain the bootstrap quantiles. For $m=0$ we reject $H_0$ for both groups, while for $m=125$ we reject the null hypotheses only for the prostate cancer group, but not for the healthy patient group (here the $p$-values are $0$ and $0.193$ respectively.).
\\
It is of interest to compare our approach with the standard one of testing classical hypotheses (i.e. $\Delta=0$). For this purpose  we consider the tests for independence and bandedness proposed in  \cite{hanetal2017} and \cite{yaoetal2017}. The test from \cite{hanetal2017} is also based on Spearman's $\rho$ while the  test from \cite{yaoetal2017}  uses the  the distance covariance as dependence measure. Both tests reject independence and also bandedness  for both patient groups. The statement of rejection is put into perspective in \cite{yaoetal2017} by the authors' remark  that the dependencies for the healthy group seems  weaker for bandwidths  between $125$ and $150$. Testing the relevant hypotheses in  \eqref{hd1b} with a threshold, which defines negligibility of the dependencies provides an interesting alternative to this discussion.

\bigskip

\textbf{ Acknowledgements.} 
This work  was partially supported by the  
 DFG Research unit 5381 {\it Mathematical Statistics in the Information Age}, project number 460867398.  
 The authors would like to thank two referees and the associate editor for  constructive comments, which led to a substantial improvement of  an earlier version of this paper.

\newpage
\appendix

		\section{Online supplement:  proofs}
\label{sec5} 
  \def\theequation{A.\arabic{equation}}	
	\setcounter{equation}{0}	
	
In this section we provide proofs of our theoretical results. These are rather involved and we proceed in several steps.
In Section \ref{sec51}, we begin with the analysis of the variance estimators 
$\hat \sigma_i^2$ defined in \eqref{varDef}.  These results are used in the proofs of Theorems \ref{alpha} and \ref{consistency}, which are provided in  Section \ref{sec52}. The proof of the consistency of the bootstrap test can be found in Section \ref{sec53}.  Several arguments 
given in this section 
rely on  sophisticated technical results, which will be provided in Section \ref{sec6}. \smallskip

{\bf Notation:} Throughout this section we use the symbol
 $a_n \lesssim b_n$ to denote $a_n \leq C\, b_n$ for some generic positive constant $C$ not depending on $n$ whose 
 concrete value   may change from line to line.  We also introduce $1-o_K(1)$ as a shorthand for any term of the form
 \begin{align*}
    1-C_1/(nd)-C_2(\log(nd))^{1/2+1/\beta}/\sqrt{n}-C_3(\log(nd))^{\beta}n^{-\gamma/\beta}\,,
 \end{align*}
 where the non-negative constants $C_1,C_2$ and $C_3$ may only depend  on $\gamma$ and $\beta$. We remark that in many cases some of the factors in the summands will be 0. 
 
 Moreover $\norm{x}_\infty=  \max_{i=1}^d|x_i|$ denotes the maximum norm of a $d$-dimensional vector, where the dimension of $x$ will always be clear from the context. We also note that many bounds could be stated with $\log(d)$ in place of $\log(nd)$ at the cost of slight changes to terms involving $1-o_K(1)$. The only places where we pay close attention to the difference between the two is when we inspect the consistency properties of the two tests. Also note that we write $\E$ instead of $\E_F$ for the sake of notational convenience.

\subsection{Variance Estimation} 
\label{sec51}

From \eqref{varDef}, recall the definition of the variance estimator  $\hat{\sigma}^2_i$. The  following theorem 
characterizes the uniform  convergence rate of the differences  $ \{ n \hat{\sigma}^2_i -m^2\zeta_{1,i}  ~|~i=1, \ldots , d \} $ with $\zeta_{1,i}$ defined in \eqref{x1b}. 

\begin{theorem}
\label{VarConv}
If Assumption $(A1)$ is satisfied and   $\log d=o(n^\gamma)$ for $\gamma\leq\frac{1}{4/\beta+1}$, we have
\begin{align*}
\max_{1 \leq i \leq d} |n\hat{\sigma}^2_i-m^2\zeta_{1,i}| \lesssim B_n^2\sqrt{\frac{\log(nd)}{n}}
\end{align*}
 with probability at least $1-o_K(1)$, where the hidden constant in the inequality depends only  on $\beta$. 
\end{theorem}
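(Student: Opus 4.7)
The strategy is to view $q_{k,i}$ as a $U$-statistic of order $m-1$ in the $n-1$ observations $\{X_l : l\neq k\}$ with kernel $g_{k,i}(x_1,\ldots,x_{m-1}) := h_i(X_k,x_1,\ldots,x_{m-1})$; conditionally on $X_k$ it is unbiased for $h_{1,i}(X_k)$. The plan is to linearize
\begin{equation*}
q_{k,i} - U_i = \bigl(h_{1,i}(X_k) - \theta_i\bigr) + R_{k,i}\,,\qquad R_{k,i} := \bigl(q_{k,i} - h_{1,i}(X_k)\bigr) - (U_i - \theta_i)\,,
\end{equation*}
so that
\begin{equation*}
\frac{1}{n}\sum_{k=1}^n (q_{k,i} - U_i)^2 = \frac{1}{n}\sum_{k=1}^n (h_{1,i}(X_k) - \theta_i)^2 + \frac{2}{n}\sum_{k=1}^n (h_{1,i}(X_k) - \theta_i)\,R_{k,i} + \frac{1}{n}\sum_{k=1}^n R_{k,i}^2\,.
\end{equation*}
After absorbing the deterministic prefactor $m^2(n-1)/(n-m)^2 = m^2/n + O(n^{-2})$ in the definition of $\hat\sigma_i^2$, the first sum, multiplied by $n$, is precisely what should approximate $m^2\zeta_{1,i}$, while the cross and square terms have to be shown to be uniformly in $i$ of smaller order than $B_n^2\sqrt{\log(nd)/n}$.

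For the leading term, the variables $(h_{1,i}(X_k)-\theta_i)^2$ have mean $\zeta_{1,i}$, a fourth moment bounded by $DB_n^2$ via (A1), and satisfy $\|(h_{1,i}(X_1)-\theta_i)^2\|_{\psi_{\beta/2}} \lesssim B_n^2$. A Bernstein-type inequality for sub-Weibull summands, followed by a union bound over $i=1,\ldots,d$, yields
\begin{equation*}
\max_{1\le i \le d}\Bigl|\frac{1}{n}\sum_{k=1}^n (h_{1,i}(X_k)-\theta_i)^2 - \zeta_{1,i}\Bigr| \lesssim B_n^2\sqrt{\tfrac{\log(nd)}{n}}
\end{equation*}
with probability at least $1-o_K(1)$, and the scale restriction $\log d = o(n^{1/(4/\beta+1)})$ is exactly what ensures the sub-Weibull union bound absorbs the remainder terms that come with such a Bernstein inequality.

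The decisive step is a uniform concentration estimate for the jackknife pseudo-values, namely
\begin{equation*}
\max_{1\le k\le n,\,1\le i \le d} |q_{k,i} - h_{1,i}(X_k)| \;\vee\; \max_{1\le i \le d} |U_i - \theta_i| \lesssim B_n\,(\log(nd))^{1/\beta}\sqrt{\tfrac{\log(nd)}{n}}
\end{equation*}
with probability at least $1-o_K(1)$. The bound on $U_i - \theta_i$ comes from the Hoeffding decomposition and a sub-Weibull Bernstein inequality applied to the non-degenerate first-order projection, with higher-order canonical components controlled via moment bounds on completely degenerate $U$-statistics; the union over $i$ costs the $\log d$ factor. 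The same tool, applied conditionally on $X_k$ to the kernel $g_{k,i} - h_{1,i}(X_k)$ whose Orlicz norm remains $\lesssim B_n$ by the triangle inequality, gives the pointwise bound for $q_{k,i}$, and a union bound over the $nd$ pairs $(k,i)$ produces the displayed estimate.

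Plugging these into the expansion, the cross term is bounded by $2\max_i\bigl(n^{-1}\!\sum_k (h_{1,i}(X_k)-\theta_i)^2\bigr)^{1/2}\max_{k,i}|R_{k,i}|$, which under the assumption on $\log d$ is of order $B_n^2\sqrt{\log(nd)/n}\cdot(\log(nd))^{1/\beta}/\sqrt n$, and the square term is dominated by $\max_{k,i} R_{k,i}^2$; both are negligible compared to the target rate. Combining everything and multiplying by $n\cdot m^2(n-1)/(n-m)^2$ yields $|n\hat\sigma_i^2 - m^2\zeta_{1,i}| \lesssim B_n^2\sqrt{\log(nd)/n}$ uniformly in $i$. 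I expect the main obstacle to be the maximal inequality for $q_{k,i}$: one needs a $U$-statistic concentration bound under only an Orlicz tail assumption with a union bound over both a random anchor index and $i$, and ensuring that the resulting logarithmic factor fits within the growth regime $\log d = o(n^{1/(4/\beta+1)})$ is what dictates the exponent in the condition on $\gamma$.
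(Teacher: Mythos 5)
Your linearization $q_{k,i}-U_i = (h_{1,i}(X_k)-\theta_i) + R_{k,i}$ and the resulting expansion of $n^{-1}\sum_k(q_{k,i}-U_i)^2$ is a natural, more streamlined organization than the paper's. The paper instead writes $\hat\sigma_i^2$ in terms of $(\bar q_{k,i}-\bar U_i)^2$, compares $\sum_k(\bar q_{k,i}-\bar U_i)^2$ to $\sum_k(g_{1,i}(X_k)-\bar g_{1,i})^2$, and expands $\bar q_{k,i}$ via the explicit Hoeffding-type decomposition $D_{n,m}\bar q_{k,i}=A_{n,m}g_{1,i}(X_k)+B_{n,m}S_i+\Gamma_{k,i}$, which leads to the five terms $J_1,\dots,J_5$. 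The treatment of the leading quadratic term (concentration of $n^{-1}\sum_k(h_{1,i}(X_k)-\theta_i)^2$ around $\zeta_{1,i}$ at rate $B_n^2\sqrt{\log(nd)/n}$) is the same in both accounts.

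The cross-term argument is where a genuine gap appears. Cauchy–Schwarz gives
\begin{equation*}
\Big|\frac{2}{n}\sum_{k=1}^{n}(h_{1,i}(X_k)-\theta_i)R_{k,i}\Big| \le 2\Big(\frac{1}{n}\sum_k(h_{1,i}(X_k)-\theta_i)^2\Big)^{1/2}\max_{k,i}|R_{k,i}|\,.
\end{equation*}
The first factor concentrates around $\sqrt{\zeta_{1,i}}\le\sqrt D$, a constant, so with $\max_{k,i}|R_{k,i}|\lesssim B_n(\log(nd))^{1/\beta}\sqrt{\log(nd)/n}$ you obtain a cross term of order $B_n(\log(nd))^{1/\beta+1/2}/\sqrt n$, \emph{not} the $B_n^2(\log(nd))^{1/\beta+1/2}/n$ you wrote; and the correct bound exceeds the target rate $B_n^2\sqrt{\log(nd)/n}$ by the factor $(\log(nd))^{1/\beta}/B_n$, which is not bounded under (A1) alone — it diverges precisely in the canonical bounded-kernel case where $B_n$ is a constant. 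The cross-sum must instead be handled by exploiting cancellation: since $q_{k,i}-h_{1,i}(X_k)$ has conditional mean zero given $X_k$, writing $q_{k,i}-h_{1,i}(X_k)=\tfrac{m-1}{n-1}\sum_{l\ne k}g_{1,i}(X_l)+\Gamma_{k,i}/D_{n,m}$ decomposes $\sum_k (h_{1,i}(X_k)-\theta_i)(q_{k,i}-h_{1,i}(X_k))$ into a rank-one quadratic form in $\bar g_{1,i}$ (of order $B_n^2\log(nd)/n$) plus a mean-zero $U$-statistic of order $m$ whose symmetrized kernel has $\psi_{\beta/2}$-norm $\lesssim B_n^2$; applying Lemma~\ref{UStatConc} to that $U$-statistic with $\beta$ replaced by $\beta/2$ (this replacement is precisely why $\gamma\le 1/(4/\beta+1)$ is the operative threshold) yields the target rate directly. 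This is the structural content that the paper's $\Gamma_{k,i}$ decomposition and $J_4,J_5$ terms are designed to capture and that a sup-norm Cauchy–Schwarz bound discards. A secondary issue: your claim that the conditional (on $X_k$) Orlicz norm of the kernel ``remains $\lesssim B_n$ by the triangle inequality'' is not correct — conditioning on an atypically large $X_k$ inflates the norm — and this is exactly why the paper first restricts to the event $A_n$ on which the kernel is uniformly bounded by $C_\beta B_n(\log(nd))^{1/\beta}$, the source of the logarithmic prefactor in the $q_{k,i}$ estimate.
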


\begin{proof}
	
	We will use similar arguments as given in the proof of Lemma A.1  in \citeSM{Zhou2019}. 
	Some difficulties arise as in contrast to this work  we consider 
	$U$-statistics with unbounded kernels.
	First, we define a centralized version of the $U$-statistics in  \eqref{UDef} and the leave one out estimator below \eqref{varDef},
	\begin{align*}
	\bar{U}_i:=U_i-\theta_i\quad \text{ and } \quad &\bar{q}_{k,i}:={n-1 \choose m-1}^{-1} \!\!\! \sum_{1 \leq l_1< \ldots <l_{m-1}\leq n, l_j \neq k}g_i(X_k,X_{l_1}, \ldots ,X_{l_{m-1}})\,,
	\end{align*}
	respectively, where $g_i(X_{l_1},\ldots ,X_{l_m})=h_i(X_{l_1}, \ldots ,X_{l_m})-\theta_i$ and $1\le i\le d$.	A simple calculation shows that 
	\begin{align*}
	\hat{\sigma}^2_{i}=\frac{m^2(n-1)}{n(n-m)^2}\sum_{k=1}^{n}(q_{k,i}-U_i)^2=\frac{m^2(n-1)}{n(n-m)^2}\sum_{k=1}^{n}(\bar{q}_{k,i}-\bar{U}_i)^2.
	\end{align*}
	Setting $g_{1,i}(x) =\E[g_i(X_1, \ldots ,X_m)|X_1=x]$, $\bar{g}_{1,i}=\frac{1}{n}\sum_{j=1}^ng_{1,i}(X_j)$  and using  the triangle inequality
	then yields 
	\begin{align*}
	|n\hat{\sigma}^2_i-m^2\zeta_{1,i}|&\leq \left|\frac{m^2(n-1)}{(n-m)^2}\sum_{k=1}^{n}\Big[(\bar{q}_{k,i}-\bar{U}_i)^2-(g_{1,i}(X_k)-\bar{g}_{1,i})^2\Big]\right|\\
	&+\left|\frac{m^2(n-1)}{(n-m)^2}\sum_{k=1}^{n}(g_{1,i}(X_k)-\bar{g}_{1,i})^2-m^2\zeta_{1,i}\right|=: M_i^{(1)}+M_i^{(2)}\,
	\end{align*} 
	for $1\le i\le d$. Therefore, the claim of Theorem \ref{VarConv} is a consequence of the following two Lemmas \ref{lem:4.1} and \ref{lem:4.2}.
\end{proof}

	\begin{Lemma}\label{lem:4.1}
	Under the conditions of Theorem \ref{VarConv} we have with probability at least $1-o_K(1)$ that
	\begin{align*}
	\max_{1 \leq i \leq d} M_i^{(1)}\lesssim B_n^2\sqrt{\frac{\log (nd)}{n}}\,.
	\end{align*}	

	\end{Lemma}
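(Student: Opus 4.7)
The plan is to exploit the identity $\bar U_i = n^{-1}\sum_{k=1}^n \bar q_{k,i}$ together with the Hoeffding-style decomposition $\bar q_{k,i} = g_{1,i}(X_k) + r_k^{(i)}$, where the residual $r_k^{(i)} := \bar q_{k,i} - g_{1,i}(X_k)$ is, conditional on $X_k$, a centred $U$-statistic of order $m-1$ in the $n-1$ remaining variables with kernel $g_i(X_k,\cdot)-g_{1,i}(X_k)$. Averaging over $k$ yields $\bar U_i = \bar g_{1,i} + \bar r^{(i)}$ with $\bar r^{(i)} := n^{-1}\sum_k r_k^{(i)}$, and the $(a-b)(a+b)$ factorisation gives
\begin{align*}
    (\bar q_{k,i}-\bar U_i)^2 - (g_{1,i}(X_k)-\bar g_{1,i})^2 = 2\,(g_{1,i}(X_k)-\bar g_{1,i})(r_k^{(i)}-\bar r^{(i)}) + (r_k^{(i)}-\bar r^{(i)})^2.
\end{align*}
Summing over $k$ and applying Cauchy--Schwarz to the cross term reduces matters to uniform control of $A_i := \sum_k(g_{1,i}(X_k)-\bar g_{1,i})^2$ and $B_i := \sum_k(r_k^{(i)}-\bar r^{(i)})^2$, since $M_i^{(1)} \lesssim (m^2/n)\bigl(A_i^{1/2}B_i^{1/2} + B_i\bigr)$ for $n$ sufficiently large.

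For $A_i$ I would use the moment bounds $\zeta_{1,i}\le D$ and $\E[g_{1,i}(X_1)^4]\le D B_n^2$ from Assumption (A1), together with a Bernstein-type tail bound for sums of sub-Weibull random variables and a union bound over the $d$ indices, to get $\max_i A_i \lesssim n$ with probability $1-o_K(1)$ under the hypothesised growth rate on $d$. The crux of the argument, which I expect to be the main obstacle, is the corresponding bound on $B_i$. For fixed $(i,k)$, $r_k^{(i)}$ is a centred $(m-1)$-order $U$-statistic whose kernel inherits the $\psi_\beta$-Orlicz control from Assumption (A1); invoking the concentration inequalities for higher-order sub-Weibull $U$-statistics developed in Section \ref{sec6} (conditionally on $X_k$ to deal with the $X_k$-dependence of the kernel), combined with a union bound over the $nd$ pairs, should deliver
\begin{align*}
    \max_{1\le i\le d,\,1\le k\le n}|r_k^{(i)}| \lesssim \frac{B_n\,(\log(nd))^{1/2 + 1/\beta}}{\sqrt{n}}
\end{align*}
with probability $1 - o_K(1)$, whence $\max_i B_i \le n\cdot\max_{i,k}(r_k^{(i)})^2 \lesssim B_n^2 (\log(nd))^{1+2/\beta}$.

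Feeding these estimates into $M_i^{(1)}\lesssim (m^2/n)(A_i^{1/2}B_i^{1/2}+B_i)$ yields
\begin{align*}
    \max_i M_i^{(1)} \lesssim \frac{m^2 B_n (\log(nd))^{1/2 + 1/\beta}}{\sqrt{n}} + \frac{m^2 B_n^2 (\log(nd))^{1+2/\beta}}{n},
\end{align*}
and I would finish by invoking the hypothesis $\log d = o(n^\gamma)$ with $\gamma \le (4/\beta+1)^{-1}$: this makes the second summand negligible next to $B_n^2\sqrt{\log(nd)/n}$ and absorbs the residual polylog factor in the first summand (after taking $B_n\ge 1$ without loss of generality). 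The real difficulty, as flagged above, is producing the uniform sub-Weibull tail bound on the degenerate residuals $r_k^{(i)}$, a regime beyond the bounded-kernel arguments of \cite{Zhou2019}; this is precisely where the concentration machinery of Section \ref{sec6} must be invoked.
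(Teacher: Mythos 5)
Your decomposition is genuinely different from, and in bookkeeping terms cleaner than, the paper's. You exploit $\bar U_i = n^{-1}\sum_k\bar q_{k,i}$, set $r_k^{(i)} = \bar q_{k,i} - g_{1,i}(X_k)$, and collapse $M_i^{(1)}$ to the two quantities $A_i$ and $B_i$ via the $(a-b)(a+b)$ identity plus Cauchy--Schwarz; the paper instead peels off $|\bar U_i^2-\bar g_{1,i}^2|$, expands $\sum_k(\bar q_{k,i}^2-g_{1,i}^2(X_k))$ using the Hoeffding form of $\bar q_{k,i}$, and estimates five pieces $J_1,\dots,J_5$. The hard ingredient is identical in both: a uniform (over $i$ and $k$) control of the degenerate residual inside $r_k^{(i)}$, obtained by truncating the kernel on a high-probability event and then invoking Lemma~\ref{UStatConc} conditionally on $X_k$. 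That yields exactly the bound $\max_{i,k}|r_k^{(i)}|\lesssim B_n(\log(nd))^{1/2+1/\beta}/\sqrt n$ that you posit, and your $A_i\lesssim n$ follows from $\zeta_{1,i}\le D$ together with the fourth-moment condition in (A1).

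The gap is the final absorption step. Your first summand $B_n(\log(nd))^{1/2+1/\beta}/\sqrt{n}$ must be dominated by the target $B_n^2\sqrt{\log(nd)/n}$, but the ratio of the two is $(\log(nd))^{1/\beta}/B_n$, and ``taking $B_n\ge 1$ without loss of generality'' does not tame it: this ratio diverges whenever $B_n=O(1)$, i.e.\ precisely in the standard bounded-kernel regime. The growth restriction on $\log d$ is also of no help here, since the offending factor does not decay with $n$. What the argument really delivers is $\max_i M_i^{(1)}\lesssim B_n^2\sqrt{\log(nd)/n}+B_n(\log(nd))^{1/2+1/\beta}/\sqrt n$, one polylog factor weaker than the statement. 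You should state this honestly rather than claim the absorption. (For what it is worth, the paper's own $J_4,J_5$ estimate in \eqref{x2d} hides the same factor: the display \eqref{x2a} has $\zeta_{1,i}/n^2$ where $\zeta_{1,i}/n$ belongs, and once that is corrected $\max_i V_i^2/n^2\approx D/n$ dominates, so \eqref{x2d} also inherits the extra $(\log(nd))^{1/\beta}$ coming from $\Lambda_i$. The weaker bound is what both proofs in fact establish and is enough for the downstream applications under the paper's overall growth conditions, but it is not the inequality stated in Lemma~\ref{lem:4.1}.)
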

	\begin{proof}
	Recalling that 	$\sum_{k=1}^{n}(g_{1,i}(X_k)-\bar{g}_{1,i})^2=\sum_{k=1}^{n}g_{1,i}^2(X_k)-n\bar{g}_{1,i}^2$ and using the triangle inequality yields
	\begin{align}
	\label{x1a}
	M_i^{(1)}\lesssim \left|\bar{U}^2_i-\bar{g}^2_{1,i}\right|+\left|\frac{m^2(n-1)}{(n-2)^2}\sum_{k=1}^{n}\Big(\bar{q}_{k,i}^2-g_{1,i}^2(X_k)\Big)\right|. 
	\end{align}	
	For the first term we use Lemma \ref{UStatConc}
	from the online supplement, as we will use it repeatedly throughout the remaining proofs we will explain its application in detail one time. We apply it separately to the $U$-Statistics $\bar{U}_i$ and $\bar{g}_{1,i}$ which fulfill the required conditions by the first equation in assumption (A1) and the assumption that $\gamma \leq \frac{1}{4/\beta+1}$, note that we will always use the version of the bound containing $\log(nd)$ except when considering consistency properties.
	\begin{align*}
	\underset{1 \leq i \leq d}{\max}|\bar{U}^2_i-\bar{g}^2_{1,i}| =	\underset{1 \leq i \leq d}{\max} |(\bar{U}_i-\bar{g}_{1,i})(\bar{U}_i+\bar{g}_{1,i})| \lesssim B_n^2 \frac{\log(nd)}{n} 
	\end{align*}

	with probability at least $1-o_K(1)$.
	For the second term in \eqref{x1a} a more sophisticated analysis is necessary which we facilitate by decomposing
	\begin{align*}
	\sum_{1 \leq l_1< \ldots <l_{m-1} \leq n, l_j \neq k}g_i(X_k,X_{l_1}, \ldots ,X_{l_{m-1}})=A_{n,m}g_{1,i}(X_k)+B_{n,m}S_i+\Gamma_{k,i}\,,
	\end{align*}
	where $A_{n,m}={n-1 \choose m-1}- {n-2 \choose m-2}$, $B_{n,m}={n-2 \choose m-2}$, $S_i=\sum_{l=1}^{n}g_{1,i}(X_l)$ and
	\begin{align*}
	\Gamma_{k,i}=\sum_{1 \leq l_1 <  \ldots  < l_{m-1}\leq n, l_j \neq k}\Big(g_i(X_k,X_{l_1} \ldots ,X_{l_{m-1}})-g_{1,i}(X_k) -\sum_{j=1}^{m-1}g_{1,i}(X_{l_j})\Big)~.
	\end{align*}
	By the definition of $\bar{q}_{k,i}$, we then have
	\begin{align*}
	\bar{q}_{k,i}=\frac{A_{n,m}g_{1,i}(X_k)+B_{n,m}S_i+\Gamma_{k,i}}{{n-1 \choose m-1}}, 
	\end{align*}
	which leaves us with the task to bound
	\begin{align*}	
	J:=	\underset{1 \leq i \leq d}{\max}\left|\frac{m^2(n-1)}{(n-2)^2}\sum_{k=1}^{n}\left[\bar{q}_{k,i}^2-g_{1,i}^2(X_k)\right]\right|~.
	\end{align*}
	Setting
	$V^2_i=\sum_{k=1}^{n}g_{1,i}^2(X_k)$, $D_{n,m}={n-1 \choose m-1}$ and  $\Lambda^2_i= \sum_{k=1}^{n}\Gamma^2_{k,i}$, we have
	\begin{align*}
	\sum_{k=1}^{n}\bar{q}_{k,i}^2 &=\frac{1}{D^2_{n,m}}
	\Big \{ A^2_{n,m}V_i^2+\Lambda_i^2+(nB^2_{n,m}+2A_{n,m}B_{n,m})S_i^2\\
	&\quad +2A_{n,m}\sum_{k=1}^{n}g_{1,i}(X_k)\Gamma_{k,i}+2B_{n,m}S_i\sum_{k=1}^{n}\Gamma_{k,i}\Big \} 
\end{align*}	
	which together with the Cauchy-Schwarz inequality (for $J_4$ and $J_5$) yields 
	\begin{align*}
	    J \lesssim J_1+J_2+J_3+J_4+J_5~,
	\end{align*} 
	where
	\begin{align*}
	&J_1=\underset{1 \leq i \leq d}{\max}\left|\frac{(A^2_{n,m}-D^2_{n,m})V_i^2}{n D^2_{n,m} }\right|\lesssim\underset{1 \leq i \leq d}{\max} \frac{V_i^2}{n^2}\,,\\
	&J_2=\underset{1 \leq i \leq d}{\max}\frac{\Lambda_i^2}{nD^2_{n,m}}\,,\\
	&J_3=\underset{1 \leq i \leq d}{\max}\frac{(nB^2_{n,m}+2A_{n,m}B_{n,m})S_i^2}{nD^2_{n,m}} \lesssim \underset{1 \leq i \leq d}{\max}\frac{S_i^2}{n^2}\,,\\
	&J_4=\underset{1 \leq i \leq d}{\max}\frac{2A_{n,m}V_i\Lambda_i}{nD^2_{n,m}} \lesssim \underset{1 \leq i \leq d}{\max}\frac{V_i\Lambda_i}{n^2}\,,\\
	&J_5=\underset{1 \leq i \leq d}{\max}\frac{2B_{n,m}|S_i|\sqrt{n}\Lambda_i}{nD^2_{n,m}} \lesssim \underset{1 \leq i \leq d}{\max}\frac{V_i\Lambda_i}{n^2}\,.
	\end{align*}
	In the remainder of this proof, we will bound the terms $J_1,\ldots, J_5$ separately.
	For $J_1$ we have by Lemma \ref{ProductNorm} that $\norm{g_{1,i}^2(X_k)}_{\psi_{\beta/2}}\leq B_n^2$ so that  Lemma \ref{UStatConc} in the online supplement yields that  
	\begin{align}
	\label{x2a}
	J_1 \lesssim \underset{1 \leq i \leq d}{\max}\frac{V_i^2}{n^2}=\underset{1 \leq i \leq d}{\max}\frac{\sum_{k=1}^{n}g_{1,i}^2(X_k)-\zeta_{1,i}}{n^2}+\frac{\zeta_{1,i}}{n^2} \lesssim B_n^2\sqrt{\frac{\log(nd)}{n^3}}
	\end{align}
	with probability at least $1-3/(nd)-C(\log (nd))^{1/2+1/\beta}/\sqrt{n}$. \\
	Regarding the term $J_2$, we define the set 
\begin{align*}
A_n & =\Big \{\underset{\substack{1 \leq i \leq d \\ 1 \leq k, l_1, 
	\ldots ,l_{m-1} \leq n}}{\max}\Big(g_i(X_k,X_{l_1} \ldots ,X_{l_{m-1}})-g_{1,i}(X_k) \\
	& ~~~~~~~~~~~~~~~~~~~~~~~~~~~
	-\sum_{j=1}^{m-1}g_{1,i}(X_{l_j})\Big)\leq C_\beta B_n (\log(nd))^{1/\beta}\Big \}\,,
\end{align*}
	where the constant $C_\beta$ is chosen such that $\p(A_n)\geq 1-\frac{1}{nd}$.
	Indeed, using the union bound and Lemma \ref{WeibullChar}  in the online supplement it is easy to see that by choosing $C_\beta$ appropriately (this can be done universally with only dependence on $\beta$) we obtain $\p(A_n)\geq 1-\frac{1}{nd}$.
	
	Conditional on $X_k$ we now apply Lemma \ref{UStatConc} in the online supplement to $\Gamma_{k,i}$ on the set $A_n$ with $K=B_n(\log(nd))^{1/\beta}$ to obtain 
	\begin{align*}
&	\P \Big( \Big\{\underset{1 \leq i \leq d, 1 \leq k \leq n}{\max}\Gamma_{k,i}/D_{n,m}  \lesssim B_n (\log(nd))^{1/\beta}\sqrt{\tfrac{\log(nd)}{n}}\Big\} \cap A_n \,\Big) \\
\nonumber & \qquad
 =\sum_{k=1}^n \P \Big( \Big\{\underset{1 \leq i \leq d}{\max}\Gamma_{k,i}/D_{n,m}  \lesssim B_n (\log(nd))^{1/\beta}\sqrt{\tfrac{\log(nd)}{n}}\Big\} \cap A_n \,\Big\vert \tilde{B}_k \Big) \frac{1}{n}    \\
\nonumber & \qquad
=\sum_{k=1}^n \E\left[\P \Big( \Big\{\underset{1 \leq i \leq d}{\max}\Gamma_{k,i}/D_{n,m}  \lesssim B_n (\log(nd))^{1/\beta}\sqrt{\tfrac{\log(nd)}{n}}\Big\} \cap A_n \,\Big|X_k, \tilde{B}_k\Big)\right] \frac{1}{n}   \\
\nonumber & \qquad
\ge   1-o_K(1)
	\end{align*}
	where we used that $\Gamma_{k,i}$ and $\Gamma_{k,j}$ have the same distribution to obtain the second line and define $\tilde{B}_l$ as the event that $\underset{1 \leq i \leq d, 1 \leq k \leq n}{\max}\Gamma_{k,i}/D_{n,m}=\underset{1 \leq i \leq d}{\max}\Gamma_{l,i}/D_{n,m}$.
	Using the definition of $\Lambda_i$ and recalling that $\gamma \leq \frac{1}{4/\beta+1}$, this yields
	\begin{align}
	\label{x2b}
	J_2=\underset{1 \leq i \leq d}{\max}\frac{\Lambda_i^2}{nD^2} \lesssim  B_n^2\frac{\log(nd)}{n}(\log(nd))^{2/\beta}\lesssim B_n^2\sqrt{\frac{\log(nd)}{n}}
	\end{align}
	with probability at least $1-\frac{4}{nd}-C(\log(d))^{1/2+1/\beta}/\sqrt{n}$.
	
	For $J_3$ we have by Lemma \ref{WeibConc} in the online supplement that
	\begin{align}
	\label{x2c}
	J_3\lesssim \underset{1 \leq i \leq d}{\max}\frac{S_i^2}{n^2}\lesssim B_n^2\frac{\log(nd)}{n}
	\end{align}
	with probability at least $1-\frac{3}{nd}$.
Finally, regarding $J_4$ and $J_5$ we have by the calculations for $J_2$
	\begin{align}
	\label{x2d}
	J_l\lesssim \underset{1 \leq i \leq d}{\max}\frac{V_i\Lambda_i}{n^2}=\underset{1 \leq i \leq d}{\max}\sqrt{\frac{V_i^2\Lambda_i^2}{n^4}} \lesssim B_n^2\sqrt{\frac{\log(nd)}{n}}\,, \qquad l=4,5,
	\end{align}
	with probability at least $1-\frac{4}{nd}-C(\log(d))^{1/2+1/\beta}/\sqrt{n}$ provided that $\gamma \leq \frac{1}{4/\beta+1}$.
	Combining \eqref{x2a}, \eqref{x2b}, \eqref{x2c} and \eqref{x2d} shows that $J\lesssim B_n^2\sqrt{(\log(nd))/n}$ with probability at least $1-o_K(1)$. 
	\end{proof}

	\begin{Lemma}\label{lem:4.2}
	Under the conditions of Theorem \ref{VarConv}, we have with probability at least $1-o_K(1)$  that
	\begin{align*}
	\max_{1\leq i \leq d}M_i^{(2)}\lesssim B^2_n \sqrt{\frac{\log(nd)}{n}}\,.
	\end{align*}
	\end{Lemma}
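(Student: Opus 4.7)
The plan is to decompose $M_i^{(2)}$ into a dominant centered-sum term and lower-order correction terms, then apply a uniform concentration bound for sub-Weibull averages to the dominant term, and finally take a maximum via a union bound. Observe first the identity
\begin{align*}
\sum_{k=1}^n (g_{1,i}(X_k) - \bar{g}_{1,i})^2 = \sum_{k=1}^n g_{1,i}^2(X_k) - n \bar{g}_{1,i}^2,
\end{align*}
so, writing $c_n := n(n-1)/(n-m)^2 = 1 + O(1/n)$ and using $\zeta_{1,i} = \E[g_{1,i}^2(X_1)]$, we may rewrite
\begin{align*}
M_i^{(2)} = m^2 \Bigl| c_n \Bigl( \tfrac{1}{n}\sum_{k=1}^n g_{1,i}^2(X_k) - \zeta_{1,i}\Bigr) + (c_n-1)\zeta_{1,i} - c_n \bar{g}_{1,i}^2\Bigr|.
\end{align*}
This isolates three pieces whose behavior I would handle separately.

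For the dominant piece $\frac{1}{n}\sum_{k=1}^n g_{1,i}^2(X_k) - \zeta_{1,i}$, I would use Lemma \ref{ProductNorm} (applied within the earlier proof) to get $\|g_{1,i}^2(X_k)\|_{\psi_{\beta/2}} \lesssim B_n^2$ uniformly in $i$, and then invoke the sub-Weibull concentration bound (Lemma \ref{WeibConc}, or equivalently the $m=1$ case of Lemma \ref{UStatConc}) to obtain
\begin{align*}
\max_{1\le i \le d}\Bigl|\tfrac{1}{n}\sum_{k=1}^n g_{1,i}^2(X_k) - \zeta_{1,i}\Bigr| \lesssim B_n^2 \sqrt{\tfrac{\log(nd)}{n}}
\end{align*}
with probability at least $1 - o_K(1)$, after taking a union bound over $i=1,\ldots,d$; this is where the growth constraint $\gamma \le 1/(4/\beta+1)$ is used (to keep the dimension compatible with the heavier tails of the squared variables).

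For the deterministic remainder $(c_n-1)\zeta_{1,i}$, Assumption (A1) gives $\zeta_{1,i} \le D$ and $c_n - 1 = O(1/n)$, so this term is $O(1/n)$ uniformly in $i$ and hence absorbed by $B_n^2\sqrt{\log(nd)/n}$. For the last piece $c_n \bar{g}_{1,i}^2$, I would apply Lemma \ref{WeibConc} directly to the sample mean $\bar{g}_{1,i}$ (whose summands have $\psi_\beta$-norm $\lesssim B_n$) combined with a union bound to obtain $\max_i \bar{g}_{1,i}^2 \lesssim B_n^2 \log(nd)/n$ with probability $1 - o_K(1)$, which is again dominated by $B_n^2\sqrt{\log(nd)/n}$. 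Collecting these three bounds and multiplying by $m^2$ yields the claim.

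The only mildly delicate point I anticipate is verifying that $\|g_{1,i}^2(X_1)\|_{\psi_{\beta/2}} \lesssim B_n^2$ uniformly, since this controls the tails of the summands in the dominant term; this follows from the standard Orlicz product inequality already invoked in the proof of Lemma \ref{lem:4.1} together with the uniform bound $\max_i \|g_i(X_1,\ldots,X_m)\|_{\psi_\beta} \le B_n$ from (A1) and Jensen's inequality (since conditioning preserves the $\psi_\beta$-norm up to a constant). Everything else is bookkeeping around constants and the union bound, and the final probability is assembled by intersecting the three high-probability events.
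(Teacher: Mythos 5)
Your proposal is correct and follows essentially the same route as the paper: both use the identity $\sum_k (g_{1,i}(X_k)-\bar g_{1,i})^2 = \sum_k g_{1,i}^2(X_k) - n\bar g_{1,i}^2$ to split $M_i^{(2)}$ into a centered sum of $g_{1,i}^2(X_k)$ and a squared sample mean, control $\|g_{1,i}^2(X_1)\|_{\psi_{\beta/2}}\lesssim B_n^2$ via Lemma \ref{ProductNorm} together with the conditional-norm preservation (Lemma \ref{WeibCond}), and then apply the sub-Weibull concentration bound (Lemma \ref{WeibConc}) plus a union bound to each piece. Your explicit bookkeeping of the $c_n$ prefactor and the deterministic $(c_n-1)\zeta_{1,i}$ term is just a more verbose version of the $\lesssim$ the paper uses, so no genuine difference.
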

	\begin{proof}
Recalling that $\sum_{k=1}^{n}(g_{1,i}(X_k)-\bar{g}_{1,i})^2=\sum_{k=1}^{n}g_{1,i}(X_k)^2-n\bar{g}_{1,i}^2$ as well as $\zeta_{1,i}=\E[g_{1,i}^2(X_1)]$ yields that
	\begin{align*}
	\left|\frac{m^2(n-1)}{(n-m)^2}\sum_{k=1}^{n}(g_{1,i}(X_k)-\bar{g}_{1,i})^2-m^2\zeta_{1,i}\right|\lesssim \left|\frac{1}{n}\sum_{k=1}^{n}g_{1,i}(X_k)^2-\zeta_{1,i}\right|+\bar{g}_{1,i}^2\,.
	\end{align*}
	
	Note that $\gamma \leq \frac{1}{4/\beta-1}$. We then apply Lemma \ref{WeibConc}, Lemma \ref{ProductNorm} and Lemma \ref{WeibCond} 
	in the online supplement
	to obtain that with probability at least $1-C/(nd)$,
	\begin{align*}
	\underset{1 \leq i \leq d}{\max}\left|\frac{1}{n}\sum_{k=1}^{n}\big(g_{1,i}^2(X_k)-\zeta_{1,i}\big) \right| \lesssim B_n^2\sqrt{\frac{\log(nd)}{n}}
	\end{align*}
	and
	\begin{align*}
	\underset{1 \leq i \leq d}{\max} \bar{g}^2_{1,i} \lesssim  B_n^2\frac{\log (nd)}{n}\,.
	\end{align*}
	\end{proof}

\subsection{ Proof of the results in Section \ref{sec21}}
\label{sec52}

\subsubsection{Preliminaries} \label{sec521}
The main step in the proofs of Theorem \ref{alpha} and 
\ref{consistency} is a weak convergence result
for the statistic 
\begin{align} \label{det1}
\mathcal{T}_n:=\underset{1\leq i\leq d}{\max}\ \frac{U^2_{i}-\theta^2_{i}}{2\hat{\sigma}_{i}|\theta_{i}|}	\end{align}
in the case where $|\theta|_{\min}:=\min_{1\le i\le d} |\theta_i| >c$ for some constant $c >0 $.
To prepare its  proof we first replace the variance estimates $\hat{\sigma}_i^2$  by the population variances using Lemma \ref{RealVar}
and then apply the Gaussian approximation in Lemma \ref{GaussApproximation} to the linearized statistic $\mathcal{T}_n$  assuming that $\log d$ and the constants $B_n$ in Assumption (A1) do not grow too fast. To this end, we 
recall the notation \eqref{x1b} and define
\begin{align}
\label{x3}
	S_n=\sqrt{n}\underset{1\leq i\leq d}{\max}\frac{\frac{1}{n}\sum_{k=1}^{n}h_{1,i}(X_k)-\theta_{i}}{\sqrt{\zeta_{1,i}}}\text{sign}(\theta_i).
\end{align}

\begin{Lemma}
	\label{RealVar}
	If Assumptions (A1) and (A2) are satisfied and  $|\theta|_{\text{min}}>c$
	for some positive constant $c$, then it holds
	\begin{align*}
	\left|\underset{1\leq i\leq d}{\max}\ \frac{U^2_{i}-\theta^2_{i}}{2\hat{\sigma}_{i}|\theta_{i}|}-S_n\right| \lesssim B_n^3\frac{\log(nd)}{\sqrt{n}}+B_n\frac{\log(nd)}{n^{1/2-\gamma/\beta}}
	\end{align*}
	with probability at least $1-o_K(1)$.
	Here the constants hidden in $\lesssim$ only depend on 
	the quantities $c,\gamma,\beta,\underline{b}$, and therefore the estimate is uniform for the subsets of the classes $\mathcal{H}_0(\Delta)$ and  $\mathcal{H}_1$  defined in \eqref{det0a} and \eqref{det0}, respectively, for which $|\theta|_{\text{min}}>c$.  
\end{Lemma}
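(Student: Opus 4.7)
The plan is to use the algebraic identity \(U_i^2-\theta_i^2=(U_i-\theta_i)^2+2\theta_i(U_i-\theta_i)\) (a variant of \eqref{pb10} with \(\Delta\) replaced by \(\theta_i\)) so that, after dividing by \(2\hat\sigma_i|\theta_i|\) and using \(\theta_i/|\theta_i|=\mathrm{sign}(\theta_i)\), the studentized summand inside the maximum splits as
\[
 \frac{U_i^2-\theta_i^2}{2\hat\sigma_i|\theta_i|}
 =\mathrm{sign}(\theta_i)\,\frac{U_i-\theta_i}{\hat\sigma_i}
 +\frac{(U_i-\theta_i)^2}{2\hat\sigma_i|\theta_i|}.
\]
Under the standing assumption \(|\theta|_{\min}>c\), Assumption (A2) yields \(\zeta_{1,i}>\underline b\) for every index appearing in the maximum, and Theorem~\ref{VarConv} gives \(\hat\sigma_i^2=m^2\zeta_{1,i}/n+o(1/n)\) uniformly in \(i\). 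Hence \(\sqrt n\,\hat\sigma_i\) is bounded above and below by constants depending only on \(\underline b,\beta,\gamma\) on an event of probability \(1-o_K(1)\), on which all subsequent estimates will be carried out, with constants that depend only on \(c,\beta,\gamma,\underline b\); this is the source of the uniformity over the relevant subsets of \(\mathcal H_0(\Delta)\) and \(\mathcal H_1(c)\).

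The quadratic remainder is straightforward: Lemma~\ref{UStatConc} applied to the centered kernels \(h_i-\theta_i\) gives \(\max_i|U_i-\theta_i|\lesssim B_n\sqrt{\log(nd)/n}\) with probability \(1-o_K(1)\), and combined with \(\hat\sigma_i|\theta_i|\gtrsim c\sqrt{\underline b/n}\) this yields \(\max_i(U_i-\theta_i)^2/(2\hat\sigma_i|\theta_i|)\lesssim B_n^2\log(nd)/\sqrt n\), which is dominated by the first term of the stated bound.

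For the linear term, I would perform the Hoeffding decomposition
\(U_i-\theta_i=(m/n)\sum_{k=1}^n(h_{1,i}(X_k)-\theta_i)+R_i\), where \(R_i\) is a completely degenerate \(U\)-statistic of order \(\ge 2\). Concentration for these residuals via Lemma~\ref{UStatConc}, together with a sub-Weibull maximal inequality, yields a uniform bound on \(\max_i|R_i|\); after division by \(\hat\sigma_i\asymp\sqrt{\zeta_{1,i}/n}\) this contributes the second summand \(B_n\log(nd)/n^{1/2-\gamma/\beta}\), the factor \(n^{\gamma/\beta}\) entering because under (A1) the best maximal bound for the linearized statistics is \((\log d)^{1/\beta}=o(n^{\gamma/\beta})\) rather than the Gaussian \(\sqrt{\log d}\). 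To turn the Hájek principal part into the summand of \(S_n\), I replace \(\hat\sigma_i\) by its population counterpart \(m\sqrt{\zeta_{1,i}/n}\); Theorem~\ref{VarConv} provides the relative error \(|\sqrt n\,\hat\sigma_i/(m\sqrt{\zeta_{1,i}})-1|\lesssim B_n^2\sqrt{\log(nd)/n}\), while the sup-norm of \((U_i-\theta_i)/\hat\sigma_i\) is controlled by \(B_n\sqrt{\log(nd)}\) (once more by Lemma~\ref{UStatConc}). Multiplying these two sup-norm bounds produces the first summand \(B_n^3\log(nd)/\sqrt n\). Using that \(|\max_i a_i-\max_i b_i|\le\|a-b\|_\infty\) translates the resulting componentwise approximation into the stated bound on the difference of maxima.

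The main obstacle is tracking the three distinct sources of error simultaneously and uniformly in \(i\): the degenerate residuals \(R_i\), the variance approximation from Theorem~\ref{VarConv}, and the lack of Gaussian-type tail control on the linear part, which forces sub-Weibull maximal inequalities and manifests as the \(n^{\gamma/\beta}\) factor. A subtler point is that (A2) only provides a lower bound on \(\zeta_{1,i}\) for indices with \(|\theta_i|>c\); the hypothesis \(|\theta|_{\min}>c\) ensures that every denominator \(\hat\sigma_i|\theta_i|\) is non-degenerate, so no index-by-index truncation is needed and the maximum can be handled in one pass.
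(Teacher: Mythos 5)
Your proposal is correct and follows essentially the same route as the paper's proof: linearize $U_i^2-\theta_i^2$ into a dominant linear part and a small quadratic part, control the degenerate residual of the Hoeffding decomposition via the Markov-type bound inside Lemma~\ref{UStatConc} (this is exactly where the $n^{\gamma/\beta}$ factor enters), and swap $\hat\sigma_i$ for $m\sqrt{\zeta_{1,i}}$ using Theorem~\ref{VarConv}, with the cross term $B_n^2\sqrt{\log(nd)/n}\cdot B_n\sqrt{\log(nd)}$ producing $B_n^3\log(nd)/\sqrt n$. The only cosmetic difference is the order of operations (you decompose before replacing the variance, the paper does the reverse), which does not affect the argument.
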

\begin{proof}
By Theorem \ref{VarConv}, we have
\begin{align*}
	\max_{1\le 1\le d}\left|\sqrt{n}\hat{\sigma}_i-m\sqrt{\zeta_{1,i}}\right|=\max_{1\le 1\le d}\frac{\left|n\hat{\sigma}_i^2-m^2\zeta_{1,i}\right|}{\sqrt{n}\hat{\sigma}_i+m\sqrt{\zeta_{1,i}}}\lesssim B_n^2\sqrt{\frac{\log(nd)}{n}}
	\end{align*}
	up to a constant depending only on $\beta$ and $\underline{b}$,
	and therefore,
	\begin{align*}
	&\left| \sqrt{n}\underset{1\leq i\leq d}{\max}\ \frac{U^2_{i}-\theta^2_{i}}{2\sqrt{n}\hat{\sigma}_{i}|\theta_{i}|}-\sqrt{n}\underset{1\leq i\leq d}{\max}\ \frac{U^2_{i}-\theta^2_{i}}{2m\sqrt{\zeta_{1,i}}|\theta_{i}|}\right|\\ 
	&\leq \underset{1 \leq i \leq d}{\max}\left|\sqrt{n}\hat{\sigma}_i-m\sqrt{\zeta_{1,i}}\right|\sqrt{n}\underset{1\leq i\leq d}{\max} \left|\frac{U^2_{i}-\theta^2_{i}}{2m\sqrt{\zeta_{1,i}}\sqrt{n}\hat{\sigma}_i|\theta_{i}|}\right|\\
	&\lesssim B_n^2\sqrt{\frac{\log(nd)}{n}}~ \underset{1\leq i\leq d}{\max} \left|\frac{U^2_{i}-\theta^2_{i}}{2m\sqrt{\zeta_{1,i}}\hat{\sigma}_i|\theta_{i}|}\right|\,.
	\end{align*}
By the same  arguments, the triangle inequality and writing 
\begin{align*}
	\hat{T}_{n,1}&=\underset{1\leq i\leq d}{\max}\left|\theta_i\frac{U_{i}-\theta_{i}}{m\sqrt{\zeta_{1,i}}\hat{\sigma}_i|\theta_{i}|}\right|\quad \text{ and } \quad
	\hat{T}_{n,2}=\underset{1\leq i\leq d}{\max}\left|\frac{(U_{i}-\theta_{i})^2}{2m\sqrt{\zeta_{1,i}}\hat{\sigma}_i|\theta_{i}|}\right|\,,
	\end{align*}
	we obtain that
	\begin{align}
		\label{UBound}
	\underset{1\leq i\leq d}{\max} \left|\frac{U^2_{i}-\theta^2_{i}}{2m\sqrt{\zeta_{1,i}}\hat{\sigma}_i|\theta_{i}|}\right| \leq \hat{T}_{n,1}+\hat{T}_{n,2} \lesssim B_n\sqrt{\log(nd)},
	\end{align}
	with probability at least $1-o_K(1)$, where the last inequality in \eqref{UBound} follows from Lemma \ref{UStatConc} in the online supplement. Therefore, the  constant 
	in this inequality only depends on the constants $\gamma, c$ and $\beta$. Combining the two estimates we conclude  
	\begin{align}
\label{approx1}
	\left|\sqrt{n}\underset{1\leq i\leq d}{\max}\ \frac{U^2_{i}-\theta^2_{i}}{2\sqrt{n}\hat{\sigma}_{i}|\theta_{i}|}-\sqrt{n}\underset{1\leq i\leq d}{\max}\ \frac{U^2_{i}-\theta^2_{i}}{2m\sqrt{\zeta_{1,i}}|\theta_{i}|}\right| \lesssim B_n^3\frac{\log(nd)}{\sqrt{n}}\,.
	\end{align}
	
We then observe that 
\begin{equation}
\begin{split}
\label{approx2}
\sqrt{n}\underset{1\leq i\leq d}{\max}\theta_i\frac{U_{i}-\theta_{i}}{m\sqrt{\zeta_{1,i}}|\theta_{i}|} & \leq \sqrt{n}\underset{1\leq i\leq d}{\max}\frac{U_{i}^2-\theta_{i}^2}{2m\sqrt{\zeta_{1,i}}|\theta_{i}|}  \\
& \leq \sqrt{n}\underset{1\leq i\leq d}{\max}\theta_i\frac{U_{i}-\theta_{i}}{m\sqrt{\zeta_{1,i}}|\theta_{i}|}+\sqrt{n}\underset{1\leq i\leq d}{\max}\frac{(U_{i}-\theta_{i})^2}{2m\sqrt{\zeta_{1,i}}|\theta_{i}|}\\
& \lesssim \sqrt{n}\underset{1\leq i\leq d}{\max}\theta_i\frac{U_{i}-\theta_{i}}{m\sqrt{\zeta_{1,i}}|\theta_{i}|}+B_n^2\frac{\log(nd)}{\sqrt{n}}\,, 
\end{split}
\end{equation}
where the last inequality follows by Lemma \ref{UStatConc} with probability at least $1-o_K(1)$ and the hidden constant depends only on $\gamma$ and $\beta$. Using the estimate \eqref{DegenConc}  (with $t=B_n(\log(nd))/(n^{1-\gamma/\beta})$) 
in the proof of Lemma \ref{UStatConc},  we get
	\begin{align}
\nonumber
		\left|\sqrt{n}\underset{1\leq i\leq d}{\max}\theta_i\frac{U_{i}-\theta_{i}}{m\sqrt{\zeta_{1,i}}|\theta_{i}|}-S_n\right|
		& \le \sqrt{n}\underset{1 \leq i \leq d}{\max}\left|\frac{\frac{1}{n}\sum_{k=1}^{n}(h_{1,i}(X_k)-\theta_i)-(U_i-\theta_i)}{\sqrt{\zeta_{1,i}}}\right| \\
			\label{Approx3}
		& \lesssim B_n\frac{\log(nd)}{n^{1/2-\gamma/\beta}}
	\end{align}
with probability at least $1-C(\log(nd))^{\beta}n^{-\gamma/\beta}$, where the constants in both inequalities 
depend only on $\beta$ and $\gamma$. Combining \eqref{approx1}, \eqref{approx2} and \eqref{Approx3} yields the desired result. 
\end{proof}

We will now provide  a Gaussian approximation
for $S_n$, which is a consequence of Lemma \ref{GaussApprox}
in the online supplement. Note that the conditions of Lemma \ref{GaussApprox} 
are satisfied because of  
Assumption (A1), (A2) and Jensen's inequality.

\begin{Lemma}
\label{GaussApproximation}
	Under the assumptions of Theorem \ref{alpha} and $|\theta|_{\min}>c$ we have, up to some constant $C$ depending only on $\beta, D, \underline{b},c$, that
	\begin{align*}
		\underset{x \in \R}{\sup}\left| \p(S_n\leq x)-\p(S^G_n\leq x)\right|\leq C\left(\frac{B_n^2(\log(nd))^{4+2/\beta}}{n}\right)^{1/4}\,,	
	\end{align*}
where $S_n^G$ is defined as in \eqref{x3} with the difference that the vectors $X_1,\ldots ,X_k$ are replaced by independent centered Gaussian vectors with covariance matrix $\Gamma =(\Gamma_{ij})_{1 \leq i,j \leq d}$  defined by 
\begin{align}
\label{GammaDefin}
\Gamma_{ij}= {\rm Corr} (h_{1,i}(X_1),h_{1,j}(X_1))\text{sign}(\theta_i \theta_j)~.
\end{align} 

Note that the sole dependence on $\beta, D, \underline{b},c$ of the bound implies that it is valid uniformly for the subsets of the classes $\mathcal{H}_0, \mathcal{H}_1$ in \eqref{det0a} and \eqref{det0} for which $|\theta|_{\min}>c$.
\end{Lemma}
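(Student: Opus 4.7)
The plan is to recognize that $S_n$ is the maximum of a $d$-dimensional normalized sum of i.i.d. random vectors and then invoke the high-dimensional Gaussian approximation result stated as Lemma \ref{GaussApprox} in the online supplement. To this end, for each $k=1,\ldots,n$ and $i=1,\ldots,d$ I would define
\begin{align*}
W_{k,i}:=\text{sign}(\theta_i)\,\frac{h_{1,i}(X_k)-\theta_i}{\sqrt{\zeta_{1,i}}}\,,
\end{align*}
so that the vectors $W_k=(W_{k,1},\ldots,W_{k,d})^{\top}$, $k=1,\ldots,n$, are i.i.d., centered, and $S_n=\max_{1\leq i\leq d} \tfrac{1}{\sqrt{n}}\sum_{k=1}^n W_{k,i}$. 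A direct computation shows that the covariance matrix of $W_1$ equals $\Gamma$ as defined in \eqref{GammaDefin}, since $\mathrm{Cov}(W_{1,i},W_{1,j})=\text{sign}(\theta_i\theta_j)\,\mathrm{Corr}(h_{1,i}(X_1),h_{1,j}(X_1))$. Consequently the Gaussian analogue $S_n^G$ described in the statement is nothing but $\max_i \tfrac{1}{\sqrt n}\sum_k Z_{k,i}$ where $Z_1,\ldots,Z_n$ are i.i.d.\ centered Gaussians with covariance $\Gamma$.

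The next step is to verify the moment/tail hypotheses required by the Chernozhukov--Chetverikov--Kato type bound (Lemma \ref{GaussApprox}) uniformly in $i$. On the restricted parameter set $|\theta|_{\min}>c$, Assumption (A2) gives $\zeta_{1,i}\ge \underline{b}$ for every $i$, so the denominator in the definition of $W_{k,i}$ is uniformly bounded below. Combined with Assumption (A1), standard properties of the Orlicz norm (see Lemma \ref{ProductNorm}) yield the uniform tail bound
\begin{align*}
\max_{1\leq i\leq d}\bigl\lVert W_{1,i}\bigr\rVert_{\psi_\beta}\lesssim B_n\,,
\end{align*}
and Jensen's inequality together with the fourth-moment assumption in (A1) control the uniform fourth moments of $W_{1,i}$. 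Feeding these bounds into Lemma \ref{GaussApprox} produces a Kolmogorov-distance bound of the prescribed shape $C\bigl(B_n^2(\log(nd))^{4+2/\beta}/n\bigr)^{1/4}$, with constants depending only on $\beta,D,\underline{b},c$, which is exactly what is claimed.

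The main obstacle I anticipate is not the reduction itself, which is essentially bookkeeping, but rather ensuring that the constants in the application of Lemma \ref{GaussApprox} depend only on $\beta,D,\underline{b},c$ (and not on the particular $F$), since this uniformity is what allows the conclusion to extend to the relevant subclasses of $\mathcal{H}_0(\Delta)$ and $\mathcal{H}_1$. This uniformity hinges on the fact that the lower bound $\zeta_{1,i}\ge\underline{b}$ holds for \emph{every} index $i$ on the restricted set $|\theta|_{\min}>c$ (not only for those with $|\theta_i|>c$ as in (A2) alone), which is automatic under the additional hypothesis $|\theta|_{\min}>c$ considered here. A secondary technicality is to check that the constants $\beta\in(0,2]$ and the Orlicz-norm bound $B_n$ arising from (A1) pass correctly through the transformation by $\text{sign}(\theta_i)/\sqrt{\zeta_{1,i}}$, which is where Lemma \ref{ProductNorm} from the online supplement is invoked.
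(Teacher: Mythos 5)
Your proof follows exactly the same route as the paper: normalize the linearized summands as $W_{k,i} = \text{sign}(\theta_i)(h_{1,i}(X_k)-\theta_i)/\sqrt{\zeta_{1,i}}$, identify their covariance matrix with $\Gamma$, verify conditions (A) and (W) of Lemma~\ref{GaussApprox} using Assumptions (A1) and (A2) (the latter applying to \emph{all} indices once $|\theta|_{\min}>c$ is imposed), and invoke Lemma~\ref{GaussApprox}. The paper's own proof is only a one-sentence remark pointing at the same chain. One small slip: the Orlicz-norm bound $\norm{W_{1,i}}_{\psi_\beta}\lesssim B_n$ follows from (A1) together with the conditional-expectation contraction in Lemma~\ref{WeibCond} (which is the Jensen step the paper alludes to), not from Lemma~\ref{ProductNorm}, which concerns products of sub-Weibull variables; scaling by the deterministic factor $1/\sqrt{\zeta_{1,i}}\le 1/\sqrt{\underline{b}}$ then needs only homogeneity of the norm. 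Also note that no Jensen's inequality is required for the fourth-moment verification, since (A1) bounds $\E[(h_{1,i}(X_1)-\theta_i)^4]$ directly and one just divides by $\zeta_{1,i}^2\ge\underline{b}^2$.
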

By the Schur product theorem, $\Gamma$ is a positive semidefinite matrix as it is the Hadamard product of a correlation matrix and the rank one matrix $(\text{sign}(\theta_i \theta_j))_{1\le i,j\le d}$ which are both positive semidefinite.


\subsubsection{Proof of Theorem \ref{alpha}.}

We recall $|\theta_i|\le \Delta$ for all $1\le i\le d$.  Fix some $0<c_0<\Delta$ and consider the following decomposition of $\{1,\ldots, d\}$:
\begin{align*}
	&I_1=\{1 \leq i \leq d \ :\quad  |\theta_i|> c_0\}\,,\\
	&I_2=\{1 \leq i \leq d \ :\quad  0<|\theta_i|\leq c_0\}\,,\\
	&I_3=\{1 \leq i \leq d \ :\quad  |\theta_i|=0\}\,.
\end{align*}

Using the definition of $\mathcal{T}_{n,\Delta}$ in \eqref{TestStatDefin}, we note that 
\begin{align*}
	\mathcal{T}_{n,\Delta}=\max\Big \{ \underset{i\in I_1}{\max}\frac{U_i^2-\Delta^2}{2\hat{\sigma}_i\Delta},\underset{i\in I_2}{\max}\frac{U_i^2-\Delta^2}{2\hat{\sigma}_i\Delta},\underset{i\in I_3}{\max}\frac{U_i^2-\Delta^2}{2\hat{\sigma}_i\Delta}	\Big \} \,.
\end{align*}

First, we show that the second and third terms in the right-hand maximum are negligible for our purposes. 
For the third term we use Lemma \ref{UStatConc} and $\theta_i=0$ for all $i \in I_3$ to obtain 
\begin{align}
	\label{RhoBounded}
	&\underset{i\in I_3}{\max}\frac{U_i^2-\Delta^2}{2\hat{\sigma}_i\Delta}
	\lesssim  \max_{i\in I_3}\frac{ B^2_n\log(nd)/\sqrt{n}-\sqrt{n}\Delta^2}{2\hat{\sigma}_i\sqrt{n}} 
\end{align}
with probability at least $1-o_K(1)$. Due to the assumption $\zeta_{1,i}\leq D$ and the fact that $B^2_n\log(nd)/\sqrt{n}\ll \sqrt{n}\Delta^2$ this diverges to $-\infty$ with rate at least $\sqrt{n}$. Note that all constants in these inequalities depend only on $\gamma, \beta$ and  $D$.

For the second term we use that 
\begin{align}
	\underset{i\in I_2}{\max}\frac{U_i^2-\Delta^2}{2\hat{\sigma}_i\Delta} &= \underset{i\in I_2}{\max}\frac{U_i^2-\theta_i^2 +(\theta_i^2-\Delta^2)}{2\hat{\sigma}_i\Delta} \notag\\
	& \lesssim \max_{i\in I_2}\frac{ B_n\log(d)/\sqrt{n}+\sqrt{n}(\theta_i^2-\Delta^2)}{2\hat{\sigma}_i\sqrt{n}} \,, \label{hd1}
\end{align}
where the last inequality holds with probability at least $1-o_K(1)$ by  the same calculation as in \eqref{RhoBounded} and the decomposition of $U_i^2-\theta_i^2$ into a linear and quadratic part as in \eqref{approx2}.
For the same reasons as in \eqref{RhoBounded} we conclude that the right-hand side of \eqref{hd1} converges to $-\infty$ with rate at least $\sqrt{n}$ (note that  $|\theta_i|\leq c_0 < \Delta$ for all $i \in I_2$).  We again stress the fact that all  constants
in these inequalities depend only on $\gamma, \beta, D$ and $c_0$. 

We hence obtain	uniformly for all distributions in $\mathcal{H}_0(\Delta)$
\begin{align*}
	\p\left(a_d\left(\mathcal{T}_{n,\Delta}-b_d\right)>q_{1-\alpha} \right) = \p\left(a_d \left(\sqrt{n}\,\underset{i\in I_1}{\max}\frac{U_i^2-\Delta^2}{2\hat{\sigma}_i\Delta}-b_d\right)>q_{1-\alpha} \right) +o(1)~,
\end{align*}

and it remains to show that the probability  on the right hand side is asymptotically bounded by $\alpha$
uniformly  in $\mathcal{H}_0(\Delta)$. As $|\theta_i|\leq \Delta$ we obtain the bound

\begin{align}
\label{pb100}
	\mathcal{T}_n(I_1):=\underset{i\in I_1}{\max}\frac{U_i^2-\Delta^2}{2\hat{\sigma}_i\Delta} \leq 
	\underset{i\in I_1}{\max}\frac{U_i^2-\theta_i^2}{2\hat{\sigma}_i\Delta}~.
\end{align}

Let  $C_1$ denote the constants hidden in $\lesssim$ in Lemma~\ref{RealVar} and let $C_2$ denote the hidden constants in Lemma ~\ref{GaussApproximation} (these constants depend only on $\gamma, \beta, c_0, \underline{b}, D$). Defining  \begin{align*}
    c_{\gamma,\beta}^{(1)}&:=C_1\frac{B_n^3(\log(nd))}{\sqrt{n}}+\frac{B_n(\log(nd))}{n^{1/2-\gamma/\beta}}, \\
    c_{\gamma,\beta}^{(2)}&:=C_2\left(\frac{B_n^2(\log(nd))^{4+2/\beta}}{n}\right)^{1/4},\\
    c_{\gamma,\beta}^{(3)}&:= c_{\gamma,\beta}^{(1)}\sqrt{\log(nd)}+c_{\gamma,\beta}^{(2)} 
\end{align*} 
and using  Lemma \ref{RealVar}, Lemma \ref{GaussApproximation} and Nazarovs Inequality  \citepSM[see][]{Nazarov}  we  obtain that
\begin{align*}
 \p\big ( a_d\big ( \mathcal{T}_n(I_1) 
 -b_d\big)
 >q_{1-\alpha}
 \big )
 &\leq    \p \big( \max_{i \in I_1}(S_n)_i>q_{1-\alpha}/a_d-c_{\gamma,\beta}^{(1)}+b_d \big )\\
 & \leq \p \big(\max_{i \in I_1} (S_n^G)_i> q_{1-\alpha}/a_d-c_{\gamma,\beta}^{(1)}+b_d \big )+c_{\gamma,\beta}^{(2)}\\
 & \leq  \p
 \big( \max_{i \in I_1} (S_n^G)_i > q_{1-\alpha}/a_d+b_d \big )+ c_{\gamma,\beta}^{(3)}\\
 & \leq \p\big( \max_{i \in I_1} Z_i > q_{1-\alpha}/a_d+b_d \big )+ c_{\gamma,\beta}^{(3)}+\gamma_n\\
 & \leq \p\big( \max_{1 \leq i \leq d} Z_i > q_{1-\alpha}/a_d+b_d \big )+ c_{\gamma,\beta}^{(3)}+\gamma_n
\end{align*}
where  $Z$ is a $d$-dimensional random vector with independent standard normal components, 
$S_n^G$ is defined in Lemma \ref{GaussApproximation},
 $\gamma_n\to 0$ is the sequence in Assumption (A3) and the second to last line is obtained by the normal comparison Lemma from \citeSM{Lindgren} (Theorem 4.2.1) . 
Note that these estimates are  uniform with respect 
the distribution in ${\cal H}_0 (\Delta)$
(as the  constants $C_1,C_2$  depend only on $\gamma, \beta, c, \underline{b}, D, \gamma_n$) and that last probability does not depend on ${\cal H}_0 (\Delta)$. Therefore, taking the $\limsup$ and the supremum with respect to
${\cal H}_0 (\Delta)$ yields
\begin{align*}
\limsup_{n \to \infty }  \sup_{F \in {\cal H}_0
	(\Delta)} 
	\p\left(a_d\left(\mathcal{T}_{n,\Delta}-b_d\right)>q_{1-\alpha} \right) &= 
	\limsup_{n \to \infty } \sup_{F \in {\cal H}_0
	(\Delta)} 
	\p
	\big (a_d\big (\mathcal{T}_n(I_1) 
	 -b_d\big)
	>q_{1-\alpha}\big ) \\
	& 
	\leq 	\lim_{n \to \infty }  \p\big( \norm{Z}_\infty > q_{1-\alpha}/a_d+b_d \big ) = \alpha~,
\end{align*}
which proves the first assertion of Theorem \ref{alpha}. For the second assertion we note that equality is achieved when $|\theta_i|=\Delta$ for all $i$, because in that case the inequality \eqref{pb100} becomes an equality and all of the following arguments  can  be reversed by the same concentration and anti-concentration bounds used to establish them. Again using \eqref{pb100} it is easy to see that $\sup_{d \in \mathbb{N}}\max_{i=1}^d|\theta_i|<\Delta$ implies that the test has an asymptotic rejection rate of 0, uniformly in $\mathcal{H}_0(\Delta)$.


\subsubsection{Proof of Theorem \ref{consistency}}
Let $i_0$ be an index such that $|\theta_{i_0}|=\max_{1\le i\le d} |\theta_i|>\Delta$; note that $i_0$ can depend on $n$, which is not reflected by our notation. Then we have
\begin{align*}
\mathcal{T}_{n,\Delta} \geq \frac{U_{i_0}^2-\theta_{i_0}^2}{2\hat{\sigma}_{i_0}\Delta}+\frac{\theta_{i_0}^2-\Delta^2}{2\hat{\sigma}_{i_0}\Delta} \,.
\end{align*}

By the same arguments as for \eqref{UBound} we obtain that with probability $1-o_K(1)$ 
\begin{align*}
\left|\frac{U_{i_0}^2-\theta_{i_0}^2}{2\hat{\sigma}_{i_0}\Delta}\right| \lesssim B_n\sqrt{\log(d)}\frac{1}{\sqrt{n}\hat{\sigma}_{i_0}}
\end{align*} while the second term converges to $\infty$ at rate $\frac{\sqrt{n}\xi_n}{\sqrt{n}\hat{\sigma}_{i_0}}$ with $\xi_n=\theta_{i_0}^2-\Delta^2$ by the same arguments as in \eqref{RhoBounded}. These bounds depend only on the constants $\gamma, \beta$ and $B_n$ in the Assumption (A1) and therefore hold uniformly  over the class $\mathcal{H}_1$ defined in \eqref{det0}.  This yields the desired conclusion whenever $\xi_n\geq CB_n\sqrt{\frac{\log(d)}{n}}$ for some large enough constant C as $(\frac{q_{1-\alpha}}{a_d}+b_d)\Delta \simeq \sqrt{\log d}$ and $\sqrt{n}\hat{\sigma}_{i_0}\lesssim B_n$ by Lemma~\ref{VarConv}. Here $a \simeq b$ denotes $c_1 a \leq b \leq c_2 a$ for some constants $c_1,c_2$ that do not depend on $n$.

\subsection{ Proof of the results in Section \ref{sec22}}
\label{sec53}

Let $\xi_k=(\xi_{k1},\dots,\xi_{kn})^\top, 1\le k\le n,$ be independent identically multinomial $\mathcal{M}  (1;\frac{1}{n},\dots ,\frac{1}{n})$ distributed random vectors independent of  $X_1,\dots,X_n$, that is $\p(\xi_{k1}=y_1,\dots,\xi_{kn}=y_n)=1/n$ for $(y_1,\ldots, y_n)\in \{0,1\}^n$ such that $y_1+\cdots+y_n=1$.
Then a sample $X_1^*,\dots, X_n^*$ drawn with replacement from $X_1,\dots,X_n$ can be represented as
$$
X^*_k = X \xi_k={\sum_{j=1}^{n}}  \xi_{kj} X_j \,,
$$
where $X = (X_{1},\dots,X_n) \in \mathbb{R} ^{p \times n}$.
We denote by $\p^*$ and $\E^*$ the probabilities and expectations conditional on $X_1, \ldots ,X_n$.
We also  recall the definition of the statistic $U_i^*$ in \eqref{hd8} and note that $\E^*[U_i^*]=V_i$ (see \eqref{hd11}), so that conditional on $X_1, \ldots ,X_n$ the quantity $U^*-V=(U_1^*-V_1, \ldots ,U_d^*-V_d)^{\top}$ is a $U$-Statistic of the random variables $\xi_1, \ldots ,\xi_n$. 
We start with several auxiliary results, which are required for the proof of 
Theorem~\ref{Boot0} in Section~\ref{sec431}.

\subsubsection{Some preparations}
\label{sec431}

We first observe that the conditional mean of the Bootstrap statistic is close to the mean of the original statistic, this will be used multiple times in some of the following approximations when terms involving $\norm{V-V_\Delta}_\infty$ appear, where we used the definition $V_\Delta=(V_{1,\Delta},...,V_{d,\Delta})$.
\begin{Lemma}
	\label{BootstrapExpectationConv}
Under the assumptions of Theorem \ref{Boot0} we have that
\begin{align}
\label{x10}
\norm{V-\theta}_\infty \lesssim B_n \sqrt{\frac{\log(nd)}{n}}\,,
\end{align}	
with $\p$-probability at least $1-o_K(1)$ where all constants involved depend only on $\gamma$ and $\beta$ which implies that the bound holds uniformly for the classes $\mathcal{H}_0(\Delta)$ and ${\cal H} _{0,boot}(\Delta)$ defined in \eqref{det0a} and \eqref{det0ab} , respectively.
\end{Lemma}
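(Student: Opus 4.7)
The plan is to decompose $V_i - \theta_i = (V_i - U_i) + (U_i - \theta_i)$ and control each piece uniformly over $1\leq i \leq d$. The second piece is immediate from the $U$-statistic concentration inequality (Lemma \ref{UStatConc} from the online supplement, already used repeatedly in Section \ref{sec51}): under Assumption (A1), one has $\max_{1\leq i \leq d} |U_i - \theta_i| \lesssim B_n \sqrt{\log(nd)/n}$ with probability $1-o_K(1)$, the constants depending only on $\gamma$ and $\beta$.

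For the first piece, I would exploit symmetry of the kernels to split the defining sum of $V_i$ according to whether the $m$-tuple of indices is all distinct or not. Since $h_i$ is symmetric,
\[
\sum_{\substack{(l_1,\ldots,l_m)\\ \text{all distinct}}} h_i(X_{l_1},\ldots,X_{l_m}) = m!\,{n\choose m}\, U_i,
\]
so that, writing $\alpha_n := m!\,{n\choose m}/n^m = 1 + O(1/n)$,
\[
V_i - U_i = (\alpha_n - 1) U_i + \frac{1}{n^m}\sum_{\substack{(l_1,\ldots,l_m)\\ \text{not all distinct}}} h_i(X_{l_1},\ldots,X_{l_m}).
\]
On the parameter classes $\mathcal{H}_0(\Delta)$ and $\mathcal{H}_{0,boot}(\Delta)$ we have $|\theta_i|\leq \Delta$, so the concentration on $U_i - \theta_i$ yields $\max_i |U_i| \leq \Delta + O(B_n\sqrt{\log(nd)/n})$, and hence $(\alpha_n-1)U_i = O(1/n)$, which is of lower order than the target rate.

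The remaining diagonal sum is where Assumption (A1') enters essentially in place of (A1): it guarantees $\norm{h_i(X_{j_1},\ldots,X_{j_m})}_{\psi_\beta}\leq B_n$ for \emph{every} tuple of (possibly repeated) indices. Since the number of non-distinct tuples equals $n^m - m!\,{n\choose m} = O(n^{m-1})$, a union bound via the Orlicz tail estimate (Lemma \ref{WeibullChar}) over the $O(n^m d)$ such summands, combined with the fact that Orlicz boundedness implies $L^1$ boundedness of the expectations, gives
\[
\max_{\substack{1\leq i\leq d\\ 1\leq l_1,\ldots,l_m\leq n}} |h_i(X_{l_1},\ldots,X_{l_m})| \lesssim B_n(\log(nd))^{1/\beta}
\]
with probability $1-O(1/(nd))$. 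The diagonal contribution is therefore bounded by $B_n(\log(nd))^{1/\beta}/n$, which is $o(B_n\sqrt{\log(nd)/n})$ under $\log d = o(n^\gamma)$ with $\gamma \leq 1/(2/\beta+1)$, since then $(\log(nd))^{1/\beta}/\sqrt{n} = o(1)$.

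Collecting the three contributions yields the claim, with constants depending only on $\gamma$ and $\beta$, so the bound holds uniformly over $\mathcal{H}_0(\Delta)\cup \mathcal{H}_{0,boot}(\Delta)$. The main technical point is the diagonal term: the crude $L^\infty$ control suffices only thanks to the strengthening (A1') of (A1), which extends the Orlicz bound to tuples with repeated arguments. A finer decomposition of the non-distinct sum into sub-$U$-statistics indexed by tie patterns would yield sharper control if one were aiming for a more refined rate, but is unnecessary here.
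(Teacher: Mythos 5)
Your proof is correct and rests on the same underlying decomposition as the paper's: split the $V$-statistic into the all-distinct contribution (which, up to the factor $m!\binom{n}{m}/n^m = 1 + O(1/n)$, is the $U$-statistic $U_i$ and is controlled by Lemma \ref{UStatConc}) and the remaining tuples with repeated indices, which are of measure $O(1/n)$. The one place you depart from the paper is in the treatment of this remainder: the paper splits the non-distinct tuples further by tie pattern and controls each piece as a lower-order $U$-statistic via Lemma \ref{UStatConc} (and Lemma \ref{WeibConc} for the fully diagonal part), obtaining bounds like $B_n\sqrt{\log(nd)/n^{2m-3}}$, whereas you bound all non-distinct summands uniformly by $B_n(\log(nd))^{1/\beta}$ using Assumption (A1') and a union bound, paying a polylogarithmic factor that is still comfortably of lower order than $B_n\sqrt{\log(nd)/n}$ under the assumed growth condition on $\log d$. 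This is a legitimate coarsening that makes the argument shorter at no cost to the claimed rate; one small remark is that your bound $|U_i|\lesssim \Delta + o(1)$ for the $(\alpha_n-1)U_i$ term uses $|\theta_i|\le\Delta$, which is available on $\mathcal{H}_0(\Delta)$ and $\mathcal{H}_{0,boot}(\Delta)$ as claimed, but you could equally use $|\theta_i|\lesssim B_n$ from (A1) if a fully parameter-free bound were needed.
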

\begin{proof}
    We first decompose $V$ (see \eqref{hd11} for its definition) into its diagonal and non-diagonal parts 
    \begin{equation} \label{hx12b}
    \begin{split}
	V&=\frac{1}{n^m}\sum_{k=1}^{n}h(X_k, \ldots ,X_k)+\frac{1}{n^m}\sum_{1 \leq l_1\neq l_2= \ldots =l_m \leq n} h(X_{l_1}, \ldots ,X_{l_m})  \\ 
	&\quad  +\cdots + \frac{1}{n^m}\sum_{1 \leq l_1\neq \ldots  \neq l_m \leq n} h(X_{l_1}, \ldots ,X_{l_m})
	\end{split}
	\end{equation}
	Applying Lemma \ref{WeibConc} to the diagonal part yields, up to some constant depending only on $\beta$ and $\gamma$,
	\begin{align*}
	\norm{\frac{1}{n^m}\sum_{k=1}^{n}h(X_k, \ldots , X_k)}_\infty \lesssim B_n \sqrt{\frac{\log(nd)}{n^{2m-1}}}
	\end{align*}
	with $\p$-probability at least $1-o_K(1)$,	where we also used Lemma \ref{WeibullChar} to uniformly bound the mean of $h(X_k, \ldots ,X_k)$ by a multiple of $B_n$ that depends only on $\beta$. Next, we will exemplary inspect the term 
	\begin{align*}
	\frac{1}{n^m}\sum_{1 \leq l_1\neq l_2 \leq n} h(X_{l_1}, X_{l_2}, \ldots ,X_{l_2})    
	\end{align*}
	in detail, all other terms (except the very last) in the decomposition of $V_i$
	can be treated analogously. Note that $H(x_1,x_2)=h(x_1,x_2, \ldots ,x_2)$ defines a non-symmetric kernel of order two whose associated $U$-statistic is given by the preceding equation, which can be symmetrized  without changing the value of the associated $U$-statistic. Applying Lemma \ref{UStatConc} then yields
	\begin{align*}
	\norm{\frac{1}{n^m}\sum_{1 \leq l_1\neq l_2 \leq n} h(X_{l_1}, \ldots ,X_{l_2})    }_\infty \lesssim B_n \sqrt{\frac{\log(nd)}{n^{2m-3}}} +n^{-(m-2)}
	\end{align*}
	with probability at least $1-o_K(1)$ for some constant $C$ that depends only on $\beta$ (note that the mean is negligible by the same arguments as for the first term). The same arguments show that all terms in \eqref{hx12b} (except the last one) are of smaller order than $B_n\sqrt{(\log(nd)) / n}$.
	Finally, for the remaining term in \eqref{hx12b}, we have by Lemma \ref{UStatConc} that
	\begin{align*}
	 \norm{\frac{1}{n^m}\sum_{1 \leq l_1\neq \ldots  \neq l_m \leq n} h(X_{l_1}, \ldots ,X_{l_m})-\theta}_\infty\lesssim B_n\sqrt{\frac{\log(nd)}{n}},
	\end{align*} 
	with $\p$-probability at least $1-o_K(1)$, which proves the assertion of the lemma. 
\end{proof}

Next, we set
\begin{align}
\label{hx13} 
S_n^*=\sqrt{n}\underset{1\leq i\leq d}{\max}\theta_i\frac{U^*_{i}-V_{i}}{m\sqrt{\zeta_{1,i}}\Delta}\,,
\end{align}
which is a linearized version of  $\mathcal{T}_n^*$ (see \eqref{hd10a} for its definition).
We will show that $\mathcal{T}_n^*$ is well approximated by $S_n^*$. This will allow us to apply Gaussian approximation results to approximate the distribution of  $\mathcal{T}_n^*$.

\begin{Lemma}
\label{BootstrapApprox}
If the assumptions of Theorem \ref{Boot0} are satisfied, $\min_{1\le i\le d} \zeta_{1,i}\ge \underline{b}>0$ and  $\max_{i=1}^d|\theta_i|\leq \Delta$, we have that 
\begin{align} \label{hd11a}
    \left|\mathcal{T}^*_{n}-S_n^*\right|&\lesssim B_n^2\frac{(\log(nd))^{1+2/\beta}}{\sqrt{n}}+B_n^3\frac{(\log(nd))^{1+1/\beta}}{\sqrt{n}}
\end{align}
holds with $\p^*$ probability at least $1-o_K(1)$ on a set of $\p$-probability at least $1-o_K(1)$.
Here  the constant in inequality \eqref{hd11a} depends only on $\beta, \gamma, \underline{b}$.
This implies that \eqref{hd11a} holds uniformly for the subset of the class ${\cal H} _{0,boot}(\Delta)$ in \eqref{det0ab} for which $\min_{1\le i\le d} \zeta_{1,i}\ge \underline{b}>0$.

\end{Lemma}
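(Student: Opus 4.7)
I will approximate $\mathcal{T}^*_n$ by $S^*_n$ by linearising the squared expression, replacing $\hat\sigma_i$ by its population analogue, and replacing the truncated bootstrap mean $V_{i,\Delta}$ by $\theta_i$. The starting identity is
\begin{equation*}
(U_i^* - V_i + V_{i,\Delta})^2 - V_{i,\Delta}^2 = W_i^2 + 2 W_i V_{i,\Delta}, \qquad W_i := U_i^* - V_i,
\end{equation*}
so that $\mathcal{T}^*_n = \max_i (W_i^2 + 2 W_i V_{i,\Delta})/(2\hat\sigma_i \Delta)$. Using $|\max_i a_i - \max_i b_i| \le \max_i |a_i - b_i|$ together with the triangle inequality, the quantity $|\mathcal{T}^*_n - S_n^*|$ is bounded by a quadratic remainder $\max_i W_i^2 /(2\hat\sigma_i \Delta)$ plus a linear remainder of the form $\max_i |W_i| \cdot |V_{i,\Delta}/(\hat\sigma_i\Delta) - \sqrt{n}\,\theta_i/(m\sqrt{\zeta_{1,i}}\Delta)|$. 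Both remainders will be controlled using a uniform bound on $\max_i |W_i|$ obtained through the bootstrap analogue of Lemma \ref{UStatConc}.

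\textbf{Step 1 (bootstrap concentration, main obstacle).} Conditionally on $X_1,\ldots,X_n$, the quantity $W_i$ is a centered $U$-statistic in the multinomial weights $\xi_1,\ldots,\xi_n$. The crux is to control $\max_{1\le i\le d}|W_i|$ under $\p^*$. By Assumption (A1'), the kernel values $h_i(X_{l_1},\ldots,X_{l_m})$ have $\psi_\beta$-Orlicz norm bounded by $B_n$ uniformly in \emph{arbitrary} (possibly repeated) index tuples $(l_1,\ldots,l_m)$. An $n^m$-fold union bound combined with Lemma \ref{WeibullChar} therefore controls the conditional Orlicz norm of the bootstrap kernel evaluation by $B_n(\log(nd))^{1/\beta}$, at the cost of a factor inflated by $(\log(nd))^{1/\beta}$ compared to the non-bootstrap situation. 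Feeding this bound into the bootstrap version of Lemma \ref{UStatConc} yields
\begin{equation*}
\max_{1\le i \le d} |W_i| \lesssim B_n (\log(nd))^{1/\beta} \sqrt{\tfrac{\log(nd)}{n}},
\end{equation*}
with $\p^*$-probability at least $1-o_K(1)$ on a $\p$-event of probability $1-o_K(1)$. This extra logarithmic factor is exactly what produces the $(\log(nd))^{2/\beta}$ inflation visible in the first term of the stated bound, and accounts for the loss of sensitivity of the bootstrap test compared to the asymptotic test noted in Remark \ref{rem2}(2).

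\textbf{Steps 2 and 3 (remainders).} For the quadratic remainder, Theorem \ref{VarConv} together with $\zeta_{1,i}\ge \underline{b}$ gives $\sqrt{n}\hat\sigma_i$ bounded away from $0$ uniformly in $i$, hence $\max_i W_i^2 /(2\hat\sigma_i\Delta) \lesssim \sqrt n \max_i W_i^2 \lesssim B_n^2 (\log(nd))^{1+2/\beta}/\sqrt n$, giving the first term in the stated bound. For the linear remainder I use the decomposition
\begin{equation*}
\frac{V_{i,\Delta}}{\hat\sigma_i} - \frac{\sqrt{n}\,\theta_i}{m\sqrt{\zeta_{1,i}}}
= V_{i,\Delta}\Bigl(\frac{1}{\hat\sigma_i} - \frac{\sqrt n}{m\sqrt{\zeta_{1,i}}}\Bigr) + \frac{\sqrt n(V_{i,\Delta}-\theta_i)}{m\sqrt{\zeta_{1,i}}}.
\end{equation*}
Theorem \ref{VarConv} gives $|1/\hat\sigma_i - \sqrt n/(m\sqrt{\zeta_{1,i}})| \lesssim B_n^2 \sqrt{\log(nd)}$; combined with $|V_{i,\Delta}|\le\Delta$ and the Step 1 bound on $W_i$, the first summand contributes $B_n^3(\log(nd))^{1+1/\beta}/\sqrt n$, which is the second term of the stated bound. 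For the second summand, Lemma \ref{BootstrapExpectationConv} gives $\max_i |V_i - \theta_i|\lesssim B_n\sqrt{\log(nd)/n}$, and together with $|\theta_i|\le \Delta$ the truncation definition yields $|V_{i,\Delta}-\theta_i|\le |V_i - \theta_i|$, so this part contributes $B_n^2(\log(nd))^{1+1/\beta}/\sqrt n$, which is dominated by the first term of the stated bound since $\beta\le 2$. Summing the three contributions completes the proof.
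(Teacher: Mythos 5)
Your proof is correct and follows essentially the same route as the paper: both rely on (i) the conditional Orlicz-norm bound for the bootstrap kernel obtained from Assumption (A1') via an $n^m$-fold union bound, fed into Lemma~\ref{UStatConc} to get $\max_i|U_i^*-V_i|\lesssim B_n(\log(nd))^{1/2+1/\beta}/\sqrt n$; (ii) Theorem~\ref{VarConv} to replace $\hat\sigma_i$ by $m\sqrt{\zeta_{1,i}}/\sqrt n$; and (iii) Lemma~\ref{BootstrapExpectationConv} to control $\|V-\theta\|_\infty$, hence $V_{i,\Delta}$. Your decomposition is a bit cleaner than the paper's: you split $\mathcal{T}_n^*$ into quadratic and linear parts at the outset and then swap $\hat\sigma_i\to m\sqrt{\zeta_{1,i}}/\sqrt n$ and $V_{i,\Delta}\to\theta_i$ in two telescoping steps in the linear part, whereas the paper inserts the intermediate statistic $\sqrt n\max_i V_i(U_i^*-V_i)/(m\sqrt{\zeta_{1,i}}\Delta)$ and hence passes $V_{i,\Delta}\to V_i\to\theta_i$. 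The net bounds you obtain match the paper's term by term. One small caveat that applies to both your proof and the paper's: the inequality you invoke, $|V_{i,\Delta}-\theta_i|\le|V_i-\theta_i|$ under $|\theta_i|\le\Delta$, and the paper's analogous bound $|V_i-V_{i,\Delta}|\le\|V-\theta\|_\infty$, are only valid if the truncation in \eqref{hd10} is read as $V_{i,\Delta}=\mathrm{sign}(V_i)\,\Delta$ when $|V_i|>\Delta$ (the case $V_i<-\Delta$ would otherwise break both); this appears to be a typo in \eqref{hd10} that both arguments implicitly correct.
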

\begin{proof}

We start by noting that an analogue of Lemma \ref{MaxBound} 
in the online supplement (which considers the maximum with respect to two indices)  and Assumption (A1') show that up to some universal constant
\begin{align*}
\underset{1 \leq i \leq d, 1 \leq j_1 <\ldots < j_m \leq n}{\max}\norm{h_i(X\xi_{j_1}, \ldots ,X\xi_{j_m})}_{\infty}\lesssim B_n(\log(dn))^{1/\beta} 
\end{align*}
with $\p$-probability at least $1-o_K(1)$.
Part $iii)$ of Lemma \ref{WeibullChar} then yields 
\begin{align}
\label{ConditionalKernelNorm}
\underset{1 \leq i \leq d, 1 \leq j_1 < \ldots  < j_m \leq n}{\max}\norm{h_i(X\xi_{j_1}, \ldots ,X\xi_{j_m})-V_i}_{\psi_2}^*\lesssim B_n(\log(dn))^{1/\beta} 
\end{align}
up to some universal constant, where 
	$ \norm{Z}_{\psi_2}^*:=\inf\{\nu>0 : \E^* [\psi_\beta(|Z|/\nu)]\leq 1\} $
denotes the Orlicz-Norm (of  a real-valued random variable $Z$) with respect to the conditional expectation $\E^*$.

Next we observe by the triangle inequality that
\begin{align}
\label{Triangle}
    \left|\mathcal{T}_{n}^*-S_n^*\right|&\lesssim
    \left| \underset{1\leq i\leq d}{\max}V_{i,\Delta}\frac{U^*_{i}-V_{i}}{\hat{\sigma}_i\Delta}-\sqrt{n}\underset{1\leq i\leq d}{\max}V_i\frac{U^*_{i}-V_{i}}{m\sqrt{\zeta_{1,i}}\Delta}\right|\\ &\qquad+
    \left|\mathcal{T}_{n}^* - \underset{1\leq i\leq d}{\max}V_{i,\Delta}\frac{U^*_{i}-V_{i}}{\hat{\sigma}_i\Delta}\right|  +\bigg | \sqrt{n}\underset{1\leq i\leq d}{\max}V_i\frac{U^*_{i}-V_{i}}{m\sqrt{\zeta_{1,i}}\Delta}
-S_n^* \bigg |\,. \notag
\end{align}
For the second summand we have
\begin{align}
\label{BootstrapDecomp}
   0\leq \mathcal{T}_{n}^* - \underset{1\leq i\leq d}{\max}V_{i,\Delta}\frac{U^*_{i}-V_{i}}{\hat{\sigma}_i\Delta}\le\underset{1\leq i\leq d}{\max}\frac{(U^*_{i}-V_{i})^2}{2\hat{\sigma}_i\Delta}
\end{align}
and, provided that $|\theta_i|\leq \Delta$ for all $i$, we claim
\begin{align}
\label{BootstrapSummandsVariance}
\left| \underset{1\leq i\leq d}{\max}V_{i,\Delta}\frac{U^*_{i}-V_{i}}{\hat{\sigma}_i\Delta}-\sqrt{n}\underset{1\leq i\leq d}{\max}V_i\frac{U^*_{i}-V_{i}}{m\sqrt{\zeta_{1,i}}\Delta}\right|&\lesssim B_n^3\frac{(\log(nd))^{1+1/\beta}}{\sqrt{n}}\,,\\
 \left|\underset{1\leq i\leq d}{\max}\frac{(U^*_{i}-V_{i})^2}{2\hat{\sigma}_i\Delta}-\sqrt{n}\underset{1\leq i\leq d}{\max}\frac{(U^*_{i}-V_{i})^2}{2m\sqrt{\zeta_{1,i}}\Delta}\right|&\lesssim  B_n^4\frac{(\log(nd))^{3/2+2/\beta}}{n} 
 \label{BootstrapSummandsVariance1}
\end{align}
with $\p$-probability at least $1-o_K(1)$, where the constants in the inequalities depend only on $\beta, \gamma$ and $\underline{b}$. This will help bounding the right hand term in \eqref{BootstrapDecomp} while simultaneously taking care of the first summand in \eqref{Triangle}. In view of \eqref{ConditionalKernelNorm}, an application of Lemma \ref{UStatConc} yields for the vector $U^*=(U_1^*, \ldots ,U_d^*)^{\top}$, that
\begin{align}
\label{hx13aa}
\norm{U^*-V}_\infty \lesssim B_n\frac{(\log(nd))^{1/2+1/\beta}}{\sqrt{n}}
\end{align}
 with $\p^*$ probability at least $1-o_K(1)$ with the bound depending only on $\beta$ and $\gamma$. Since by Assumption  $\min\limits_{1 \leq i \leq d, |\theta_i|>c}\zeta_{1,i}\ge \underline{b}$ this gives 
\begin{align}
\label{BootstrapQuadraticDecay}
    \sqrt{n}\underset{1\leq i\leq d}{\max}\frac{(U^*_{i}-V_{i})^2}{2m\sqrt{\zeta_{1,i}}\Delta}\lesssim B_n^2\frac{(\log(nd))^{1+2/\beta}}{\sqrt{n}}
\end{align}
with $\p^*$ probability at least $1-o_K(1)$. Combining \eqref{BootstrapSummandsVariance1} and \eqref{BootstrapQuadraticDecay} yields
\begin{align}
\label{b1}
\left|\mathcal{T}_{n}^* - \underset{1\leq i\leq d}{\max}V_{i,\Delta}\frac{U^*_{i}-V_{i}}{\hat{\sigma}_i\Delta}\right|  &\lesssim B_n^2\frac{(\log(nd))^{1+2/\beta}}{\sqrt{n}}+B_n^4\frac{(\log(nd))^{3/2+2/\beta}}{n} \notag \\
&\lesssim B_n^2\frac{(\log(nd))^{1+2/\beta}}{\sqrt{n}}~.
\end{align}\\

The estimate \eqref{BootstrapSummandsVariance} 
is  obtained as follows.  
First, we use the  inequality
\begin{align*}
   \sqrt{n} &\left| \underset{1\leq i\leq d}{\max}V_{i,\Delta}\frac{U^*_{i}-V_{i}}{\sqrt{n}\hat{\sigma}_i\Delta}- \underset{1\leq i\leq d}{\max}V_i\frac{U^*_{i}-V_{i}}{m\sqrt{\zeta_{1,i}}\Delta}\right|\\ 
   &\lesssim \sqrt{n}\norm{V-V_{\Delta}}_\infty \norm{U^*-V}_\infty 
 \max_{1\leq i\leq d} \frac{\big| \sqrt{n}\hat{\sigma}_i-m\zeta_{1,i}\big|}{\sqrt{n}\hat{\sigma}_i} ~.
\end{align*}
Secondly, we use 
\eqref{hx13aa} and Theorem \ref{VarConv} to bound the terms involving $U^*$ and $\hat{\sigma}_i$. Recalling \eqref{hd10}, we get
	\begin{align*}
	|V_i-V_{i,\Delta}|=\begin{cases}
	0 &\text{if } |V_i| \leq \Delta \\
	|V_i-\Delta| &\text{otherwise}\,.
	\end{cases}	
	\end{align*}
	As long as $|\theta_i|\leq \Delta$ and when \eqref{hx13aa} holds we can bound the latter quantity uniformly by $\norm{V-\theta}_\infty$  so that  Lemma \ref{BootstrapExpectationConv} is applicable to derive \eqref{BootstrapSummandsVariance} with the hidden constants depending only on $\beta, \gamma$ and $\underline{b}$. The bound \eqref{BootstrapSummandsVariance1} is obtained similarly. 
	
	For the last term on the right-hand side of \eqref{Triangle} we observe that 
	\begin{align}
	    \label{b2}
	    \bigg | \sqrt{n}\underset{1\leq i\leq d}{\max}V_i\frac{U^*_{i}-V_{i}}{m\sqrt{\zeta_{1,i}}\Delta}
-S_n^* \bigg |&\lesssim \sqrt{n}\norm{V-\theta}_\infty \norm{U^*-V}_\infty \notag\\
&\lesssim B_n^2\frac{(\log(nd))^{1+1/\beta}}{\sqrt{n}}
	\end{align}
	with $\p$-probability at least $1-o_K(1)$ by virtue of Lemma \ref{BootstrapExpectationConv}.\\
Combining \eqref{BootstrapSummandsVariance}, \eqref{b1} and \eqref{b2} yields, up to some constant depending only on $\gamma, \beta, \underline{b}$, that
\begin{align*}
     \left|\mathcal{T}_{n}^*-\sqrt{n}\underset{1\leq i\leq d}{\max}V_i\frac{U^*_{i}-V_{i}}{m\sqrt{\zeta_{1,i}}\Delta}\right|&\lesssim
     B_n^2\frac{(\log(nd))^{1+2/\beta}}{\sqrt{n}}+B_n^3\frac{(\log(nd))^{1+1/\beta}}{\sqrt{n}}
\end{align*}
with $\p^*$ probability at least $1-o_K(1)$ on a set of $\p$-probability at least $1-o_K(1)$.

\end{proof}

In the next step we  decompose the statistic $U^*$  into a linear and a non-linear part. The linear part of the Hoeffding decomposition (for more details see \citeSM{Hoeffding1948}) of $U^*$ conditional on $X_1, \ldots ,X_n$ is given by
\begin{align*}
h_1^X(\xi_1)=\E^*[h(X\xi_1, \ldots , X\xi_m)|\xi_1]=\frac{1}{n^{m-1}}\sum_{l_1, \ldots ,l_{m-1}=1}^{n}h(X\xi_1,X_{l_1}, \ldots ,X_{l_{m-1}})\,.
\end{align*}
To proceed we need the notation
\begin{align*}
S_{n,1}^*=\sqrt{n}\underset{1 \leq i \leq d}{\max}\theta_i \frac{\frac{1}{n}\sum_{j=1}^{n}h_{1,i}^X(\xi_j)-V_i}{\sqrt{\zeta_{1,i}}\Delta}\,.
\end{align*}
\begin{Lemma}
\label{BootstrapLinearization}
Under the assumptions of Theorem \ref{Boot0} we have for the statistic $S_n^*$ in \eqref{hx13} that
\begin{align*}
	|S_n^*-S_{n,1}^*|\lesssim B_n\frac{(\log(nd))^{1+1/\beta}}{n^{1/2-\gamma/\beta}}
\end{align*}	
 with $\p^*$-probability at least $1-n^{-\gamma/\beta}$ whenever \eqref{ConditionalKernelNorm} holds. Here the constant in the inequality depends only on $\beta$, and therefore the inequality holds uniformly over  the classes ${\cal H} _{0,boot}(\Delta)$ and $\mathcal{H}_1$ defined in \eqref{det0ab} and \eqref{det0}.
\end{Lemma}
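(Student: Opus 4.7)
This lemma is the bootstrap analog of the linearization step~\eqref{Approx3} used in the proof of Lemma~\ref{RealVar}: there, the remainder of the Hoeffding decomposition of $U_i-\theta_i$ after subtracting its H\'ajek projection was handled by the degenerate $U$-statistic concentration bound from the proof of Lemma~\ref{UStatConc}. The plan is to carry out exactly the same strategy in the conditional probability space generated by the multinomial selectors $\xi_1,\ldots,\xi_n$, which are i.i.d.\ uniform on $\{1,\ldots,n\}$ under $\p^*$, with the conditional Orlicz bound~\eqref{ConditionalKernelNorm} playing the role of Assumption~(A1).

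Concretely, I would start from the elementary inequality $|\max_i a_i-\max_i b_i|\le\max_i|a_i-b_i|$. Combining this with $\zeta_{1,i}\ge\underline b$ from Assumption~(A2) (in the relevant regime) and the bound $|\theta_i|/\Delta\le 1$ (which holds under the null; the power analysis reduces to the same estimate after restricting to a single optimal index) gives
\[
|S_n^*-S_{n,1}^*|\le \frac{\sqrt n}{\sqrt{\underline b}}\,\max_{1\le i\le d}\left|\frac{U_i^*-V_i}{m}-\Big(\frac1n\sum_{j=1}^n h_{1,i}^X(\xi_j)-V_i\Big)\right|.
\]
The quantity inside the absolute value is $1/m$ times the remainder in the Hoeffding decomposition of the $U$-statistic $U_i^*-V_i$ (under $\p^*$) after subtracting its linear H\'ajek projection $(m/n)\sum_j(h_{1,i}^X(\xi_j)-V_i)$, and is therefore a completely degenerate $U$-statistic of order at least two in the selectors $\xi_j$.

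Next, I would invoke, conditionally on $X_1,\ldots,X_n$, the degenerate-part concentration inequality from the proof of Lemma~\ref{UStatConc} (the bound~\eqref{DegenConc}), applied to the symmetrized kernel $(\xi_{j_1},\ldots,\xi_{j_m})\mapsto h_i(X\xi_{j_1},\ldots,X\xi_{j_m})-V_i$. By~\eqref{ConditionalKernelNorm} this kernel has conditional $\psi_2$-Orlicz norm at most $CB_n(\log(nd))^{1/\beta}$, so the bound applies with $\beta$ replaced by $2$ and $B_n$ replaced by $B_n(\log(nd))^{1/\beta}$. With the tail parameter chosen as in the proof of Lemma~\ref{RealVar}, this yields on the event $\{\eqref{ConditionalKernelNorm} \text{ holds}\}$ and with $\p^*$-probability at least $1-n^{-\gamma/\beta}$ that
\[
\max_{1\le i\le d}\left|\frac{U_i^*-V_i}{m}-\Big(\frac1n\sum_{j=1}^n h_{1,i}^X(\xi_j)-V_i\Big)\right|\lesssim B_n(\log(nd))^{1/\beta}\,\frac{\log(nd)}{n^{1-\gamma/\beta}}.
\]
Multiplying by $\sqrt n/\sqrt{\underline b}$ produces exactly the claimed rate $B_n(\log(nd))^{1+1/\beta}/n^{1/2-\gamma/\beta}$.

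The main obstacle will be checking that the degenerate piece of Lemma~\ref{UStatConc} genuinely transfers to the conditional law $\p^*$. That proof uses only (a) independence of the inputs, which holds since $\xi_1,\ldots,\xi_n$ are i.i.d.\ under $\p^*$, and (b) a uniform Orlicz-tail bound on the kernel evaluations, which is exactly the content of~\eqref{ConditionalKernelNorm}. Once this transfer is granted, the rate-bookkeeping is mechanical, and the dependence only on $\beta,\gamma,\underline b$ delivers the uniformity over the relevant subsets of $\mathcal{H}_{0,\text{boot}}(\Delta)$ and $\mathcal{H}_1$ as claimed in the lemma.
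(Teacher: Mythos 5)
Your proposal is correct and is essentially the paper's own proof, expanded: you reduce $|S_n^*-S_{n,1}^*|$ to the sup-norm of the non-linear remainder of the conditional Hoeffding decomposition of $U_i^*-V_i$, then apply Theorem~5.1 of \cite{songetal2019} conditionally on $X_1,\ldots,X_n$ (equivalently the bound \eqref{DegenConc}) together with Markov's inequality, with the conditional Orlicz bound \eqref{ConditionalKernelNorm} supplying the needed tail control on the selectors $\xi_j$. The paper's one-sentence argument invokes exactly these ingredients without spelling out the preliminary reduction that uses $\zeta_{1,i}\ge\underline b$ and $|\theta_i|/\Delta\le 1$, so your extra detail is a welcome clarification. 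One small bookkeeping slip worth flagging: applying \eqref{DegenConc} with sub-Weibull exponent $2$ and Orlicz constant $B_n(\log(nd))^{1/\beta}$ produces a numerator of order $(\log(nd))^{3/2+1/\beta}$, not the $(\log(nd))^{1+1/\beta}$ you write; the extra $(\log(nd))^{1/2}$ factor is harmless under condition \eqref{pb1} and does not affect Theorem~\ref{Boot0}, but your displayed intermediate estimate does not quite follow from the preceding sentence as stated.
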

\begin{proof}
By Theorem 5.1 in \citeSM{songetal2019} and Markov's inequality 
the non-linear part of the Hoeffding decomposition is bounded by some multiple of
 $B_n\frac{(\log(nd))^{1+1/\beta}}{n^{1-\gamma/\beta}}$ that depends only on $\beta$ with $\p^*$-probability at least $1-n^{-\gamma/\beta}$ whenever \eqref{ConditionalKernelNorm} holds. 
\end{proof}
The final result of this section provides a Gaussian approximation for the  statistic 
$S_{n,1}^*$. Note that
 $h_1^X(\xi_i)$ is not the bootstrap version of $h_1(X_i)$  and therefore  
 Lemma \ref{GaussApprox} is not applicable. Instead we will utilize a Gaussian approximation together with a bound on the distance of two Gaussian random vectors by the difference of their covariance matrices and their dimension.

Recalling the definition of $\Gamma$ from \eqref{GammaDefin}, we define the $d\times d$ diagonal matrix $B=\text{Diag}(\zeta_{1,1}^{-1/2},...,\zeta_{1,d}^{-1/2})$  and put 
\begin{align*}
\hat{\Gamma} &:=B\text{Cov}^*(h_1^X(\xi_1))B \\
& =B\Big (\frac{1}{n^{2m-1}}\sum_{l,l_1, \ldots , l_{2m-2}}^{n}h(X_l,X_{l_1}, \ldots ,X_{l_{m-1}})h(X_l,X_{l_m}, \ldots ,X_{l_{2m-2}})^\top-VV^\top\Big )B\,,
\end{align*}
where $\text{Cov}^*$ is the covariance operator with respect to the conditional expectation $\E^*$.
$\hat{\Gamma}$ is a rescaled version of the (conditional) covariance matrix  
of the vector $h_1^X(\xi_1)$. Further,
we introduce the matrices $\hat{\Lambda}$ and $\Lambda$ with entries
\begin{align*}
\hat{\Lambda}_{ij}=\hat{\Gamma}_{ij}\theta_i\theta_j \quad \text{ and } \quad
\Lambda_{ij}=\Gamma_{ij}\theta_i\theta_j\,,\qquad i,j=1,\ldots,d\,.
\end{align*} 
In the following discussion the symbol $a \leq b$ for vectors $a,b \in \mathbb{R}^d$ means coordinate-wise inequality.

\begin{Lemma}
	\label{LinearGaussApprox}
	Let $Z\sim N(0,\Lambda)$ and $Z^X \sim N(0,\hat{\Lambda})$ conditional on $X_1, \ldots ,X_n$. Suppose that the assumptions of Theorem \ref{Boot0} hold and that $|\theta|_{\min}>c>0$ for some constant $c$. Then we have  
	\begin{align*}
	\underset{\mathbf{x} \in \R^d}{\sup}\left|\p(Z \leq \mathbf{x} )-\p^*(Z^X \leq \mathbf{x} )\right|\lesssim \left(\frac{B_n^2\,(\log(nd))^5}{n}\right)^{1/6}
	\end{align*}
	with $\p$-probability at least $1-o_K(1)$. 
	Additionally, whenever \eqref{ConditionalKernelNorm} holds,  we have
    \begin{align}\label{eq:sfgsedg}
	\underset{x \in \R}{\sup}\left|\p^*(Z^X\leq (x,...,x)^\top )-\p^*(\Delta\, S_{n,1}^*\leq x)\right|\lesssim \left(\frac{B_n^2\,(\log(nd))^{5+\frac{2}{\beta}}}{n}\right)^{1/4}\,.
	\end{align}
	The  constants in both inequalities depend only on $\beta$ and $\gamma$. Therefore, both inequalities hold uniformly in the  subsets
	of the classes ${\cal H} _{0,boot}(\Delta)$ and $\mathcal{H}_1 $,  defined in \eqref{det0ab} and \eqref{det0}, for which $|\theta|_{\min}>c>0$.
	
\end{Lemma}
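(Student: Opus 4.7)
The plan is to treat the two inequalities separately, reducing both to off-the-shelf Gaussian comparison/approximation results whose inputs are supplied by concentration estimates for bootstrap moments.

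For the first inequality, I would invoke a Slepian-type Gaussian comparison lemma (of the kind used by \cite{UApprox,ImprovedApprox}), which bounds the Kolmogorov distance between two centered $d$-dimensional Gaussian vectors by a power of $\max_{i,j}|\hat\Lambda_{ij}-\Lambda_{ij}|$ times a polylogarithmic factor in $d$. Since $|\theta_i|,|\theta_j|\le\Delta$ and $|\theta|_{\min}>c$, the factors $\theta_i\theta_j$ cause no trouble and it suffices to prove the concentration bound
\begin{align*}
\max_{1\le i,j\le d}|\hat\Gamma_{ij}-\Gamma_{ij}|\lesssim B_n\sqrt{\log(nd)/n}
\end{align*}
with $\p$-probability $1-o_K(1)$. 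This is the key technical step. Expanding the definition of $\hat\Gamma$, the quantity inside the matrix $B(\cdot)B$ is a $V$-statistic in $X_1,\ldots,X_n$ based on a symmetric kernel of the form $(x,y_1,\ldots,y_{m-1},z_1,\ldots,z_{m-1})\mapsto h_i(x,y_1,\ldots)h_j(x,z_1,\ldots)$. Splitting off the diagonals as in the proof of Lemma \ref{BootstrapExpectationConv} (and noting that $\|h_i h_j\|_{\psi_{\beta/2}}\lesssim B_n^2$ via Lemma \ref{ProductNorm}), each off-diagonal piece is controlled by Lemma \ref{UStatConc}, while on-diagonal and cross terms are lower order under the growth conditions on $B_n$ and $d$. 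The leading term, a genuine $U$-statistic, estimates $\mathrm{Cov}(h_{1,i}(X_1),h_{1,j}(X_1))$ at rate $B_n^2\sqrt{\log(nd)/n}$, and Assumption (A2) provides the uniform lower bound $\underline b$ needed to convert covariance control into correlation control after dividing by $\sqrt{\zeta_{1,i}\zeta_{1,j}}$ (i.e.\ multiplying by $B$). The resulting estimate $\lesssim B_n\sqrt{\log(nd)/n}$ plugged into the comparison lemma yields the claimed rate $(B_n^2\log^5(nd)/n)^{1/6}$.

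For the second inequality \eqref{eq:sfgsedg}, conditional on $X_1,\ldots,X_n$ the random vectors $h_1^X(\xi_1),\ldots,h_1^X(\xi_n)$ are i.i.d.\ (with respect to $\p^*$), so $\Delta S_{n,1}^*$ is the maximum over $1\le i\le d$ of a normalized linear statistic times $\theta_i$. I would apply the high-dimensional central limit theorem of \cite{UApprox,ImprovedApprox} (the same tool used in Lemma \ref{GaussApproximation}) conditional on the data. The two inputs required are: (i) the uniform Orlicz-norm bound \eqref{ConditionalKernelNorm}, which gives subexponential/sub-Weibull tails in the $\psi_2$ sense with parameter $B_n(\log(nd))^{1/\beta}$, explaining the factor $(\log(nd))^{2/\beta}$ in the final exponent; and (ii) a uniform lower bound on the conditional variances $B_{ii}^2\,\mathrm{Var}^*(h_{1,i}^X(\xi_1))$. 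The latter follows from (A2) combined with the fact that these conditional variances are (up to the factors $\theta_i^2$ in the product $\Lambda_{ii}=\hat\Gamma_{ii}\theta_i^2$ absorbed into normalization) concentrated around $\zeta_{1,i}$, which is itself a byproduct of the concentration argument established in the first part (specialized to $i=j$).

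The main obstacle is the concentration bound on $\max_{i,j}|\hat\Gamma_{ij}-\Gamma_{ij}|$; this is what makes the rate in the first inequality come out correctly and indirectly ensures that the conditional variance lower bound holds with high probability for the second part. Everything else is an application of tools already used earlier in the paper (Lemmas \ref{UStatConc}, \ref{WeibullChar}, \ref{ProductNorm}, the Gaussian comparison/approximation machinery), with bookkeeping to confirm that all implicit constants depend only on $\beta,\gamma,\underline b,c$ and hence the estimates are uniform over the stated subset of ${\cal H}_{0,\mathrm{boot}}(\Delta)\cup{\cal H}_1$.
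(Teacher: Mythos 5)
Your overall strategy coincides with the paper's: decompose $\hat\Gamma$ into $U$-statistics of various orders (as in Lemma \ref{BootstrapExpectationConv}), apply Lemma \ref{UStatConc} to conclude entrywise concentration of $\hat\Gamma$ around $\Gamma$, then invoke the Gaussian-to-Gaussian comparison from \cite{UApprox} for the first inequality; and for \eqref{eq:sfgsedg}, apply the high-dimensional Gaussian approximation of Lemma \ref{GaussApprox} conditionally on the data, feeding it the bound \eqref{ConditionalKernelNorm}. That all matches.

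However, one step in your argument for the first inequality is incorrect as written. You claim the covariance term concentrates at rate $B_n^2\sqrt{\log(nd)/n}$, and then that the rate drops to $B_n\sqrt{\log(nd)/n}$ ``after dividing by $\sqrt{\zeta_{1,i}\zeta_{1,j}}$'' using Assumption (A2). This division cannot change the $B_n$-dependence: by (A1) and (A2), $\zeta_{1,i} \in [\underline{b}, D]$ for the relevant indices, so $\sqrt{\zeta_{1,i}\zeta_{1,j}}$ is bounded above and below by absolute constants, and dividing by it absorbs only into the implicit constant. If the covariance deviation really were $\asymp B_n^2\sqrt{\log(nd)/n}$, then the correlation deviation would also be of order $B_n^2\sqrt{\log(nd)/n}$, and feeding that into the Gaussian comparison lemma would yield $\bigl(B_n^4(\log(nd))^5/n\bigr)^{1/6}$, not the $\bigl(B_n^2(\log(nd))^5/n\bigr)^{1/6}$ that the Lemma asserts. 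The rate $B_n\sqrt{\log(nd)/n}$ is not obtained from $B_n^2\sqrt{\log(nd)/n}$ by dividing by a bounded quantity; if it holds, it must come directly from a sharper estimate of the linear projection of the product kernel. Specifically, the variance of the projected kernel $h_{1,i}(X_1)h_{1,j}(X_1)$ is controlled by $O(B_n^2)$ using the fourth-moment condition $\E_F[(h_{1,i}(X_1)-\theta_i)^4]\le D B_n^2$ from (A1), whereas the generic bound via the $\psi_{\beta/2}$-Orlicz norm of $h_ih_j$ (what Lemma \ref{UStatConc} uses out of the box) gives only $O(B_n^4)$; one needs to re-enter the proof of Lemma \ref{UStatConc} through Lemma \ref{WeibConc} with the explicit variance parameter $\Gamma_n\lesssim B_n^2$ in place of the crude bound $\Gamma_n\lesssim K_n^2$ to obtain the $B_n\sqrt{\log(nd)/n}$ rate. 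Your write-up, by attributing the improvement to (A2), both misidentifies the mechanism and, as a self-contained argument, would not actually reach the claimed power of $B_n$.
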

\begin{proof}
   We employ a 
    decomposition into $U$-statistics of orders up to $2m-1$ 
    \begin{align*}
        &\frac{1}{n^{2m-1}}\sum_{l,l_1, \ldots , l_{2m-2}}^{n}h(X_l,X_{l_1}, \ldots ,X_{l_{m-1}})h(X_l,X_{l_m}, \ldots ,X_{l_{2m-2}})^\top \\
        &=\frac{1}{n^{2m-1}}\sum_{l\neq l_1\neq  \ldots . \neq l_{2m-2}}^{n}h(X_l,X_{l_1}, \ldots ,X_{l_{m-1}})h(X_l,X_{l_m}, \ldots ,X_{l_{2m-2}})^\top + {R_n}   ~,
    \end{align*}
  where the term $R_n$ contains all 
   sums, where at least  two of the indices $l_i$  and $l_j$ ($i\not = j$) coincide
   (compare with the  the proof of Lemma \ref{BootstrapExpectationConv}).
   We then apply Lemma~\ref{UStatConc} to each $U$-statistic appearing in the above decomposition
   %
    to obtain, up to some constant depending only on $\gamma$ and $\beta$, that 
    \begin{align*}
        \max_{1\le i,j\le d}\big|\hat{\Gamma}_{ij}-\Gamma_{ij} \big| \lesssim B_n\sqrt{\frac{\log(nd)}{n}}
    \end{align*}
    with probability at least  $1-o_K(1)$. Finally, we use the Gaussian to Gaussian comparison from Lemma C.1 from \citeSM{UApprox} to establish the desired result. 
    
	The second bound \eqref{eq:sfgsedg} is an immediate consequence of Lemma \ref{GaussApprox}
	in the online supplement. Note that conditions (A) and (W), which are required for Lemma \ref{GaussApprox},  are satisfied with $B_n(\log(nd))^{1/\beta}$ instead of $B_n$ with $\p$-probability at least $1-o_K(1)$, which  follows from similar arguments as for the first bound and the fact that $\norm{h^X_{1,i}-V_i}_\infty$ is bounded by $B_n(\log(nd))^{1/\beta}$ with $\p$-probability at least $1-1/(nd)$.

\end{proof}

\subsubsection{Proof of Theorem \ref{Boot0}}
\label{sec432}
We start with the proof of \eqref{hd9a}. First assume that $|\theta|_{\min}>c>0$.
A combination of Lemmas \ref{BootstrapApprox} and  \ref{BootstrapLinearization} yields that under the null hypothesis
	\begin{align*}
		|S_{n,1}^*-\mathcal{T}^*_n|\lesssim
		c_{n,\epsilon}:= B_n\frac{(\log(nd))^{1+1/\beta}}{n^{1/2-\gamma/\beta}}+\frac{B_n^3(\log(nd))^{1+1/\beta}+B_n^2(\log(nd))^{1+2/\beta}}{\sqrt{n}}
	\end{align*}
with $\p^*$-probability at least $1-o_K(1)$  on a set of $\p$-probability at least $1-o_K(1)$, where all involved constants depend
only on $\beta,\gamma,c$ and $\underline{b}$. We hence obtain 
\begin{align}
\label{hx14}
\p^*(S^*_{n,1}>t
+c_{n,\epsilon})-o_K(1) & \leq \p^*(\mathcal{T}^*_n>t)  \leq \p^*(S^*_{n,1}>t-c_{n,\epsilon})+o_K(1)\,. 
\end{align}
Nazarov's inequality \citepSM[see for example][]{Nazarov} combined with the second part of Lemma \ref{LinearGaussApprox} then yields, up to some constant depending only on $\beta, \gamma ,\underline{b},c$, that
\begin{align*}
\underset{t \in \R}{\sup}	\left|\p^*(S^*_{n,1}>t\pm c_{n,\epsilon})-\p^*(S^*_{n,1}>t)\right|\lesssim \left(\frac{B_n^2(\log(nd))^{5+\frac{2}{\beta}}}{n}\right)^{1/4}+ c_{n,\epsilon}\sqrt{\log d}\,.
\end{align*}
In conjunction with  \eqref{hx14} and   the first part of  Lemma \ref{LinearGaussApprox}, we obtain  
\begin{align*}
	\underset{t \in \R}{\sup}\left|\p^*(\mathcal{T}_n^*\leq t)-\p(Z/\Delta \leq (t,\ldots,t)^{\top})\right|&\lesssim 
	d^{(1)}_{n,\epsilon}
\end{align*}
with $\p$-probability at least $1-o_K(1)$,
where $Z\sim N(0,\Lambda)$ and 
$$
	d^{(1)}_{n,\epsilon}:=
\left(\frac{B_n^2(\log(nd))^{5+\frac{2}{\beta}}}{n}\right)^{1/4}+\left(\frac{(\log(nd))^5B_n^2}{n}\right)^{1/6} +c_{n,\epsilon}\sqrt{\log d}  .
$$

We now derive a similar Gaussian approximation for the quantity 
\begin{align*}
T_{n,\Delta}=\underset{1\leq i\leq d}{\max}\ \frac{U^2_{i}-\theta_i^2}{2\hat{\sigma}_{i}\Delta}\,.
\end{align*}
Using Lemma \ref{RealVar} as well as the same arguments as above 
(with Lemma \ref{AntiConcentrationSampleMean} replacing  Nazarov's inequality), we get
\begin{align*}
	\underset{t \in \R}{\sup}\left|\p(T_{n,\Delta}\leq t)-\p(Z/\Delta \leq (t,...,t)^{\top}\right|\lesssim
		d_{n,\epsilon}^{(2)}\,,
\end{align*}
where
$$
d_{n,\epsilon}^{(2)}=
\left(\frac{B_n^2(\log(nd))^{4+\frac{2}{\beta}}}{n}\right)^{1/4}+
\frac{B_n^3(\log(nd))^{3/2}}{\sqrt{n}}+\frac{B_n(\log(nd))^{3/2}}{n^{1/2-\gamma/\beta}}.
$$
where all constants involved depend only on $\beta, \gamma ,\underline{b}$ and $c$.
Since $d^{(2)}_{n,\epsilon} \lesssim d^{(1)}_{n,\epsilon}$, we deduce
\begin{align*}
\underset{t \in \R}{\sup}\left|\p(T_{n,\Delta}\leq t)-\p^*(\mathcal{T}_n^*\leq t)\right|\lesssim
	d^{(1)}_{n,\epsilon}
\end{align*}
with $\p$-probability at least $1-o_K(1)$.
Because $\mathcal{T}_{n,\Delta}\leq T_{n,\Delta}$ this yields \eqref{hd9a} in the case $| \theta |_{\min}  >c$.

We conclude the proof considering the case where $|\theta|_{\min}$ is not bounded away from zero.  First, we assume that there exists a sufficiently small constant 
such that the set $I:=\{1 \leq i \leq d \colon  |\theta_i|> c\}$ is non-empty (in other words, we have $\theta_i \not =0$ for at least one  index $i\in \{ 1, \ldots , d\}$.
 By the arguments in the proof of Theorem \ref{alpha} we observe that 
\begin{align*}
\mathcal{T}_{n,\Delta}= \mathcal{T}_{n,\Delta}^I := 
\underset{i \in I}{\max}\frac{U_i^2-\Delta^2}{2 \hat{\sigma}_i\Delta}
\end{align*}
with high probability uniformly with respect to the class $\mathcal{H}_0$.
 Let $\mathcal{T}_{n}^{**}$  denote the analogue of the statistic $\mathcal{T}_{n}^{*}$
defined in \eqref{hd10a}, where the maximum
 is only taken over the set $I$, and denote by  $\hat{q}_{1-\alpha}^{**}$ the corresponding $(1-\alpha)$-quantile.
Observing that $q^*_{1-\alpha} \geq q^{**}_{1-\alpha}$, we have by the arguments  given in the above paragraph and the first part of this proof that 
\begin{align*}
 	\limsup_{n \to \infty } \underset{F \in {\cal H} _0(\Delta)  }{\sup}
	\mathbb{P}     (\mathcal{T}_{n,\Delta} \geq \hat{q}_{1-\alpha}^{*}) &= 
	 	\limsup_{n \to \infty } \underset{F \in {\cal H} _0(\Delta)  }{\sup}
	\mathbb{P}     (\mathcal{T}_{n,\Delta}^I \geq \hat{q}_{1-\alpha}^{*}) \\
 & \leq 	\limsup_{n \to \infty } \underset{F \in {\cal H} _0(\Delta)  }{\sup}
	\mathbb{P}     (\mathcal{T}_{n,\Delta}^I \geq \hat{q}_{1-\alpha}^{**})\leq \alpha ~,
\end{align*}
which yields \eqref{hd9a}.  Next we  consider the case where $\theta_i= 0$ for all $i=1, \ldots , d$  (in other words $I = \emptyset$ for all $c>0$). From the considerations in the proof of Theorem \ref{alpha} we know that in this case $\mathcal{T}_{n,\Delta}\lesssim -\sqrt{n}$. It hence suffices to show that $\mathcal{T}_n^*$ is lower bounded by a constant with probability at least $1-o_K(1)$. To this end note that 
\begin{align*}
    (U_i^*-V_i+V_{i,R})^2-V_{i,R}^2=(U_i^*-V_i)^2+2V_{i,R}(U_i^*-V_i)~.
\end{align*}
The first term on the right-hand side is positive, while the second term is bounded in absolute value by $B_n^2\frac{(\log(nd))^{1+\beta}}{\sqrt{n}}$ due to Lemmas \ref{BootstrapExpectationConv} and \ref{UStatConc}. This establishes the desired conclusion in the case $I = \emptyset$  and completes the proof under the null hypothesis.

Finally, we turn to the consistency part of Theorem \ref{Boot0}.
We have already seen in the proof of Theorem \ref{consistency} that there exists some constant $C>0$ such that for 
 $\xi= {\max_{1 \leq i  \leq d}~}\theta_{i}^2-\Delta^2=\theta_{i_0}^2-\Delta^2$
\begin{align*}
\mathcal{T}_{n,\Delta}\geq O_\p\left(B_n\sqrt{\log(d)}\frac{1}{\sqrt{n}\hat{\sigma}_{i_0}}\right)
+\frac{C\xi \sqrt{n}}{\sqrt{n}\hat{\sigma}_{i_0}}~.
\end{align*}
uniformly over $\mathcal{H}_1$. Note that for $\xi\downarrow 0$ we have $\xi \simeq \max_{1 \leq i\leq d }|\theta_i|-\Delta$. 
On the other hand, the arguments used in the proof of Lemma \ref{BootstrapApprox} show that
\begin{align*}
    \mathcal{T}^*_n \lesssim B_n(\log d)^{1/2}(\log(nd)^{1/\beta}
\end{align*}
with $\p^*$-probability at least $1-o_K(1)$ on a set of $\p$-probability at least $1-o_K(1)$ which implies that any fixed quantile of $T_n^*$ is eventually bounded (up to some constant that does not change with $n$) by $B_n(\log d)^{1/2}(\log(nd)^{1/\beta}$ with $\p$-probability at least $1-o_K(1)$. 
 Moreover, 
if the kernel	$h$ in \eqref{hd3} is bounded we can obtain \eqref{hd11a} without the additional
 factor
 $(\log(nd))^{1/\beta}$, which yields
\begin{align*}
    \mathcal{T}_n^*\lesssim B_n(\log d)^{1/2}
\end{align*}
and hence establishes the improved rate  in Theorem \ref{Boot0} for bounded kernels.

\subsubsection{Proof of Theorem \ref{optimality}}

Let $I_p$ be the $p$-dimensional identity matrix and $J_{a,b}$ the $a\times b$ matrix filled with ones and $J_p:=J_{p,p} $. Let $U_k$ be the $p \times 2$ matrix with entries $U_{k,11}=U_{k,k2}=1$ and $U_{k,ij}=0$ otherwise
 and write $e_1,\ldots,e_p$ for the canonical basis vectors of $\R^p$. We then define $\Sigma_{p,a}=(1-a)I_p+a J_p $ and $C=J_2-I_2$. Set $M_0=\Sigma_{p,\Delta}$ and $$M_k=(1-\Delta)I_p+\Delta J_p+\rho e_1 e_k^\top +\rho e_k e_1^\top =\Sigma_{p,\Delta}+\rho\, U_kCU_k^\top\,,\quad 2 \leq k \leq p\,,$$
 where $\rho=c_0(\log(p)/n)^{1/2}$ for some small constant $c_0=c_0(\Delta)$, which  will be specified later. Note that for sufficiently small $\rho$, the matrices $M_k$ are correlation matrices. 

Let $\mu_p$ be the uniform measure on the set $\mathcal{F}(p)=\{M_2  , \ldots , M_p \}$. We denote by $\p_\Sigma = 
\mathcal{N}_p(0,\Sigma)  \otimes \ldots \otimes  \mathcal{N}_p(0,\Sigma)$  the  product  probability measure induced by $n$ i.i.d.\ $p$-dimensional random vectors $Z_1, \ldots , Z_n \sim \mathcal{N}_p(0,\Sigma)$ and define $\p_{\mu_{p}}=\int \p_\Sigma d\mu_p(\Sigma)$. Let $\p_0$ denote the $n$-fold product probability measure of $\mathcal{N}_p(0,M_0)$. By the same arguments as in the proof of Theorem~5 in \citeSM{hanetal2017}, we obtain
\begin{align}
\label{hd45}
    \inf_{T_\alpha \in \mathcal{T}_\alpha}\sup_{\Sigma \in \mathcal{F}(p)}\p_\Sigma(T_\alpha \text{ does not reject } H_0)\geq 1-\alpha-{1 \over 2}
    \big (\E_{\p_0}[\mathcal{L}^2_{\mu_p}(Y)]-1\big )^{1/2}\,,
\end{align}
where
\begin{align*}
    \mathcal{L}_{\mu_p}(y)={\mathrm{d}\p_{\mu_p} \over \mathrm{d}\p_0} (y)=\frac{1}{p-1}\sum_{k=2}^p\Big [\prod_{i=1}^n\frac{\left|M_0\right|^{1/2}}{\left|M_k\right|^{1/2}} \exp\big (-\frac{1}{2}y_i^\top (M_k^{-1}-M^{-1}_0)y_i\big)\Big]
\end{align*}
with $|M_k|$ being the determinant of $M_k$.
Squaring and taking expectations yields
\begin{align*}
\E_{\mathbb{P}_0}\big [ \mathcal{L}^2_{\mu_p}(Y)\big ] = \frac{1}{(p-1)^2}\sum_{k,l=2}^p \E_{\mathbb{P}_0} &\Big [\prod_{i=1}^n\frac{\left|M_0\right|^{1/2}}{\left|M_k\right|^{1/2}}\frac{\left|M_0\right|^{1/2}}{\left|M_l\right|^{1/2}}\\ &\times \exp\big(-\frac{1}{2}Y_i^\top (M_k^{-1}+M_l^{-1}-2M_0^{-1})Y_i\big  )\Big]\,,
\end{align*}
where $Y=(Y_1,...,Y_n)$ 
and the random vectors $Y_1, \ldots , Y_n$ are independent with distribution $\p_0 = {\cal N}_p (0,M_0)$. 
By definition, the matrix $M_k$ is a rank two perturbation of $M_0$ and thus we can obtain its inverse by  the Woodbury matrix identity. Lengthy but straightforward calculations then yield
\begin{align*}
    M_0 ^{-1}-M_k^{-1}
    &=\frac{1}{a(a+2b)}\left[(u-v)U_k+vJ_{p,2}\right]\left((a+2b)I_2-bJ_2\right)\left[(u-v)U_k^\top +vJ_{2,p}\right],
\end{align*}
where 
\begin{align*}
u&:=\frac{1}{1-\Delta}\Big(1-\frac{\Delta}{1+(p-1)\Delta}\Big)~ ,\quad ~v:=\frac{-\Delta}{(1-\Delta)(1+(p-1)\Delta)}\,,\\
a & :=\frac{1}{1-\Delta}-\frac{1}{\rho}~ ,\quad
b:=\frac{1}{\rho}-\frac{\Delta}{(1-\Delta)(1+(p-1)\Delta)}\,.
\end{align*}
Denoting $T^{kl}=M_k^{-1}+M_l^{-1}-2M_0^{-1}$ we have by standard results on the moment generating function of a Gaussian quadratic form that
\begin{align*}
    \E\left[\exp\left(-\frac{1}{2}Y_i^\top T^{kl}Y_i\right)\right]=\left|I_d+T^{kl}\Sigma_{p,\Delta}\right|^{-1/2}\,.
\end{align*}
We will show below that these determinants 
attain only two values depending on whether $k=l$ or $k \not =l$. Hence, observing that $|M_k| = |M_2|$ for $k=2, \ldots , d$ we  obtain 
\begin{align}
\label{hd43}
\E_{\p_0}  \big [ \mathcal{L}^2_{\mu_p}(Y) \big] &= \frac{1}{p-1}\prod_{i=1}^n\frac{ |M_0 |}{ |M_2 |}\left|I_p+T^{22}\Sigma_{p,\Delta}\right|^{-1/2} \notag \\ & \quad + \frac{p-2}{p-1}
\prod_{i=1}^n\frac{|M_0|}{|M_2|}\left|I_p+T^{23}\Sigma_{p,\Delta}\right|^{-1/2} =:A_{11}+A_{22}\,. 
\end{align}

We now investigate the different terms separately. {First we consider the ratio $|M_0 | / |M_2 |$
which appears in   both terms in \eqref{hd43}}. Using the fact that the eigenvalues of an equicorrelation matrix $\Sigma_{p,a}$ are $1-a$ with multiplicity $p-1$ and $1+(p-1)a$ with multiplicity 1 we have
\begin{align*}
\left| {M_0} \right|&=(1-\Delta)^{p-1}(1+(p-1)\Delta)\,.
\end{align*}
For $M_2$ we have the block decomposition
$$M_2 = 
     \begin{pmatrix}
     \Sigma_{2,\Delta+\rho} & \Delta J_{2,p-2}\\
     \Delta J_{p-2,2} & \Sigma_{p-2,\Delta}
     \end{pmatrix}$$
     from which we deduce that 
\begin{align*}
|M_2| &= \big|\Sigma_{2,\Delta+\rho} \big| ~ \big |\Sigma_{p-2,\Delta}-\Delta^2 J_{2,p-2} \Sigma_{2,\Delta+\rho}^{-1} J_{p-2,2} \big |\\
&= \big|\Sigma_{2,\Delta+\rho} \big| ~\Big| \frac{\Delta(1-\Delta +\rho)}{1+\Delta+\rho}J_{p-2} +(1-\Delta) I_{p-2} \Big|\\
  \nonumber 
&=(1-\Delta-\rho)(1+\Delta+\rho)(1-\Delta)^{p-3}\Big [(p-2)\frac{\Delta(1-\Delta+\rho)}{1+\Delta+\rho}+1-\Delta\Big ]\,.
\end{align*}
Hence, we get
\begin{align}
 {
\frac{\left|M_0\right|}{\left|M_k\right|}}
& =\frac{(1-\Delta)^2(1+(p-1)\Delta}{(1-\Delta-\rho)(1+\Delta+\rho)\big [(p-2)\frac{\Delta(1-\Delta+\rho)}{1+\Delta+\rho}+1-\Delta\big ]}\,. 
\label{DetQuotient}
\end{align}

{Next we consider the determinant involving the matrix $T^{kk}$}. We start by observing that 
\begin{align*}
    T^{kk}\Sigma_{p,\Delta}=2[(u-v)U_k+vJ_{p,2}]MU_k^\top \,,
\end{align*}
where $M:=\frac{-1}{a(a+2b)}\left((a+2b)I_2-bJ_2\right)$. An application of the Weinstein–Aronszajn identity \citeSM{Akritas1996} then yields
\begin{align} |I_p+T^{kk}\Sigma_{p,\Delta}|&=|I_2+2MU_k^\top [(u-v)U_k+vJ_{p,2}]| \nonumber \\
    &=|I_2+2M[(u-v)I_2+vJ_2]| \nonumber \\
    &=\Big|I_2-\frac{2}{a(a+2b)}[(a+2b)(u-v)I_2+((a+2b)v-b(u+v))J_2]\Big| \nonumber \\
    &=|q_1I_2+q_2J_2|=(2q_2+q_1)q_1\,,\label{kkDeterminant} 
\end{align}
where 
\begin{align*}
q_1&=1-\frac{2}{a}(u-v)=\frac{1+\rho-\Delta}{1-\rho-\Delta}\,,\\
q_2&=\frac{-2v}{a}+\frac{2b(u+v)}{a(a+2b)}=\frac{\rho(1-\Delta)2((p-1)\Delta+1)}{(\rho-1+\Delta)((-p+1)\Delta^2+((p-3)\rho+d-2)\Delta+\rho+1)}.
\end{align*}

Combining \eqref{DetQuotient} and \eqref{kkDeterminant} then yields 
\begin{align*}
\log A_{11} &= \log \Big [
    \frac{1}{p-1}\prod_{i=1}^n\frac{\left|M_0\right|}{\left|M_2\right|}\left|I_p+T^{22}\Sigma_{p,\Delta}\right|^{-1/2} \Big ] \\
    \nonumber 
  &=-\log(p-1)+ {n \over 2} \Big [ 4\log(1-\Delta)+2\log(1+(p-1)\Delta)
 \\
    \nonumber
    & ~~~~- \log((1-\Delta)^2-\rho^2)-\log((1+(p-2)\Delta(1-\Delta)-\Delta^2)^2-((p-3)\Delta+1)^2\rho^2)
    \Big ] \\
  &=-\log(p-1)+ {n \over 2} \Big [\frac{-C}{p}+o(p^{-1})+\frac{2\rho^2}{(1-\Delta)^2}
  \Big ]\,, \qquad n\to \infty\,,
     \nonumber
\end{align*}
where we used a Taylor expansion  for $\log(1+x)$ 
in the last step (assuming that $\rho\rightarrow 0$) and $C$ is some positive constant. Therefore we obtain  
\begin{align}
\label{kkterm}
A_{11} = o(1) 
\end{align}
if we choose $\rho^2=c_0^2\log(p)/n$, where the constant $c_0$ satisfies  $c_0<1-\Delta$.

{For the determinant involving $T^{kl}$ in the $A_{22}$ term in \eqref{hd43} }
we obtain by straightforward calculations that
\begin{align*}
    |I_p+T^{kl}\Sigma_{d,\Delta}|&=|I_p+T^{23}\Sigma_{p,\Delta}|\\
    &=\left|I_3-\frac{1}{a(a+2b)}\begin{pmatrix}
    2[u(a+b)-vb] & v(a+b)-ub & v(a+b)-ub\\
    v(a+b)-ub & u(a+b)-vb & av\\
    v(a+b)-ub & av & u(a+b)-vb
    \end{pmatrix}\right|\,.
    \end{align*}

Tedious but straightforward calculations  yield
\begin{align*}
\Big ( \frac{|M_0|}{|M_2|} \Big )^2|I_p+T^{kl}\Sigma_{p,\Delta}|^{-1} =\frac{f}{g}~,
\end{align*}
where
{\small\begin{align*}
    f&=((-p+1)\Delta^2+((\rho+1)p-3\rho-2)\Delta+\rho+1)(1+(p-1)\Delta)^2(-1+\Delta)^4\,,\\
    g&=\left(1+(p-1)^2\Delta^4+(-2p^2+6p-4)\Delta^3+(\rho^2p+p^2-3\rho^2-6p+6)\Delta^2+(\rho^2+2p-4)\Delta\right)\\
    & \quad \quad \times (1-\Delta+\rho)\left(1+(p-1)\Delta^3+(-2p+2\rho+3)\Delta^2+(\rho^2+p-2\rho-3)\Delta\right)\,.
\end{align*}}

Once again assuming $\rho\rightarrow 0$, taking the logarithm of $f/g$ and using the Taylor expansion of $\log(1+x)$ yields that $\log(f/g)=O(p^{-2}+\rho p^{-1})$ so that
\begin{align*}
 A_{22}=   \frac{p-2}{p-1}\left[\prod_{i=1}^n\frac{\left|M_0\right|}{\left|M_2\right|}\left|I_p+T^{23}\Sigma_{p,\Delta}\right|^{-1/2} \right]=\exp(o(1))(1+o(1)) =
 1+o(1)\,,
\end{align*}
 where we used $\log(p)n/p^2=o(1)$. 
Observing \eqref{hd43} and \eqref{kkterm} we obtain
$$
\E_{\p_0}  \big [ \mathcal{L}^2_{\mu_p}(Y) \big] =
A_{11} + A_{22} =1 + o(1)
$$
and the assertion of the theorem follows from \eqref{hd45}, completing the proof.

\subsection{Proof of Theorem \ref{BootAbs}}
The proofs are structurally the same as those for Theorems \ref{alpha}, \ref{consistency} and \ref{Boot0}, one only needs to substitute the squares of $U_i, \theta_i, U_i^*$ and $ V_i$ by absolute values and omit the variances and their estimators (this only ever improves any of the bounds appearing in the proofs). We indicate how to adjust the arguments for those cases where substantial differences occur.  \\
\textbf{Differences in the proof of Theorem \ref{alpha}:}\\
In the proof of Lemma \ref{RealVar} the quantity $\hat T_{n,2}$ is instead given by
\begin{align*}
    \1\{U_i\theta_i<0\}\max_{1 \leq i \leq d}\left|\frac{|U_i|-|\theta_i|}{2m}\right|
\end{align*}
This quantity is zero with probability at least $1-o_K(1)$ by Lemma \ref{UStatConc} which yields the required bound \eqref{UBound}. Similarly the decomposition mentioned after \eqref{hd1} of $U_i^2-\theta_i^2$ into a quadratic and linear part is instead the decomposition of $|U_i|-|\theta_i|$ into a part where they have the same sign and a part where they do not.\\
\textbf{Differences in the proof of Theorem \ref{Boot0}}\\
In the proof of Lemma \ref{BootstrapApprox} the  upper bound in the equation \eqref{BootstrapDecomp} is instead given by \begin{align*}   
    \1\{U_i^*V_i<0\}\max_{1 \leq i \leq d}\left|\frac{|U_i^*|-|V_i|}{2m\Delta}\right|
\end{align*}
Which is 0 with $\p^*$ probability at least $1-o_K(1)$ on a set of $\p$ probability at least $1-o_K(1)$ whenever $\min_{1 \leq i \leq d}|\theta_i|>c>0$  for some $c>0$ (this not a restriction since the resulting inequalities are only needed for this case in the proof). This can be seen by  applying Lemma \ref{BootstrapExpectationConv}  in conjunction with Lemma \ref{UStatConc}.

\section{Further technical details}
\label{sec6}
  \def\theequation{B.\arabic{equation}}	
	\setcounter{equation}{0}
	
    \subsection{Randomized Lindeberg Method}
\label{sec61}

In this section we state two important auxiliary results (Lemmas \ref{GaussApprox}  and \ref{AntiConcentrationSampleMean}), which will be used in the proofs of our main results in Section \ref{sec5}. They are a consequence of a general Gaussian approximation result (Theorem \ref{Lindeberg}), which is proved in Section \ref{sec511} via the iterative randomized Lindeberg method.

\subsubsection{A Gaussian approximation  and its consequences}

Let $V_1, \ldots ,V_n, Z_1, \ldots ,Z_n$ denote independent random vectors in $\R^d$, where
 $V_i=(V_{i1}, \ldots, V_{id})^\top$ and $Z_i=(Z_{i1}, \ldots, Z_{id})^\top$ 
 for $i=1, \ldots , n$. We also assume that the following conditions hold for the vectors
 $V_1, \ldots ,V_n, Z_1, \ldots ,Z_n$.
 There exists a sequence of constants $B_n$ such that:

\medskip 

\textbf{Condition V:} There exists a constant $C_v>0$ such that for all $j$
\begin{align*}
	\frac{1}{n}\sum_{i=1}^{n}\E[V_{ij}^2+Z_{ij}^2]\leq C_v, \quad \frac{1}{n}\sum_{i=1}^{n}\E[V_{ij}^4+Z_{ij}^4]\leq B_n^2C_v\,.
\end{align*}
\textbf{Condition P:} There exists a constant $C_p\geq 1$ such that for all $i$
\begin{align*}
	\p\left(\norm{V_i}_\infty \lor \norm{Z_i}_\infty>C_pB_n(\log(dn))^{1/\beta}\right)\leq \frac{1}{n^4}\,.
\end{align*}
\textbf{Condition B:} There exists a constant $C_b>0$ such that for all $i$
\begin{align*}
	\E[\norm{V_i}^8_\infty+\E[\norm{Z_i}^8_\infty]\leq C_bB_n^8(\log(dn))^{8/\beta}\,.
\end{align*}
\textbf{Condition A:} There exists a constant $C_a>0$ such that for all $(y,t)\in \R^d\times \R_+$, we have
\begin{align*}
	\p\left(\frac{1}{\sqrt{n}}\sum_{i=1}^{n}Z_i\leq y+t\right)-\p\left(\frac{1}{\sqrt{n}}\sum_{i=1}^{n}Z_i\leq y\right) \leq C_a t \sqrt{\log d}\,.
\end{align*}

Here $y+t$ means addition of $t$ to every component of $y$.
The following result, which will be proved in Section \ref{sec511}, will be crucial for Lemmas \ref{GaussApprox} - \ref{AntiConcentrationSampleMean}. Its proof uses distributional approximations via the Iterative Randomized Lindeberg Method
and is structurally the same as in \citeSM{ImprovedApprox}. However, we require a weaker decay in the tails at the cost of a weaker bound.

\begin{theorem}[Iterative Randomized Lindeberg Method]
	\label{Lindeberg}
	Suppose that conditions V,P,B and A are satisfied. In addition, suppose that for some positive constant $C_m$
	\begin{align*}
		\underset{1\leq j,k\leq d}{\max}\left|\frac{1}{\sqrt{n}}\sum_{i=1}^{n}(\E[V_{ij}V_{ik}]-\E[Z_{ij}Z_{ik}])\right|\leq C_mB_n(\log(dn))^{1/\beta}\,, \\
		\underset{1\leq j,k,l\leq d}{\max}\left|\frac{1}{\sqrt{n}}\sum_{i=1}^{n}(\E[V_{ij}V_{ik}V_{il}]-\E[Z_{ij}Z_{ik}Z_{il}])\right|\leq C_mB_n^2 (\log(dn))^{2/\beta}~.
	\end{align*}
	Then it holds
	\begin{align*}
		\underset{y \in \R^d}{\sup}\left|\p\left(\frac{1}{\sqrt{n}}\sum_{i=1}^{n}V_i\leq y\right)-\p\left(\frac{1}{\sqrt{n}}\sum_{i=1}^{n}Z_i\leq y\right)\right|\leq C\left(\frac{B_n^2(\log(dn))^{4+2/\beta}}{n}\right)^{1/4}\,,
	\end{align*}
	where $C>0$ is a constant depending only on $C_v, C_p, C_b, C_a, C_m$.
\end{theorem}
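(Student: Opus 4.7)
The plan is to follow the standard iterative (randomized) Lindeberg template: smooth the multivariate indicator, swap one summand at a time from $V_i$ to $Z_i$, Taylor-expand the smoothed functional to fourth order so that first-, second- and third-order contributions either vanish in expectation or are absorbed into the assumed moment-matching error $C_m$, and finally undo the smoothing via the anti-concentration Condition A.

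More precisely, I would fix $y\in\R^d$ and $\phi>0$ and introduce the smoothing
\[
g(w) \;=\; m\!\left(F_\beta\!\bigl(\phi(w-y)\bigr)\right),
\]
where $F_\beta(u) = \beta^{-1}\log\sum_{j=1}^d e^{\beta u_j}$ is the standard soft-max (with parameter $\beta \asymp \log d/\phi$, chosen so that $F_\beta$ is within $\phi^{-1}\log d$ of $\max_j u_j$) and $m:\R\to[0,1]$ is a fixed $C^4$ cut-off with $m\equiv 1$ on $(-\infty,0]$ and $m\equiv 0$ on $[1,\infty)$. Then $\mathbf{1}\{w\le y - \phi^{-1}\log d\}\le g(w)\le \mathbf{1}\{w\le y + 2\phi^{-1}\log d\}$, and the partial derivatives of $g$ satisfy the well-known bounds $\sum_{|\alpha|=k}|\partial^\alpha g(w)|\lesssim \phi^k (\log d)^{k-1}$ for $k=1,2,3,4$. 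Using Condition A to replace the Gaussian event $\{W^{(n)}\le y\pm \phi^{-1}\log d\}$ by $\{W^{(n)}\le y\}$ costs $\phi^{-1}(\log d)^{3/2}$, so the final step of the proof is to optimise $\phi$ against the swap error.

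For the swap itself, set $W^{(k)} = n^{-1/2}\sum_{i\le k}Z_i + n^{-1/2}\sum_{i>k}V_i$ and write the telescoping sum
\[
\E[g(W^{(n)})] - \E[g(W^{(0)})] \;=\; \sum_{k=1}^n \Bigl(\E[g(W^{(k)})] - \E[g(W^{(k-1)})]\Bigr).
\]
Truncate $V_k$ and $Z_k$ at level $T=C_p B_n (\log(nd))^{1/\beta}$; Condition P makes the discarded part contribute at most $O(n^{-3})$ per step (total $O(n^{-2})$), while Condition B together with Cauchy–Schwarz controls the error in $\E[g]$ caused by truncation. On the truncated event, Taylor-expand $g$ around the common point $W^{(k-1)}-n^{-1/2}V_k = W^{(k)}-n^{-1/2}Z_k$ to fourth order. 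The first-order terms vanish by independence and the zero-mean assumption; the second-order and third-order terms cancel up to $C_m B_n^{k-1}(\log(nd))^{(k-1)/\beta}$ by the two hypotheses of the theorem together with the derivative bounds on $g$; and the fourth-order remainder is dominated by $\phi^4(\log d)^3 T^2/n^2$ times the second moment of $V_k$ or $Z_k$, using Condition V. Summing over $k$ yields a swap error of order
\[
\phi^2 B_n(\log d)(\log(nd))^{1/\beta}/\sqrt{n} + \phi^3 B_n^2(\log d)^2(\log(nd))^{2/\beta}/\sqrt{n} + \phi^4 B_n^2(\log(nd))^{2/\beta}(\log d)^3/n,
\]
plus the anti-concentration cost $\phi^{-1}(\log d)^{3/2}$ from undoing the smoothing.

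The main obstacle is the usual one: balancing $\phi$. Because Condition P only delivers a $(\log(nd))^{1/\beta}$ envelope rather than a sub-Gaussian one, the per-step Taylor remainder carries an extra factor $(\log(nd))^{2/\beta}$ that did not appear in the sub-exponential version of the argument. Choosing $\phi$ to equalise the anti-concentration term with the dominant (quadratic in $\phi$) swap error, namely $\phi \asymp \bigl(n/(B_n^2(\log(nd))^{4+2/\beta})\bigr)^{1/4}$, produces the stated rate $(B_n^2(\log(nd))^{4+2/\beta}/n)^{1/4}$, and the cubic and quartic remainders are of smaller order provided the right-hand side is $o(1)$. The randomisation aspect (replacing the deterministic index-by-index swap by a mixture over a random order) is used only to smooth the derivative bounds in the quartic remainder and is essentially bookkeeping once the above outline is in place.
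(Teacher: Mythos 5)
There is a genuine gap: your outline omits the self-improving structure that is the whole point of the \emph{iterative randomized} Lindeberg method, and the arithmetic of the final optimization in your plan does not actually deliver the stated $n^{-1/4}$ rate.

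Concretely, in a one-at-a-time deterministic swap with derivative bounds $\sum_{|I|=k}|\partial^I g|\lesssim \phi^k(\log d)^{k-1}$, the second-order swap error aggregated over all $n$ steps is, as you write,
\begin{align}
\mathcal{E}_2 \asymp \frac{B_n(\log(nd))^{1/\beta}\,\phi^2\log d}{\sqrt{n}}\,,
\end{align}
and the anti-concentration cost is $\sqrt{\log d}/\phi$.  Equalizing these two gives $\phi\asymp\big(\sqrt{n}/(B_n\sqrt{\log d}\,(\log(nd))^{1/\beta})\big)^{1/3}$ and a final bound of order $n^{-1/6}$ (up to logs and $B_n$ factors), \emph{not} $n^{-1/4}$.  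With the $\phi$ you state, i.e.\ $\phi\asymp(n/(B_n^2(\log(nd))^{4+2/\beta}))^{1/4}$, the quantity $\mathcal{E}_2$ is of order $\log d/(\log(nd))^2$, which is $O(1)$ and not small at all.  So the quadratic term cannot be balanced against anti-concentration by choosing $\phi$; a different mechanism is needed.

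The paper's mechanism is precisely the one you describe as ``essentially bookkeeping'': because the derivatives of the smoothing $m^y$ are supported on the thin slab $\{|\max_j(W_j-y_j)|\le\phi^{-1}\}$, one can write $m^y_{jk}(W)=h^y(W;\phi^{-1})m^y_{jk}(W)$ and bound
\begin{align}
\sum_{j,k}\E[|m^y_{jk}(W)|]\;\lesssim\;\phi^2\log d\,\Big(\E[\varrho_{\epsilon^{s+1}}]+\tfrac{\sqrt{\log d}}{\phi}\Big)\,,
\end{align}
i.e.\ the derivative sum picks up an extra factor equal to the Kolmogorov distance $\varrho$ itself plus the Gaussian slab probability.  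This extra factor of order $1/\phi$ (up to logs) is exactly what upgrades the rate from $n^{-1/6}$ to $n^{-1/4}$.  But the random variable $W$ appearing here is the \emph{partially swapped} sum $S^V_{n,\epsilon^{s+1}}$, not the fully Gaussian one, so the slab probability is a priori unknown; this is why the recursion $\E[\varrho_{\epsilon^s}\mathbf 1_{\mathcal A_s}]\le C(\dots)\big(\E[\varrho_{\epsilon^{s+1}}\mathbf 1_{\mathcal A_{s+1}}]+\sqrt{\log d}/\phi\big)+\dots$ is set up and iterated $D\asymp\log n$ times until $\varrho_{\epsilon^D}=0$ on a high-probability event.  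The random exchangeable choice of which coordinates to swap (the $\epsilon^s$ process and the conditional argument via Lemma~\ref{Permutation}) is what makes the partial-sum $W$ in the recursion have exactly the distribution that reappears at the next level, and what keeps the moment conditions $\mathcal A_s$ propagating with the explicitly growing constants $B_{n,1,s},B_{n,2,s}$ (Lemma~\ref{ExchLemma}).  None of this is bookkeeping; it is the core of the proof and cannot be dropped without losing the rate.  To fix your proposal you would need to replace the direct bound $\sum_{j,k}|\partial^2 g|\lesssim\phi^2\log d$ by the slab-localized bound, introduce the iterated randomized swap, and run the fixed-point argument $f_s\lesssim f_{s+1}^{2/3}+(s+1)^{2/3}+1$ with per-step choices of $\phi=\phi_s$ as in the paper.
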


Theorem \ref{Lindeberg} has several important consequences, which are now stated in Lemma~\ref{GaussApprox} and Lemma~\ref{AntiConcentrationSampleMean}
and used in the proofs in Section \ref{sec5}. 
For a precise formulation we require the following assumptions.

  Let 
$X_1, \ldots ,X_n \in \R^d$ denote  i.i.d. centred random vectors, 
$ X_i=(X_{i1},\dots,X_{id})^\top$, satisfying the following 
Assumptions:
\begin{description}
	\item \textbf{(A)}: There exists a sequence of constants $(B_n)_{n \in \mathbb{N}}$ such that for $1 \leq j \leq d$ we have 
	$\norm{X_{1j}}_{\psi_\beta}\lesssim B_n$ for some $0<\beta\leq 2$.
	\item \textbf{(W)}: There exist constants $\sigma_{min}>0$ and $D>0$ such that for all $j$ 
	\begin{align*}
		\sigma_{min} \leq \frac{1}{n}\sum_{i=1}^{n}\E[X^2_{ij}]\leq D \quad \text{ and } \quad
		\frac{1}{n}\sum_{i=1}^{n}\E[X^4_{ij}]\leq B_n^2D\,.
	\end{align*}
	
\end{description}
We begin with a result describing the deviation between the empirical moments of the centered vectors 
$$
\Tilde{X}_k=  (\Tilde{X}_{k1},\dots, \Tilde{X}_{kd})^\top := X_k - \bar {X} = (X_{k1}-\bar {X}_{ 1},\dots,X_{kd}-\bar {X}_{d})^\top\,,
$$
where $\bar{X}_j=\frac{1}{n}\sum_{i=1}^nX_{ij}$, and the covariance matrix $\mathbb{E}{[X_k  X^\top_k]}.$ 
\begin{Lemma}
	\label{MomentBound}
	Suppose that assumptions (A)  and (W) hold.  Then there exists a universal constant $c>0$, constants $C,D>0$ and $n_0 \in \mathbb{N}$ depending only on $\beta$, $\sigma_{min}$ and $B_n$ such that for all $n\geq n_0$ the inequality 
	\begin{align*}
		B_n^2(\log(dn))^{4+2/\beta}\leq cn
	\end{align*}
	implies that the inequalities
	\begin{align}
		\label{det41}
		\frac{\sigma_{min}}{2}\leq \frac{1}{n}\sum_{i=1}^{n}\tilde{X}^2_{ij} &\leq D\,, \\
		\label{det42}
		\frac{1}{n}\sum_{i=1}^{n}\tilde{X}^4_{ij} &\leq B_n^2D\,,\\	
		\label{det43}
		\underset{1\leq k,j\leq p}{\max}\left|\frac{1}{\sqrt{n}}\sum_{k=1}^{n}(\tilde{X}_{ik}\tilde{X}_{ij}-E[X_{ik}X_{ij}])\right| &\leq CB_n(\log(dn))^{1/\beta}\,,\\
		\label{det44}
		\underset{1\leq k,j,l\leq p}{\max}\left|\frac{1}{\sqrt{n}}\sum_{i=1}^{n}(\tilde{X}_{ik}\tilde{X}_{ij}\tilde{X}_{il}-E[X_{ik}X_{ij}X_{il}])\right| &\leq CB_n^2(\log(dn))^{2/\beta}
	\end{align}
	hold jointly with probability at least $1-1/n$.
	
\end{Lemma}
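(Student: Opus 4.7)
The plan is to reduce each of the four inequalities to a concentration bound for i.i.d.\ sums of sub-Weibull random variables, aggregated by a union bound over at most $d^3$ coordinates. The main ingredients are the standard sub-Weibull Bernstein/Bennett-type tail bounds for sums and $U$-statistics (Lemmas \ref{WeibConc} and \ref{UStatConc}), the product bound $\norm{XY}_{\psi_{\beta/2}}\lesssim\norm{X}_{\psi_\beta}\norm{Y}_{\psi_\beta}$ of Lemma \ref{ProductNorm}, and a uniform truncation step showing that $\max_{i,j}|X_{ij}|\le C_\beta B_n(\log(nd))^{1/\beta}$ with probability at least $1-(nd)^{-2}$. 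Under the growth hypothesis $B_n^2(\log(nd))^{4+2/\beta}\le cn$ the Bernstein remainders become negligible, while the leading sub-Gaussian parts match the claimed rates because $\sqrt{\log(nd)}\le(\log(nd))^{1/\beta}$ whenever $\beta\le 2$.

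The first step is to control the sample mean: a direct application of Lemma \ref{WeibConc} together with a union bound over $1\le j\le d$ yields $\max_{j}|\bar X_j|\lesssim B_n\sqrt{\log(nd)/n}$ with probability at least $1-(4n)^{-1}$. Plugging $\tilde X_{ij}=X_{ij}-\bar X_j$ into the square and the fourth power reduces \eqref{det41} and \eqref{det42} to uniform control of $\frac{1}{n}\sum_i X_{ij}^2$ and $\frac{1}{n}\sum_i X_{ij}^4$. Sub-Weibull concentration on $X_{ij}^2$, which is sub-Weibull$(\beta/2)$ with parameter $\lesssim B_n^2$, produces a deviation of order $B_n^2\sqrt{\log(nd)/n}=o(1)$, so \eqref{det41} follows from (W) with a slightly larger constant $D$. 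For \eqref{det42}, the same scheme applied on the good truncation event bounds the empirical fourth moment by a constant multiple of $B_n^2 D$.

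For \eqref{det43}, the decomposition
\begin{align*}
\tfrac{1}{\sqrt n}\sum_{i=1}^n(\tilde X_{ik}\tilde X_{ij}-\E[X_{ik}X_{ij}])=\tfrac{1}{\sqrt n}\sum_{i=1}^n(X_{ik}X_{ij}-\E[X_{ik}X_{ij}])-\sqrt n\,\bar X_k\bar X_j
\end{align*}
absorbs the last term via the first step. For the main term I would apply a Bernstein inequality to $Y_i=X_{ik}X_{ij}-\E[X_{ik}X_{ij}]$ with a.s.\ bound $K\lesssim B_n^2(\log(nd))^{2/\beta}$ on the good truncation event and variance bound $\E Y_i^2\le DB_n^2$ from Cauchy-Schwarz combined with (W). A union bound over the $d^2$ pairs then produces the rate $B_n\sqrt{\log(nd)}+B_n^2(\log(nd))^{2/\beta+1}/\sqrt n\lesssim B_n(\log(nd))^{1/\beta}$, where the first inequality uses $\beta\le 2$ and the second uses the scaling hypothesis.

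The hardest step will be \eqref{det44}: a naive sixth-moment bound gives $\sigma\lesssim B_n^3$ and a leading contribution $B_n^3\sqrt{\log(nd)}$, which is too large to match $B_n^2(\log(nd))^{2/\beta}$. On the good truncation event, however, one can bound the variance by $\sigma^2\lesssim B_n^2(\log(nd))^{2/\beta}\cdot DB_n^2=DB_n^4(\log(nd))^{2/\beta}$, combining the a.s.\ truncation $|X_{il}|\le C_\beta B_n(\log(nd))^{1/\beta}$ with the Cauchy-Schwarz consequence $\E[X_{ik}^2X_{ij}^2]\le DB_n^2$ of (W). The leading sub-Gaussian contribution then becomes $B_n^2(\log(nd))^{1/\beta+1/2}\lesssim B_n^2(\log(nd))^{2/\beta}$ (again via $\beta\le 2$), and the Bernstein remainder is subleading under the scaling. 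A final union bound over the finitely many failure events, each of probability at most $O((nd)^{-2}\vee n^{-1})$, gives the joint estimate with probability at least $1-1/n$ for $n\ge n_0$.
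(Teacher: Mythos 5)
Your proposal is correct and follows the same strategic skeleton as the paper's proof: expand $\tilde X_{ij}=X_{ij}-\bar X_j$ into products of uncentered $X$-moments and powers of $\bar X$, use a truncation event of high probability, concentrate the uncentered empirical moments at orders up to four, and union-bound over coordinate tuples. The key observation that makes \eqref{det44} work --- bounding the variance of the truncated triple product by $B_n^4(\log(nd))^{2/\beta}$ via the a.s.\ truncation on one factor together with the Cauchy--Schwarz/(W) bound $\E[X_{ik}^2X_{ij}^2]\lesssim B_n^2$ on the remaining two, then using $\beta\le 2$ to absorb $\sqrt{\log(nd)}$ into $(\log(nd))^{1/\beta}$ --- is exactly what the paper relies on, though the paper compresses this into the line $\E[(X_i^h)^2]\lesssim B_n^{2(m-1)}(\log(dn))^{2(m-2)/\beta}$ "by standard calculations and (W)" while you make it explicit. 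The one genuine difference is the concentration engine: the paper routes everything through the Chernozhukov--Chetverikov--Kato maximal inequality (Lemma \ref{MaxIneqCentSum}), which combines a mean bound with an Orlicz-tail concentration around the mean, whereas you invoke a Bernstein/Bennett bound directly after truncation. Both devices deliver the same sub-Gaussian leading term plus a heavy-tailed remainder killed by $B_n^2(\log(nd))^{4+2/\beta}\le cn$, so this is a cosmetic swap of lemmas rather than a different argument. Your count of "at most $d^3$ coordinates" slightly undercounts what is needed for the fourth-moment expansion behind \eqref{det42} (the paper controls a $d^4$-indexed maximum in its event $\mathcal A$), but since \eqref{det42} itself only involves the diagonal, your direct treatment avoids that issue and the bookkeeping still closes.
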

\begin{proof}
		Let $A=5L(C_1+C_2)$ for some $C_1, C_2$ to be specified later and denote by $\mathcal{A}$ the event that the inequalities
	\begin{align*}
		&\underset{1\leq k\leq d}{\max}\left|\frac{1}{\sqrt{n}}\sum_{k=1}^{n}X_{ik}\right|\leq A\sqrt{\log(dn)}\,,\\
		&\underset{1\leq k,j\leq d}{\max}\left|\frac{1}{\sqrt{n}}\sum_{k=1}^{n}(X_{ik}X_{ij}-E[X_{ii}X_{ij}])\right|\leq AB_n(\log(dn))^{1/\beta}\,,\\
		&\underset{1\leq k,j,l\leq d}{\max}\left|\frac{1}{\sqrt{n}}\sum_{k=1}^{n}(X_{ik}X_{ij}X_{kl}-E[X_{ii}X_{ij}X_{il}])\right|\leq AB_n^2(\log(dn))^{2/\beta}\,,\\
		&\underset{1\leq k,j,l,r\leq d}{\max}\left|\frac{1}{\sqrt{n}}\sum_{k=1}^{n}(X_{ik}X_{ij}X_{kl}X_{ir}-E[X_{ii}X_{ij}X_{il}X_{ir}])\right|\leq AB_n^3(\log(dn))^{3/\beta}
	\end{align*}
	hold jointly.
	Noting that
	\begin{align*}
		&\underset{1\leq k,j\leq d}{\max}\left|\frac{1}{\sqrt{n}}\sum_{i=1}^{n}(\tilde{X}_{ik}\tilde{X}_{ij}-E[X_{ii}X_{ij}])\right| \\&\leq \underset{1\leq k,j\leq d}{\max}\left|\frac{1}{\sqrt{n}}\sum_{i=1}^{n}(X_{ik}X_{ij}-E[X_{ii}X_{ij}])\right|+\sqrt{n}\underset{1\leq k \leq d}{\max}|\bar{X}_{k}|^2 
	\end{align*}
	and 
	\begin{align*}
		&\underset{1\leq k,j,l\leq d}{\max}\left|\frac{1}{\sqrt{n}}\sum_{k=1}^{n}(\tilde{X}_{ik}\tilde{X}_{ij}\tilde{X}_{il}-E[X_{ik}X_{ij}X_{il}])\right| \\
		&\leq \underset{1\leq k,j,l\leq d}{\max}\left| \frac{1}{\sqrt{n}}\sum_{k=1}^{n}(X_{ik}X_{ij}X_{il}-E[X_{ik}X_{ij}X_{il}])\right|\\
		&+2\sqrt{n}\underset{1 \leq k \leq d}{\max}|\bar{X}_k|^3+\underset{1 \leq k,j,l\leq d}{\max}|\bar{X_l}|\left|\frac{3}{\sqrt{n}}\sum_{i=1}^{n}X_{ik}X_{ij}\right|
	\end{align*}
	yields the   bounds \eqref{det43} and \eqref{det44} on $\mathcal{A}$ .
	Considering 
	\begin{align*}
		\underset{1\leq k \leq d}{\max}\left|\frac{1}{n}\sum_{i=1}^{n}(\tilde{X}^2_{ik}-\E[X_{ik}^2])\right|
		& \leq \underset{1 \leq k,j\leq d}{\max} \left|\frac{1}{n}\sum_{i=1}^{n}(\tilde{X}_{ik}\tilde{X}_{ij}-\E[X_{ik}X_{ij}])\right| \\
		& \leq \frac{CB_n(\log(dn))^{1/\beta}}{\sqrt{n}}\leq \frac{\sigma_{min}}{2}
	\end{align*}
	yields \eqref{det41} on $\mathcal{A}$ , and \eqref{det42} follows by  similar considerations.\\
	
	We now show that we can find $C_1,C_2$ such that $\mathcal{A}$ has probability at least $1-1/n$. Fix $m \in \{1,2,3,4\}$ and let $P=\{1, \ldots ,d\}^m$. We denote 
	$y^h=y_{h_1}\ldots y_{h_m}$ for any $y=(y_1,\ldots ,y_d)^\top \in \R^d$ and $h=(h_1,\ldots ,h_m)^\top \in P$. As $X_{ij}$ have  $\psi_\beta$-norms uniformly bounded by $B_n$ we obtain by standard calculations and (W) that 
	\begin{align*}
		\underset{h \in P}{\max}\frac{1}{n}\sum_{i=1}^{n}\E[(X_i^h-\E[X_i^h])^2] \leq \underset{h \in P}{\max}\frac{1}{n}\sum_{i=1}^{n}\E[(X_i^h)^2]\leq C_1^2B_n^{2(m-1)}\log(dn)^{2(m-2)/\beta\lor 0},
	\end{align*}
	where the constant $C_1$ depends only on  $\beta$ and $D$.
	By Lemma  \ref{ProductNorm} and Lemma  \ref{MaxBound} in Section \ref{Weibull}
	we obtain that 
	\begin{align*}
		\E\left[\underset{1 \leq i \leq n, h \in P}{\max}(X^h_i-\E[X^h_i])^2\right] \leq C_2^2B_n^{2m}(\log(nd))^{2m/\beta}~,
	\end{align*}
	where the constant  $C_2$ depends only on $\beta, m$ and $K$.
Therefore, it follows from  Lemma~\ref{MaxIneqCentSum} that 
	\begin{align*}
		&\E\left[\underset{h \in P}{\max}\left|\frac{1}{\sqrt{n}}\sum_{i=1}^{n}(X^h_i-\E[X^h_i])\right|\right]\\
		&\leq L\left(C_1B_n^{m-1}\log(nd)^{((m-2)/\beta+1/2)\lor 1/2}+\frac{C_2(\log(nd))^{m/\beta+1}}{\sqrt{n}}\right)\\
		&\leq L(C_1+C_2)B_n^{m-1}(\log(nd))^{(m-1)/\beta \lor 1/2}~.
	\end{align*}
	Now applying Lemma \ref{MaxIneqCentSum} with $t=3L(C_1+C_2)B_n^{m-1}(\log(nd))^{(m-1)/\beta \lor 1/2}$, $\nu=1$ and  $\beta=\frac{2m}{\beta}$ we obtain  for $n\geq n_0$ (where $n_0$ depends only on $\beta,m,K$ and $L$) that 
	\begin{align*}
		\underset{h \in P}{\max}\left|\frac{1}{\sqrt{n}}\sum_{i=1}^{n}(X^h_i-\E[X^h_i])\right|>5L(C_1+C_2)B_n^{m-1}(\log(nd))^{(m-1)/\beta \lor 1/2}
	\end{align*}
	with probability at most
	\begin{align*}
	\frac{1}{(dn)^3}
		+3\exp\left(-C\left(\frac{\sqrt{n}B_n^{m-1}(\log(nd))^{(m-1)/\beta \lor 1/2}}{B_n^m(\log(nd))^{2m/\beta}}\right)^{\beta/(2m)}\right)\leq \frac{1}{(nd)^{3}}+\frac{3}{n^3}\leq \frac{1}{n}~.
	\end{align*}
Here $C$ is a constant, which  depends only on $\beta,m,K$ and $L$ and we 
have used that $B_n^2(\log(nd))^{4+2/\beta}\leq n$ for the first inequality.

\end{proof}

We note that, using Lemma \ref{MaxBound}, Assumptions (A) and (W) imply  Conditions A,B, P and V and therefore  Theorem \ref{Lindeberg} is applicable in the following discussion.  We start with a preliminary result regarding the quantities
\begin{equation}
    \label{det31}
   T_n=\frac{1}{\sqrt{n}} {\sum_{i=1}^{n}}  X_i
  \quad \text{ and } \quad
    T^*_n=\frac{1}{\sqrt{n}}\sum_{i=1}^{n}e_k(X_i-\bar{X})\,. 
  \end{equation}
Here $e_1,\ldots ,e_n$, which we will sometimes call multipliers, are  independent random variables (independent of $X=(X_1,\dots, X_n)$) such that $e_i=e_{i,1}+e_{i,2}$,  where $e_{i,1}$ and $e_{i,2}$ are independent,
$e_{i,1} \sim N(0,\sigma^2)$   and  $e_{i,2}$   has a two point distribution
with $\E[e_i]=0$, $\E[e_i^2]=\E[e_i^3]=1$ (see Lemma 7.3 in \citeSM{ImprovedApprox} for more details and note that $\sigma$ can be chosen universally). 

\begin{Lemma}\label{GaussApprox}  
	Suppose that Conditions (A) and (W) hold,
	then, with probability at least $1-2/n$, we have
	\begin{align}
	\label{det32}
		\underset{x \in \R}{\sup}\ \big |  \p(T_n\leq x)-\p(T^*_n\leq x|X) \big |
		\leq C\left(\frac{B_n^2(\log(nd))^{4+2/\beta}}{n}\right)^{1/4}~,
	\end{align}	
	where the constant $C$ only depends on $\sigma_{min}$, $\beta$. \\
	Further, let $T_n^G$ denote the analogue of the statistic $T_n$ in \eqref{det31},
	where the random variables $X_k$ have been replaced 
	by independent zero mean Gaussian vectors with the same covariance structure. Then
	\begin{align}
		\label{det33}
		\underset{x \in \R}{\sup}\big |  \p(T_n\leq x)-\p(T^G_n\leq x) \big |\leq C\left(\frac{B_n^2(\log(nd))^{4+2/\beta}}{n}\right)^{1/4}\,.	
	\end{align}
\end{Lemma}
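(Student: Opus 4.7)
The plan is to derive both estimates from a single tool, namely Theorem~\ref{Lindeberg}, by choosing the summands $(V_i)$ and $(Z_i)$ differently in each case and verifying Conditions V, P, B, A together with the second- and third-moment matching hypotheses. The hard work is packaged in Lemma~\ref{MomentBound}, which controls how close the empirical moments of $\tilde X_i$ are to the true moments of $X_1$; this is exactly what is needed to match the tolerances in Theorem~\ref{Lindeberg}.

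For the Gaussian approximation \eqref{det33} I would take $V_i:=X_i$ and $Z_i$ independent centered Gaussians with $\mathrm{Cov}(Z_i)=\mathrm{Cov}(X_1)$. Conditions V and B follow directly from Assumptions (A) and (W): moments of order up to $4$ and $8$ are bounded through Lemmas~\ref{ProductNorm} and~\ref{MaxBound}, and the Gaussian side is immediate. Condition P is the union bound applied to the sub-Weibull tails of $X_{ij}$ and to Gaussian tails of $Z_{ij}$. Condition A is Nazarov's anti-concentration inequality for Gaussian maxima, which applies because $\mathrm{Var}(Z_{ij})\ge \sigma_{\min}$ by (W). Second-moment matching is exact since $\mathrm{Cov}(V_i)=\mathrm{Cov}(Z_i)$, and third-moment matching reduces to bounding $\sqrt{n}\,|\E[X_{1j}X_{1k}X_{1l}]|$ through the $\psi_\beta$-bound in (A) and the regime $B_n^2(\log(nd))^{4+2/\beta}=o(n)$. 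Theorem~\ref{Lindeberg} then produces \eqref{det33}.

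For the multiplier bootstrap bound \eqref{det32} I would work conditionally on $X=(X_1,\ldots,X_n)$, taking $V_i:=X_i$ (with its unconditional law) and $Z_i:=e_i\tilde X_i$ (with only the multipliers random, conditional on $X$). Because $\E[e_i]=0$ and $\E[e_i^2]=\E[e_i^3]=1$, the conditional moments of $Z_i$ are $\E^*[Z_iZ_i^\top]=\tilde X_i\tilde X_i^\top$ and $\E^*[Z_{ij}Z_{ik}Z_{il}]=\tilde X_{ij}\tilde X_{ik}\tilde X_{il}$. On an event $\mathcal E$ of $\p$-probability at least $1-1/n$, Lemma~\ref{MomentBound} gives \eqref{det41}--\eqref{det44}. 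Inequalities \eqref{det41} and \eqref{det42} deliver Conditions V and B for $Z_i$; inequalities \eqref{det43} and \eqref{det44} are precisely the second- and third-moment matching between $V_i$ and $Z_i$ at the rates $B_n(\log(nd))^{1/\beta}$ and $B_n^2(\log(nd))^{2/\beta}$ required by Theorem~\ref{Lindeberg}. Condition P is obtained (on a further exceptional set of $\p$-probability at most $1/n$) by combining the bound $\max_i\|\tilde X_i\|_\infty\lesssim B_n(\log(nd))^{1/\beta}$ from Lemma~\ref{MaxBound} with the sub-Weibull tail of $e_i$. Condition A exploits the Gaussian component $e_{i,1}\sim N(0,\sigma^2)$: the partial sum $\tfrac{1}{\sqrt n}\sum_i e_{i,1}\tilde X_i$ is conditionally centered Gaussian with covariance $\sigma^2\hat\Sigma$ whose diagonal is bounded below by $\sigma^2\sigma_{\min}/2$ via \eqref{det41}, so Nazarov's inequality applies and transfers to the full sum of $Z_i$. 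Theorem~\ref{Lindeberg} then yields \eqref{det32} on $\mathcal E$ intersected with the Condition P event, i.e.\ on a set of $\p$-probability at least $1-2/n$.

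The part I expect to be most delicate is the third-moment matching in the Gaussian step for \eqref{det33}: because $\E[Z^{\otimes 3}]=0$ while $\sqrt n\,|\E[X_1^{\otimes 3}]|$ need not be of order $B_n^2(\log(nd))^{2/\beta}$ without extra structure, one may have to route the comparison through the conditional bootstrap as an intermediate object, namely combine \eqref{det32} with a Gaussian-to-Gaussian covariance comparison (in the style of Lemma~C.1 of \cite{UApprox}) between $N(0,\hat\Sigma)$ and $N(0,\mathrm{Cov}(X_1))$, whose entrywise proximity is controlled by \eqref{det43}. Everything else is routine verification of tail and moment estimates from Assumption (A) via the Orlicz-norm lemmas of Section~\ref{sec6}.
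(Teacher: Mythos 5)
Your proof of \eqref{det32} matches the paper's: apply Theorem~\ref{Lindeberg} conditionally on $X$ with $V_i=X_i$ and $Z_i=e_i(X_i-\bar X)$, verify Conditions V, P, B, A, and invoke Lemma~\ref{MomentBound} for the moment-matching hypotheses. You also correctly identify the obstruction to a direct application of Theorem~\ref{Lindeberg} for \eqref{det33}: with Gaussian $Z_i$ the mixed third moments vanish, so the third-moment-matching hypothesis reduces to $\sqrt n\,|\E[X_{1j}X_{1k}X_{1l}]|\lesssim B_n^2(\log(nd))^{2/\beta}$, which is not implied by (A), (W) (under those assumptions one only gets $|\E[X_{1j}X_{1k}X_{1l}]|\lesssim B_n$, and $\sqrt n B_n$ is far larger in the relevant regime). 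The paper indeed does not take that direct route.

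Your proposed remedy, though, leaves a genuine gap. The bound \eqref{det32} compares $T_n$ with the conditional law of $T_n^*=\tfrac{1}{\sqrt n}\sum_i e_i(X_i-\bar X)$, and that conditional law is \emph{not} Gaussian: the multipliers $e_i=e_{i,1}+e_{i,2}$ carry the non-Gaussian two-point component $e_{i,2}$, which is exactly what produces $\E[e_i^3]=1$ and makes the third-moment matching go through. Therefore pairing \eqref{det32} with a Gaussian-to-Gaussian covariance comparison between $N(0,\hat\Sigma)$ and $N(0,\mathrm{Cov}(X_1))$ does not close the argument; you still need a comparison between the (non-Gaussian) conditional law of the bootstrap statistic and a Gaussian. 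The paper supplies this missing link by running the Lindeberg argument a second time with different multipliers $\tilde e_i=4\nu-1$, $\nu\sim\mathrm{Beta}(1/2,3/2)$ (which still satisfy $\E[\tilde e_i]=0$, $\E[\tilde e_i^2]=\E[\tilde e_i^3]=1$, so moment matching again holds), then applying Corollary~5.2 of \cite{ImprovedApprox} --- a result tailored to precisely these Beta-type multipliers, not a Gaussian-to-Gaussian argument --- to bound the distance between the conditional law of $R_n^*=\tfrac{1}{\sqrt n}\sum_i\tilde e_i(X_i-\bar X)$ and a Gaussian process $T_n^{\tilde G}$, and only then invoking the Gaussian-to-Gaussian comparison (Corollary~5.1 of \cite{ImprovedApprox}) with entrywise covariance proximity supplied by \eqref{det43}, as you sketch for the final step.
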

\begin{proof}
First assume that  $Y_1, \ldots ,Y_n$ are  vectors in $\R^d$ such that 
	\begin{align*}
		\underset{1\leq k \leq d}{\max}\norm{Y_k}_\infty&\leq KB_n\left(5\log(dn)\right)^{1/\beta}\,,\\
		\frac{\sigma_{min}}{2}\leq \frac{1}{n}\sum_{i=1}^{n}Y^2_{ik} &\leq D \,,\\
		\frac{1}{n}\sum_{i=1}^{n}Y^4_{ik} &\leq B_n^2D\,,\\	
		\underset{1\leq k,j\leq d}{\max}\left|\frac{1}{\sqrt{n}}\sum_{i=1}^{n}(Y_{ik}Y_{ij}-E[X_{ik}X_{ij}])\right| &\leq C_mB_n(\log(dn))^{1/\beta}\,,\\
		\underset{1\leq k,j,l\leq d}{\max}\left|\frac{1}{\sqrt{n}}\sum_{i=1}^{n}(Y_{ik}Y_{ij}Y_{il}-E[X_{ik}X_{ij}X_{il}])\right|& \leq C_mB_n^2(\log(dn))^{2/\beta}\,.
	\end{align*}
	Recall the definition of the multipliers 
	in the paragraph following equation
	\eqref{det31}. 
	 We will apply Theorem \ref{Lindeberg} with $Z_i=e_iY_i$ and $V_i=X_i$. The Conditions V, P and B follow immediately from the properties of $Y$ with $C_v, C_p, C_b$ only depending on $\sigma_{min}$ and $B_n$.  Condition $A$ follows from the Gaussianity of $e_{i,1}$ with $C_a$ depending only on $\sigma_{min}$ and $\sigma$ (first condition on $e_{i,2}$ and then use Lemma 8.3 from \citeSM{ImprovedApprox}). The remaining conditions  in Theorem \ref{Lindeberg} follow easily from the properties of $e_i$ and $Y_i$. We hence obtain
	{\small \begin{align*}
		\sup_{y\in \R}\left|\p\left(\frac{1}{\sqrt{n}}\sum_{i=1}^{n}X_i\leq y\right)-\p\left(\frac{1}{\sqrt{n}}\sum_{i=1}^{n}e_iY_i\leq y\right)\right|\leq K_2\left(\frac{B_n^2(\log(nd))^{4+2/\beta}}{n}\right)^{1/4} ~,
	\end{align*}}
where $K_2$ depends only on $\sigma_{min}$. By Lemma \ref{MaxBound} and  Lemma \ref{MomentBound} the random 
	vectors  $X_{i}-\bar{X}$ satisfy 
	the assumptions stated for the vectors $Y_{i}$ 
	(with probability close to $1$), and we  obtain
	\begin{align*}
		\underset{x \in \R}{\sup}|\p(T_n\leq x)-\p(T^*_n\leq x|X)|\leq K_2\left(\frac{B_n^2(\log(nd))^{4+2/\beta}}{n}\right)^{1/4}
	\end{align*}
	 with probability at least $1-2/n$, establishing \eqref{det32}. 
 For the second inequality \eqref{det33} we define
 \begin{align*}
     R^*_n:=\frac{1}{\sqrt{n}}\sum_{i=1}^{n}\tilde{e}_i(X_i-\bar{X})\,,
 \end{align*}
 where the multipliers $\tilde{e}_1, \ldots ,\tilde{e}_n$ are now chosen as in Corollary 5.2 of \citeSM{ImprovedApprox},
 with $v=0, \alpha = 1/2$ and $\beta = 3/2$. More precisely, we sample $\tilde{e}_i$ independently from the distribution that is given by $4\nu-1$ where $\nu\sim \text{Beta}(1/2,3/2)$. Note that $\E[\tilde{e}_i]=0$ and $\E[\tilde{e}_i^2]=1$. We then obtain by similar arguments as above that 
 \begin{align*}
		\underset{x \in \R}{\sup}| \p(T_n\leq x)-\p(R^*_n\leq x|X)|\leq C\left(\frac{B_n^2(\log(nd))^{4+2/\beta}}{n}\right)^{1/4}	
\end{align*}
with probability at least $1-2/n$. We let $A_n$ be the event that the first three inequalities in Lemma \ref{MomentBound} hold. Then $\p(A_n)\geq 1-1/n$. On $A_n$ we may apply first Corollary 5.2 of \citeSM{ImprovedApprox} which gives
\begin{align*}
    \underset{x \in \R}{\sup}|\p(T_n^{\tilde{G}} \leq x|X)-\p(R^*_n\leq x|X)|\leq C\left(\frac{B_n^2(\log(nd))^{5}}{n}\right)^{1/4}\,,	
\end{align*}
where $T_n^{\tilde{G}}$ is defined analogously to $T_n^G$ for a certain Gaussian process $\tilde{G}$. The Gaussian to Gaussian comparison from Corollary 5.1 of \citeSM{ImprovedApprox} then yields that on $A_n$ we further have
\begin{align*}
     \underset{x \in \R}{\sup}| \p(T_n^{\tilde{G}} \leq x|X)-\p(T_n^G \leq x)|\leq C\left(\frac{B_n^2(\log(nd))^{4+2/\beta}}{n}\right)^{1/4}	
\end{align*}
as we can bound $ \| \text{vech}(\Sigma_G) - \text{vech}(\Sigma_{\tilde{G}})\|_{\infty}$ by $B_n\log(dn)^{1/\beta}$ due to \eqref{det43}.
\end{proof}

\begin{Lemma}
	\label{AntiConcentrationSampleMean}
	Suppose that Conditions (A) and (W) hold. Then for any $x\in \R$ and $t>0$ we have
	\begin{align*}
		\p(T_n\leq x+t)-\p(T_n\leq x)\leq C\left(t\sqrt{\log d}+\left(\frac{B_n^2(\log(nd))^{4+2/\beta}}{n}\right)^{1/4}\right)\,.
	\end{align*}
\end{Lemma}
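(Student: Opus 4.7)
The plan is to combine the Gaussian approximation established in Lemma~\ref{GaussApprox}\,\eqref{det33} with Nazarov's anti-concentration inequality for the limiting Gaussian vector. Since the second conclusion of Lemma~\ref{GaussApprox} compares the distribution of $T_n$ in the max-rectangle (Kolmogorov/hyperrectangle) metric to the distribution of the associated Gaussian analogue $T_n^G$, the problem reduces to controlling $\p(T_n^G \leq x+t)-\p(T_n^G\leq x)$ by a multiple of $t\sqrt{\log d}$.

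First I would fix $x\in\mathbb{R}$, write $x+t$ and $x$ as the vectors $(x+t,\ldots,x+t)^\top$ and $(x,\ldots,x)^\top$ in $\mathbb{R}^d$ (consistent with the convention of Condition (A)), and use Lemma~\ref{GaussApprox}\,\eqref{det33} to sandwich
\begin{align*}
\p(T_n\leq x+t)-\p(T_n\leq x) \leq \bigl[\p(T_n^G\leq x+t)-\p(T_n^G\leq x)\bigr] + 2C\left(\frac{B_n^2(\log(nd))^{4+2/\beta}}{n}\right)^{1/4},
\end{align*}
where $T_n^G$ is a centered Gaussian vector with the same covariance matrix $\Sigma$ as $T_n$. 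The diagonal of $\Sigma$ is bounded below by $\sigma_{\min}>0$ thanks to Condition (W), so the coordinates of $T_n^G$ have variances uniformly bounded away from $0$.

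Next I would invoke Nazarov's inequality \citepSM{Nazarov}, which asserts that for any centered Gaussian vector $G$ in $\mathbb{R}^d$ whose coordinates have variances bounded below by some $\underline{\sigma}^2>0$, and for all $y\in\mathbb{R}^d$ and $t>0$,
\begin{align*}
\p(G\leq y+t\mathbf{1})-\p(G\leq y)\leq \frac{t}{\underline{\sigma}}\bigl(\sqrt{2\log d}+2\bigr).
\end{align*}
Applied to $G=T_n^G$ with $\underline{\sigma}^2=\sigma_{\min}$, this yields a bound of the form $Ct\sqrt{\log d}$, where the constant depends only on $\sigma_{\min}$. Adding the two bounds gives the claim.

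The proof is therefore essentially a two-line argument once Lemma~\ref{GaussApprox} is in hand; no separate obstacle is anticipated. The only subtlety worth flagging is that Lemma~\ref{GaussApprox}\,\eqref{det33} yields the Gaussian approximation in the hyperrectangle metric with the stated rate under Conditions (A) and (W), and that the Gaussian covariance $\Sigma$ inherits the lower bound on its diagonal directly from Condition (W); both facts are needed to make Nazarov's inequality applicable with a $d$-free dependence on $\underline{\sigma}$.
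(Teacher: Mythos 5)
Your proposal is correct and follows essentially the same route as the paper: triangle inequality, the Gaussian approximation \eqref{det33} from Lemma~\ref{GaussApprox} applied at $x$ and at $x+t$, and then Gaussian anti-concentration with the uniform variance lower bound $\sigma_{\min}$ from Condition (W). The only superficial difference is the citation for the anti-concentration step — you quote Nazarov's inequality directly, while the paper cites Lemma~8.3 of \cite{ImprovedApprox}, which is the same bound.
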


\begin{proof}
 For some constant $C$ only depending on $\sigma_{min}$ and $B_n$ we get
	\begin{align*}
		&\p(T_n\leq x+t)-\p(T_n\leq x)\le \left| \p(T_n\leq x+t)-\p(T_n^G\leq x+t)\right|\\
		&\quad + \left| \p(T_n\leq x)-\p(T_n^G\leq x)\right| +\left| \p(T_n^G\leq x+t)-\p(T_n^G\leq x)\right|\\
		&\le 2C\left(\frac{B_n^2(\log(nd))^{4+2/\beta}}{n}\right)^{1/4}+Ct\sqrt{\log d}\,,
	\end{align*}
where for the last line we used \eqref{det33} and the Gaussian anti-concentration property from \citeSM[Lemma~8.3]{ImprovedApprox}.
\end{proof}

\subsubsection{Proof of Theorem \ref{Lindeberg}} \label{sec511}

We will establish Theorem  \ref{Lindeberg} via the Iterative Randomized Lindeberg Method. The proof is structurally the same as in \citeSM{ImprovedApprox} but asks for weaker decay in the tails at the cost of a weaker bound. We begin by introducing some notation which we will be used in this section.

For $\epsilon \in \{0,1\}^n$ we set
\begin{align*}
	S_{n,\epsilon}^V=\frac{1}{\sqrt{n}}\sum_{i=1}^{n}(\epsilon_iV_i+(1-\epsilon_i)Z_i) \quad \text{ and } \quad S_{n}^Z=\frac{1}{\sqrt{n}}\sum_{i=1}^{n}Z_i\,.
\end{align*}
Let $\epsilon^0=(1, \ldots ,1)$, $D=[\log(n)]+1$ and define random vectors $\epsilon^1, \ldots ,\epsilon^D \in \{0,1\}^n$  such that
i) $\epsilon_i^s=0$ if $\epsilon_i^{s-1}=0$ and ii) for $I_{s-1}=\{i=1, \ldots ,n:\epsilon_i^{s-1}=1\}$, the random variables $\{\epsilon_i^s\}_{i \in I_{s-1}}$ are exchangeable conditional on $\epsilon^{s-1}$ and satisfy 
	\begin{align*}
		\p\left(\sum_{i \in I_{s-1}}\epsilon_i^s=k\, |\, \epsilon^{s-1}  \right)=\frac{1}{1+|I_{s-1}|}\,, \qquad k=0,\ldots,|I_{s-1}|\,.
	\end{align*}
As remarked in \citeSM{ImprovedApprox}, these properties uniquely determine the joint distribution of $\epsilon^1, \ldots ,\epsilon^D$ which we also assume independent of $V_1, \ldots ,V_n, Z_1, \ldots ,Z_n$. For positive constants $B_{n,1,s},B_{n,2,s}$ and 
\begin{align*}
	\mathcal{E}_{i,jk}^V&=\E[V_{ij}V_{ik}]\,, \quad ~ \mathcal{E}_{i,jkl}^V=\E[V_{ij}V_{ik}V_{il}]\,,\\
	\mathcal{E}_{i,jk}^Z&=\E[Z_{ij}Z_{ik}]\,, \quad \mathcal{E}_{i,jkl}^Z=\E[Z_{ij}Z_{ik}Z_{il}]\,,
\end{align*}
we denote by $\mathcal{A}_s$ the event
{\small\begin{align*}
	\left\{\underset{1\leq j,k\leq d}{\max}\Big|\frac{1}{\sqrt{n}}\sum_{i=1}^{n}\epsilon^s_i(\mathcal{E}^V_{i,jk}-\mathcal{E}^Z_{i,jk})\Big|\leq B_{n,1,s}\,, 
	\underset{1\leq j,k,l\leq d}{\max}\Big|\frac{1}{\sqrt{n}}\sum_{i=1}^{n}\epsilon^s_i(\mathcal{E}^V_{i,jkl}-\mathcal{E}^Z_{i,jkl})\Big|\leq B_{n,2,s}\right\}.
\end{align*}}

We also fix a five times continuously differentiable and decreasing function $g_0:\R\rightarrow \R$ such that
$$\text{i) } g_0(t)\geq 0\,, \quad
	\text{ii) } g_0(t)=0 \text{ when } t\geq 1\,, \quad\text{ and }
	\text{iii) } g_0(t)=1 \text{ when } t\leq 0\,.$$

Clearly we can bound the first five derivatives of this function uniformly by some constant $C_g$. The bounds in the following proofs and results depend on the particular choice of $g_0$, but as we may choose some $g_0$ that works universally we suppress that dependence.\\

Next we let $\phi>0, \beta=\phi\log d$, set $g(t)=g_0(\phi t)$ and define for $\omega \in \R^d$ the softmax function 
\begin{align*}
	F(w)=\beta^{-1}\log\left(\sum_{j=1}^{d}\exp(\beta w_j)\right)\,.
\end{align*}
It is easy to check that 
\begin{align*}
	g(t)=\begin{cases}
		1 \quad \text{if} \ t\leq 0\\
		0 \quad \text{if} \ t\geq \phi^{-1}
	\end{cases}
\end{align*}
and	$\max_{1 \leq j \leq d} w_j \leq F(w) \leq \max_{1 \leq j \leq d} w_j+\phi^{-1}$.
For $y\in \R^d$ we now define the function
\begin{align*}
	m^y(w)=g(F(w-y))\,, \qquad w\in \R^d\,,
\end{align*}
and its partial derivatives up to fifth order, for instance we write
\begin{align*}
	m^y_{jklrh}(w)=\frac{\partial^5 m^y(w)}{\partial w_j\partial w_k\partial w_l\partial w_r\partial w_h}\,, \qquad j,k,l,r,h=1,\ldots,d\,.
\end{align*}
From \citeSM{ImprovedApprox} we know that there exist functions $U^y_{jk}, U^y_{jkl}, U^y_{jklr}, U^y_{jklrh}:\R^d\rightarrow\R$ with the following 3 properties.
\begin{description}
	\item \textbf{i)} $|m^y_{I}(w)| \leq U^y_{I}(w)$ where $I$ is any of the index sets $jk, jkl, jklr$ or $jklrh$.
	\item \textbf{ii)} For any $w_1,w_2 \in \R^d$ such that $\beta\norm{w_2}_\infty \leq 1$ we have 
	\begin{align}
		\label{mSumBound}
		U^y_{jklr}(w_1+w_2)\lesssim U^y_{jklr}(w_1), \quad U^y_{jklrh}(w_1+w_2)\lesssim U^y_{jklrh}(w_{1})\,.
	\end{align}
	\item \textbf{iii)} For the same $I$ as in i) we have uniformly in $w$
	\begin{align} 
		\label{x1}
		\sum_{I} U^y_{I}(w) \lesssim \phi^{|I|}
		(\log d)^{|I|-1} \,.
	\end{align}
\end{description}

Lastly we define 
\begin{align*}
	&\mathcal{I}^y:=m^y(S^V_{n,\epsilon^s})-m^y(S^Z_n)\,,\\
	&h^y(Y;x):=\1\{-x < \underset{1 \leq j \leq d}{\max}(Y_j-y_j)\leq x\}\,, \qquad x>0\,,\\
	&\varrho_\epsilon:=\underset{y \in \R^d}{\sup}|\p(S^V_{n,\epsilon}\leq y)-\p(S^Z_n\leq y)|\,.
\end{align*}
We now state and prove three auxiliary results, which be essential for the proof of Theorem \ref{Lindeberg}.

\begin{Lemma}
	\label{LindebergLemma}
	Suppose that conditions V,P,B and A are satisfied. Then for any $d=0, \ldots ,D-1$ and any $\phi>0$ such that 
	\begin{align}
		\label{mSumBound2}
		C_pB_n \phi (\log(dn))^{1+1/\beta}\leq \sqrt{n}
	\end{align}
	on the event $\mathcal{A}_s$, we have
	\begin{align*}
		\varrho_{\epsilon^s}\lesssim &
		\frac{\sqrt{\log d}}{\phi}+\frac{B_n^2\phi^4(\log(dn))^{3+2/\beta}}{n^2}\\&+\left(\frac{\sqrt{\log d}}{\phi}+
		\E[\varrho_{\epsilon^{s+1}}|\epsilon^s]\right) 
		\left(\frac{B_{n,1,s}\phi^2\log d}{\sqrt{n}}+\frac{B_{n,2,s}\phi^3 (\log d)^2}{n}+\frac{B_n^2\phi^4(\log d)^3}{n}\right)
	\end{align*}
	up to a constant depending only on $C_v, C_p, C_b, C_a$.
\end{Lemma}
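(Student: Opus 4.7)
The plan is to follow the smoothed Lindeberg swap scheme of \cite{ImprovedApprox}. First, for any fixed $y\in\R^d$, decompose
\begin{align*}
\p(S^V_{n,\epsilon^s}\leq y)-\p(S^Z_n\leq y) = \E[\mathcal{I}^y] + E_1 + E_2,
\end{align*}
where $E_1, E_2$ are the errors incurred by replacing the indicator of $\{\max_j(\cdot - y_j)\leq 0\}$ by the smoothed function $m^y$. Since $m^y$ disagrees with this indicator only on a strip of width $\phi^{-1}$, each of $E_1,E_2$ is bounded via Condition A by $C_a\phi^{-1}\sqrt{\log d}$, producing the standalone $\sqrt{\log d}/\phi$ contribution in the conclusion.

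Second, bound $\E[\mathcal{I}^y]$ by a Taylor expansion. Set $W:=S^V_{n,\epsilon^{s+1}}$ and $\xi := S^V_{n,\epsilon^s}-W = n^{-1/2}\sum_{i\in I_s\setminus I_{s+1}}(V_i-Z_i)$, so that $\xi$ encodes precisely the coordinates un-swapped from step $s+1$ back to step $s$. Expand $m^y(W+\xi)$ around $W$ up to order five, condition on $\epsilon^{s+1}$, and take expectation in $(V,Z)$. The first-order term vanishes because $\E[V_i]=\E[Z_i]=0$; the second- and third-order contributions reduce to sums against $\mathcal{E}^V_{i,jk}-\mathcal{E}^Z_{i,jk}$ and $\mathcal{E}^V_{i,jkl}-\mathcal{E}^Z_{i,jkl}$, which on $\mathcal{A}_s$ are controlled by $B_{n,1,s}/\sqrt{n}$ and $B_{n,2,s}/n$ uniformly in $j,k,l$. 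The crucial averaging of the derivative factors $m^y_{jk}(W), m^y_{jkl}(W)$ uses $|m^y_I|\leq U^y_I$ combined with the observation that $U^y_I$ is supported in the strip $\{\max_j(W_j-y_j)\in[-\phi^{-1},\phi^{-1}]\}$; the probability of this strip under $\mathrm{Law}(W)$ is bounded by Condition A (yielding $\sqrt{\log d}/\phi$) plus the recursion defect $\E[\varrho_{\epsilon^{s+1}}\mid\epsilon^s]$, which produces exactly the multiplicative factor $(\sqrt{\log d}/\phi + \E[\varrho_{\epsilon^{s+1}}\mid\epsilon^s])$ appearing in the statement.

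For the fourth- and fifth-order Taylor remainders, truncate each $V_i, Z_i$ at height $T:=C_pB_n(\log(dn))^{1/\beta}$. On the truncation event, the hypothesis \eqref{mSumBound2} ensures $\beta\|\xi\|_\infty\leq 1$ (where $\beta=\phi\log d$ is the softmax parameter), so property \eqref{mSumBound} permits replacing $U^y_I(W+\xi)$ by $U^y_I(W)$ up to a universal constant; combined with the fourth-moment bound of Condition V and the same anti-concentration--recursion identity, this produces the $B_n^2\phi^4(\log d)^3/n$ summand in the multiplicative part. The complementary (untruncated) event has probability at most $n^{-4}$ by Condition P, and Cauchy--Schwarz with the eighth-moment bound of Condition B handles this tail, yielding the standalone $B_n^2\phi^4(\log(dn))^{3+2/\beta}/n^2$ contribution. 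The main obstacle will be the anti-concentration-plus-recursion step for $\E[U^y_I(W)]$: one must represent $U^y_I$ as essentially a smooth weight on a narrow strip around the argmax of $W-y$ and carefully replace its integral against $\mathrm{Law}(W)$ by its integral against $\mathrm{Law}(S^Z_n)$ using $\varrho_{\epsilon^{s+1}}$ to absorb the substitution error, while ensuring that none of the constants inherit dependence on $s$ and remain functions of $C_v, C_p, C_b, C_a$ only.
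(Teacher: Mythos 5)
Your proposal correctly identifies the smoothing step (bounding $E_1,E_2$ by $C_a\phi^{-1}\sqrt{\log d}$ via Condition A, matching the paper's Step 2) and the anti-concentration-plus-recursion idea (the multiplicative factor $\sqrt{\log d}/\phi + \E[\varrho_{\epsilon^{s+1}}\mid\epsilon^s]$ arising from the strip support of the $U^y_I$). However, the core Taylor step has a genuine gap. You set $\xi := S^V_{n,\epsilon^s}-S^V_{n,\epsilon^{s+1}} = n^{-1/2}\sum_{i\in I_s\setminus I_{s+1}}(V_i-Z_i)$ and propose to expand $m^y(W+\xi)$ around $W$. This fails on several counts. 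First, $\xi$ aggregates on the order of $|I_s|/2$ increments of size $n^{-1/2}$ each, so $\xi$ is $O(1)$, not small; the condition \eqref{mSumBound2} controls the size of a \emph{single} increment $V_i/\sqrt n$ or $Z_i/\sqrt n$ (so that $\beta\|V_i/\sqrt n\|_\infty\le 1$), not the bulk increment $\xi$, and therefore \eqref{mSumBound} cannot be invoked for $W+\xi$ versus $W$. Second, $W=S^V_{n,\epsilon^{s+1}}$ contains $Z_i$ for $i\in I_s\setminus I_{s+1}$, so $W$ and $\xi$ are dependent; consequently $\E[m^y_j(W)\xi_j]\neq 0$ and the first-order term does not vanish. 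Third, even ignoring dependence, $\E[\xi_j\xi_k]=n^{-1}\sum_{i\in I_s\setminus I_{s+1}}(\mathcal{E}^V_{i,jk}+\mathcal{E}^Z_{i,jk})$ because $V_i$ and $Z_i$ are independent mean-zero: you obtain the \emph{sum} of the moment tensors, not the difference $\mathcal{E}^V-\mathcal{E}^Z$ that the event $\mathcal{A}_s$ controls. The difference structure only emerges when, for each fixed $i$, you Taylor-expand $m^y(W_i+V_i/\sqrt n)$ and $m^y(W_i+Z_i/\sqrt n)$ around a \emph{common} independent reference point $W_i$ and subtract, which is exactly the one-coordinate-at-a-time Lindeberg telescoping.

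The paper solves this via the randomized permutation device (the lemma on exchangeable swap orders). It writes $\E[\mathcal{I}^y]$ as a sum over $i\in I_s$ of differences $m^y(W^\sigma_i+V_{\sigma(i)}/\sqrt n)-m^y(W^\sigma_i+Z_{\sigma(i)}/\sqrt n)$ for a uniformly random permutation $\sigma$ of $I_s$, encoded by the interpolation $f(t)$ and expanded at $t=0$: each increment is $O(n^{-1/2})$, the first-order terms vanish by independence, and the second/third-order terms pick up exactly $\mathcal{E}^V-\mathcal{E}^Z$. The permutation lemma then identifies $\E[m^y_I(W^\sigma_{\sigma^{-1}(i)})]$ with $\E[m^y_I(W)]$ up to a further small Taylor correction $R^\sigma_{i,I}$, which is what feeds into the anti-concentration bound you describe. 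Without this device, or an equivalent one-at-a-time telescoping over $I_s$, the moment-difference structure and the $O(n^{-1/2})$-per-increment scaling cannot be recovered, and the Taylor remainder at fourth/fifth order is not $O(\phi^4 B_n^2(\log d)^3/n)$ but $O(1)$.
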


\begin{proof}
	Fix $s=0, \ldots ,D-1$ and $e^s\in \{0,1\}^n$ such that if $\epsilon^s=e^s$, then $\mathcal{A}_s$ holds. All following arguments will be conditional on $\epsilon^s=e^s$, for the sake of brevity we will make this conditioning implicit and  write $\p(\cdot)$ and $\E[\cdot]$ instead of $\p(\cdot|\epsilon^s=e^s)$ and $\E[\cdot |\epsilon^s=e^s]$.
	We denote 
	\begin{align*}
		W=(W_1, \ldots ,W_d)^\top =\frac{1}{\sqrt{n}}\sum_{i=1}^{n}\left(\epsilon_i^{s+1}V_i+(1-\epsilon_i^{s+1})Z_i\right)\,.
	\end{align*}
	We will split the proof into two steps and three auxiliary calculations where we prove bounds that are used in the first two steps.
In the first step we establish the bound
	\begin{align}
		\label{Bound1}
		\underset{y \in \R^d}{\sup}&|\E[\mathcal{I}^y]|\lesssim  \frac{B_n^2\phi^4(\log(dn))^{3+2/\beta}}{n^2}\\&+\left(\frac{\sqrt{\log d}}{\phi}+
		\E[\varrho_{\epsilon^{s+1}}|\epsilon^s] \right) 
		\left(\frac{B_{n,1,s}\phi^2\log d}{\sqrt{n}}+\frac{B_{2,n,s}\phi^3(\log d)^2}{n}+\frac{B_n^2\phi^4(\log d)^3}{n}\right)
	\end{align}
	and in the second step we show that 
	\begin{align}
		\label{Bound2}
		\varrho_{\epsilon^s}\lesssim \frac{\sqrt{\log d}}{\phi}+\underset{y \in \R^d}{\sup}|\E[\mathcal{I}^y]|
	\end{align}
	which then yields the desired claim.
	
	\textbf{Step 1.} 
	Let $\mathcal{S}_n$ be the set of permutations on $\{1, \ldots ,|I_s|\}$ and let $\sigma$ be a random variable that is distributed uniformly on $\mathcal{S}_n$ and also independent of $V_1, \ldots ,V_n,Z_1, \ldots ,Z_n$ and $\epsilon^{s+1}$.
	Writing
	\begin{align*}
		W^\sigma_i=\frac{1}{\sqrt{n}}\sum_{j=1}^{i-1}V_{\sigma(j)}+\frac{1}{\sqrt{n}}\sum_{j=i+1}^{|I_s|}Z_{\sigma(j)}+\frac{1}{\sqrt{n}}\sum_{j \notin I_s}Z_j, \quad \text{for all }i=1, \ldots ,|I_s|\,,
	\end{align*}
	it follows by Lemma \ref{Permutation} that for any function $m:\R^d\rightarrow\R$ and any $i \in I_s$, 
	\begin{align*}
		\E[m(W)]=\E\left[\frac{\sigma^{-1}(i)}{|I_s|+1}m\left(W^\sigma_{\sigma^{-1}(i)}+\frac{V_i}{\sqrt{n}}\right)+(1-\frac{\sigma^{-1}(i)}{|I_s|+1})m\left(W^\sigma_{\sigma^{-1}(i)}+\frac{Z_i}{\sqrt{n}}\right)\right]
	\end{align*}
	
	Fixing some $y\in \R^d$ we observe that
	\begin{align*}
		\mathcal{I}^y=\sum_{i=1}^{|I_s|} \left(m\left(W^\sigma_{i}+\frac{V_{\sigma(i)}}{\sqrt{n}}\right)  - m\left(W^\sigma_{i}+\frac{Z_{\sigma(i)}}{\sqrt{n}}\right)  \right)
	\end{align*}
	and let 
	\begin{align*}
		f(t)=\sum_{i=1}^{|I_s|}\E \left(m\left(W^\sigma_{i}+\frac{tV_{\sigma(i)}}{\sqrt{n}}\right)  - m\left(W^\sigma_{i}+\frac{tZ_{\sigma(i)}}{\sqrt{n}}\right)  \right), \quad \text{for } t \in [0,1].
	\end{align*}
	Clearly $\E[\mathcal{I}^y]=f(1)$ and by Taylor's expansion
	\begin{align*}
		f(1)=f(0)+f^{(1)}(0)+\frac{f^{(2)}(0)}{2}+\frac{f^{(3)}(0)}{6}+\frac{f^{(4)}(\bar{t})}{24}
	\end{align*}
	for some $\bar{t} \in (0,1)$. Clearly $f(0)=0$ and because of $\E[V_{ij}]=\E[Z_{ij}]=0$ we also obtain $f^{(1)}(0)=0$.
	
	We defer the bounds of $|f^{(2)}(0)|,|f^{(3)}(0)|$ and $|f^{(4)}(\bar{t})|$ to the three auxiliary calculations.\\
	\smallskip

\textbf{Step 2.}
We observe that
\begin{align*}
	\p(S^V_{n,\epsilon^s}\leq y)&\leq \p(F(S^V_{n,\epsilon^s}-y-\phi^{-1})\leq 0)\leq \E[m^{y+\phi^{-1}}(S^V_{n,\epsilon^s})]\\
	&\leq \E[m^{y+\phi^{-1}}(S^Z_n)]+|E[\mathcal{I}^{y+\phi^{-1}}]|\leq \p(S^Z_n\leq y+2\phi^{-1})+|E[\mathcal{I}^{y+\phi^{-1}}]|\\
	&\leq \p(S^Z_n\leq y)+2C_a\phi^{-1}\sqrt{\log d}+|E[\mathcal{I}^{y+\phi^{-1}}]|\,.
\end{align*}
Similarly, we obtain 
\begin{align*}
	\p(S^V_{n,\epsilon^s}\leq y)\geq \p(S^Z_n\leq y)-2C_a\phi^{-1}\sqrt{\log d}-|E[\mathcal{I}^{y+\phi^{-1}}]|\,.
\end{align*}
Combining these bounds yields \eqref{Bound2}.

\textbf{Auxiliary Calculation 1.}
We calculate a bound for $|f^{(2)}(0)|$ by utilizing the representation
\begin{align*}
	f^{(2)}(0)&=\frac{1}{n}\sum_{i=1}^{|I_s|}\sum_{j,k=1}^{d}\E[m^y_{jk}(W^\sigma_i)(V_{\sigma(i)j}V_{\sigma(i)k}-Z_{\sigma(i)j}Z_{\sigma(i)k})]\\
	&=\frac{1}{n}\sum_{i \in I_s}\sum_{j,k=1}^{d}\E[m^y_{jk}(W^\sigma_{\sigma^{-1}(i)})(V_{ij}V_{ik}-Z_{ij}Z_{ik})]\\
	&=\frac{1}{n}\sum_{i \in I_s}\sum_{j,k=1}^{d}\E[m^y_{jk}(W^\sigma_{\sigma^{-1}(i)})](\mathcal{E}^V_{i,jk}-\mathcal{E}^Z_{i,jk})~,
\end{align*}
where we used the independence of $W^\sigma_{\sigma^{-1}(i)}$ and $V_{ij}V_{ik}-Z_{ij}Z_{ik}$ when conditioning on $\sigma$ in the third line. Denoting
{\small\begin{align*}
	R^\sigma_{i,jk}=m^y_{jk}(W^\sigma_{\sigma^{-1}(i)})-\frac{\sigma^{-1}(i)}{|I_s|+1}m^y_{jk}\left(W^\sigma_{\sigma^{-1}(i)}+\frac{V_i}{\sqrt{n}}\right)-(1-\frac{\sigma^{-1}(i)}{|I_s|+1})m^y_{jk}\left(W^\sigma_{\sigma^{-1}(i)}+\frac{Z_i}{\sqrt{n}}\right)
\end{align*}}
we obtain the decomposition $f^{(2)}(0)=\mathcal{I}_{2,1}+\mathcal{I}_{2,2}$,  where
\begin{align*}
	\mathcal{I}_{2,1}&=\frac{1}{n}\sum_{i \in I_s}\sum_{j,k=1}^{d}\E[m^y_{jk}(W)](\mathcal{E}^V_{i,jk}-\mathcal{E}^Z_{i,jk})\,,\\
	\mathcal{I}_{2,2}&=\frac{1}{n}\sum_{i \in I_s}\sum_{j,k=1}^{d}\E[R^\sigma_{i,jk}](\mathcal{E}^V_{i,jk}-\mathcal{E}^Z_{i,jk})\,.
\end{align*}

We first bound $\mathcal{I}_{2,1}$ by
\begin{align*}
	|\mathcal{I}_{2,1}|\leq \sum_{j,k=1}^{d}\E[|m^y_{jk}(W)|]\underset{1 \leq j,k\leq p}{\max}\left|\frac{1}{n}\sum_{i=1}^{n}\epsilon^s_i(\mathcal{E}^V_{i,jk}-\mathcal{E}^Z_{i,jk})\right|\leq \frac{B_{n,1,s}}{\sqrt{n}}\sum_{j,k=1}^{d}\E[|m^y_{jk}(W)|]\,.
\end{align*}
Recalling the definition of $m^y$ and $h^y$, we see that $m^y_{jk}(W)=h^y(W;\phi^{-1})m^y_{jk}(W)$. Thus, since 
\begin{align}
	\label{x2}
	\mathcal{P}:=\p\left(-\phi^{-1}\leq \underset{1\leq j \leq p}{\max}\frac{1}{\sqrt{n}}\sum_{i=1}^{n}(Z_{ij}-y_j)\leq \phi^{-1}\right)\leq \frac{2C_a\sqrt{\log d}}{\phi}
\end{align}
by Condition A, the basic properties of $U_{jk}$ and the definitions of the quantities involved imply (note that $m^y_{jk}\leq U_{jk}$ and 
\eqref{x1}, \eqref{x2})

\begin{align*}
	\sum_{j,k=1}^{d}\E[|m^y_{jk}(W)|]=&\sum_{j,k=1}^{d}\E[|h^y(W;\phi^{-1})m^y_{jk}(W)|] \\
	&\le \sum_{j,k=1}^{d}\E[|h^y(W;\phi^{-1})U_{jk}(W)|]\\
	& \lesssim \phi^2\log d\, \p\left(-\phi^{-1}<\underset{1\leq j \leq d}{\max}(W_j-y_j)\leq \phi^{-1}\right) \\
	& \leq \phi^2\log d \,(2\E[\varrho_{\epsilon^{s+1}}]+\mathcal{P}) \\ &
	\lesssim \phi^2\log d \,\left(\E[\varrho_{\epsilon^{s+1}}]+\frac{\sqrt{\log d}}{\phi}\right)\,,
	\tag{\theequation}\label{m^yBound}
\end{align*}
and therefore,
\begin{align*}
	|\mathcal{I}_{2,1}|\lesssim \frac{B_{n,1,d}\phi^2\log d}{\sqrt{n}}\left(\E[\varrho_{\epsilon^{s+1}}]+\frac{\sqrt{\log d}}{\phi} \right)\,.
\end{align*}

To bound $\mathcal{I}_{2,2}$ we use the same Taylor expansion as above and get
\begin{align*}
	|\E[R^\sigma_{i,jk}]|\leq \sum_{l,r=1}^{d}\E\left[m^y_{jklr}\left(W^\sigma_{\sigma^{-1}(i)}+\frac{\bar{t}V_i}{\sqrt{n}}\right)\frac{V_{il}V_{ir}}{n}\right]\\+ \sum_{l,r=1}^{d}\E\left[m^y_{jklr}\left(W^\sigma_{\sigma^{-1}(i)}+\frac{\bar{t}Z_i}{\sqrt{n}}\right)\frac{Z_{il}Z_{ir}}{n}\right]\,,
\end{align*}
which yields $|\mathcal{I}_{2,2}|\leq \mathcal{I}_{2,2,1}+\mathcal{I}_{2,2,2}$, where
\begin{align*}
	\mathcal{I}_{2,2,1}=\frac{1}{n^2}\sum_{i \in I_s}\sum_{j,k,l,r=1}^{n}\E\left[\left| m^y_{jklr}\left(W^\sigma_{\sigma^{-1}(i)}+\frac{\bar{t}V_i}{\sqrt{n}}\right)V_{il}V_{ir}\right|\right]|\mathcal{E}^V_{i,jk}-\mathcal{E}^Z_{i,jk}|\,,\\
	\mathcal{I}_{2,2,2}=\frac{1}{n^2}\sum_{i \in I_s}\sum_{j,k,l,r=1}^{n}\E\left[\left| m^y_{jklr}\left(W^\sigma_{\sigma^{-1}(i)}+\frac{\bar{t}Z_i}{\sqrt{n}}\right)Z_{il}Z_{ir}\right|\right]|\mathcal{E}^V_{i,jk}-\mathcal{E}^Z_{i,jk}|\,.
\end{align*}
Next, we will bound $\mathcal{I}_{2,2,1}$. Setting $x=C_pB_n(\log(dn))^{1/\beta}/\sqrt{n}+\phi^{-1}$ and $\bar{V}_i=\1\{\norm{V_i}_\infty \leq C_pB_n(\log(dn))^{1/\beta}\}$, we have
\stepcounter{equation}
\begin{align*}
	&\sum_{l,r=1}^{d}\E\left[\bar{V}_i\left|m^y_{jklr}\left(W^\sigma_{\sigma^{-1}(i)}+\frac{\bar{t}V_i}{\sqrt{n}}\right)V_{il}V_{ir}\right|\right]\\
	&=\sum_{l,r=1}^{d}\E\left[\bar{V}_i h^y\left(W^\sigma_{\sigma^{-1}(i)};x\right)\left|m^y_{jklr}\left(W^\sigma_{\sigma^{-1}(i)}+\frac{\bar{t}V_i}{\sqrt{n}}\right)V_{il}V_{ir}\right|\right]\\
	&\leq \sum_{l,r=1}^{d}\E\left[\bar{V}_i h^y\left(W^\sigma_{\sigma^{-1}(i)};x\right)U^y_{jklr}\left(W^\sigma_{\sigma^{-1}(i)}+\frac{\bar{t}V_i}{\sqrt{n}}\right)|V_{il}V_{ir}|\right] \\
	&\lesssim  \sum_{l,r=1}^{d}\E\left[\bar{V}_i h^y\left(W^\sigma_{\sigma^{-1}(i)};x\right)U^y_{jklr}\left(W^\sigma_{\sigma^{-1}(i)}\right)|V_{il}V_{ir}|\right]
	\tag{\theequation}\label{V-h-m-trick} ~,
\end{align*}
where the first equality follows by the definitions of the involved quantities and the second inequality follows by \eqref{mSumBound}. Setting $\bar{Z}_i=\1\{\norm{Z_i}_\infty \leq C_pB_n(\log(dn))^{1/\beta}\}$ we bound the above expectation by
\stepcounter{equation}
\begin{align*}
	&\E\left[\bar{V}_i h^y\left(W^\sigma_{\sigma^{-1}(i)};x\right)U^y_{jklr}\left(W^\sigma_{\sigma^{-1}(i)}\right)|V_{il}V_{ir}|\right] \\
	&\lesssim \E\left[\bar{V}_i\bar{Z}_i h^y\left(W^\sigma_{\sigma^{-1}(i)};x\right)U^y_{jklr}\left(W^\sigma_{\sigma^{-1}(i)}\right)\right]\E[|V_{il}V_{ir}|]\\
	&\lesssim \E\left[h^y\left(W;2x\right)U^y_{jklr}\left(W\right)\right]\E[|V_{il}V_{ir}|]\,,
	\tag{\theequation}\label{2x-trick} 
\end{align*}

where the inequalities follow by Condition P, the definitions of $h^y$, $W$ and $W^\sigma_{\sigma^{-1}(i)}$ and \eqref{mSumBound} as well as \eqref{mSumBound2}. 

Hence we obtain, by the same arguments as for \eqref{m^yBound},
\begin{align*}
	&\frac{1}{n^2}\sum_{i \in I_s}\sum_{j,k,l,r=1}^{d}\E\left[\bar{V}_i\left|m^y_{jklr}\left(W^\sigma_{\sigma^{-1}(i)}+\frac{\bar{t}V_i}{\sqrt{n}}\right)V_{il}V_{ir}\right|\right]|\mathcal{E}^V_{i,jk}-\mathcal{E}^Z_{i,jk}|\\
	&\lesssim \frac{1}{n^2}\sum_{i \in I_s}\sum_{j,k,l,r=1}^{d} \E\left[h^y\left(W;2x\right)U^y_{jklr}\left(W\right)\right]\E[|V_{il}V_{ir}|]|\mathcal{E}^V_{i,jk}-\mathcal{E}^Z_{i,jk}|\\
	&\lesssim \frac{B_n^2}{n}\sum_{j,k,l,r=1}^{d}\E\left[h^y\left(W;2x\right)U^y_{jklr}\left(W\right)\right]\lesssim \frac{B_n^2\phi^4(\log d)^3}{n}\left(\E[\varrho_{\epsilon^{s+1}}]+\frac{\sqrt{\log d}}{\phi}\right)\,,
\end{align*}
where we used that by Condition V
\begin{align*}
	\underset{1 \leq j,j,l,r\leq d}{\max}\sum_{i=1}^{n}\E[|V_{il}V_{ir}|]|\mathcal{E}^V_{i,jk}-\mathcal{E}^Z_{i,jk}|
	&\lesssim \underset{1 \leq j,j,l,r\leq d}{\max}\sum_{i=1}^{n} \E[|V_{il}V_{ir}|^2]|+|\mathcal{E}^V_{i,jk}-\mathcal{E}^Z_{i,jk}|^2\\
	&\lesssim B_n^2n\,.
\end{align*}
Additionally we have
\begin{align*}
	&\frac{1}{n^2}\sum_{i \in I_s}\sum_{j,k,l,r=1}^{d}\E\left[(1-\bar{V}_i)\left|m^y_{jklr}\left(W^\sigma_{\sigma^{-1}(i)}+\frac{\bar{t}V_i}{\sqrt{n}}\right)V_{il}V_{ir}\right|\right]|\mathcal{E}^V_{i,jk}-\mathcal{E}^Z_{i,jk}|\\
	&\lesssim \frac{\phi^4 (\log d)^3}{n}\sum_{i=1}^{n}\E[(1-\bar{V}_i)\norm{V_i}_\infty^2]\\
	&\lesssim \frac{B_n^2\phi^4(\log(nd))^{3+2/\beta}}{n^2}\,,
\end{align*}
where the first inequality follows by $m^y_I \leq U^y_I$ for appropriate index sets $I$ as well as Condition V, and the second inequality follows from Hölder's inequality and Condition P as well as B.

Combining these two inequalities we obtain
\begin{align*}
	\mathcal{I}_{2,2,1} \lesssim \frac{B_n^2\phi^4(\log d)^3}{n}\left(\E[\varrho_{\epsilon^{s+1}}]+\frac{\sqrt{\log d}}{\phi}\right)+\frac{B_n^2\phi^4(\log(nd))^{3+2/\beta}}{n^2} ~.
\end{align*}
Similar arguments as for $\mathcal{I}_{2,2,1}$ also establish the same bound for $\mathcal{I}_{2,2,2}$ and therefore we conclude 
{\small\begin{align*}
	|f^{(2)}(0)|\lesssim \left( \frac{B_{n,1,s}\phi^2\log d}{\sqrt{n}} + \frac{B_n^2\phi^4(\log d)^3}{n}\right) \left(\E[\varrho_{\epsilon^{s+1}}]+\frac{\sqrt{\log d}}{\phi}\right)+\frac{B_n^2\phi^4(\log(nd))^{3+2/\beta}}{n^2}\,.
\end{align*}}

\textbf{Auxiliary Calculation 2.}

Just as in the beginning of the previous calculations we obtain
\begin{align*}
	f^{(3)}(0)=\frac{1}{n^{3/2}}\sum_{i \in I_s}\sum_{j,k,l=1}^{d}\E[m^y_{jkl}(W^\sigma_{\sigma^{-1}(i)})](\mathcal{E}^V_{i,jkl}-\mathcal{E}^Z_{i,jkl})~.
\end{align*}
Writing
\begin{align*}
	R^\sigma_{i,jkl}&=m^y_{jkl}(W^\sigma_{\sigma^{-1}(i)})-\frac{\sigma^{-1}(i)}{|I_s|+1}m^y_{jkl}\left(W^\sigma_{\sigma^{-1}(i)}+\frac{V_i}{\sqrt{n}}\right)\\
	&\quad -\Big(1-\frac{\sigma^{-1}(i)}{|I_s|+1}\Big)m^y_{jkl}\left(W^\sigma_{\sigma^{-1}(i)}+\frac{Z_i}{\sqrt{n}}\right)
\end{align*}
we have (just as above for $f^{(2)}(0)$) that $f^{(3)}(0)=\mathcal{I}_{3,1}+\mathcal{I}_{3,2}$, where
\begin{align*}
	\mathcal{I}_{3,1}=\frac{1}{n^{3/2}}\sum_{i \in I_s}\sum_{j,k,l=1}^{d}\E[m^y_{jkl}(W)](\mathcal{E}^V_{i,jkl}-\mathcal{E}^Z_{i,jkl})\,,\\
	\mathcal{I}_{3,2}=\frac{1}{n^{3/2}}\sum_{i \in I_s}\sum_{j,k,l=1}^{d}\E[R^\sigma_{i,jkl}(W)](\mathcal{E}^V_{i,jkl}-\mathcal{E}^Z_{i,jkl})\,.
\end{align*}
By the same arguments that we used to bound $\mathcal{I}_{2,1}$, we obtain
\begin{align*}
	\mathcal{I}_{3,1} &\lesssim \frac{B_{n,2,s}\phi^3(\log d)^2}{n} \left(\E[\varrho_{\epsilon^{s+1}}]+\frac{\sqrt{\log d}}{\phi}\right)\,.
\end{align*}

We also get by the same arguments as before that $|\mathcal{I}_{3,2}|\leq \mathcal{I}_{3,2,1}+\mathcal{I}_{3,2,2}$ where
\begin{align*}
	\mathcal{I}_{3,2,1}=\frac{1}{n^{5/2}}\sum_{i \in I_s}\sum_{j,k,l,r,h=1}^{d}\E\left[\left| m^y_{jklrh}\left(W^\sigma_{\sigma^{-1}(i)}+\frac{\bar{t}V_i}{\sqrt{n}}\right)V_{ir}V_{ih}\right|\right]|\mathcal{E}^V_{i,jkl}-\mathcal{E}^Z_{i,jkl}|\,,\\
	\mathcal{I}_{3,2,2}=\frac{1}{n^{5/2}}\sum_{i \in I_s}\sum_{j,k,l,r,h=1}^{d}\E\left[\left| m^y_{jklrh}\left(W^\sigma_{\sigma^{-1}(i)}+\frac{\bar{t}Z_i}{\sqrt{n}}\right)Z_{ir}Z_{ih}\right|\right]|\mathcal{E}^V_{i,jkl}-\mathcal{E}^Z_{i,jkl}|\,.
\end{align*}

Moreover, we have
\begin{align*}
	|\mathcal{E}^V_{i,jkl}|&\leq \E[|V_{ij}V_{ik}V_{il}|]=\E[\bar{V}_i|V_{ij}V_{ik}V_{il}|]+\E[(1-\bar{V}_i)|V_{ij}V_{ik}V_{il}|]\\
	&\lesssim  B_n(\log(dn))^{1/\beta}\E[|V_{ij}V_{ik}|]+B_n^3(\log(dn))^{2/\beta}/n^2
	\stepcounter{equation}\tag{\theequation}\label{ThirdMomentBounds1} 
\end{align*}
and similarly 
\begin{align}
	\label{ThirdMomentBounds2}
	|\mathcal{E}^Z_{i,jkl}| \lesssim B_n(\log(dn))^{1/\beta}\E[|V_{ij}V_{ik}|]+B_n^3(\log(dn))^{2/\beta}/n^2~.
\end{align}

Just as in the previous auxiliary calculation we get
\begin{align*}
	&\frac{1}{n^{5/2}}\sum_{i \in I_s}\sum_{j,k,l,r,h=1}^{d}\E\left[\bar{V}_i\left|m^y_{jklr}\left(W^\sigma_{\sigma^{-1}(i)}+\frac{\bar{t}V_i}{\sqrt{n}}\right)V_{il}V_{ir}\right|\right]|\mathcal{E}^V_{i,jkl}-\mathcal{E}^Z_{i,jkl}|\\
	&\lesssim \frac{1}{n^{5/2}}\sum_{i \in I_s}\sum_{j,k,l,r,h=1}^{d} \E\left[h^y\left(W;2x\right)U^y_{jklrh}\left(W\right)\right]\E[|V_{ir}V_{ih}|]|\mathcal{E}^V_{i,jkl}-\mathcal{E}^Z_{i,jkl}|\\
	& \lesssim \left( \frac{B_n^3\phi^5(\log(dn))^{4+1/\beta}}{n^{3/2}}+\frac{B_n^3\phi^5(\log(dn))^{4+2/\beta}}{n^{7/2}}\right)\left(\E[\varrho_{\epsilon^{s+1}}]+\frac{\sqrt{\log d}}{\phi}\right)\\
	& \lesssim \frac{B_n^3\phi^5(\log(dn))^{4+1/\beta}}{n^{3/2}}\left(\E[\varrho_{\epsilon^{s+1}}]+\frac{\sqrt{\log d}}{\phi}\right)
\end{align*}
and
\begin{align*}
	&\frac{1}{n^{5/2}}\sum_{i \in I_s}\sum_{j,k,l,r,h=1}^{d}\E\left[(1-\bar{V}_i)\left|m^y_{jklrh}\left(W^\sigma_{\sigma^{-1}(i)}+\frac{\bar{t}V_i}{\sqrt{n}}\right)V_{ir}V_{ih}\right|\right]|\mathcal{E}^V_{i,jkl}-\mathcal{E}^Z_{i,jkl}|\\
	&\lesssim \frac{B_n\phi^5(\log(dn))^4}{n^{3/2}}\sum_{i=1}^{n}\E\left[(1-\bar{V}_i)\norm{V_i}_\infty^2\right] \lesssim
	\frac{B_n^3\phi^5(\log(dn))^{4+2/\beta}}{n^{5/2}}\,,
\end{align*}
where we used that by Condition V and Hölder's inequality
\begin{align*}
	|\mathcal{E}^V_{i,jkl}| \lesssim B_nn \quad \text{ and } \quad |\mathcal{E}^Z_{i,jkl}| \lesssim B_nn\,.
\end{align*}
Thus,
\begin{align*}
	\mathcal{I}_{3,2,1}\lesssim \frac{B_n^3\phi^5(\log(dn))^{4+1/\beta}}{n^{3/2}}\left(\E[\varrho_{\epsilon^{s+1}}]+\frac{\sqrt{\log d}}{\phi}\right)+\frac{B_n^3\phi^5(\log(dn))^{4+2/\beta}}{n^{5/2}}
\end{align*}
and since the same bound holds for $\mathcal{I}_{3,2,2}$ we have that 
\begin{align*}
	\mathcal{I}_{3,2}\lesssim \frac{B_n^3\phi^5(\log(dn))^{4+1/\beta}}{n^{3/2}}\left(\E[\varrho_{\epsilon^{s+1}}]+\frac{\sqrt{\log d}}{\phi}\right)+\frac{B_n^3\phi^5(\log(dn))^{4+2/\beta}}{n^{5/2}}\,,
\end{align*}
which finally yields
\begin{align*}
	|f^{(3)}(0)|&\lesssim \left(\frac{B_n^3\phi^5(\log(dn))^{4+1/\beta}}{n^{3/2}}+\frac{B_{n,2,s}\phi^3(\log d)^2}{n}\right)\left(\E[\varrho_{\epsilon^{s+1}}]+\frac{\sqrt{\log d}}{\phi}\right)\\
	&\quad +\frac{B_n^3\phi^5(\log(dn))^{4+2/\beta}}{n^{5/2}}\,.
\end{align*}

\textbf{Auxiliary Calculation 3.}
We decompose $f^{(4)}(\bar{t})=\mathcal{I}_{4,1}-\mathcal{I}_{4,2}$, where
\begin{align*}
	\mathcal{I}_{4,1}=\frac{1}{n^2}\sum_{i \in I_s}\sum_{j,k,l,r=1}\E\left[m^y_{jklr}\left(W^\sigma_{\sigma^{-1}(i)}+\frac{\bar{t}V_i}{\sqrt{n}}\right)V_{ij}V_{ik}V_{il}V_{ir}\right]\,,\\
	\mathcal{I}_{4,2}=\frac{1}{n^2}\sum_{i \in I_s}\sum_{j,k,l,r=1}\E\left[m^y_{jklr}\left(W^\sigma_{\sigma^{-1}(i)}+\frac{\bar{t}Z_i}{\sqrt{n}}\right)Z_{ij}Z_{ik}Z_{il}Z_{ir}\right]\,.
\end{align*}
Again denoting $x=C_pB_n(\log(dn))^{1/\beta}/\sqrt{n}+\phi^{-1}$ we have, by the same arguments leading to \eqref{V-h-m-trick},
\begin{align*}
	&\frac{1}{n^2}\sum_{i \in I_s}\sum_{j,k,l,r=1}\E\left[\bar{V}_i\left|m^y_{jklr}\left(W^\sigma_{\sigma^{-1}(i)}+\frac{\bar{t}V_i}{\sqrt{n}}\right)V_{ij}V_{ik}V_{il}V_{ir}\right|\right]\\
	&\lesssim \frac{1}{n^2}\sum_{i \in I_s}\sum_{j,k,l,r=1}\E\left[h^y(W^\sigma_{\sigma^{-1}(i)};x)U^y_{jklr}\left(W^\sigma_{\sigma^{-1}(i)}\right)\right]\E[|V_{ij}V_{ik}V_{il}V_{ir}|] \,.
\end{align*}
We also obtain
\begin{align*}
	\E[h^y(W^\sigma_{\sigma^{-1}(i)};x)U^y_{jklr}(W^\sigma_{\sigma^{-1}(i)})]\lesssim \E[h^y(W;2x)U^y_{jklr}(W)]
\end{align*}
by the same arguments as those leading to \eqref{2x-trick}. 
Hence,
\begin{align*}
	&\frac{1}{n^2}\sum_{i \in I_s}\sum_{j,k,l,r=1}\E\left[\bar{V}_i\left|m^y_{jklr}\left(W^\sigma_{\sigma^{-1}(i)}+\frac{\bar{t}V_i}{\sqrt{n}}\right)V_{ij}V_{ik}V_{il}V_{ir}\right|\right]\\
	&\lesssim \frac{1}{n^2}\sum_{j,k,l,r=1}\E\left[h^y(W;2x)U^y_{jklr}\left(W\right)\right]\sum_{i=1}^{n}\underset{1 \leq j,k,l,r\leq p}{\max}\E[|V_{ij}V_{ik}V_{il}V_{ir}|] \\
	&\lesssim \frac{B_n^2\phi^4(\log d)^3}{n}\left(\E[\varrho_{\epsilon^{s+1}}]+\frac{\sqrt{\log d}}{\phi} \right)\,,
\end{align*}
where the second inequality follows from the properties of $U^y$ and the arguments leading up to  \eqref{m^yBound}.
Moreover, we have
\begin{align*}
	&\frac{1}{n^2}\sum_{i \in I_s}\sum_{j,k,l,r=1}\E\left[(1-\bar{V}_i)\left|m^y_{jklr}\left(W^\sigma_{\sigma^{-1}(i)}+\frac{\bar{t}V_i}{\sqrt{n}}\right)V_{ij}V_{ik}V_{il}V_{ir}\right|\right] \\
	&\lesssim \frac{\phi^4(\log d)^3}{n^2}\sum_{i=1}^{n}\E[(1-\bar{V}_i)\norm{V_i}_\infty^4]\lesssim \frac{B_n^4\phi^4(\log d)^3(\log(nd))^{4/\beta}}{n^3}\\ 
	&\leq \frac{B_n^2\phi^4(\log(nd))^{1+2/\beta}}{n^2}
\end{align*}
by Condition B and $m^y_I\lesssim U^y_I$. Clearly the same bounds also hold for $\mathcal{I}_{4,2}$ which finally establishes 
\begin{align*}
	|f^{(4)}(\bar{t})|\lesssim \frac{B_n^2\phi^4(\log(nd))^{1+2/\beta}}{n^2}+\frac{B_n^2\phi^4(\log d)^3}{n}\left(\E[\varrho_{\epsilon^{s+1}}]+\frac{\sqrt{\log d}}{\phi} \right)\,.
\end{align*}
\end{proof}

\begin{Lemma}
	\label{ExchLemma}
	Suppose that the conditions of Lemma \ref{LindebergLemma} are satisfied. Then there exists a constant $K>0$ depending only on $C_v, C_p, C_b$  such that for all $s=0, \ldots ,D$, if $B_{n,1,s+1}\geq B_{n,1,s}+KB_n(\log(nd))^{1/2}$ and $B_{n,2,s+1}\geq B_{n,2,s}+KB_n^2(\log(dn))^{1/2+2/\beta}$, then for any constant $\phi>0$ satsifying $\eqref{mSumBound2}$ we have
	\begin{align*}
		\E[\varrho_{\epsilon^s}\1\{\mathcal{A}_s\}]\lesssim &
		\frac{\sqrt{\log d}}{\phi}+\frac{B_n^2\phi^4(\log(dn))^{3+2/\beta}}{n^2} +
	\left(\frac{\sqrt{\log d}}{\phi} + 
		\E[\varrho_{\epsilon^{s+1}}\1\{\mathcal{A}_{s+1}\}]\right) 
			\\& \times
		\left(\frac{B_{n,1,s}\phi^2\log d}{\sqrt{n}}+\frac{B_{2,n,s}\phi^3(\log d)^2}{n}+\frac{B_n^2\phi^4(\log d)^3}{n}\right)
	\end{align*}
	up to a constant only depending on $C_v, C_p, C_b, C_a$.
\end{Lemma}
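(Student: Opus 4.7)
The argument consists of three steps: integrate the pointwise bound of Lemma~\ref{LindebergLemma} against $\1\{\mathcal{A}_s\}$, propagate the indicator from level $s$ to level $s+1$, and bound the resulting transition probability via concentration of the random subsampling.

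For the first step, I apply Lemma~\ref{LindebergLemma} pointwise on $\mathcal{A}_s$, multiply by $\1\{\mathcal{A}_s\}$ and take expectations. Since $\mathcal{A}_s\in\sigma(\epsilon^s)$ (the entries $\mathcal{E}^{V}_{i,\cdot},\mathcal{E}^{Z}_{i,\cdot}$ being deterministic), the tower property gives $\E[\E[\varrho_{\epsilon^{s+1}}\mid\epsilon^s]\1\{\mathcal{A}_s\}]=\E[\varrho_{\epsilon^{s+1}}\1\{\mathcal{A}_s\}]$. For the second step, $\varrho_{\epsilon^{s+1}}\le 1$ deterministically, so $\E[\varrho_{\epsilon^{s+1}}\1\{\mathcal{A}_s\}]\le\E[\varrho_{\epsilon^{s+1}}\1\{\mathcal{A}_{s+1}\}]+\p(\mathcal{A}_s\setminus\mathcal{A}_{s+1})$. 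The task thus reduces to showing $\p(\mathcal{A}_s\setminus\mathcal{A}_{s+1})\lesssim (nd)^{-c}$ for some $c>0$, which is easily absorbable into the leading $\phi^{-1}\sqrt{\log d}$ and $B_n^2\phi^4(\log(dn))^{3+2/\beta}/n^2$ error terms already present in the target bound.

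For the third step I condition on $\epsilon^s$ with $\mathcal{A}_s$ holding, set $a_i=\mathcal{E}^V_{i,jk}-\mathcal{E}^Z_{i,jk}$, and use the decomposition
\begin{align*}
\frac{1}{\sqrt{n}}\sum_i\epsilon_i^{s+1}a_i=\frac{1}{2\sqrt{n}}\sum_i\epsilon_i^s a_i+\frac{1}{\sqrt{n}}\sum_{i\in I_s}\left(\epsilon_i^{s+1}-1/2\right)a_i~,
\end{align*}
valid because conditionally on $\epsilon^s$ each $\epsilon_i^{s+1}$ with $i\in I_s$ has mean $1/2$ (the number of ones in $\{\epsilon_i^{s+1}\}_{i\in I_s}$ being uniform on $\{0,\dots,|I_s|\}$). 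On $\mathcal{A}_s$ the first summand is bounded in absolute value by $B_{n,1,s}/2$. The second is an exchangeable sum; conditioning further on its count $k$ produces a simple random sample of size $k$ from $I_s$, to which Hoeffding's inequality for sampling without replacement applies. Using Conditions~V and~B to control $\max_i|a_i|\lesssim B_n(\log(nd))^{1/\beta}$ (after the standard truncation justified by Condition~P) together with the variance bound $\tfrac{1}{n}\sum_i a_i^2\lesssim B_n^2$, one obtains a deviation of order $B_n\sqrt{\log(nd)}$ with probability at least $1-(nd)^{-4}$. A union bound over the $d^2$ pairs $(j,k)$ (respectively the $d^3$ triples for the cubic sums, which contribute the extra $(\log(dn))^{2/\beta}$ factor) then produces $|S^{(s+1)}_{jk}|\le B_{n,1,s}/2+KB_n\sqrt{\log(nd)}\le B_{n,1,s+1}$ and the analogous cubic bound once $K$ is chosen large enough depending only on $C_v,C_p,C_b$.

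The principal obstacle is the concentration step: the retention mechanism is \emph{not} i.i.d.\ Bernoulli since the kept count is uniform rather than Binomial or fixed, so standard sub-Gaussian bounds do not apply directly. The conditioning-on-count trick combined with Hoeffding-type inequalities for simple random sampling provides the technical crux, after which the rest of the argument is routine bookkeeping.
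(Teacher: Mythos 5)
The overall structure you lay out (integrate the pointwise bound against $\1\{\mathcal{A}_s\}$, peel the indicator from level $s$ to $s+1$ via $\varrho\le1$, then bound the transition probability $\p(\mathcal{A}_s\setminus\mathcal{A}_{s+1})$) matches the paper, and your first two steps are correct. The concentration step, however, has a genuine gap. You decompose
$\frac{1}{\sqrt{n}}\sum_i\epsilon_i^{s+1}a_i=\frac{1}{2\sqrt{n}}\sum_i\epsilon_i^s a_i+\frac{1}{\sqrt{n}}\sum_{i\in I_s}(\epsilon_i^{s+1}-1/2)a_i$
and claim that, after conditioning on the retained count $k$, Hoeffding for sampling without replacement gives a deviation of order $B_n\sqrt{\log(nd)}$ for the second term. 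But Hoeffding controls the deviation of $\sum_{i\in I_s}\epsilon_i^{s+1}a_i$ around its \emph{conditional} mean $\frac{k}{|I_s|}\sum_{i\in I_s}a_i$, not around $\frac12\sum_{i\in I_s}a_i$. The leftover bias $\bigl(\frac{k}{|I_s|}-\frac12\bigr)\sum_{i\in I_s}a_i$ does not concentrate at scale $B_n\sqrt{\log(nd)}$: since $k$ is uniform on $\{0,\dots,|I_s|\}$, the factor $|\frac{k}{|I_s|}-\frac12|$ is of constant order with constant probability, so the bias is of order $\frac{1}{\sqrt{n}}\bigl|\sum_{i\in I_s}a_i\bigr|\le B_{n,1,s}/2$ on $\mathcal{A}_s$. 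Your stated bound $|S^{(s+1)}_{jk}|\le B_{n,1,s}/2+KB_n\sqrt{\log(nd)}$ is therefore not what this argument delivers; the honest conclusion is $|S^{(s+1)}_{jk}|\le B_{n,1,s}+KB_n\sqrt{\log(nd)}$, which still satisfies $\le B_{n,1,s+1}$ under the lemma's hypothesis, but only after you acknowledge the bias term explicitly (e.g.\ by centering at $\frac{k}{|I_s|}$ rather than $\frac12$). The paper avoids this bookkeeping entirely by invoking Lemma~\ref{ExchangeableConcentration} (Lemma~7.1 of \cite{ImprovedApprox}), which for exchangeable $|X_i|\le1$ directly yields $\p\bigl(|\sum_i a_iX_i|\ge|\sum_i a_i|+t\bigr)\le 2\exp(-t^2/(32\sum_i a_i^2))$; this compares to the count-independent baseline $|\sum_i a_i|$ rather than to any conditional mean, so the uniform distribution of the count never enters.
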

\begin{proof}
	Fix $s=0, \ldots ,D-1$ and $\phi>0$ such that \eqref{mSumBound2} holds. By Lemma \ref{LindebergLemma} we have
	\begin{align*}
		\E[\varrho_{\epsilon^s}\1\{\mathcal{A}_s\}]\lesssim &
		\frac{\sqrt{\log d}}{\phi}+\frac{B_n^2\phi^4(\log(dn))^{3+2/\beta}}{n^2}+\left(\frac{\sqrt{\log d}}{\phi}+
		\E[\varrho_{\epsilon^{s+1}}\1\{\mathcal{A}_{d}\}]\right) 
		\\&  \times 
		\left(\frac{B_{n,1,s}\phi^2\log d}{\sqrt{n}}+\frac{B_{2,n,s}\phi^3(\log d)^2}{n}+\frac{B_n^2\phi^4(\log d)^3}{n}\right)
	\end{align*}
	up to a constant only depending on $C_v,C_p,C_B,C_a$. Hence the claim of the lemma follows if we can show that
	\begin{align*}
		\E[\varrho_{\epsilon^{s+1}}\1\{\mathcal{A}_s\}] \leq 	\E[\varrho_{\epsilon^{s+1}}\1\{\mathcal{A}_{s+1}\}]+\frac{4}{n}\,.
	\end{align*}
	We have
	\begin{align*}
		\E[\varrho_{\epsilon^{s+1}}\1\{\mathcal{A}_s\}]&=\E[\varrho_{\epsilon^{s+1}}\1\{\mathcal{A}_s\}\1\{\mathcal{A}_{s+1}\}]+\E[\varrho_{\epsilon^{s+1}}\1\{\mathcal{A}_s\}(1-\1\{\mathcal{A}_{s+1}\})]\\
		&\leq 	\E[\varrho_{\epsilon^{s+1}}\1\{\mathcal{A}_{s+1}\}]+\E[\1\{\mathcal{A}_s\}(1-\1\{\mathcal{A}_{s+1}\})]\\
		&\leq \E[\varrho_{\epsilon^{s+1}}\1\{\mathcal{A}_{s+1}\}]+1-\p(\mathcal{A}_{s+1}|\mathcal{A}_s)
	\end{align*}
	where we used that $0\leq \varrho_{\epsilon^{s+1}} \leq 1$ for the first inequality.
	Now Lemma \ref{ExchangeableConcentration} yields
	\begin{align*}
		\p\left(\left|\frac{1}{\sqrt{n}}\sum_{i=1}^{n}\epsilon^{s+1}_i(\mathcal{E}^V_{i,jk}-\mathcal{E}^Z_{i,jk})\right|>\left|\frac{1}{\sqrt{n}}\sum_{i=1}^{n}\epsilon^{s}_i (\mathcal{E}^V_{i,jk}-\mathcal{E}^Z_{i,jk})\right|+t \bigg| \epsilon^s \right)\\
		\leq 2\exp\left(-\frac{nt^2}{32\sum_{i=1}^{n}(\mathcal{E}^V_{i,jk}-\mathcal{E}^Z_{i,jk})^2}\right)\leq 2\exp\left(-\frac{t^2}{128B_n^2C_v}\right)\,,
	\end{align*}
	where the last inequality is due to Condition V. Setting $t=8B_n\sqrt{6C_v\log(dn)}$ and recalling that 
	\begin{align*}
		\underset{1\leq j,k\leq d}{\max}\left|\frac{1}{\sqrt{n}}\sum_{i=1}^{n}\epsilon^s_i(\mathcal{E}^V_{i,jk}-\mathcal{E}^Z_{i,jk})\right|\leq B_{n,1,d}
	\end{align*}
	on $\mathcal{A}_s$ we obtain by the tower property of conditional probabilities that for any $B_{n,1,s+1}\geq B_{n,1,s}+t$
	\begin{align*}
		\p\left(\underset{1\leq j,k\leq d}{\max}\left|\frac{1}{\sqrt{n}}\sum_{i=1}^{n}\epsilon^s_i(\mathcal{E}^V_{i,jk}-\mathcal{E}^Z_{i,jk})\right|>B_{n,1,d+1}\,\Big|\, \mathcal{A}_s\right)\leq \frac{2p^2}{(nd)^3}\leq \frac{2}{n} \,.
	\end{align*}
	
	We recall $\eqref{ThirdMomentBounds1}$ and $\eqref{ThirdMomentBounds2}$ which follow by Conditions P and B. Hence we find that 
	\begin{align*}
		\frac{32}{n}\sum_{i=1}^{n}(\mathcal{E}^V_{i,jkl}-\mathcal{E}^Z_{i,jkl})^2\leq CB_n^4(\log(dn))^{4/\beta}
	\end{align*}
	for some  constant $C$ only depending on $C_v, C_p$ and $C_b$. We hence obtain by the same arguments as above
	\begin{align*}
		\p\left(\left|\frac{1}{\sqrt{n}}\sum_{i=1}^{n}\epsilon^{s+1}_i(\mathcal{E}^V_{i,jkl}-\mathcal{E}^Z_{i,jkl})\right|>\left|\frac{1}{\sqrt{n}}\sum_{i=1}^{n}\epsilon^{s}_i(\mathcal{E}^V_{i,jkl}-\mathcal{E}^Z_{i,jkl})\right|+t \bigg| \epsilon^s \right)\\
		\leq 2\exp\left(-\frac{nt^2}{32\sum_{i=1}^{n}(\mathcal{E}^V_{i,jkl}-\mathcal{E}^Z_{i,jkl})^2}\right)\leq 2\exp\left(-\frac{t^2}{128C(\log(dn))^{4/\beta}}\right)\,.
	\end{align*}
	Applying this inequality with $t=\sqrt{3C}B_n^2(\log(dn))^{1/2+2/\beta}$ yields that for any $B_{n,2,s+1}\geq B_{n,2,s}+t$, we have
	\begin{align*}
		\p\left(\underset{1\leq j,k,l\leq d}{\max}\left|\frac{1}{\sqrt{n}}\sum_{i=1}^{n}\epsilon^{s+1}_i(\mathcal{E}^V_{i,jkl}-\mathcal{E}^Z_{i,jkl})\right|>B_{n,2,s+1}\,\Big|\, \mathcal{A}_s\right)\leq \frac{2p^3}{(nd)^3}\leq \frac{2}{n}\,. 
	\end{align*}
	Thus $1-\p(\mathcal{A}_{s+1}|\mathcal{A}_s)\leq 4/n$ which completes the proof.
\end{proof}	
\begin{Lemma}
	For any constant $\phi>0$ such that \eqref{mSumBound2} holds we have
	\begin{align*}
		\E[\varrho_{\epsilon^s}\1\{\mathcal{A}_D\}]\leq \frac{1}{n}\,.
	\end{align*}
\end{Lemma}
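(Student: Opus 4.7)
This is the terminating base case for the iterative scheme of Lemma~\ref{ExchLemma}, chaining bounds from $\varrho_{\epsilon^0}$ through to $\varrho_{\epsilon^D}$. The plan is to exploit the explicit construction of the random indicators $\epsilon^s$. By definition, conditional on $|I_s|=m$ the cardinality $|I_{s+1}|$ is uniform on $\{0,1,\ldots,m\}$, and $\{|I_s|=0\}$ is absorbing. Crucially, on the event $\{|I_D|=0\}$ we have $\epsilon^D\equiv 0$ componentwise, so
\[
S^V_{n,\epsilon^D} \;=\; \frac{1}{\sqrt n}\sum_{i=1}^n \bigl(\epsilon^D_i V_i + (1-\epsilon^D_i)Z_i\bigr) \;=\; \frac{1}{\sqrt n}\sum_{i=1}^n Z_i \;=\; S^Z_n
\]
deterministically, and therefore $\varrho_{\epsilon^D}=0$ on this event.

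Since $\varrho_{\epsilon^D}\le 1$ always (it is a supremum of differences of probability values), the proof then reduces to a purely combinatorial tail bound on the thinning chain:
\[
\E\bigl[\varrho_{\epsilon^D}\1\{\mathcal{A}_D\}\bigr]
\;\le\; \E\bigl[\varrho_{\epsilon^D}\1\{|I_D|=0\}\bigr] + \p(|I_D|\ge 1)
\;=\; 0 + \p(|I_D|\ge 1),
\]
so everything boils down to showing $\p(|I_D|\ge 1)\le 1/n$. The event $\mathcal{A}_D$ only shrinks the set we are integrating over and plays no active role. I would establish the tail estimate by iterating the elementary moment recursion
\[
\E\bigl[|I_{s+1}|^k\mid I_s\bigr] \;=\; \frac{1}{|I_s|+1}\sum_{j=0}^{|I_s|} j^k \;\le\; \frac{(|I_s|+1)^k}{k+1},
\]
starting from $|I_0|=n$, and invoking Markov's inequality in the form $\p(|I_D|\ge 1)\le \E[|I_D|^k]$ for $k$ chosen to balance the decay factor $(k+1)^{-D}$ against $n^k$; alternatively, one can work directly with the identity $\p(|I_D|=0) = \E\bigl[\prod_{s=0}^{D-1}(1-1/(|I_s|+1))\bigr]$, exploiting the positive extinction probability at every step of the iterated uniform thinning.

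The main obstacle is calibrating this tail bound sharply: a crude first-moment Markov bound only gives $\p(|I_D|\ge 1)\le \E[|I_D|]\le n/2^D$, which for $D=[\log n]+1$ is far from $1/n$, so the argument must genuinely exploit either a higher-moment bound with a carefully chosen order $k$, or the fine distributional structure of the iteratively thinned set. Once this probabilistic estimate is in hand, no further analytic input is needed: the conclusion follows from pure calculation on the recursion defining $\epsilon^s$, entirely decoupled from the high-dimensional statistics machinery used elsewhere in the proof.
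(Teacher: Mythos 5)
Your reduction and calculation are exactly the paper's: note that $\varrho_{\epsilon^D}=0$ when $\epsilon^D\equiv 0$ (then $S^V_{n,\epsilon^D}=S^Z_n$), use $0\le\varrho_{\epsilon^D}\le 1$, and control $\p(\epsilon^D\neq 0)$ by Markov via the exact recursion $\E[\sum_i\epsilon^s_i\mid\epsilon^{s-1}]=\tfrac12\sum_i\epsilon^{s-1}_i$, yielding $\p(\epsilon^D\neq 0)\le\E[\sum_i\epsilon^D_i]=n/2^D$. (The lemma statement should read $\varrho_{\epsilon^D}$, not $\varrho_{\epsilon^s}$, as your reading correctly assumes.)

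Your suspicion about the final calibration is well placed and in fact pinpoints a slip in the paper. With $D=[\log n]+1$ the quantity $n/2^D$ is not $\le 1/n$; the paper's own chain of inequalities writes $n/2^D\le n/2^{4\log n}$, which presupposes $D\ge 4\log n$, contradicting the stated definition. However, the remedies you propose do not close this gap. The first-moment bound is essentially sharp for this uniform-thinning chain: your moment recursion $\E[|I_{s+1}|^k\mid I_s]\le(|I_s|+1)^k/(k+1)$ gives (ignoring lower-order terms) $\E[|I_D|^k]\approx n^k(k+1)^{-D}$, so with $D\asymp\log n$ one gets $\p(|I_D|\ge 1)\lesssim n^{k-\log(k+1)}$, and $k-\log(k+1)$ is strictly increasing in $k\ge 1$ and never $\le -1$; higher moments only \emph{weaken} the bound. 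The exact extinction probability is of the same order as the Markov bound up to constants. The correct and cost-free fix is simply to enlarge $D$ (e.g.\ $D=[4\log n]+1$ or $D=2\lceil\log_2 n\rceil+1$): the rest of the proof of Theorem~\ref{Lindeberg} uses only the induction $f_s\le C(s+1)$ terminating at $f_D=1$ and evaluated at $s=0$, which is insensitive to the exact size of $D$ as long as $D\lesssim\log n$.
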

\begin{proof}
	Recall that $D=[\log(n)]+1$ and note that $\varrho_{\epsilon^D}=0$ if $\epsilon^D=(0, \ldots ,0)$. Moreover, by Markov's Inequality,
	\begin{align*}
		\p(\epsilon^D\neq (0, \ldots ,0))&=\p\left(\sum_{i=1}^{n}\epsilon^D_i\geq 1\right)\leq \E[\sum_{i=1}^{n}\epsilon^D_i]\\
		&=\E[\E[\sum_{i=1}^{n}\epsilon^D_i|\sum_{i=1}^{n}\epsilon^{D-1}_i]]=\E[\frac{1}{2}\sum_{i=1}^{n}\epsilon^{D-1}_i]\\&= \ldots .=\E[\frac{1}{2^D}\sum_{i=1}^{n}\epsilon^0_i]=\frac{n}{2^D}\leq \frac{n}{2^{4\log(n)}}\leq \frac{1}{n}\,.
	\end{align*}
	It follows that
	\begin{align*}
		\E[\varrho_{\epsilon^D}\1\{\mathcal{A}_D\}]\leq \E[\varrho_{\epsilon^D}]\leq \p(\epsilon^D\neq (0, \ldots ,0))\leq \frac{1}{n}\,.
	\end{align*}
\end{proof}

\begin{proof}[\textbf{Proof of Theorem \ref{Lindeberg}}]
	Throughout the proof we will assume that 
	\begin{align*}
		C_p^4B_n^2(\log(dn))^{4+2/\beta}\leq n
	\end{align*}	
	since otherwise the claim follows immediately.
	
	Let $K$ be the constant from Lemma \ref{ExchLemma} and for all $s=0, \ldots ,D$ define $B_{n,1,s}=C_1B_n(s+1)(\log(nd))^{1/\beta}$ and $B_{n,2,s}=C_1B_n^2(s+1)(\log(nd))^{1/2+2/\beta}$ where $C_1=K+C_M$ so that both $\mathcal{A}_0$ and the requirements for Lemma \ref{ExchLemma} hold. Now we define for $s=0, \ldots ,D$
	\begin{align*}
		f_s=\inf \left\{x\geq 1: \E[\varrho_{\epsilon^s}\1\{\mathcal{A}_s\}]\leq x\left(\frac{B_n^2(\log(dn))^{4+2/\beta}}{n}\right)^{1/4}\right\}
	\end{align*}
	and for all $s=0, \ldots ,D$ we apply Lemma \ref{ExchLemma} with \begin{align*}
		\phi=\phi_s=\frac{n^{1/4}}{B_n^{1/4}(\log(dn))^{1/2+1/(2\beta)}((d+1)f_{s+1})^{1/3}}~.
	\end{align*}
	Noting that
	\begin{align*}
		\frac{B_n^2\phi^4(\log(dn))^{3+2/\beta}}{n^2}&\leq \frac{\log(dn)}{n}\leq \frac{B_n^2C_p(\log(dn))^{1/4}}{n^{1/4}}\leq \frac{C_p \sqrt{\log d}}{\phi}\\&\leq C_p((s+1)f_{s+1})^{1/3}\left(\frac{B_n^2(\log(nd))^{4+2/\beta}}{n}\right)^{1/4}
	\end{align*}
	and
	\begin{align*}
		\frac{B_{n,1,s}\phi^2\log d}{\sqrt{n}}\leq \frac{C_1(s+1)}{((s+1)f_{s+1})^{2/3}}\\
		\frac{B_{n,2,s}\phi^3(\log d)^2}{n}+\frac{B_n^2\phi^4(\log(dn))^3}{n}\leq \frac{C_1+1}{f_{s+1}}
	\end{align*}
	we get for $s=0, \ldots D$
	\begin{align*}
		\E[\varrho_{\epsilon^s}\1\{\mathcal{A}_s\}]\leq C_2(f^{2/3}_{s+1}+(s+1)^{2/3}+1)\left(\frac{B_n^2(\log(dn))^{4+2/\beta}}{n}\right)^{1/4}
	\end{align*}
	for some constant $C_2$ depending only on $C_v,C_p,C_b,C_a,C_m$. Hence we obtain
	\begin{align*}
		f_s\leq C_2(f^{2/3}_{s+1}+(s+1)^{2/3}+1)).
	\end{align*}
	
	Clearly $f_D=1$ due to the previous lemma.  A simple induction then shows that 
	\begin{align*}
		f_s\leq C(s+1)
	\end{align*}
	for some constant $C\geq 1$ depending only on $C_2$. We then finally obtain
	\begin{align*}
		\varrho_{\epsilon^0}\1\{\mathcal{A}_0\}=\E[\varrho_{\epsilon^0}\1\{\mathcal{A}_0\}]\leq C\left(\frac{B_n^2(\log(dn))^{4+2/\beta}}{n}\right)^{1/4}.
	\end{align*}
\end{proof}

\subsection{Sub-Weibull Random Variables}
\label{Weibull}
In this section we collect some results on sub-Weibull random variables, which are mainly taken from \citeSM{Weibull}.
Recalling the definition of the  Orlicz norm in \eqref{orl}, 
 a random variable $X$ is called  sub-Weibull of order $\beta$, denoted sub-Weibull$(\beta)$, if
	\begin{align*}
	\norm{X}_{\psi_{\beta}}<\infty  ~,
	\end{align*}
	where $
	\psi_\beta(x)=\exp(x^\beta)-1. $ 
	We also occasionally  call $\norm{X}_{\psi_{\beta}}$ its $\beta$-parameter. This definition includes 
the important   sub-exponential ($\beta=1$)  and sub-Gaussian ($\beta=2$) cases. Clearly sub-Weibull$(\beta)$ random variables possess exponential tail decay rates, more precisely $
	\p(|X|\geq t)\leq 2\exp  (-{t^\beta}/{\norm{X}_{\psi_\beta}^\beta})
$. The following result is a slight refinement of this statement, which for instance can be found in  \citeSM{Weibull}.
\begin{Lemma}
	\label{WeibullChar}
	For any random variable $X$ and constant $\beta>0$ the following are equivalent:
	\begin{itemize}
		\item[i)] $\norm{X}_{\psi_{\beta}}=K_1$,
		\item[ii)] $\p(|X|\geq t)\leq 2\exp\left(-\frac{t^\beta}{K_2^\beta}\right)$,
		\item[iii)] $\sup_{p\geq 1} \frac{\norm{X}_p}{p^{1/\beta}}=K_3$,
	\end{itemize} 
	where we have $K_1 \lesssim K_2 \lesssim K_3 \lesssim K_1$ up to constants only depending on $\beta$. 
	Note that the third formulation yields a quasi-triangle inequality for the $\beta$-parameter of sums of finitely many  random variables.
\end{Lemma}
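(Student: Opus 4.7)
The plan is to prove the equivalence through the standard cyclic chain (i) $\Rightarrow$ (ii) $\Rightarrow$ (iii) $\Rightarrow$ (i), tracking at each step an explicit comparison between the constants so that the claimed inequalities $K_1 \lesssim K_2 \lesssim K_3 \lesssim K_1$ (with implicit constants depending only on $\beta$) emerge automatically. Each of the three implications is a short exercise in standard techniques (Markov, tail integration, Taylor expansion combined with Stirling), so the proof is essentially a bookkeeping exercise rather than something requiring new ideas.

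For (i) $\Rightarrow$ (ii): from $\norm{X}_{\psi_\beta} = K_1$ and the definition of the Orlicz norm, the infimum characterization gives $\E[\exp(|X|^\beta/K_1^\beta)] \leq 2$, so applying Markov's inequality to $\exp(|X|^\beta/K_1^\beta)$ yields $\p(|X| \geq t) \leq 2\exp(-t^\beta/K_1^\beta)$, which shows $K_2 \leq K_1$. For (ii) $\Rightarrow$ (iii): use the layer-cake identity $\E[|X|^p] = \int_0^\infty p t^{p-1} \p(|X|\geq t)\, dt$, substitute the tail bound, and change variables $u = t^\beta/K_2^\beta$ to get
\begin{equation*}
\E[|X|^p] \;\leq\; \frac{2p K_2^p}{\beta}\, \Gamma(p/\beta)\,.
\end{equation*}
Then apply Stirling's inequality $\Gamma(p/\beta) \leq C(p/\beta)^{p/\beta}$ and take the $p$-th root to obtain $\norm{X}_p \lesssim K_2\, p^{1/\beta}$ uniformly over $p \geq 1$, which gives $K_3 \lesssim K_2$.

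For (iii) $\Rightarrow$ (i): expand $\psi_\beta(|X|/K) = \sum_{k\geq 1}|X|^{\beta k}/(K^{\beta k} k!)$ and take expectations. Whenever $\beta k \geq 1$, assumption (iii) yields $\E[|X|^{\beta k}] \leq K_3^{\beta k}(\beta k)^{k}$; for any initial indices $k$ with $\beta k < 1$ one bounds $\E[|X|^{\beta k}] \leq 1 + \E[|X|] \lesssim K_3$ by monotonicity of $L^p$-norms in $p$. Using $k! \geq (k/e)^k$ from Stirling, the dominant tail of the series is controlled by $\sum_k ((K_3/K)^\beta \beta e)^k$, a geometric series that converges and equals at most $1$ once $K = C_\beta K_3$ for a sufficiently large constant $C_\beta$ depending only on $\beta$. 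Hence $\E[\psi_\beta(|X|/K)] \leq 1$ for this $K$, giving $K_1 \lesssim K_3$.

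The main pitfall is not any single analytic step but rather keeping the constants honest: the implicit constants depend only on $\beta$, and this only works because all three manipulations (Markov, Gamma-function tail integral, Stirling-controlled power series) produce constants of this form. The most delicate of the three is the last one, where one must ensure the series converges for some $K$ proportional to $K_3$ and that the contributions from small $k$ (where $\beta k < 1$) are absorbed into a constant depending only on $\beta$; this is where the precise form of Stirling's lower bound is used.
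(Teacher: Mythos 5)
Your proof follows the paper's own cyclic chain (i) $\Rightarrow$ (ii) $\Rightarrow$ (iii) $\Rightarrow$ (i) with the identical three tools at each step: Markov's inequality, the layer-cake formula with a Gamma-function evaluation, and the power-series expansion of $\psi_\beta$ controlled by Stirling. The one slip is in the last step: for indices with $\beta k<1$, the bound $\E[|X|^{\beta k}]\leq 1+\E[|X|]\lesssim K_3$ is not scale-invariant and fails as $K_3\to 0$; Jensen's inequality gives the homogeneous bound $\E[|X|^{\beta k}]\leq(\E[|X|])^{\beta k}\leq K_3^{\beta k}$, which is what the geometric series comparison actually requires (the paper itself elides this corner case entirely by applying the $p\geq 1$ moment bound $\E[|X|^{\beta n}]\leq K_3^{\beta n}(\beta n)^n$ for every $n\geq 1$, so your version is slightly more careful even though the specific patch should be via Jensen rather than the additive bound).
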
	

\begin{proof}
If 	i) holds,  ii) follows from Markov's inequality.
\smallskip

If 	ii) holds, it follows that
	\begin{align*}
	\E[|X|^p]&=\int_{0}^{\infty}\p(|X|^p>t)dt=\int_{0}^{\infty}\p(|X|>t^{1/p})dt\\
	&\leq 2\int_{0}^{\infty}\exp(-t^{\beta/p}/K_2^\beta)dt=2K_2^p\frac{p}{\beta}\Gamma(p/\beta)\,.
	\end{align*}
	Taking the $p-$th root and recalling $\Gamma(x)\leq x^{1/x}$ then yields
$
	\frac{\norm{X}_p}{p^{1/\beta}}\leq C_\beta K_2 ,
$
	which implies iii).
\smallskip

If 	iii) holds,  we have  for some $K>0$
	\begin{align*}
	\E[\exp(|X|^\beta/K^\beta)-1]=\sum_{n=1}^{\infty}\frac{\E[|X|^{\beta n}]}{K^{\beta n}n!}\leq  \sum_{n=1}^{\infty} (\beta n)^{n}\frac{K^{\beta n}_3}{K^{\beta n}n!} \leq \sum_{n=1}^{\infty}(\beta e)^n\left( \frac{K_3}{K}\right)^{\beta n}\,.
	\end{align*}
	
	Now there exists a constant $C_\beta$ only depending on $\beta$ such that $ K=C_\beta K_3$ yields that the last term is bounded by 1. This implies i).
	
\end{proof}

\begin{Lemma}
	\label{WeibCond}
	Let $X$ be a random variable with $\norm{X}_{\psi_{\beta}}<\infty$. Then for any sigma algebra $\mathcal{B}$ we have that $\norm{\E[X|\mathcal{B}]}_{\psi_{\beta}}\leq C_\beta\norm{X}_{\psi_{\beta}}$. 
\end{Lemma}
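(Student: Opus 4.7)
The plan is to sidestep the fact that $\psi_\beta$ fails to be convex on all of $[0,\infty)$ when $\beta<1$ (so conditional Jensen cannot be applied directly to $\psi_\beta$) by routing through the moment characterization (iii) of Lemma~\ref{WeibullChar}. The key observation is that the $L^p$-norm is a bona fide convex functional for every $p\ge 1$, and this is precisely where conditional expectations are well-behaved.

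First I would set $K:=\norm{X}_{\psi_\beta}$ and invoke the implication i)$\Rightarrow$iii) in Lemma~\ref{WeibullChar} to obtain a constant $c_\beta>0$, depending only on $\beta$, with $\norm{X}_p\le c_\beta K\, p^{1/\beta}$ for every $p\ge 1$. Next, I would apply the conditional Jensen inequality to the convex map $t\mapsto |t|^p$, giving $|\E[X\mid\mathcal B]|^p\le \E[|X|^p\mid\mathcal B]$ almost surely; taking expectations and using the tower property yields the contraction
\begin{align*}
\norm{\E[X\mid\mathcal B]}_p\le \norm{X}_p\le c_\beta K\, p^{1/\beta}\qquad\text{for all }p\ge 1.
\end{align*}
In other words, the conditional expectation satisfies the same moment growth as $X$ itself, with the same parameter $c_\beta K$.

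Finally I would close the loop by invoking the implication iii)$\Rightarrow$i) in Lemma~\ref{WeibullChar}: since $\sup_{p\ge 1}\norm{\E[X\mid\mathcal B]}_p/p^{1/\beta}\le c_\beta K$, there is a constant $C_\beta$, depending only on $\beta$, such that $\norm{\E[X\mid\mathcal B]}_{\psi_\beta}\le C_\beta K = C_\beta\norm{X}_{\psi_\beta}$, which is the claim.

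There is essentially no serious obstacle here; the heavy lifting is already encoded in the equivalence of Lemma~\ref{WeibullChar}. The only subtle point worth flagging is that one genuinely cannot apply Jensen to $\psi_\beta$ directly when $\beta<1$ (it is concave near the origin), which is why the detour through $L^p$-norms is necessary. In the sub-Gaussian/sub-exponential regime $\beta\ge 1$ a one-line argument gives the sharper constant $C_\beta=1$ via $\E[\psi_\beta(|\E[X\mid\mathcal B]|/\nu)]\le \E[\psi_\beta(|X|/\nu)]\le 1$ with $\nu=\norm{X}_{\psi_\beta}$, but the above route handles all $\beta>0$ uniformly.
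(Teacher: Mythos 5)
Your argument is exactly the one the paper has in mind: its one-line proof ("This follows immediately from Lemma \ref{WeibullChar} and the fact that conditional expectations are $\mathcal{L}_p$ contractions") unpacks to precisely your detour through the moment characterization (iii), the conditional-Jensen $L^p$-contraction, and the reverse implication. Your additional remark about why the detour is forced for $\beta<1$ and why $C_\beta=1$ is available for $\beta\ge 1$ is correct but not part of the paper's reasoning.
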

\begin{proof}
	This follows immediately from Lemma \ref{WeibullChar} and the fact that conditional expectations are $\mathcal{L}_p$ contractions.
\end{proof}

\begin{Lemma}
	\label{SubgaussAverage}
	Let $\bar{X}_n=\frac{1}{n}\sum_{k=1}^{n}X_k$ be the average of sub-Weibull$(2)$ random variables with 2-parameter $\sigma$. Then $\bar{X}_n$ is sub-Weibull$(2)$ with 2-parameter at least $\frac{\sigma C}{\sqrt{n}}$  and at most $\frac{\sigma \bar{C}}{\sqrt{n}}$ for some universal constants $C,\bar{C}>0$.
\end{Lemma}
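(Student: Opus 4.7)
The plan is to prove both bounds using the moment characterization (iii) in Lemma~\ref{WeibullChar}, which gives $\|Y\|_{\psi_2} \asymp \sup_{p\ge 1} \|Y\|_p/\sqrt{p}$ up to universal constants. Implicit in the statement (otherwise the lower bound fails for degenerate or identically dependent summands) is that the $X_k$ are independent and centered, so I shall carry out the proof under these assumptions.

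For the upper bound, I would first note that by Lemma~\ref{WeibullChar} there exists a universal constant $K$ with $\P(|X_k|\ge t)\le 2\exp(-t^2/(K\sigma)^2)$ for every $k$, which by a standard MGF computation yields $\E[\exp(\lambda X_k)] \le \exp(c\lambda^2 \sigma^2)$ for all $\lambda\in\R$ and a universal $c>0$. Independence then gives
\begin{equation*}
\E\bigl[\exp(\lambda \bar X_n)\bigr] = \prod_{k=1}^n \E\bigl[\exp((\lambda/n) X_k)\bigr] \le \exp\bigl(c\lambda^2 \sigma^2/n\bigr),
\end{equation*}
so $\bar X_n$ is sub-Gaussian with proxy variance $\lesssim \sigma^2/n$. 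Converting back via Lemma~\ref{WeibullChar} gives $\|\bar X_n\|_{\psi_2}\le \bar C\sigma/\sqrt n$ for a universal $\bar C>0$.

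For the lower bound, the key observation is that the 2-parameter always dominates the $L^2$-norm: by characterization (iii) of Lemma~\ref{WeibullChar} applied with $p=2$, one has $\|Y\|_2 \lesssim \|Y\|_{\psi_2}$, but the reverse-direction anchoring at $p=2$ also gives a matching lower bound $\|Y\|_{\psi_2}\ge c\,\|Y\|_2$ (indeed, the supremum in (iii) is attained up to constants, and in particular $\|Y\|_{\psi_2}\ge c\|Y\|_2/\sqrt 2$). Assuming the $X_k$ are i.i.d.\ centered with $\|X_1\|_{\psi_2}=\sigma$, the equivalence $\|X_1\|_2 \ge c_0\sigma$ holds, and by independence $\mathrm{Var}(\bar X_n)=\mathrm{Var}(X_1)/n$, so
\begin{equation*}
\|\bar X_n\|_{\psi_2} \ge c\,\|\bar X_n\|_2 = c\,\frac{\|X_1\|_2}{\sqrt n} \ge \frac{Cc_0\,\sigma}{\sqrt n}.
\end{equation*}

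The upper bound is the standard sub-Gaussian concentration argument and should be routine. The main subtle point is the lower bound: it is really a statement that the $\psi_2$-norm and the $L^2$-norm are of the same order for a fixed random variable (up to universal constants), combined with the exact $1/\sqrt n$ scaling of the standard deviation of an i.i.d.\ average. This comparison between $\|\cdot\|_{\psi_2}$ and $\|\cdot\|_2$ is where one must be careful about what ``2-parameter equal to $\sigma$'' is taken to mean, since for truly degenerate or non-centered variables the claim would be false; the statement implicitly fixes the typical i.i.d.\ centered setting that is used elsewhere in the paper.
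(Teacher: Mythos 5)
The paper itself offers no proof of this lemma, so I will assess your argument on its own merits. Your treatment of the upper bound is fine: independence and a sub-Gaussian MGF bound for centered sub-Weibull$(2)$ variables give $\E[\exp(\lambda\bar X_n)]\le\exp(c\lambda^2\sigma^2/n)$, and the chain tail $\Rightarrow$ moment $\Rightarrow$ Orlicz-norm from Lemma~\ref{WeibullChar} converts this back into $\|\bar X_n\|_{\psi_2}\lesssim\sigma/\sqrt n$. That part is standard and correct.

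Your lower bound, however, contains a genuine error. From characterization~(iii) of Lemma~\ref{WeibullChar} at $p=2$ you correctly obtain $\|Y\|_2\lesssim\|Y\|_{\psi_2}$, i.e.\ the $\psi_2$-norm \emph{dominates} the $L^2$-norm. You then assert the reverse direction, $\|X_1\|_2\ge c_0\,\sigma$ with $\sigma=\|X_1\|_{\psi_2}$, and feed this into $\|\bar X_n\|_{\psi_2}\gtrsim\|\bar X_n\|_2=\|X_1\|_2/\sqrt n$. But (iii) only says that $\|\cdot\|_{\psi_2}$ is comparable to the \emph{supremum} of $\|\cdot\|_p/\sqrt p$, and the supremum can be carried by moments of arbitrarily high order, so the $p=2$ term gives no lower bound on the supremum. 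Concretely, for $X=0$ with probability $1-q$ and $X=\pm q^{-1/2}$ each with probability $q/2$, one has $\|X\|_2=1$ while $\|X\|_{\psi_2}\asymp 1/\sqrt{q\log(1/q)}\to\infty$ as $q\to0$; the $\psi_2$-norm can exceed the $L^2$-norm by an arbitrary factor. Thus ``$\|X_1\|_2\ge c_0\sigma$'' is false in general, and your derivation of $\|\bar X_n\|_{\psi_2}\gtrsim\sigma/\sqrt n$ collapses. The only lower bound your method genuinely yields is $\|\bar X_n\|_{\psi_2}\gtrsim\|X_1\|_2/\sqrt n$, which is weaker than what the lemma claims. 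This distinction is not a technicality: the lemma's lower bound, as stated in terms of $\sigma=\|X_1\|_{\psi_2}$, cannot hold with a universal constant without an extra assumption such as $\|X_1\|_2\asymp\|X_1\|_{\psi_2}$, as the sparse example above shows (there $\|\bar X_n\|_{\psi_2}$ is of order $\|X_1\|_2/\sqrt n$, which can be far below $\sigma/\sqrt n$). So the gap in your proof is real and, moreover, is pointing at a defect in the statement itself rather than something your argument simply failed to reach.
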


\begin{Lemma}
	\label{ProductNorm}
	Let $X_1, \ldots, X_n$ be random variables with $\norm{X_k}_{\psi_{\beta_k}}<\infty$ ($k=1, \ldots, n$). Then for 
$
	\frac{1}{\beta}=\sum \frac{1}{\beta_k}
$
	we have
	\begin{align*}
	\norm{\prod_{k=1}^{n} X_k}_{\psi_\beta}\leq\prod_{k=1}^{n} \norm{X_k}_{\psi_{\beta_k}}\,.
	\end{align*}
\end{Lemma}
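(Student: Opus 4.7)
The plan is to work directly from the definition of the Orlicz norm and combine the weighted Young inequality for products with Hölder's inequality. Set $\nu_k := \norm{X_k}_{\psi_{\beta_k}}$ and $Y_k := |X_k|/\nu_k$; by definition of $\nu_k$, each $Y_k$ satisfies $\E[\exp(Y_k^{\beta_k})] \le 2$. The goal is to show $\E[\exp((\prod_{k=1}^n Y_k)^\beta)] \le 2$, which by the definition \eqref{orl} is precisely the inequality $\norm{\prod_k X_k}_{\psi_\beta} \le \prod_k \nu_k$.

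First I would note the key arithmetic identity: since $1/\beta = \sum_k 1/\beta_k$, the numbers $p_k := \beta_k/\beta \ge 1$ satisfy $\sum_k 1/p_k = \beta \sum_k 1/\beta_k = 1$. I can therefore apply the weighted Young inequality for products, $\prod_k a_k \le \sum_k a_k^{p_k}/p_k$ (valid for non-negative reals with $\sum 1/p_k = 1$), to $a_k = Y_k^\beta$. This yields
\begin{equation*}
\Big(\prod_{k=1}^n Y_k\Big)^{\beta} \;=\; \prod_{k=1}^n Y_k^{\beta} \;\le\; \sum_{k=1}^n \frac{\beta}{\beta_k}\, Y_k^{\beta_k}\,,
\end{equation*}
because $a_k^{p_k}=Y_k^{\beta_k}$ and $1/p_k=\beta/\beta_k$. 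Exponentiating gives the pointwise bound $\exp((\prod_k Y_k)^\beta) \le \prod_k \exp((\beta/\beta_k) Y_k^{\beta_k})$.

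Next, I would apply Hölder's inequality with the same exponents $p_k = \beta_k/\beta$ to the expectation of this product, obtaining
\begin{equation*}
\E\Big[\prod_{k=1}^n \exp\big((\beta/\beta_k) Y_k^{\beta_k}\big)\Big] \;\le\; \prod_{k=1}^n \big(\E\big[\exp(Y_k^{\beta_k})\big]\big)^{\beta/\beta_k} \;\le\; \prod_{k=1}^n 2^{\beta/\beta_k} \;=\; 2^{\sum_k \beta/\beta_k} \;=\; 2\,,
\end{equation*}
where I used the defining bound for each $\nu_k$ together with $\sum_k \beta/\beta_k = 1$. Chaining this with the previous display gives $\E[\psi_\beta(|\prod_k X_k|/\prod_k \nu_k)] = \E[\exp((\prod_k Y_k)^\beta)] - 1 \le 1$, and the conclusion follows from \eqref{orl}.

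There is no real obstacle here; the only thing to be careful about is the bookkeeping of exponents, namely verifying that the single condition $1/\beta = \sum 1/\beta_k$ simultaneously makes the Young inequality applicable (to produce the bound on $(\prod Y_k)^\beta$) and gives conjugate exponents for Hölder (to factor the expectation), and that the combined constant from both steps evaluates exactly to $2$ rather than some larger constant that would only yield the inequality up to a multiplicative factor.
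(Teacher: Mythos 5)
Your proof is correct. The paper does not actually prove Lemma \ref{ProductNorm}; it is stated without proof in the supplement's collection of sub-Weibull facts, which the paper notes are ``mainly taken from'' the cited sub-Weibull reference. The argument you give—weighted Young's inequality pointwise on the exponent followed by Hölder with the same conjugate exponents $p_k=\beta_k/\beta$—is exactly the standard proof in that literature, and your bookkeeping checks out: $\sum_k 1/p_k=1$ makes both inequalities applicable simultaneously, the exponents cancel so that $f_k^{p_k}=\exp(Y_k^{\beta_k})$, and the combined constant is $2^{\sum_k\beta/\beta_k}=2$ on the nose. One small technicality that you implicitly use and could flag explicitly: the infimum in the definition \eqref{orl} is attained (by monotone convergence as $\nu\downarrow\nu_k$), which is what lets you write $\E[\exp(Y_k^{\beta_k})]\le 2$. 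You also tacitly assume $\nu_k>0$; the degenerate case $\nu_k=0$ (i.e.\ $X_k=0$ a.s.) makes both sides zero and can be dispatched separately. Neither point is a real gap.
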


\begin{Lemma}
	\label{MaxBound}
	Assume that $X_i=(X_{i1},\ldots, X_{id})^{\top}$, $1\le i\le n$, are random vectors whose components $X_{ij}$, $1\le j\le d$, are sub-Weibull$(\beta)$ random variables  with $\norm{X_{ij}}_{\psi_{\beta}}\leq K$. Then for $d\ge 2$ we have	
	\begin{align*}
		\underset{1\leq i \leq d}{\max}\norm{X_i}_\infty\leq K\left(5\log(dn)\right)^{1/\beta}
	\end{align*}
	with probability at least $1-1/(2n^4)$.
\end{Lemma}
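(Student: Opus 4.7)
The plan is to combine the tail characterization from Lemma~\ref{WeibullChar} with a standard union bound. First I would observe that by the definition of the Orlicz norm, the hypothesis $\norm{X_{ij}}_{\psi_\beta} \le K$ means $\E[\exp(|X_{ij}|^\beta/K^\beta)] \le 2$, so Markov's inequality (as carried out in the implication (i) $\Rightarrow$ (ii) of Lemma~\ref{WeibullChar}) gives the clean tail bound
\begin{align*}
\p\bigl(|X_{ij}| \ge t\bigr) \;\le\; 2\exp\!\bigl(-t^\beta/K^\beta\bigr)
\end{align*}
for every $t \ge 0$ and every pair $(i,j)$.

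Next I would apply the union bound over the $nd$ coordinates, yielding
\begin{align*}
\p\Bigl(\max_{1\le i \le n,\,1\le j \le d}|X_{ij}| \ge t\Bigr) \;\le\; 2nd\,\exp\!\bigl(-t^\beta/K^\beta\bigr).
\end{align*}
Choosing $t = K\bigl(5\log(dn)\bigr)^{1/\beta}$ converts the exponential factor into $(dn)^{-5}$, so the right hand side is at most $2 (dn)^{-4} = 2 d^{-4} n^{-4}$. The assumption $d \ge 2$ gives $d^4 \ge 16 \ge 4$, hence $2 d^{-4} \le 1/2$, and the total bound is at most $1/(2n^4)$, which is exactly the claimed probability.

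There is no serious obstacle here; the only point that requires a moment of care is tracking the constants in Lemma~\ref{WeibullChar} so that the tail bound indeed carries the Orlicz norm $K$ itself (rather than some $\beta$-dependent multiple of $K$) in the denominator of the exponent. That is precisely what the Markov step in the proof of (i)~$\Rightarrow$~(ii) gives, so plugging this in makes the computation go through exactly as above.
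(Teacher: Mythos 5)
Your proof is correct and follows essentially the same route as the paper: the tail bound from Lemma~\ref{WeibullChar} via Markov's inequality, a union bound over the $nd$ entries, and the choice $t=K(5\log(dn))^{1/\beta}$ together with $d\ge 2$ to absorb the factor $2d^{-4}$ into $1/2$. The one point you flag as needing care — that the Markov step yields the constant $K$ itself in the exponent rather than a $\beta$-dependent multiple — is exactly what the paper implicitly relies on, and your observation is correct.
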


\begin{proof}
	Using the union bound and Lemma~\ref{WeibullChar}, we obtain for any $x>0$, 
	\begin{align*}
		\p(\underset{1\leq i \leq n, 1 \leq j \leq d}{\max}|X_{ij}|>x)\leq dn \underset{1\leq i \leq n, 1 \leq j \leq d}{\max} \p(|X_{ij}|>x)\leq 2dn \exp\left(-\frac{x^\beta}{K^\beta}\right)\,.
	\end{align*}
	Taking $x=K\left(5\log(dn)\right)^{1/\beta}$ yields the desired claim.
\end{proof}

\begin{Lemma}[\citeSM{Weibull}, Theorem 3.4]
	\label{WeibConc}
	 Let $X_1,\ldots,X_n$ be independent $d$-dimensional random vectors with mean zero and components satisfying $\norm{X_{ij}}_{\psi_\beta}\leq K_n$ for some $\beta\leq 2$. Setting
	\begin{align*}
	\Gamma_n:=\underset{1 \leq j \leq d}{\max}\frac{1}{n}\sum_{i=1}^{n}\E[X_{ij}^2]
	\end{align*}
	we have for $t>0$, with probability at least $1-3e^{-t}$,
	\begin{align*}
	\norm{\frac{1}{n}\sum_{i=1}^{n}X_i}_\infty\lesssim \sqrt{\frac{\Gamma_n(t+\log d)}{n}}+K_n\frac{(\log(2n))^{1/\beta}(t+\log d)^{1/\beta^*}}{n}
	\end{align*}
	 up to some constant depending only on $\beta$ and where $\beta^*=\min(1,\beta)$. In particular, noting that $\Gamma_n \lesssim K^2_n$ up to a constant depending only on $\beta$, we have for $t=\log d$ and $\frac{1}{n}\lesssim (\log d)^{1-2/\beta^*}$ that 
	\begin{align*}
	\norm{\frac{1}{n}\sum_{i=1}^{n}X_i}_\infty\lesssim K_n\sqrt{\frac{\log d}{n}}
	\end{align*}
	holds with probability at least $1-3/d$. When $\log d \lesssim n^\gamma$ this holds
	as long as $\gamma \leq \frac{1}{2/\beta^*-1}$.

\end{Lemma}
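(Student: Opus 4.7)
The plan is to reduce, via a union bound over the $d$ coordinates, to the univariate concentration problem for a single fixed coordinate $j$: establishing that $S_n^{(j)} = \sum_{i=1}^n X_{ij}$ satisfies, for every $u>0$,
$$
\p\!\left(\Big|\tfrac{1}{n}S_n^{(j)}\Big| > c_1\sqrt{\tfrac{\Gamma_n u}{n}} + c_2\, K_n\,\tfrac{(\log 2n)^{1/\beta}\, u^{1/\beta^*}}{n}\right) \le 2e^{-u}\,.
$$
Applying this with $u$ replaced by $u+\log d$ and taking a union bound over $j=1,\ldots,d$ produces the stated inequality; the specialization $u = \log d$ together with the rate condition on $\log d$ in the second part of the lemma then follows by inspection of when the first (sub-Gaussian) term dominates the second.

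For the core univariate Bernstein-type bound I would use a truncation argument. Set $M := C_\beta K_n (\log 2n)^{1/\beta}$ with $C_\beta$ chosen so that, using Lemma~\ref{WeibullChar}, $\sum_{i=1}^n \p(|X_{ij}|>M)$ is negligible. Decompose each summand as $X_{ij} = \tilde X_{ij} + R_{ij}$, where $\tilde X_{ij} := X_{ij}\mathbbm{1}\{|X_{ij}|\le M\} - \E[X_{ij}\mathbbm{1}\{|X_{ij}|\le M\}]$ is bounded by $2M$ and centred, and $R_{ij} := X_{ij}\mathbbm{1}\{|X_{ij}|>M\} + \E[X_{ij}\mathbbm{1}\{|X_{ij}|\le M\}]$ is the remainder. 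Classical Bernstein applied to $\sum_i \tilde X_{ij}$, using $\mathrm{Var}(\tilde X_{ij}) \le \E[X_{ij}^2] \le \Gamma_n$ and $|\tilde X_{ij}|\le 2M$, yields the sub-Gaussian contribution $\sqrt{\Gamma_n u/n}$ together with a Bernstein-style term of order $M u/n$; this already handles the $\beta\ge 1$ case, where $\beta^*=1$ and $u^{1/\beta^*}=u$.

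For the complementary $\beta<1$ regime, where sub-Weibull random variables need not admit a finite moment generating function, I would control $\frac{1}{n}\sum_i R_{ij}$ via Rosenthal- or Latala-type moment inequalities for sums of independent random variables. Using the third characterisation in Lemma~\ref{WeibullChar}, namely $\|X_{ij}\|_p \lesssim K_n p^{1/\beta}$, inside a Rosenthal bound gives $\|\sum_i R_{ij}\|_p \lesssim \sqrt{p n\Gamma_n} + p^{1/\beta}\, K_n (\log n)^{1/\beta}$; converting to a tail bound via Markov's inequality and optimising in $p$ produces the sub-Weibull tail contribution $K_n (\log 2n)^{1/\beta} u^{1/\beta^*}/n$ with the correct exponent $\beta^*=\min(1,\beta)$.

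The principal obstacle is exactly the $\beta<1$ regime: since the MGF of a sub-Weibull$(\beta)$ random variable can be infinite, a Chernoff-type argument is not directly available, and one must work at the level of moments and only then convert to probabilities. Calibrating the truncation level $M$, the moment order $p$ in the Rosenthal bound, and the optimisation over $p$ so that the two regimes (the Gaussian-like one from the variance and the Weibull-like one from the worst single summand) glue together with the correct transition exponent $\beta^*$, rather than the naive $\beta$, is the delicate step; once done, the union bound over $j$ and the rate-restriction specialization are routine.
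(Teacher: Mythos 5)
This lemma is not proved in the paper: it is cited verbatim as Theorem~3.4 of~\cite{Weibull} (Kuchibhotla and Chakrabortty), whose argument goes through a \emph{generalized Bernstein--Orlicz norm}: one shows that the GBO norm of $\sum_i X_{ij}$ is bounded by $(\sum_i K_{ij}^2)^{1/2}$, and the tail bound is then read off from the defining Orlicz function. Your proposal attempts instead an elementary truncation-plus-moments route. That is a legitimate alternative strategy, but as sketched it has two concrete gaps.

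First, the claim that Bernstein on the truncated part ``already handles the $\beta\ge 1$ case'' is not right, because the remainder does not disappear at the right rate. With the $u$-independent truncation level $M=C_\beta K_n(\log 2n)^{1/\beta}$, the event $\{\max_i|X_{ij}|>M\}$ has probability of order $n\,(2n)^{-C_\beta^\beta}$ — small for large $n$, but a \emph{fixed} quantity given $n$, certainly not $\lesssim e^{-u}$ for arbitrary $u>0$. So ignoring the remainder only yields a statement valid for $u\lesssim\log n$, whereas the lemma asserts the bound with probability $1-3e^{-t}$ for \emph{all} $t>0$. You either need $M$ to grow with $u$ (which then spoils the Bernstein term $Mu/n$), or you must control the remainder's moments for all $p$; you cannot simply drop it for $\beta\ge 1$.

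Second, the moment bound you state for the remainder, namely
$\bigl\|\sum_i R_{ij}\bigr\|_p\lesssim \sqrt{pn\Gamma_n}+p^{1/\beta}K_n(\log n)^{1/\beta}$,
does not follow from the classical Rosenthal inequality. Rosenthal gives
$\|\sum_i Y_i\|_p\lesssim \sqrt{p}\bigl(\sum_i\E Y_i^2\bigr)^{1/2}+p\bigl(\sum_i\E|Y_i|^p\bigr)^{1/p}$,
and plugging in $\|R_{ij}\|_p\lesssim K_n p^{1/\beta}$ produces a second term of order $p\cdot n^{1/p}\cdot K_n p^{1/\beta}\asymp K_n p^{1+1/\beta}$, not $K_n p^{1/\beta}$. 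Converting to a tail via Markov with $p\asymp u$ then gives a deviation term of order $K_n u^{1+1/\beta}$, which exceeds the required $K_n(\log 2n)^{1/\beta}u^{1/\beta^*}$ precisely on the range $u>(\log 2n)^{1/\beta^*}$. The sharper $p^{1/\beta}$ exponent that makes the heavy-tail regime work is exactly what Latała's moment inequality delivers for $\beta<1$; your proposal name-checks it as an alternative but actually carries out the (too weak) Rosenthal version. This calibration — truncation level, which moment inequality, and the optimization over $p$ — is precisely where the GBO-norm machinery of the cited source earns its keep, and the sketch as written does not replace it.
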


\subsection{Further technical details}

All results in this section are taken from \citeSM{ImprovedApprox}, but we will list them here for sake of completeness.
\begin{Lemma}
	\label{Permutation}
	\citeSM[Lemma 7.2]{ImprovedApprox} Let $\mathcal{S}_n$ be the set of all permutations of $\{1, \ldots ,n\}$. Let $X_1, \ldots ,X_n$, $Y_1, \ldots ,Y_n$ be sequences of vectors in $\R^d$. Let $U$ be a random variable with uniform distribution on $[0,1]$ and $\sigma$ be uniformly distributed on $\mathcal{S}_n$ and also independent from $U$.
	For $k=1,\ldots, n$ denote 
	\begin{align*}
		W^\sigma_k=\sum_{j=1}^{k-1}X_{\sigma(j)}+\sum_{j=k+1}^{n}Y_{\sigma(j)}
	\end{align*}
	and 
	\begin{align*}
		W_k=\begin{cases}
			W^\sigma_{\sigma^{-1}(k)}+X_k\,, \quad \text{if } U\leq \frac{\sigma^{-1}(k)}{n+1}\,,\\
			W^\sigma_{\sigma^{-1}(k)}+Y_k\,, \quad \text{if } U > \frac{\sigma^{-1}(k)}{n+1}\,.
		\end{cases}
	\end{align*}
	
	Then the distribution of $W_k$ does not depend on $k$ and there exists a random vector  $\epsilon=(\epsilon_1, \ldots ,\epsilon_n)$ with values in $\{0,1\}^n$ such that the distribution of $W_k$ is equal to that of
	\begin{align*}
		\sum_{i=1}^{n}\left(\epsilon_iX_i+(1-\epsilon_i)Y_i\right)
	\end{align*}
	In particular, the random variables $\epsilon_i$ are exchangeable and their sum is uniformly distributed on $\{0, \ldots ,n\}$.
\end{Lemma}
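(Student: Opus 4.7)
The plan is to represent $W_k$ as $\sum_{i\in A_k} X_i + \sum_{i\notin A_k} Y_i$ for a random ``selection set'' $A_k\subseteq \{1,\ldots,n\}$, and to determine the distribution of $A_k$ explicitly. By inspection of the definition of $W_k$, the set $A_k$ consists of those indices $i\neq k$ for which $\sigma^{-1}(i)<\sigma^{-1}(k)$, together with $k$ itself precisely when $U\le \sigma^{-1}(k)/(n+1)$. Setting $M:=\sigma^{-1}(k)$ and $B:=\mathbf{1}\{U\le M/(n+1)\}$, this gives the compact description $|A_k|=M-1+B$.

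The first step is to show that $|A_k|$ is uniform on $\{0,1,\ldots,n\}$, regardless of $k$. Because $\sigma$ is uniform on $\mathcal{S}_n$, the index $M$ is uniform on $\{1,\ldots,n\}$; and because $U$ is independent of $\sigma$, conditionally on $M$ the variable $B$ is Bernoulli with parameter $M/(n+1)$. A direct telescoping calculation then gives, for $0\le t\le n$,
\[
\mathbb{P}(|A_k|=t)=\mathbb{P}(M=t,B=1)+\mathbb{P}(M=t+1,B=0)=\tfrac{1}{n}\tfrac{t}{n+1}+\tfrac{1}{n}\tfrac{n-t}{n+1}=\tfrac{1}{n+1},
\]
with the obvious boundary conventions at $t=0$ and $t=n$.

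Next I would upgrade this to the joint statement: conditionally on $|A_k|=t$, the set $A_k$ is uniformly distributed over all $t$-subsets of $\{1,\ldots,n\}$. Conditional on $M$, the restriction of $\sigma$ to $\{1,\ldots,n\}\setminus\{M\}$ is a uniform bijection onto $\{1,\ldots,n\}\setminus\{k\}$. Case-splitting on $B$: if $B=1$ then $|A_k|=M$ and $k\in A_k$, with the remaining $M-1$ elements forming a uniform $(M-1)$-subset of $\{1,\ldots,n\}\setminus\{k\}$; if $B=0$ then $|A_k|=M-1$ and $k\notin A_k$, with $A_k$ a uniform $(M-1)$-subset of $\{1,\ldots,n\}\setminus\{k\}$. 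Combining both cases with the probabilities $M/(n+1)$ and $(n+1-M)/(n+1)$, one checks that $\mathbb{P}(A_k=S\mid |A_k|=t)=1/\binom{n}{t}$ for every $t$-subset $S$, since a uniform $t$-subset of $\{1,\ldots,n\}$ contains $k$ with probability $t/n$ and the two conditional laws above reassemble to this.

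Since the law of $A_k$ described above makes no reference to $k$, the distribution of $W_k$ does not depend on $k$. For the second assertion, I would construct $\epsilon$ directly: sample $T$ uniformly on $\{0,\ldots,n\}$ independently of a uniform permutation $\tau$ of $\{1,\ldots,n\}$, and set $\epsilon_{\tau(j)}=1$ for $j\le T$ and $\epsilon_{\tau(j)}=0$ for $j>T$. Then $\sum_i \epsilon_i=T$ is uniform on $\{0,\ldots,n\}$, the vector $\epsilon$ is exchangeable by construction, and $\{i:\epsilon_i=1\}$ has exactly the mixture distribution computed for $A_k$; hence $W_k\stackrel{d}{=}\sum_{i=1}^n(\epsilon_iX_i+(1-\epsilon_i)Y_i)$. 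The main obstacle is the bookkeeping in the conditional identification of the law of $A_k$ as a uniform $t$-subset: if one only tracks $|A_k|$ one recovers the correct cardinality distribution but misses exchangeability, so the two cases $B=0$ and $B=1$ must be handled jointly to see the uniform subset structure emerge.
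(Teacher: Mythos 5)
Your proof is correct and complete. Note that the paper does not actually prove Lemma \ref{Permutation}; it is quoted verbatim as Lemma~7.2 of \cite{ImprovedApprox}, and the surrounding text explicitly says the results in that section ``are taken from \cite{ImprovedApprox}'' and are ``list[ed] here for sake of completeness,'' so there is no paper proof to compare against. Your self-contained argument is sound: the reduction to the selection set $A_k=\{i\ne k:\sigma^{-1}(i)<\sigma^{-1}(k)\}\cup(\{k\}\text{ if }B=1)$ with $M=\sigma^{-1}(k)$ and $B=\mathbf{1}\{U\le M/(n+1)\}$ correctly encodes $W_k$ as $\sum_{i\in A_k}X_i+\sum_{i\notin A_k}Y_i$; the telescoping computation $\P(|A_k|=t)=\frac{1}{n}\frac{t}{n+1}+\frac{1}{n}\frac{n-t}{n+1}=\frac{1}{n+1}$ (with boundary cases $t=0,n$) is right; and the conditional identification of $A_k$ given $|A_k|=t$ as a uniform $t$-subset is verified correctly by the two-case check $\P(A_k=S\mid|A_k|=t)=\tfrac{t}{n\binom{n-1}{t-1}}=\tfrac{(n-t)}{n\binom{n-1}{t}}=\binom{n}{t}^{-1}$. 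One small simplification worth pointing out: you do not need a fresh construction of $\epsilon$ via $(T,\tau)$ — taking $\epsilon_i=\mathbf{1}\{i\in A_k\}$ directly gives the almost-sure identity $W_k=\sum_i(\epsilon_iX_i+(1-\epsilon_i)Y_i)$, and since you have already shown the law of $A_k$ is permutation-invariant in the index set with $|A_k|$ uniform on $\{0,\ldots,n\}$, exchangeability of $\epsilon$ and uniformity of $\sum_i\epsilon_i$ follow immediately. Your construction via $(T,\tau)$ is equivalent and also fine.
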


\begin{Lemma} \label{ExchangeableConcentration}
	\citeSM[Lemma7.1]{ImprovedApprox} Let $a_1, \ldots ,a_n$ be some constants in $\R$ and let $X_1, \ldots ,X_n$ be exchangeable random variables such that $|X_i|\leq 1$ almost surely. Then
	\begin{align*}
		\p\left(\left|\sum_{i=1}^{n}a_iX_i\right|\geq \left|\sum_{i=1}^{n}a_i\right|+t\right)\leq 2\exp\left(-\frac{t^2}{32\sum_{i=1}^{n}a_i^2}\right)
	\end{align*}
	for all $t>0$.
\end{Lemma}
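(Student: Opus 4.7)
The plan is to reduce the statement to a concentration inequality for linear statistics of a uniformly random permutation, exploiting exchangeability as a ``symmetrization'' device. The key identity is that for a uniformly random permutation $\sigma$ of $\{1,\ldots,n\}$, independent of $X = (X_1,\ldots,X_n)$, exchangeability of $X$ gives
$$
\sum_{i=1}^n a_i X_i \;\stackrel{d}{=}\; \sum_{i=1}^n a_{\sigma(i)} X_i .
$$
This transfers the randomness from the (dependent) $X_i$'s to the permutation $\sigma$, conditional on which the sum is a linear combination of the fixed values $X_1,\ldots,X_n \in [-1,1]$ against a random permutation of the constants $a_1,\ldots,a_n$.

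Next, I would condition on $X$ and study $S_\sigma := \sum_{i=1}^n a_{\sigma(i)} X_i$ as a function of the random permutation only. The conditional mean is
$$
\mathbb{E}_\sigma[S_\sigma \mid X] \;=\; \frac{1}{n}\Big(\sum_{i=1}^n a_i\Big)\Big(\sum_{j=1}^n X_j\Big) \;=\; \bar a \sum_j X_j,
$$
and since $|X_j|\le 1$ almost surely, this conditional mean is bounded in absolute value by $|\sum_i a_i|$. Thus if $|S_\sigma - \mathbb{E}_\sigma[S_\sigma\mid X]| < t$, then necessarily $|S_\sigma| \le |\sum_i a_i| + t$. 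Consequently the desired bound follows as soon as one establishes a sub-Gaussian concentration inequality for $S_\sigma$ around its conditional mean of the form
$$
\mathbb{P}\!\left(\,|S_\sigma - \bar a \textstyle\sum_j X_j| \ge t \,\big|\, X\right) \;\le\; 2\exp\!\Big(-\frac{t^2}{32\sum_i a_i^2}\Big),
$$
since integrating over $X$ and using the distributional identity above yields the claim.

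The main step, and the main obstacle, is precisely this conditional concentration inequality for linear statistics of a uniformly random permutation with coefficients bounded by $2$ (the range of the $X_i$'s). I would obtain it by the standard Doob martingale argument for random permutations: revealing $\sigma(1),\sigma(2),\ldots$ one coordinate at a time, the bounded differences of the resulting martingale are controlled by the range of the coefficients and by $\max_i|a_i|\le (\sum_i a_i^2)^{1/2}$; combined with a telescoping argument this yields Azuma--Hoeffding with a universal constant, which after bookkeeping gives the factor $32$ in the exponent. Alternatively one can invoke Hoeffding's (1963) classical inequality for sampling without replacement. This second half of the argument is the technical heart of the proof and does not use exchangeability at all — exchangeability has been used entirely in the first step to reduce to this pure permutation statement.
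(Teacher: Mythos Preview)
The paper does not prove this lemma; it is simply quoted from the cited reference. Your two–step reduction is correct and is the natural route: exchangeability gives $\sum_i a_i X_i \stackrel{d}{=} \sum_i a_{\sigma(i)} X_i$ for $\sigma$ uniform and independent of $X$, and after conditioning on $X$ the conditional mean $\bar a\sum_j X_j$ is indeed bounded in absolute value by $|\sum_i a_i|$. So everything comes down to sub-Gaussian concentration, with variance proxy $O(\sum_i a_i^2)$, for the centred permutation statistic $T=\sum_j (a_j-\bar a)\,X_{\sigma^{-1}(j)}$.

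Two points in your last step need more care than the sketch indicates. First, the filtration matters: if you reveal $\sigma(1),\sigma(2),\ldots$ as you wrote, the increments factor as $e_k\,(a_{\sigma(k)}-\bar A_{k-1})$ with $e_k$ determined by the $X$'s; Hoeffding's lemma then produces a variance proxy of order $n(\max_i a_i-\min_i a_i)^2$, which is the wrong quantity. Revealing instead $\sigma^{-1}(1),\sigma^{-1}(2),\ldots$ (i.e.\ the $X$-values in the order they are matched to $a_1,a_2,\ldots$) gives increments $c_k D_k$ with $D_k$ of conditional range $2$ and deterministic
\[
c_k=(a_k-\bar a)-\frac{1}{n-k}\sum_{j>k}(a_j-\bar a).
\]
Second, bounding $\sum_k c_k^2$ by a universal multiple of $\sum_i a_i^2$ is not automatic; a crude Cauchy--Schwarz estimate on the tail sums leaves a spurious $\log n$ factor. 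One clean way out is the discrete Hardy inequality $\sum_{\ell\ge 1}\bigl(\ell^{-1}\sum_{m\le\ell}x_m\bigr)^{2}\le 4\sum_m x_m^2$, applied after reversing the index, which yields $\sum_k c_k^2\lesssim\sum_j(a_j-\bar a)^2\le\sum_j a_j^2$ and hence a constant safely below $32$. Your fallback to Hoeffding's (1963) sampling-without-replacement inequality does not apply directly: that result treats unweighted sums and delivers a variance proxy proportional to $K(\max_i a_i-\min_i a_i)^2$ rather than $\sum_i a_i^2$.
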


\begin{Lemma}
	\label{MaxIneqCentSum}
	\citeSM[Lemma7.1]{Chernozhukov2017}
	Let $X_1, \ldots ,X_n$ be independent centered random vectors in $\R^d$ with $d\geq 2$. Define  $Z=\max_{1 \leq j \leq d} |\sum_{i=1}^{n}X_{ij}|, M=\max_{1 \leq i \leq n, 1 \leq j \leq d}|X_{ij}|$ and $\sigma^2=\max_{1 \leq j \leq d}\sum_{i=1}^n\E[X_{ij}^2]$. Then
	\begin{align*}
		E[Z]\leq L(\sigma\sqrt{\log d}+\sqrt{E[M^2]}\,\log d)
	\end{align*}
	for some universal constant $L$.
	Moreover, for every $\nu>0, \beta\in (0,1]$ and $t>0$ we have
	\begin{align*}
		\p(Z\geq (1+\nu)\E[Z]+t)\leq \exp(-t^2/(3\sigma^2))+3\exp\left(-\frac{t^\beta}{K\norm{M}_{\psi_{\beta}}^\beta}\right)
	\end{align*}
	for some universal constant $K$ that depends only on $\nu$ and $\beta$.
\end{Lemma}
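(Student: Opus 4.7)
The plan is to reproduce, at the level of a sketch, the argument behind this maximal inequality (which is just being imported from Chernozhukov--Chetverikov--Kato 2017). The two statements are essentially (a) a moment bound on $\E[Z]$ and (b) a Talagrand-type concentration of $Z$ around its mean that interpolates between a Gaussian tail (rate $\sigma^{2}$) and a sub-Weibull tail (rate $\|M\|_{\psi_\beta}^{\beta}$).

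For the moment bound I would start with symmetrization, $\E[Z]\leq 2\,\E\bigl[\max_{j}|\sum_{i}\varepsilon_{i}X_{ij}|\bigr]$ with Rademacher $\varepsilon_i$ independent of $X$, and then truncate each $X_{ij}$ at a level $\tau$ to be chosen. Conditionally on $X$, the truncated sum $\sum_{i}\varepsilon_{i}X_{ij}\1_{|X_{ij}|\le\tau}$ is sub-Gaussian with variance parameter $\sum_{i}X_{ij}^{2}$ and sub-exponential parameter $\tau$, so a coordinatewise Bernstein inequality followed by a union bound over $j$ and tail integration gives the $\sigma\sqrt{\log d}$ contribution. For the truncation remainder I would use the trivial bound $\E\bigl[\max_{j}|\sum_{i}X_{ij}\1_{|X_{ij}|>\tau}|\bigr]\leq \E\bigl[\sum_{i}M\,\1_{M>\tau}\bigr]\leq \E[M^{2}]/\tau\cdot (\text{something of order }n)$, and then balance $\tau$ against the Gaussian piece. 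A careful book-keeping yields exactly the claimed decomposition $\sigma\sqrt{\log d}+\sqrt{\E[M^{2}]}\log d$.

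For the concentration bound I would apply Talagrand's inequality in its Bousquet (sharp-constants) form to the truncated process $\tilde Z := \max_{j}|\sum_{i}X_{ij}\1_{|X_{ij}|\le\tau}|$, giving
\[
\p\bigl(\tilde Z\geq \E[\tilde Z]+t\bigr)\leq \exp\bigl(-t^{2}/(2\sigma^{2}+c\tau t)\bigr).
\]
To cover the sub-Weibull case I would then choose $\tau$ so that $\p(M>\tau)\leq \exp(-t^{\beta}/(K\|M\|_{\psi_\beta}^{\beta}))$ (using Lemma \ref{WeibullChar}); on the complementary event Talagrand gives the Gaussian factor $\exp(-t^{2}/(3\sigma^{2}))$, and the truncation event contributes the second summand. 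The $(1+\nu)\E[Z]$ prefactor arises when one passes from the mean of the truncated process back to $\E[Z]$: the excess $\E[Z]-\E[\tilde Z]$ is absorbed into $\nu\E[Z]$ via a Hoffmann--J{\o}rgensen-style redistribution, with the constant $K$ in the exponent depending on $\nu$ and $\beta$.

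The main technical obstacle is the coupling of the truncation level with the two exponents in the concentration statement: $\tau$ must simultaneously be small enough that the sub-exponential term $c\tau t$ in Talagrand's denominator is dominated by $\sigma^{2}$ (preserving the Gaussian regime down to $\exp(-t^{2}/(3\sigma^{2}))$), and large enough that $\p(M>\tau)$ matches the target sub-Weibull rate. The restriction $\beta\in(0,1]$ is precisely what makes a single calibration of $\tau$ work; extending beyond $\beta=1$ (toward sub-Gaussian summands) would require a different balancing. Everything else is standard: the sharp Bousquet constant in Talagrand's inequality is what allows the multiplicative loss in front of $\E[Z]$ to be $(1+\nu)$ rather than a fixed universal constant, while Lemma \ref{WeibullChar} is what converts sub-Weibull Orlicz control into the explicit tail $\exp(-t^{\beta}/(K\|M\|_{\psi_\beta}^{\beta}))$.
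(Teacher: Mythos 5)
The paper does not supply its own proof of this statement: Lemma~\ref{MaxIneqCentSum} is imported verbatim from Chernozhukov, Chetverikov and Kato (2017), so there is nothing in the present text to line your sketch up against step by step. Your reconstruction identifies the right toolbox (symmetrization, truncation, a conditional Bernstein/Hoeffding step, Bousquet's form of Talagrand's inequality, Lemma~\ref{WeibullChar}), but the moment-bound half has a genuine gap as written.

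The truncation remainder is where it breaks. You bound $\E\bigl[\max_{j}|\sum_{i}X_{ij}\1\{|X_{ij}|>\tau\}|\bigr]$ by $\E\bigl[\sum_{i}M\1\{M>\tau\}\bigr]=n\,\E[M\1\{M>\tau\}]$ and yourself flag that this is ``of order $n$''. That factor of $n$ cannot be removed by any choice of $\tau$: balancing it against the $\sigma\sqrt{\log d}$ piece produces a term that grows with $n$, not the claimed $\sqrt{\E[M^2]}\,\log d$, so the assertion that ``a careful book-keeping yields exactly the claimed decomposition'' does not follow from the bound you wrote. The missing ingredient is Hoffmann--J{\o}rgensen (in CCK's own proof this enters through Lemma~9 of their 2015 anti-concentration paper): for independent summands it shows that the expectation of the maximum of the sum of truncation remainders is controlled by a \emph{single} copy of the envelope, roughly $\E[M\1\{M>\tau\}]$, with no $n$ in front, precisely because only a few summands survive the truncation. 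You invoke Hoffmann--J{\o}rgensen later, to redistribute the mean in the concentration half, but it is already indispensable here in establishing $\E[Z]\lesssim\sigma\sqrt{\log d}+\sqrt{\E[M^2]}\,\log d$. The same issue recurs, less visibly, in the Rademacher step: passing from the conditional bound $\sqrt{\log d}\,\max_j(\sum_i X_{ij}^2)^{1/2}$ to the deterministic $\sigma\sqrt{\log d}$ needs control of $\E[\max_j\sum_i X_{ij}^2]$, a maximum of centered sums with envelope $M^2$, and again the trivial bound carries an unwanted $n$. By contrast, the skeleton of your concentration argument (Bousquet on the truncated process, a $t$-dependent truncation level trading the $c\tau t$ term in the denominator against $\p(M>\tau)$, and Lemma~\ref{WeibullChar} to convert to a sub-Weibull tail) is structurally the right one, and your comment about why $\beta\in(0,1]$ permits a single calibration of $\tau$ is on target.
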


\subsection{Concentration Inequalities for U-Statistics}

\begin{definition}
	Consider a symmetric and measurable function $h=(h_1,\ldots,h_d)^{\top}:\left(\R^p\right)^m\rightarrow \R^d$ together with a collection of iid random variables $X_1,\ldots,X_n \in \R^p$. We define the associated U-statistic $U_n$ of order $m$ by
	\begin{align*}
	U_n={n \choose m}^{-1}\sum_{1\leq l_1< \ldots <l_m\leq n}h(X_{l_1}, \ldots ,X_{l_m})\,.
	\end{align*}
	For $x\in \R^d$ we write
	\begin{align*}
	h_{1,i}(x)=\E[h(X_1, \ldots ,X_{m})|X_1=x]\,, \qquad 1\le i\le d\,,
	\end{align*}
	and set $h_{(1)}(x)=(h_{1,1}(x),\ldots,h_{1,d}(x))^{\top}$.
	
\end{definition}

\begin{Lemma}
	\label{UStatConc}
	Consider a mean zero U-Statistic $U_n$ of order $m$ as defined above. Provided that $\max_{1\le i \le d} \norm{h_i(X_1, \ldots ,X_m)}_{\psi_{\beta}}\leq K$ for some $2 \geq \beta>0$ and that $\log d=o(n^\gamma)$ for $\gamma \leq \frac{1}{2/\beta+1}$ it holds
		\begin{align*}
		\norm{U_n}_\infty \lesssim K\sqrt{\frac{\log d}{n}}
		\end{align*}
		with probability at least $1-3/d-C(\log d)^{1/2+1/\beta}/\sqrt{n}$ for some universal constant $C>0$. Note that the same bound with $\log (nd)$ instead of $\log d$ holds with probability at least  $1-3/(nd)-C(\log d)^{1/2+1/\beta}/\sqrt{n}$
\end{Lemma}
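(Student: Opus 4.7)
The plan is to reduce the uniform control of $\|U_n\|_\infty$ to two pieces via Hoeffding's decomposition: the linear (non-degenerate) part, which produces the leading $\sqrt{\log d /n}$ term, and a degenerate remainder of orders $2,\ldots,m$, which is of strictly smaller order but is the source of the inferior $C(\log d)^{1/2+1/\beta}/\sqrt n$ tail in the failure probability. Concretely, write
\begin{align*}
U_n \;=\; \frac{m}{n}\sum_{k=1}^n h_{(1)}(X_k) \;+\; R_n,
\end{align*}
where $h_{(1)}(x)$ is defined as in the statement and $R_n=\sum_{j=2}^m \binom{m}{j} U_n^{(j)}$ with each $U_n^{(j)}$ a completely degenerate $U$-statistic of order $j$ with kernel $\pi_j h$ in the standard Hoeffding projection notation.

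For the linear part, I would first note that Lemma \ref{WeibCond} transfers the sub-Weibull($\beta$) bound $K$ from $h_i(X_1,\ldots,X_m)$ to $h_{1,i}(X_1)$, uniformly in $1\le i\le d$, up to a constant depending only on $\beta$. Lemma \ref{WeibConc} applied to the centered vectors $h_{(1)}(X_k)$ with $t=\log d$ then gives
\begin{align*}
\Big\|\tfrac{m}{n}\sum_{k=1}^n h_{(1)}(X_k)\Big\|_\infty \;\lesssim\; K\sqrt{\tfrac{\log d}{n}}
\end{align*}
with probability at least $1-3/d$, precisely under the growth assumption $\log d=o(n^\gamma)$ with $\gamma\le 1/(2/\beta-1)$, which is implied by the hypothesis $\gamma\le 1/(2/\beta+1)$.

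The degenerate remainder is the main obstacle. Here I would apply a truncation-plus-moment argument: truncate $h$ at level $M_n\asymp K(\log(nd))^{1/\beta}$, so that by Lemma \ref{MaxBound} the truncation is exact on all $\binom{n}{m}$ blocks with probability $1-(2n^4)^{-1}$; then for the truncated, still mean-zero kernel, use the completely degenerate concentration inequality of \cite{UApprox} (or, equivalently, Theorem 5.1 of \cite{songetal2019} combined with Markov's inequality) to obtain $\|U_n^{(j)}\|_\infty\lesssim K(\log(nd))^{j/\beta}(\log d)^{1/2}/n^{j/2}$ with probability at least $1-C_j(\log d)^{1/2+1/\beta}/\sqrt n$ for each $j\ge 2$; and finally bound the (conditional) mean induced by truncation via the sub-Weibull tail, which contributes only a lower-order correction. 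Summing over $j=2,\ldots,m$, the dominant remainder term is the $j=2$ one, of order $K(\log(nd))^{2/\beta}(\log d)^{1/2}/n$, which under $\log d=o(n^{1/(2/\beta+1)})$ is indeed of order $o(K\sqrt{\log d/n})$, so it is absorbed into the linear-part bound.

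The hard part will be organizing the degenerate tail bound so that the probability loss matches the claimed $C(\log d)^{1/2+1/\beta}/\sqrt n$: a naive application of an exponential inequality for degenerate $U$-statistics with sub-Weibull kernels would produce a Weibull-type tail, and it is the truncation level choice together with a Markov step on the $\psi_\beta$-norm of the truncation error that accounts for the polynomial rather than exponential rate. Once these pieces are in hand, a union bound over the two events (linear and remainder) delivers the stated bound; the variant with $\log(nd)$ in place of $\log d$ follows by the same argument with the union bound over the linear term taken over $nd$ instead of $d$, which is the form actually used throughout Section \ref{sec5}.
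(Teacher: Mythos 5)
Your overall structure is right and you identify the correct key ingredients -- Hoeffding decomposition, transfer of the $\psi_\beta$-bound to $h_{(1)}$ via Lemma \ref{WeibCond}, Lemma \ref{WeibConc} for the linear part, and a Markov step for the degenerate remainder -- but the primary route you propose for the remainder is more elaborate than it needs to be, and your diagnosis of where the polynomial failure rate comes from is slightly off. The paper does not truncate at all and does not split $R_n$ into the individual Hoeffding projections $U_n^{(j)}$: it invokes Theorem~5.1 of \cite{songetal2019} directly on the full remainder to get the single moment bound
\begin{align*}
\E\Big\|U_n-\tfrac{m}{n}\sum_{k=1}^n h_{(1)}(X_k)\Big\|_\infty \lesssim K\,\frac{(\log d)^{1+1/\beta}}{n}\,,
\end{align*}
which already absorbs the sub-Weibull tails (there is no separate truncation-error term to control), and then applies Markov's inequality with threshold $t=K\sqrt{\log d/n}$, which yields the failure probability $C(\log d)^{1/2+1/\beta}/\sqrt n$ in one line. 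So the polynomial (rather than Weibull-type) rate is simply an artifact of using a first-moment bound plus Markov, not of a truncation level choice. You do note the ``Theorem~5.1 combined with Markov'' route as an equivalent alternative, which is in fact what the paper does; your truncation-plus-per-order argument would also work but costs you extra bookkeeping for the truncation bias and for summing the $m-1$ separate failure events, without buying anything here. One minor point: the per-order bound you write, $\|U_n^{(j)}\|_\infty\lesssim K(\log(nd))^{j/\beta}(\log d)^{1/2}/n^{j/2}$ with failure probability $C_j(\log d)^{1/2+1/\beta}/\sqrt n$, does not quite come out of a Markov step at threshold $K\sqrt{\log d/n}$ unless the expectation bound for $j=2$ already has the exponent $1+1/\beta$ on $\log d$; that is exactly what the single application of Theorem~5.1 gives for the whole remainder, which again argues for not decomposing by order.
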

\begin{proof}
	By Theorem 5.1 from \citeSM{songetal2019} we obtain that
	\begin{align*}
	\E\norm{U_n-\frac{m}{n}\sum_{k=1}^nh_{(1)}(X_k)}_\infty\lesssim K\left(\frac{(\log d)^{1+1/\beta}}{n}\right)
	\end{align*}
	up to some universal constant that depends only on $m$ and $\beta$. Using Markov's inequality we deduce that
	\begin{align}
	\label{DegenConc}
	\p\left(\norm{U_n-\frac{m}{n}\sum_{k=1}^nh_{(1)}(X_k)}_\infty>t\right)\lesssim \frac{K}{t} \left(\frac{(\log d)^{1+1/\beta}}{n}\right)\,.
	\end{align}

	For the linear part of $U_n$ we obtain by Lemmas \ref{WeibConc} and \ref{WeibCond} that 
	\begin{align*}
	\norm{\frac{m}{n}\sum_{k=1}^{n}h_{(1)}(X_k)}_\infty \lesssim K\sqrt{\frac{\log d}{n}}
	\end{align*}
	with probability at least $1-3/d$ as long as $\log d=o(n^\gamma)$ where $\gamma\leq \frac{1}{2/\beta^*-1}$. Setting $t=K\sqrt{\frac{\log d}{n}}$ in \eqref{DegenConc} then yields
	\begin{align*}
	\norm{U_n}_\infty \lesssim K\sqrt{\frac{\log d}{n}}
	\end{align*}
	with probability at least $1-3/d-C(\log d)^{1/2+1/\beta}/\sqrt{n}$ up to some universal constant C that depends only on $m$ and $\beta$. The second bound is obtained by the same arguments but with a different choice of $t$.
\end{proof}

\bibliographystyle{apalike}
\setlength{\bibsep}{2pt}
\bibliography{reference}

\end{document}